
\documentclass[preprint,3p]{elsarticle1}



\usepackage{graphicx}

\usepackage{amssymb}
\usepackage{comment}



\usepackage{lineno}





\usepackage{amsmath}
\usepackage{amsthm}
\usepackage{epsfig}

\usepackage{mathrsfs}

\usepackage{psfrag}
\usepackage{color}

\journal{Linear Algebra and its Applications}

\newtheorem{theorem}{Theorem}
\newtheorem{problem}[theorem]{Problem}
\newtheorem{lemma}[theorem]{Lemma}
\newtheorem{example}[theorem]{Example}
\newtheorem{definition}[theorem]{Definition}

\newtheorem{corollary}[theorem]{Corollary}
\newtheorem{remark}{Remark}

\newtheoremstyle{algstyle}%
  {10mm}       
  {10mm}       
  {\tt}   
  {0pt}        
  {\bfseries}  
  {\newline}   
  {10mm}       
  {\thmname{#1}\thmnumber{ #2}\thmnote{ (#3)}}          
\theoremstyle{algstyle}
\newtheorem{algorithm}{Algorithm}

\newtheoremstyle{algdashstyle}%
  {10mm}       
  {10mm}       
  {\tt}   
  {0pt}        
  {\bfseries}  
  {\newline}   
  {10mm}       
  {\thmname{#1}\thmnumber{ #2}$'$\thmnote{ (#3)}}          
\theoremstyle{algdashstyle}

\newcommand{\nw}[1]{%
\textbf{#1}%
}

\newcommand{\mnw}[1]{%
\boldsymbol{#1}%
}


\newcommand{\ppmatrix}[1]{%
\begin{pmatrix} #1 \end{pmatrix}%
}



\newcommand{\lrar}{\leftrightarrow}

\newcommand{\equaln}{\hspace{0.1cm} = \hspace{0.1cm}}

\newcommand{\equivd}{:\equiv }

\newcommand{\V}{\mbox{$\mathcal V$}} 

\newcommand{\F}{\mbox{$\cal F$}} 

            

    \newcommand{\0}{{\mathbf 0}}        
        

\newcommand{\Vs}{{\cal V}_{S}}             
             
\newcommand{\Vsp}{{\cal V}_{SP}}           			
\newcommand{\Msp}{{\cal M}_{SP}}           			
\newcommand{\Gsp}{{\cal G}_{SP}}           			
\newcommand{\Gsq}{{\cal G}_{SQ}}           			
\newcommand{\Ksp}{{\cal K}_{SP}}           			
\newcommand{\Asp}{{\cal A}_{SP}}           			
\newcommand{\A}{{\cal A}}           			
\newcommand{\Kpq}{{\cal K}_{PQ}}           			
\newcommand{\Apq}{{\cal A}_{PQ}}           			
\newcommand{\Ksq}{{\cal K}_{SQ}}           			
\newcommand{\Q}{{\mathbb{Q}}}
\newcommand{\Vsq}{{\cal V}_{SQ}}




 
\newcommand{\Vpq}{{\cal V}_{PQ}}            			
\newcommand{\Mpq}{{\cal M}_{PQ}}            			
\newcommand{\D}[0]{{\cal D}}                    	

\newcommand{\G}[0]{{\cal G}}                       
  

\newcommand{\K}[0]{{\cal K}}                       

\newcommand{\M}{\mbox{$\cal M$}}

\newcommand{\N}[0]{{\cal N}}    							
\newcommand{\It}[0]{{\cal I}_{t}}                   

\newcommand{\s}[1]{ short  \ #1}                 










\begin{document}

\begin{frontmatter}



\title{Implicit Linear Algebra and Basic Circuit Theory II:
 port behaviour of  rigid  multiports
}

\author[]{H. Narayanan}
\ead{hn@ee.iitb.ac.in}
\address{Department of Electrical Engineering, Indian Institute of Technology Bombay}

\begin{abstract}
In this paper, we define the notion of rigidity for linear electrical multiports
and for matroid pairs. We show the parallel  between the 
two and study the consequences of this parallel. We  
 present  applications  to 
testing, using purely matroidal methods, whether a 
connection of rigid multiports yields a 
 linear network with unique solution.
We also indicate that rigidity can be regarded as 
the closest notion to duality that can be hoped for, 
  when the spaces correspond to different physical constraints, such as topological and device characteristic.

%

A vector space  $\Vs$ is a collection of row vectors, over a field $\mathbb{F}$ on a finite column set $S,$ closed under addition and multiplication by elements 
of $\mathbb{F}.$ We denote the space of all vectors on $S$ by $\F_S$ 
and the space containing only the zero vector on $S$ by $\0_S.$
An affine space $\A_S$ is the sum  of a vector $\alpha_S$ with $\Vs.$
The dual $\Vs^{\perp}$ of a vector space $\Vs$ is the collection
of all vectors whose dot product with vectors in $\V_S$ is zero.

A matroid $\M_S$  on $S,$
is a family of `independent' sets with the property that maximal 
independent sets contained in any given subset of $S$ have the same cardinality.

A multiport is an ordered pair $(\V^1_{AB},\A^2_{B}),$
where $\V^1_{AB}$ is the solution space on $A\uplus B$ of the Kirchhoff current and voltage 
equations of the graph of the multiport and $\A^2_{B}\equivd \alpha_B+\V^2_B$ is 
the device characteristic of the multiport, with $A$ corresponding to port voltages
and currents and $B$ corresponding to internal voltages and currents.
A solution of $(\V^1_{AB},\A^2_{B}),$ is a vector $(x_A,x_B)$ such that 
$(x_A,x_B)\in \V^1_{AB}, x_B\in\A^2_{B}.$
A network is  a multiport $(\V^1_{B},\A^2_{B}).$

The pair $\{\V^1_{AB},\alpha_B+\V^2_{B}\}$ is said to be  rigid iff 
it has a solution $(x_A,x_B)$ for every vector $\alpha_B$ and 
given a restriction $x_A$ of the solution, $x_B$ is unique.
A rigid network has a unique solution.\\
The pair $\{\M^1_{AB},\M^2_{B}\}$ is said to be  rigid iff 
the two matroids have disjoint bases which cover $B.$
We show that the properties of rigid pairs of matroids closely parallel
those of rigid multiports.

New multiports can be constructed from multiports $(\V^1_{AB},\A^2_{B}),
(\V^1_{CD},\A^2_{D}),$ $A,B,C,D,$ pairwise disjoint, by forcing the variables  on 
$A_1\subseteq A$ and $D_1\subseteq D$ to lie in an affine space $\A_{A_1C_1}.$
This new multiport $((\V^1_{AB}\oplus \V^1_{CD}), (\A^2_{B}\oplus \A^2_{D}\oplus \A_{A_1C_1}))$
is said to be obtained from the original ones by connection through $\A_{A_1C_1}.$ We give necessary and sufficient conditions for this multiport to be 
rigid given that the original multiports are rigid.
Networks are usually constructed in this manner from smaller multiports,
so that these conditions are necessary and sufficient for the 
final network to have a unique solution.

We use the methods developed in the paper to show that a multiport 
with independent and controlled sources and positive or negative resistors, whose parameters can be taken to be algebraically independent over $\Q,$
is rigid, if certain simple topological conditions are satisfied by the device edges.
\end{abstract}
\begin{keyword}
Rigidity, multiports, matroids, implicit duality.
\MSC   15A03, 15A04, 94C05, 94C15 

\end{keyword}

\end{frontmatter}


\section{Note}
This paper is a sequel to \\`Implicit Linear Algebra and Basic Circuit Theory (authors H.Narayanan and Hariharan Narayanan)\\
arXiv:2005.00838v1,May 2020' \cite{narayanan2020} hereinafter denoted by `ILABCI'.
\subsection{Errata  for ILABCI}
1. Line 14 of page 5  should read \\
`... $Ax=b.x^TA^T=b^T$..'
\\in place of 
\\`... $Ax=b.x^TA=b^T$..'
\\
2. Line 27 of page 7 should read `$\cdots$ where
$e_i, i=0, \cdots ,k-1, $' 
\\in place of
\\ `$\cdots$ where
$e_i, i=1, \cdots ,k-1, $'
\\
3. Line 26 of page 9 should read 
\\`..\nw{skewed composition} $\Ksp\rightleftharpoons\Kpq$..'
\\in place of 
\\`..\nw{skewed composition} $\Ksp\lrar\Kpq$..'
\\4. Theorem 26  part 3b should read \\
 `If $\N_P$ is regular $r(\V_{S'S"}^{\perp})+r(\breve{\V}_{P'P"})
=|S|+|P|.$'
\\in place of 
\\`If $\N_P$ is regular $r(\V_{S'S"})+r(\breve{\V}_{P'P"})
=|S|+|P|.$'
\\5. In the proof of Theorem 26  part 3b 
\\`$r(\V_{S'S"}^{\perp})$' should replace `$r(\V_{S'S"})$'
wherever the latter occurs.
\subsection{Notational changes from ILABCI}
`Regular multiports' are now replaced by `Rigid Multiports'.
This was felt necessary since `regular' 
is already overused in the literature.
Further, a network with unique solution is analogous to 
rigid structures in statics.
The methods of this paper are equally applicable to static structures.

\section{Introduction}
\label{sec:intro}
The notion of multiports is fundamental to, and characteristic of,  electrical
 network theory \cite{belevitch68, desoerkuh}. Decomposition into multiports is a useful technique 
for theoretical and computational analysis of the network \cite{HNarayanan1997}.
It is also the natural way of synthesizing a complex network in terms of 
simpler ones \cite{belevitch68}. In recent times the importance of ports has  also been emphasized 
for system theory \cite{vanderscaftport}. 

An electrical network can be regarded as a theoretical
model of a practical circuit. A test for validity of this model 
is whether it has a unique solution. A natural question is, `what is the 
corresponding test for validity of a multiport as a model for a practical 
multiport?'. We claim that the test should be for `rigidity' of the multiport
 when it is linear. This paper is an attempt to justify this claim.

We say that a linear multiport is rigid if it has a solution for arbitrary 
source values and has a unique solution for a given valid port condition.
To see the relevance of the notion, we note that
when  the multiport is a network without ports,  rigidity translates to the network having a unique solution.
Further, if multiports are to be connected to form a network with unique solution, it is necessary that they be rigid.
Therefore, it is only rigid multiports which, after some preprocessing,
 can be handled by conventional circuit simulators.
The notion is seen to be natural, when we show in this paper that 
if we connect rigid multiports, to retain rigidity, we only have to 
verify certain interface properties, which again can be interpreted in terms 
of rigidity.
%

Rigid multiports behave the way linear multiports are expected to behave
by a working engineer.
For instance, we show in this paper, if the device characteristic is `proper' (dimension equal to half 
the number of current and voltage variables), the multiport behaviour (the affine space of voltage current pairs that can occur at the ports) will be proper.
Further, under simple topological conditions, assuming algebraic independence 
of parameters of the device characteristic, the port behaviour will actually 
be `hybrid' (in the equations of the port behaviour, exactly one of the current-voltage variables of a port will occur on the left side).

We show that there is a corresponding notion of rigidity for pairs of matroids and its properties 
exactly mimic those of multiport rigidity. We exploit this similarity between 
multiports and matroid pairs to derive matroidal tests for rigidity 
of multiports which are made up of nonzero resistors, independent and controlled sources,
 assuming that the resistances, gains, transresistances, transconductances, are algebraically independent over $\Q.$

We now give a brief summary of the  paper.\\
A vector space  $\Vs$ is a collection of row vectors, over a field $\mathbb{F}$ on a finite column set $S,$ closed under addition and multiplication by elements
of $\mathbb{F}.$ 
(The vector space may be thought of as composed of row vectors and $S$ may be regarded as the column set.)
A linear multiport can be treated as  an ordered pair $(\V^1_{AB},\A^2_{B}),$
where $\V^1_{AB}$ is the solution space on $A\uplus B$ of the Kirchhoff current and voltage 
equations of the graph of the multiport and $\A^2_{B}\equivd \alpha_B+\V^2_B$ is 
the device characteristic of the multiport, with $A$ corresponding to port voltages
and currents and $B$ corresponding to internal voltages and currents.
(We refer to $\V_B$ as the vector space translate of $\A_B.$) 
A solution of the multiport is a vector of the form 
 $(v_{P'},i_{P"},v_{S'},i_{S"}),$ with $P',P",$ being disjoint copies of the port branches $P$ and $S',S",$ being disjoint copies of the internal
 branches $S,$  $A\equivd P'\uplus P"$ and $ B\equivd S'\uplus S".$ 
This vector is a solution of the multiport provided it belongs to $\V_{AB}$
and its restriction $(v_{S'},i_{S"}),$ belongs to $\A_B.$ 

We say that the linear multiport is rigid iff
\\(a) it has a solution for arbitrary values of sources, i.e., for arbitrary 
values of $\alpha_B,$
and\\
(b) for a given valid port condition (a vector $(v_{P'},i_{P"})$ that is a restriction of a solution of the multiport) 
there is a unique multiport solution. 
\\If either of the above conditions is violated, freely available 
circuit simulators (such as SPICE) cannot handle networks made up of the multiports.

Rigidity of $\{\V_{AB},\A_B\}, $ can be shown to be equivalent to the 
condition 
$$r(\V_{AB}+(\V_{B}+\0_A))= r(\V_{AB})+r(\V_{B})\ \ \mbox{and}\ \ 
r(\V_{AB}^{\perp}+(\V_B^{\perp}+\0_A))= 
r(\V_{AB}^{\perp})+r(\V_B^{\perp}).$$
($\V_X^{\perp}$ is the collection of all vectors on $X$ whose dot product
with every vector in $\V_X$ is zero.)

Multiports are usually connected together to form larger multiports, 
through additional device characteristic at some of the ports.
So the natural question is, under what conditions is such a connection 
of rigid multiports, rigid.
To answer this and a related question in brief, we introduce some notation.

For any vector space $\V_{AB},$ where $A,B$ are disjoint, we define
$\V_{(-A)B}\equivd \{(-f_A,f_B):(f_A,f_B)\in \V_{AB}\}.$ 
Let $\Vsp,\Vpq$ be vector spaces on $S\uplus P,P\uplus Q,$ where $S,P,Q$ 
are pairwise disjoint. 
We denote the space of all vectors on $S$ by $\F_S$
and the space containing only the zero vector on $S$ by $\0_S.$
We define the 
 {restriction}  of ${\mathcal{V}_{SP}}$ to $S$  to be 
${\mathcal{V}_{SP}\circ S}\equivd \{f_S:(f_S,f_P)\in \mathcal{V}_{SP}\},$
the {contraction}  of ${\mathcal{V}_{SP}}$ to $S$ to be
${\mathcal{V}_{SP}\times S}\equivd \{f_S:(f_S,0_P)\in \mathcal{V}_{SP}\},$
a {minor} of $\V_X$ to be a vector space $(\V_X\circ X_1)\times X_2, X_2\subseteq X_1\subseteq X.$ We define the {sum}
$\mathcal{V}_{SP} + \mathcal{V}_{PQ}$ to be $ (\mathcal{V}_{SP} \oplus \0_{Q}) + (\0_{S} \oplus \mathcal{V}_{PQ}),$ the {intersection} 
${\mathcal{V}_{SP} \cap \mathcal{V}_{PQ}}$ of $\mathcal{V}_{SP}$, $\mathcal{V}_{PQ}$ to be
$(\mathcal{V}_{SP} \oplus  \F_{Q}) \cap (\F_{S} \oplus \mathcal{V}_{PQ}),$
the {matched composition} 
$\Vsp\lrar \Vpq$  to be $(\Vsp+\V_{(-P)Q})\times (S\uplus Q)$
and the {skewed composition}    
$\Vsp\rightleftharpoons \Vpq$  to be $(\Vsp+\V_{PQ})\times (S\uplus Q).$
\\
The implicit duality theorem for vector spaces (Theorem \ref{thm:idt0})
 states that $(\Vsp\lrar \Vpq)^{\perp}\ =\ \Vsp^{\perp}\rightleftharpoons  \Vpq^{\perp}.$

In terms of the above notation, 
we have two questions:\\
{\it Question 1}. Let $(\V_{ABC},\A_C)$ be a rigid multiport, where $A\uplus B$ correspond to port variables and $C,$ to internal variables. Suppose we impose 
additional device characteristic $\A_B$ on the port variables $B.$
When would $(\V_{ABC},\A_B\oplus \A_C)$ be rigid? The answer is `it is rigid iff $(\V_{ABC}\lrar \V_C, \V_B)$ is rigid' (Theorem \ref {thm:derivedregularity}),
$ \V_C, \V_B,$ being vector space translates of $\A_C, \A_B,$ respectively.\\
{\it Question 2}. 
`given two rigid multiports $(\V_{ABC},\A_C), (\V_{D\tilde{B}E},\A_E),$\\$
A,B,C,D,\tilde{B},E,$ being pairwise disjoint, when is the connection 
$(\V_{ABC}\oplus \V_{D\tilde{B}E}, \A_C\oplus\A_E\oplus \A_{B\tilde{B}}),$ 
 rigid?' and the answer is `it is rigid iff $([(\V_{ABC}\lrar \V_C)\oplus (\V_{D\tilde{B}E}\lrar \V_E)], \V_{B\tilde{B}})$ is rigid' (Theorem \ref {thm:regularmultiportrecursive}).

Next, we show that there is an analogous notion of rigidity for pairs of matroids which leads to a parallel theory to that of pairs of vector spaces.

A matroid $\M_S$  on $S,$
is a family of `independent' sets with the property that maximal
independent sets contained in any given subset of $S$ have the same cardinality.
A base of $\M_S$ is a maximal independent subset of $\M_S$ contained in $S.$
The rank $r(\M_S)$ of $\M_S$ is the cardinality of any of its bases.
The dual $\M_S^{*}$ of $\M_S$ is the matroid whose bases are complements 
of bases of $\M_S.$ We denote the matroid on $S$ for which $S$ is independent, by
 $\F_S$
and the matroid on $S$  which has no independent sets, by
$\0_S.$
\\
Let $\Msp,\Mpq$ be matroids on $S\uplus P,P\uplus Q,$ where $S,P,Q$
are pairwise disjoint. 
We define the
 {restriction}  of ${\mathcal{M}_{SP}}$ to $S$  to be
the matroid on $S,$ whose independent sets are the same as those of ${\mathcal{M}_{SP}}$ that are contained in $S$,
the {contraction}  of ${\mathcal{M}_{SP}}$ to $S$ to be
${\mathcal{M}_{SP}\times S}\equivd  (\mathcal{M}^*_{SP}\circ S)^*,$
a {minor} of $\M_X$ to be a matroid $(\M_X\circ X_1)\times X_2, X_2\subseteq X_1\subseteq X.$ We define the
 union $\mathcal{M}_{SP} \vee \mathcal{M}_{PQ}$ to be the matroid  on $S\uplus P\uplus Q,$ 
whose independent sets are unions of independent sets of the individual 
matroids $\mathcal{M}_{SP}$ and $ \mathcal{M}_{PQ}$
and the matched composition 
$\Msp\lrar \Mpq$  to be $(\Msp\vee\Mpq)\times (S\uplus Q).$
The implicit duality theorem for matroids (Theorem \ref{thm:idt0m})
 states that $(\Msp\lrar \Mpq)^{*}=\Msp^{*}\lrar \Mpq^{*}.$\\
We say the pair $\{\M_{AB},\M_B\}$ is rigid iff 
$$r(\M_{AB}\vee \M_{B})= r(\M_{AB})+r(\M_{B}) \ \ \mbox{and}
\ \ r((\M_{AB})^{*}\vee(\M_B)^{*})= 
r(\M_{AB}^{*})+r(\M_B^{*}).$$

There is a natural way of associating a matroid with a vector space.
The matroid $\M(\Vs)$ is defined to be the matroid on $S$ whose 
independent sets are the independent columns of $\Vs.$
It can be shown that $\{\M(\V_{AB}),\M(\V_B)\}$ is rigid when $\{\V_{AB},\V_B\}$ is rigid (Theorem \ref{thm:rigidmatroidvector}).

Analogous to the answers to the two questions on rigidity of vector spaces, 
we have that \\
when $(\M_{ABC}, \M_C)$ is given to be rigid, $(\M_{ABC},\M_B\oplus \M_C)$ is rigid iff  $(\M_{ABC}\lrar \M_C, \M_B)$ is rigid
 and that 
 when $(\M_{ABC}, \M_C), (\M_{D\tilde{B}E},\M_E)$ are given to be rigid,\\ $(\M_{ABC}\vee \M_{D\tilde{B}E}, \M_C\vee\M_E\vee \M_{B\tilde{B}})$
 is rigid 
iff $([(\M_{ABC}\lrar \M_C)\vee (\M_{D\tilde{B}E}\lrar \M_E)], \M_{B\tilde{B}})$ is rigid.
These follow from Theorem \ref{thm:derivedregularitym}.

The link between the results on vector space  pair rigidity and matroid pair rigidity
 arises when we have $\M(\V_{AB}+\V_B)=\M(\V_{AB})\vee \M(\V_B)$ and
$\M(\V_{AB}^{\perp}+\V_B^{\perp})=\M(\V_{AB}^{\perp})\vee \M(\V_B^{\perp}).$
This happens, for instance, when the device characteristic $\V_B$ of the multiport $(\V_{AB},\V_B)$ is characterized by parameters 
 which are algebraically independent over $\Q,$
 with the vector space $\V_{AB}$ being over $\Q.$
For such pairs $\{\V_{AB},\V_B\}, $ we have that 
$\{\M(\V_{AB}),\M(\V_B)\}$ is rigid iff $\{\V_{AB},\V_B\}$ is rigid 
 and that $\M(\V_{AB}\lrar \V_B) $\\$= \M(\V_{AB})\lrar \M(\V_B)$ (Theorem \ref{thm:rigidmatroidvector}).
Further when $\V_{AB},\V_B$ are minors of $\V_{AX},\V_X,$ respectively  
and satisfy certain additional conditions, we can show that rigidity of the former pair is equivalent to the rigidity of the latter pair.
There is a parallel result for matroid pairs where we can show that 
when $\M_{AB},\M_B$ are minors of $\M_{AX},\M_X,$ respectively
and satisfy certain additional conditions, the rigidity of the former pair is equivalent to the rigidity of the latter pair.
Because of these results, starting with the fact that $\{\M(\V_{AB}),\M(\V_B)\}$ is rigid iff $\{\V_{AB},\V_B\}$ is rigid
we can conclude that $\{\M(\V_{AX}),\M(\V_X)\}$ is rigid iff $\{\V_{AX},\V_X\}$ is rigid (Theorem \ref{thm:maxdistancenew2}).

We use these ideas
to show that a multiport
with independent and controlled sources and positive or negative resistors, whose parameters can be taken to be algebraically independent over $\Q,$
is rigid if \\
(a) the voltage sources, controlled as well as independent,
do not form a loop with controlling current branches and
(b) the current sources, controlled as well as independent,
do not form a cutset with controlling voltage branches (Theorem \ref{thm:purslowgraphnew}). In this case the test for rigidity is linear time on the 
size of the graph of the multiport and additionally the multiport 
behaviour $\V_{AX}\lrar \V_X$ has a hybrid representation and can be built in a convenient way.

The work reported here has its roots in the work of Purslow,
who gave necessary conditions for a circuit with resistors and
controlled sources to have a unique solution \cite{purslow},
and Iri and Tomizawa,  who introduced the approach using matroids 
exploiting algebraic
independence of parameters \cite{iritomizawa,iritomizawa2}.
Extensive work of this kind  was carried out in the 70's and 80's studying the solvability of 
systems 
(specifically using the matroid union and matroid intersection theorems
\cite{edm65a, iritomizawa,iritomizawa2,irisurvey,irireview,irifujishige,iriapplications,iriprogress,murotairi1,murotabook0, murotabook, recski78, recski79}).
A summary of the work relevant to uniqueness of solution for linear networks according to this approach is available
in \cite{recski89}. In our case, the building blocks are rigid multiports and we need rigidity to be preserved even after connection
of these building blocks. This requires some additional techniques,
 such as the use of the implicit duality theorem for vector spaces
and matroids (Theorems \ref{thm:idt0}, \ref{thm:idt0m}).

We now present a brief outline of the paper.

Section \ref{sec:Preliminaries} is on vector space and network preliminaries.
Among other things, it contains an extended definition of sum and intersection
of (row) vector spaces with different column sets, the definition 
of matched and skewed composition and basic
definitions on multiports.

Section \ref{sec:iitidt} is on two basic theorems of `implicit linear algebra'
\cite{narayanan2016} - implicit inversion theorem, which is used to discuss
 `port transformation' (\ref{subsec:porttransformation}) and implicit duality theorem which is used extensively in the paper.

Section \ref{sec:rigid} is on rigid multiports which are studied in terms of  
the properties of  rigid pairs of affine spaces. 
Theorem \ref{thm:regularrecursive} contains the basic properties. The main theorem 
from the point of view of applications is Theorem \ref{lem:derivedregularity}, which states that $(\V_{WTV}, \V_T\oplus \V_V)$ is rigid iff 
$(\V_{WTV}, \V_V)$ and $(\V_{WTV}\lrar \V_V,\V_T)$ are rigid.
Subsection \ref{subsec:rigidfamiliesaffine} generalizes these ideas to 
 families of affine spaces on sets which have the property that no element 
 belongs to more than two of the underlying sets. Theorem \ref{thm:associativerigidrecursive} is 
the general version of Theorem \ref{lem:derivedregularity}.
It reduces the rigidity of a family of affine spaces of the above kind, recursively, to 
rigidity of two complementary subfamilies and an interface pair of affine spaces.
Theorem \ref{thm:associativerigidrecursiven} is a version which is in terms of 
a partition of the original family and a corresponding interface 
set of affine spaces.

Section \ref{sec:matroidrigidity} is on matroidal rigidity. It begins with 
a subsection on matroid preliminaries which is deliberately written in a 
manner parallel to vector space preliminaries in Section \ref{sec:Preliminaries}. This is followed by a subsection on rigidity of matroid pairs, which 
parallels the development of rigid pairs of affine spaces of Section \ref{sec:rigid}. The main theorem
from the point of view of applications is Theorem \ref{thm:derivedregularitym},
which states that $(\M_{WTV}, \M_T\oplus \M_V)$ is rigid iff 
$(\M_{WTV}, \M_V)$ and $(\M_{WTV}\lrar \M_V,\M_T)$ are rigid.
The proof of this theorem is a line by line `matroidal' translation of that of the corresponding vector space result (Theorem \ref{lem:derivedregularity}). It is relegated to 
 \ref{subsec:rigidpairsmp}.
Subsection \ref{subsec:rigidfamiliesmatroid} generalizes these ideas to
 families of affine spaces on sets which have the property that no element
 belongs to more than two of the underlying sets. Theorem \ref{thm:associativerigidrecursivem} is
the general version of Theorem \ref{thm:derivedregularitym}.
It reduces the rigidity of a family of matroids of the above kind, recursively, to
rigidity of subfamilies and an interface pair of matroids.
Theorem \ref{thm:associativerigidrecursivenm} is a version which is in terms of
a partition of the original family and a corresponding interface
set of matroids.

Section \ref{subsec:connect} is on connecting rigid multiports to yield a 
rigid multiport. The main theme is on matroidal tests for rigidity of multiports. This section uses the ideas in Sections \ref{sec:rigid} and \ref{sec:matroidrigidity}
to give a simple topological sufficient condition for rigidity of multiports containing 
non zero resistors and independent and controlled sources (Theorem \ref{thm:purslowgraphnew}). The proof requires the development of a theory of the relation between 
multiport and matroid pair rigidity. This development is given in  \ref{sec:matroidalrigidity}. 
Theorem \ref{thm:purslow2} is a generalization of Theorem \ref{thm:purslowgraphnew} that includes gyrators and ideal transformers in the set of devices.
This requires the testing of independence of certain sets of columns of a Dirac
structure. The proof is given in \ref{sec:purslow2}.
The Dirac structure itself arises as the port behaviour of a multiport 
which has  only devices with a Dirac structure. 
It is more convenient to work with a corresponding multiport with the same Dirac devices but with 
minimal ports. This requires the idea of port minimization which is 
discussed in \ref{subsec:porttransformation}.

Section \ref{sec:conclusions} is on conclusions.


\section{Preliminaries}
\label{sec:Preliminaries}
The preliminary results and the notation used are from \cite{HNarayanan1997}.

A \nw{vector} $\mnw{f}$ on a finite set $X$ over $\mathbb{F}$ is a function $f:X\rightarrow \mathbb{F}$ where $\mathbb{F}$ is a field. 


%

The {\bf sets} on which vectors are defined are  always {\bf finite}. When a vector $x$ figures in an equation, we use the 
convention that $x$ denotes a column vector and $x^T$ denotes a row vector such as
in `$Ax=b,x^TA=b^T$'. Let $f_Y$ be a vector on $Y$ and let $X \subseteq Y$. The \textbf{restriction $f_Y|_X$} of $f_Y$ to $X$ is defined as follows:
$f_Y|_X \equivd g_X, \textrm{ where } g_X(e) = f_Y(e), e\in X.$


When $f$ is on $X$ over $\mathbb{F}$, $\lambda \in \mathbb{F},$   the \nw{scalar multiplication} $\mnw{\lambda f}$ of $f$ is on $X$ and is defined by $(\lambda f)(e) \equivd \lambda [f(e)]$, $e\in X$. When $f$ is on $X$ and $g$ on $Y$ and both are over $\mathbb{F}$, we define $\mnw{f+g}$ on $X\cup Y$ by \\
$(f+g)(e)\equivd f(e) + g(e),e\in X \cap Y,\ (f+g)(e)\equivd  f(e), e\in X \setminus Y,
\ (f+g)(e)\equivd g(e), e\in Y \setminus X.
$
(For ease in readability, we will henceforth use $X-Y$ in place of $X \setminus Y.$)

When $X, Y, $ are disjoint,  $f_X+g_Y$ is written as  $\mnw{(f_X, g_Y)}.$ When $f,g$ are on $X$ over $\mathbb{F},$ the \textbf{dot product} $\langle f, g \rangle$ of $f$ and $g$ is defined by 
$ \langle f,g \rangle \equivd \sum_{e\in X} f(e)g(e).$
When $X$, $Y$ are disjoint, $\mnw{X\uplus Y}$ denotes the disjoint
union of $X$ and $Y.$ A vector $f_{X\uplus  Y}$ on $X\uplus Y$ is  written as $\mnw{f_{XY}}.$

We say $f$, $g$ are \textbf{orthogonal} (orthogonal) iff $\langle f,g \rangle$ is zero.

An \nw{arbitrary  collection} of vectors on $X$ 
is denoted by $\mnw{\mathcal{K}_X}$. 
When $X$, $Y$ are disjoint we usually write $\mathcal{K}_{XY}$ in place of $\mathcal{K}_{X\uplus Y}$.
We write $\K_{XY}$ as $\K_X\oplus \K_Y$ iff
$\K_{XY}\equivd\{f_{XY}:f_{XY}=(f_X,g_Y), f_X\in \K_X, g_Y\in \K_Y\}.$
We refer to $\K_X\oplus \K_Y$ as the \nw{direct sum} of $\K_X, \K_Y.$ 

A collection $\K_X$ is a \nw{vector space} on $X$ iff it is closed under 
addition and scalar multiplication. 
The notation $\mnw{\V_X}$ always denotes
a vector space on $X.$
For any collection $\K_X,$  $\mnw{span(\K_X)}$ is the vector space of all
linear combinations of vectors in it.
We say $\K_X$ is an \nw{affine space} on $X,$ iff it can be expressed as
$x_X+\V_X,$ where $x_X$ is a vector and $\V_X,$ a vector space on $X.$
The latter is unique for the affine space $\K_X$ and is said to be its \nw{vector space translate}.
The notation $\mnw{\A_X}$ always denotes
an \nw{affine space} on $X.$
When $\A_X$ and $\A'_X$ have the same vector space translate, we say that 
they are \nw{translates} of each other.

For a vector space  $\V_X,$ since we take $X$ to be finite,
any maximal independent subset of $\V_X$ has size less than or equal to $|X|$ and this 
size can be shown
to be unique. A maximal independent subset of a vector
space $\V_X$ is called its \nw{basis} and its  size 
is called the  {\bf dimension} or \nw{rank} of $\V_X$ and denoted by ${\mnw{dim}(\V_X)}$
 or by ${\mnw{r}(\V_X)}.$
For any collection of vectors $\K_X,$
the rank $\mnw{r}(\K_X)$
is defined to be $dim(span(\K_X)).$
The collection of all linear combinations of the rows of a matrix $A$ is a vector space 
that is denoted by $row(A).$

For any collection of vectors
$\mathcal{K}_X,$   the collection $\mnw{\mathcal{K}_X^{\perp}}$ is defined by
$ {\mathcal{K}_X^{\perp}} \equivd \{ g_X: \langle f_X, g_X \rangle =0\},$
It is clear that $\mathcal{K}_X^{\perp}$ is a vector space for 
any $\mathcal{K}_X.$ When $\mathcal{K}_X$ is a vector space $\V_X,$
 and the underlying set $X$ is finite, it can be shown that $({\mathcal{V}_X^{\perp}})^{\perp}= \mathcal{V}_X$ 
and  $\mathcal{V}_X,{\mathcal{V}_X^{\perp}}$ are said to be \nw{complementary orthogonal}. 
The symbol $0_X$ refers to the \nw{zero vector} on $X$ and $\mnw{0_X}$  refers to the \nw{zero vector space} on $X.$ The symbol $\mnw{\F_X}$  refers  to the collection of all vectors on $X$ over the field in question.
It is easily seen, when $X,Y$ are disjoint, and $\K_X, \K_Y$           
contain zero vectors, that $(\K_X\oplus \K_Y)^{\perp}=
\K_X^{\perp}\oplus\K_Y^{\perp}.$

A matrix of full row rank, whose rows generate a vector space $\V_X,$
is called a \nw{representative matrix} for $\V_X.$
We will refer to the index set $X$ as the \nw{set of columns} of $\V_X.$

If $B^1_X,B^2_X$ are representative matrices of $\V_X,$ it is clear
that the solution spaces of $B^1_Xy=0$ and $B^2_Xy=0$ are identical, being equal to $\V^{\perp}_X.$ A consequence is that a subset of columns of $B^1_X$ is independent  iff it is independent in $B^2_X.$
We say, in such a case, that the corresponding {columns of $\V_X$ are independent}.
A \nw{column base of $\V_X$} is a maximally independent set of columns of $\V_X.$  A \nw{column cobase of $\V_X$} is the complement $X-T$ of a column base $T$
 of  $\V_X.$

A representative matrix which can be put in the form $(I\ |\ K)$ after column
permutation, is called a \nw{standard representative matrix}.
It is clear that every vector space has a  standard representative matrix.
If $(I\ |\ K)$ is  a standard representative matrix of $\V_X,$
it is easy to see  that $(-K^T|I)$ is a standard representative matrix of $\V^{\perp}_X.$
Therefore we have the following result.
\begin{theorem}
\label{thm:perperp}
Let $\V_X$ be a vector space on $X.$ Then
\begin{enumerate}
\item 
$r(\V_X)+r(\V^{\perp}_X)=|X|$ and $((\V_X)^{\perp})^{\perp}=\V_X;$
\item $T$ is a column base of $\V_X$ iff $T$ is a column cobase of $\V^{\perp}_X.$
\end{enumerate}
\end{theorem}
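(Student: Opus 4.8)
The plan is to derive everything from the standard-representative-matrix construction recalled just above the statement, namely that if $(I\ |\ K)$ is a standard representative matrix of $\V_X$ then $(-K^T\ |\ I)$ is one for $\V^{\perp}_X$.

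First I would prove the rank identity in part 1. Fix a standard representative matrix $(I\ |\ K)$ of $\V_X$; being of full row rank, its number of rows equals $r(\V_X)$, and if the identity block is $r\times r$ then $K$ is $r\times(|X|-r)$, so $r(\V_X)=r$. The companion matrix $(-K^T\ |\ I)$ represents $\V^{\perp}_X$ and has full row rank equal to the size $|X|-r$ of its identity block, whence $r(\V^{\perp}_X)=|X|-r$ and the two ranks sum to $|X|$. For the double-orthogonal-complement claim I would avoid re-tracking column permutations and instead argue by dimension: directly from the definition of $(\cdot)^{\perp}$, every $f_X\in\V_X$ is orthogonal to every vector of $\V^{\perp}_X$, so $\V_X\subseteq(\V^{\perp}_X)^{\perp}$; applying the rank identity twice gives $r((\V^{\perp}_X)^{\perp})=|X|-r(\V^{\perp}_X)=r(\V_X)$; since $X$ is finite, a subspace contained in another of equal dimension must coincide with it, giving $(\V^{\perp}_X)^{\perp}=\V_X$.

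The core of part 2 is the one-directional claim that a column base $T$ of $\V_X$ yields a column base $X-T$ of $\V^{\perp}_X$. I would first record that every maximal independent set of columns of $\V_X$ has size exactly $r(\V_X)$: the columns of a full-row-rank representative matrix $B_X$ lie in and span $\mathbb{F}^{r(\V_X)}$, so an independent column set of size $<r(\V_X)$ can be extended within the columns of $B_X$ and is therefore not maximal. Given such a $T$ with $|T|=r(\V_X)=r$, the $r\times r$ submatrix $B_T$ is invertible, so $B_T^{-1}B_X$ is a representative matrix of $\V_X$ that, after ordering the columns as $(T, X-T)$, has the standard form $(I\ |\ K)$ with the identity block on $T$. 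The construction then represents $\V^{\perp}_X$ by $(-K^T\ |\ I)$ in the same column order, i.e.\ with an identity block on $X-T$; hence the columns $X-T$ of $\V^{\perp}_X$ are independent, and since $|X-T|=|X|-r=r(\V^{\perp}_X)$ by part 1 they form a maximal, hence a column base of $\V^{\perp}_X$.

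Finally I would assemble part 2 from this claim together with part 1. The forward direction, ``$T$ a column base of $\V_X$ $\Rightarrow$ $X-T$ a column base of $\V^{\perp}_X$ $\Rightarrow$ $T$ a column cobase of $\V^{\perp}_X$'', is exactly the claim. For the converse, if $T$ is a column cobase of $\V^{\perp}_X$ then $X-T$ is a column base of $\V^{\perp}_X$; applying the claim with $\V^{\perp}_X$ in the role of $\V_X$ shows $X-(X-T)=T$ is a column base of $(\V^{\perp}_X)^{\perp}=\V_X$, where the last equality is part 1. The only place demanding care is the bookkeeping in the core claim---keeping the column order $(T,X-T)$ consistent when passing from $(I\ |\ K)$ to $(-K^T\ |\ I)$---since everything else is dimension counting.
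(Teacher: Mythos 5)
Your proof is correct and follows essentially the same route as the paper: the paper derives Theorem \ref{thm:perperp} directly from the observation that if $(I\ |\ K)$ is a standard representative matrix of $\V_X$ then $(-K^T\ |\ I)$ is one for $\V^{\perp}_X$, which is exactly the engine of your argument. You have merely made explicit the dimension counting and the column-permutation bookkeeping that the paper leaves implicit.
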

\begin{remark}
\label{rem:realtocomplex}
When the field $\mathbb{F}\equivd \mathbb{C},$ it is usual to interpret
the dot product $\langle f_X, g_X \rangle$ of $f_X,g_X$ to be
the inner product $\Sigma f(e)\overline{g(e)}, e\in X,$
where $\overline{g(e)}$ is the complex conjugate of $g(e).$
In place of $\V^{\perp}_X$ we must use $\V^{*}_X,$
where $\V^{*}_X\equivd \{ g_X: \langle f_X, g_X \rangle =0\},$
 $\langle f_X, g_X \rangle$ being taken to be as above.
The definition of adjoint, which is introduced later, must be in terms of
$\V^{*}_X$ instead of in terms of $\V^{\perp}_X.$
\\
In this case, in place of the transpose of a matrix $Z$ we use $Z^*,$ the conjugate
transpose of $Z.$
\\
The above interpretation of dot product is {\it essential} if we wish to
define the notion of power for networks defined over the complex field.

The
Implicit Duality Theorem (Theorem \ref{thm:idt0}), would go through
with either definition of dot product and the corresponding
definition of orthogonality.
\end{remark}

The collection
$\{ (f_{X},\lambda f_Y) : (f_{X},f_Y)\in \mathcal{K}_{XY} \}$
is denoted by
$ \mnw{\mathcal{K}_{X(\lambda Y)}}.
$
When $\lambda = -1$ we would write $ {\mathcal{K}_{X(\lambda Y)}}$  more simply as $\mnw{\mathcal{K}_{X(-Y)}}.$
Observe that $(\mathcal{K}_{X(-Y)})_{X(-Y)}=\mathcal{K}_{XY}.$

We say sets $X$, $X'$ are \nw{copies of each other} iff they are disjoint and there is a bijection, usually clear from the context, mapping  $e\in X$ to $e'\in X'$.
When $X,X'$ are copies of each other, the vectors $f_X$ and $f_{X'}$ are said to be copies of each other with  $f_{X'}(e') \equivd  f_X(e), e \in X.$ 
The copy $(\K_X)_{X'}$ of $\K_X$ is defined by
 $(\K_X)_{X'}\equivd\{f_{X'}:f_X\in \K_X\}.$ 
When $X$ and $X'$ are copies of each other, the notation for interchanging the positions of variables with index sets $X$ and $X'$ in a collection $\mathcal{K}_{XX'Y}$ is given by $\mnw{(\mathcal{K}_{XX'Y})_{X'XY}}$, that is\\
$(\mathcal{K}_{XX'Y})_{X'XY}
 \equivd \{(g_X,f_{X'},h_Y)\ :\ (f_X,g_{X'},h_Y) \in \mathcal{K}_{XX'Y},\ g_X\textrm{ being  copy of }g_{X'},\ f_{X'}\textrm{ being  copy of }f_X  \}.$
An affine space $\mathcal{A}_{XX'}$ is said to be {\bf proper}
iff the rank of its vector space translate is $|X|=|X'|.$


\subsection{Restriction and contraction}

The \nw{restriction}  of $\mnw{\mathcal{K}_{SP}}$ to $S$ is defined by
$\mnw{\mathcal{K}_{SP}\circ S}\equivd \{f_S:(f_S,f_P)\in \mathcal{K}_{SP}\}.$
The \nw{contraction}  of $\mnw{\mathcal{K}_{SP}}$ to $S$ is defined by
$\mnw{\mathcal{K}_{SP}\times S}\equivd \{f_S:(f_S,0_P)\in \mathcal{K}_{SP}\}.$
The sets on which we perform the contraction operation would always 
have the zero vector as a member so that the resulting set would be nonvoid.
It is clear that restriction and contraction of vector spaces are also
vector spaces.
Here again, $\mnw{\mathcal{K}_{SPZ}\circ SP}$, $\mnw{\mathcal{K}_{SPZ} \times SP}$, respectively
when $S,P,Z,$ are pairwise disjoint,  denote ${\mathcal{K}_{S\uplus P\uplus Z}\circ (S\uplus P)}$, ${\mathcal{K}_{S\uplus P\uplus Z} \times (S \uplus P)}.$

We usually write $(\K_S\circ A)\times B , (\K_S\times A)\circ B $ respectively, more simply as $\K_S\circ A\times B, \K_S\times A\circ B .$ 
We refer to $\K_S\circ A\times B, B\subseteq A \subseteq S, $ as a \nw{minor} of  $\K_S.$

We have the following set of basic results  regarding minors and complementary orthogonal vector spaces.
\begin{theorem}
\label{thm:dotcrossidentity}
Let $S\cap P=\emptyset, T_2\subseteq T_1\subseteq S,$
$\V_S$ be a vector space  on $S,$ $\Vsp$ be a vector space  on $S\uplus P.$
We then have the following.
\begin{enumerate}
\item $(\V_S \times T_1 ) \cdot T_2 = (\V_S \cdot (S - (T_1 - T_2))\times T_2.$
\item $r(\Vsp)=r(\Vsp\circ S)+r(\Vsp\times P).$
\item $\V_{SP}^{\perp}\circ P= (\V_{SP}\times P)^{\perp}.$
\item $\V_{SP}^{\perp}\times S= (\V_{SP}\circ S)^{\perp}.$
\item 
$P_1\subseteq P$ is a column base of $\Vsp\times P$ iff 
there exists $S_1\subseteq S$ such that  $S_1$ is a column base of $\Vsp\circ S$
and $S_1\cup P_1$ is a column base of $\Vsp.$
\item If $P_1\subseteq P$ is a column base of $\Vsp\times P,$
then there exists a column base $P_2$ of $\Vsp\circ P,$
that contains it.
\item 
Let $A,B\subseteq S\uplus P, A\cap B=\emptyset,$ and let there exist a column base $D_1$  of $\Vsp$ that contains $A$ and a column cobase  $E_2$  of $\Vsp$ that
contains $B.$ Then there exists a column base $D$ of $\Vsp$  that contains $A$ but does not intersect $B.$
\end{enumerate}
\end{theorem}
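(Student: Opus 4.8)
The plan is to read part~7 as a statement about the matroid of independent columns of $\Vsp$: an independent set disjoint from a coindependent set can be completed to a base avoiding the latter. The two hypotheses supply exactly independence of $A$ and coindependence of $B$, and the conclusion is the standard completion property. Writing $X\equivd S\uplus P$, I would first translate the hypotheses. Since $A$ is contained in the column base $D_1$ of $\Vsp$, the columns of $A$ are independent. Since $E_2$ is a column cobase, $T\equivd X-E_2$ is a column base of $\Vsp$ with $|T|=r(\Vsp)$; and $B\subseteq E_2$ forces $T\subseteq X-B$.

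The second step is to check that $X-B$ is spanning, i.e. $r(\Vsp\circ(X-B))=r(\Vsp)$. This is immediate from the first step, since $X-B$ contains the column base $T$: a maximal independent set of columns taken from $X-B$ then has size at least $|T|=r(\Vsp)$, and it cannot exceed $r(\Vsp)$. (Alternatively, staying inside the present framework: applying part~2 to $\Vsp^{\perp}$ with the partition $\{B,\,X-B\}$ and using that $B$ is independent in $\Vsp^{\perp}$, by Theorem~\ref{thm:perperp}, gives $r(\Vsp^{\perp}\times(X-B))=r(\Vsp^{\perp})-|B|$; part~4 and Theorem~\ref{thm:perperp} then yield $r(\Vsp\circ(X-B))=|X-B|-r(\Vsp^{\perp}\times(X-B))=r(\Vsp)$.)

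The third step extends $A$ to the required base. Because $A\cap B=\emptyset$ we have $A\subseteq X-B$, and $A$ is an independent set of columns. I would enlarge $A$, adjoining columns from $X-B$ one at a time while preserving independence, until no further column of $X-B$ can be added; call the result $D$. By the exchange property of independent columns of a vector space, all maximal independent sets of columns inside $X-B$ share the common size $r(\Vsp\circ(X-B))=r(\Vsp)$, so $|D|=r(\Vsp)$; being independent of full size, $D$ is a column base of $\Vsp$. By construction $A\subseteq D\subseteq X-B$, hence $D\cap B=\emptyset$, which is the claim.

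The only genuine content is the standard fact that an independent set of columns contained in a spanning subset can be completed to a column base lying in that subset, and this is where both hypotheses are actually consumed: independence of $A$ starts the extension, while the cobase hypothesis on $B$ guarantees that $X-B$ still spans, so that the extension reaches full rank. I expect no real obstacle here; the sole point needing care is the spanning verification of the second step, which the cobase hypothesis delivers, and the rest is bookkeeping.
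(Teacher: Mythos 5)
Your proposal addresses only part 7 of the seven-part statement. Parts 1--6 --- the commutation identity for minors, the rank decomposition $r(\Vsp)=r(\Vsp\circ S)+r(\Vsp\times P)$, the two orthogonality identities $\V_{SP}^{\perp}\circ P=(\V_{SP}\times P)^{\perp}$ and $\V_{SP}^{\perp}\times S=(\V_{SP}\circ S)^{\perp}$, and the base-extension facts 5 and 6 --- are never argued, and your own reasoning actually consumes several of them: your alternative verification of the spanning step explicitly invokes parts 2 and 4 together with Theorem \ref{thm:perperp}. Relative to the statement as given, this is a genuine gap in coverage, even granting that part 7 is the only part whose proof is not a routine manipulation of representative matrices. (Note that the paper itself supplies no proof to compare against: Theorem \ref{thm:dotcrossidentity} sits in the preliminaries and is imported from \cite{HNarayanan1997}.)

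That said, your treatment of part 7 is correct and is the natural matroid-style argument. The translation of the hypotheses is right: $A\subseteq D_1$ gives independence of the columns $A$, and by Theorem \ref{thm:perperp} the cobase hypothesis on $E_2$ yields the column base $T\equivd (S\uplus P)-E_2$ with $T\subseteq (S\uplus P)-B$, so the columns outside $B$ already contain a full-rank independent set. The greedy completion of $A$ inside $(S\uplus P)-B$ then works because column independence in $\Vsp$ satisfies the matroid axiom (maximal independent subsets of any fixed set of columns have equal cardinality --- this is exactly what makes $\M(\Vsp)$ a matroid in the paper's later Section \ref{subsec:matroidprelim}), so the maximal extension has size $r(\Vsp)$ and is therefore a column base disjoint from $B$. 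One small bookkeeping point: when you equate the common size of maximal independent sets inside $(S\uplus P)-B$ with $r(\Vsp\circ((S\uplus P)-B))$, you are implicitly using that column independence within a restriction agrees with column independence in $\Vsp$ (the identity $\M(\V\circ T)=\M(\V)\circ T$ of Lemma \ref{lem:minorvectorspacem}); this is elementary, and in fact avoidable, since your direct comparison with $|T|=r(\Vsp)$ already pins down the size without mentioning the restriction space at all.
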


\subsection{Graphs}
For a \nw{directed graph $\G$} (graph in brief), the vertex and edge sets are denoted by $V(\G),E(\G),$ 
respectively.
An edge is usually diagrammatically represented with an arrow going from its 
positive to its negative endpoint. The \nw{degree} of a node is the number 
of edges incident at it, with edges with single endpoints (\nw{self loops}) counted twice. An \nw{isolated} vertex has degree zero. An \nw{undirected path} between vertices $v_0,v_k$ of $\G$ is a 
sequence $(v_0,e_0,v_1, e_1, \cdots , e_{k-1},v_k),$ where 
$e_i, i=0, \cdots ,k-1, $ is incident at $v_i.$
A graph is said to be \nw{connected}, if there exists an undirected path between every pair of nodes. Otherwise it is said to be \nw{disconnected}.
A subgraph $\G_1$  of a graph $\G,$  is a graph  where $V(\G_1)\subseteq V(\G),E(\G_1)\subseteq E(\G),$ and edges of $\G_1$ have the same positive and negative endpoints as in $\G.$ 
A disconnected graph has  \nw{connected components} which are individually connected  with no edges between the components.

A \nw{loop} is a connected subgraph with the degree of each node equal to $2.$
An \nw{orientation} of a loop is a sequence of all its edges such that each
edge has a common end point with the edge succeeding it, the first edge
being treated as succeeding the last. Two orientations in which the succeeding edge to a given
edge agree are treated as the same so that there are only two possible 
orientations for a loop. 
The relative orientation of an edge 
with respect to that of the loop is positive, if the orientation 
of the loop agrees with the direction (positive node to negative node)
of the edge and negative if opposite.

A \nw{tree} subgraph of a graph is a connected sub-graph of the original graph with no loops. The set of edges of a tree subgraph is called a \nw{tree} and its edges are called \nw{branches}. A \nw{spanning tree} is a maximal tree with respect to the edges of a connected graph.
A \nw{cotree} of a graph is the (edge set) complement of a spanning tree of the connected graph.

A \nw{cutset} is a minimal 
 subset of edges which when deleted from the graph increases the count of connected components by one.
Deletion of the edges of a cutset breaks exactly one of the components 
of the graph, say $\G_1,$ into two, say $\G_{11},\G_{12}.$   A cutset can be oriented in one 
of two ways corresponding to the ordered pair $(\G_{11},\G_{12})$ or the ordered pair $(\G_{12},\G_{11}).$
The relative orientation of an edge 
with respect to that of the cutset is positive if the orientation, say $(\G_{11},\G_{12}),$
of the cutset agrees with the direction (positive node to negative node)
of the edge and negative if opposite.

Let $\mathcal{G}$ be a graph with $S\equivd E(\mathcal{G})$  and let $T\subseteq S.$ Then 
$\mnw{\mathcal{G} sub (S-T)}$ denotes the graph obtained by removing the edges $T$ from $\mathcal{G}.$ 
This operation is referred to also as {\bf deletion} or
open circuiting of the edges $T.$
The graph $\mnw{\mathcal{G} \circ (S-T)}$ is obtained by removing  the isolated vertices from $\mathcal{G} sub (S-T).$
The graph $\mnw{\mathcal{G} \times (S-T)}$ is obtained by removing the edges $T$ from $\mathcal{G}$ and fusing the end vertices of the removed edges. If any isolated vertices (i.e., vertices not incident on any edges) result, they are deleted. 
Equivalently, one may first build $\mathcal{G} sub T,$
and treat each of its connected components, including the isolated nodes 
as a `supernode' of another graph with edge set $S-T.$
If any of the supernodes is isolated, it is removed.
This would result in ${\mathcal{G} \times (S-T)}.$
This operation is referred to also as {\bf contraction} or 
short circuiting of the edges $T.$
We refer to $(\G\times T)\circ W, (\G\circ T)\times W$ respectively, more simply by $\G\times T\circ W, \G\circ T\times W. $
\\
If disjoint edge sets $A,B$ 
are respectively deleted and contracted, 
the order in which these operations are performed 
can be seen to be irrelevant.
Therefore, $\G\circ (S-A)\times (S-(A\uplus B))=\G\times (S-B)\circ (S-(A\uplus B)).$
(Note that $\times, \circ $ are also used to denote vector space operations. However, the context would make clear
whether the objects involved are graphs or vector spaces.)

\nw{Kirchhoff's Voltage Law (KVL)} for a graph states that the sum of the signed voltages of
edges 
around an oriented loop is zero - the sign of the voltage of an edge 
being positive if the edge orientation agrees with the orientation of the loop
and negative if it opposes.\\
\nw{Kirchhoff's Current Law (KCL)} for a graph states that the sum of the signed currents leaving 
a node is zero, the sign of the current of an edge being positive if 
 its positive endpoint is the node in question.\\
We refer to the space of vectors $v_{S},$ which satisfy Kirchhoff's Voltage Law (KVL) of the graph $\mathcal{G},$
by $\mnw{\V^v(\mathcal{G})}$ and to the space of vectors $i_{S},$ which satisfy Kirchhoff's Current Law (KCL) of the graph $\mathcal{G},$
by $\mnw{\V^i(\mathcal{G})}.$
{\it These vector spaces will, unless otherwise stated, be taken as  
over $\Re.$}

The following are useful results on vector spaces associated with graphs.
\begin{theorem}
\label{thm:tellegen}
{\bf Tellegen's Theorem} \cite{tellegen},\cite{penfield}  Let $\G$ be a graph on edge set $S.$ Then $\V^i(\mathcal{G})= (\V^v(\mathcal{G})^{\perp}).$
\end{theorem}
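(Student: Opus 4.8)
The strategy is to realize both spaces through the node--edge incidence matrix $A_{\G}$ of $\G$ and then invoke the fact, recorded just before Theorem~\ref{thm:perperp}, that the solution space of a homogeneous system is the orthogonal complement of the row space of its coefficient matrix. Let $A_{\G}$ be the matrix whose rows are indexed by $V(\G)$ and whose columns are indexed by $S = E(\G)$, with $(v,e)$-entry equal to $+1$ if $v$ is the positive endpoint of $e$, $-1$ if $v$ is the negative endpoint, and $0$ otherwise. By the very statement of KCL, a current vector $i_S$ satisfies KCL at every node iff $A_{\G}\, i = 0$; hence $\V^i(\mathcal{G}) = \{i_S : A_{\G} i = 0\} = (row(A_{\G}))^{\perp}$.

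It therefore remains to identify $\V^v(\mathcal{G})$ with $row(A_{\G})$, after which Theorem~\ref{thm:perperp}(1) gives $\V^i(\mathcal{G}) = (row(A_{\G}))^{\perp} = (\V^v(\mathcal{G}))^{\perp}$, as required. The easy inclusion is $row(A_{\G}) \subseteq \V^v(\mathcal{G})$: a generic element of $row(A_{\G})$ has the form $v = A_{\G}^T u$ for a potential vector $u$ on $V(\G)$, so that $v_e = u_{p(e)} - u_{n(e)}$, where $p(e), n(e)$ denote the positive and negative endpoints of $e$; summing the signed $v_e$ around any oriented loop telescopes to zero, so $v$ obeys KVL.

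For the reverse inclusion $\V^v(\mathcal{G}) \subseteq row(A_{\G})$ --- the step I expect to be the crux --- I would argue componentwise. Fix a spanning tree of each connected component, assign the root potential $0$, and define $u$ on the remaining vertices by accumulating the signed entries of a given KVL vector $v$ along the unique tree path from the root; this makes $v_e = u_{p(e)} - u_{n(e)}$ hold on every tree edge by construction. For each cotree edge $e$, the fundamental loop it forms with the tree is an oriented loop, and the hypothesis that $v$ satisfies KVL around this loop forces $v_e = u_{p(e)} - u_{n(e)}$ there as well. Hence $v = A_{\G}^T u \in row(A_{\G})$. The only points needing care are the existence of a spanning tree in each component (so that the potentials are well defined and the tree relations hold tautologically) and the verification that KVL on the fundamental loops is exactly what pins down the cotree entries; both are routine once the spanning-tree bookkeeping is set up. Combining the two inclusions yields $\V^v(\mathcal{G}) = row(A_{\G})$ and completes the proof.
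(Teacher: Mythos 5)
Your proof is correct, and since the paper supplies no proof of Theorem \ref{thm:tellegen} (it only cites the classical references), your incidence-matrix argument --- $\V^i(\mathcal{G})$ is the null space of $A_{\mathcal{G}}$ by KCL, $\V^v(\mathcal{G})$ is its row space by the tree-potential/fundamental-loop argument, and the two are therefore complementary orthogonal via Theorem \ref{thm:perperp} --- is exactly the standard proof given in those sources. The only point worth a parenthetical is self loops, which the paper permits: a self loop is necessarily a cotree edge and is by itself an oriented loop, so KVL forces $v_e=0=u_{p(e)}-u_{n(e)}$ while its column of $A_{\mathcal{G}}$ is zero, so both of your inclusions go through unchanged.
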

\begin{lemma}
\label{lem:minorgraphvectorspace}
\cite{tutte}
Let $\G$ be a graph on edge set $S.$
Let $W\subseteq T\subseteq S.$
\begin{enumerate}
\item $ \V^v(\mathcal{G}\circ T)= (\V^v(\mathcal{G}))\circ T, \ \ \  \V^v(\mathcal{G}\times T)= (\V^v(\mathcal{G}))\times T,\ \ \V^v(\mathcal{G}\circ T\times W)= (\V^v(\mathcal{G}))\circ T \times W;$
\item $ \V^i(\mathcal{G}\circ T)= (\V^i(\mathcal{G}))\times T, \ \ \  \V^i(\mathcal{G}\times T)= (\V^i(\mathcal{G}))\circ T,
\V^i(\mathcal{G}\times T\circ W)= (\V^i(\mathcal{G}))\circ T \times W.$
\item A subset of columns is a column base of $\V^v(\G)$ iff it is a tree of
$\G.$
\item A subset of columns is a column base of $\V^i(\G)$ iff it is a cotree of
$\G.$

\end{enumerate}
\end{lemma}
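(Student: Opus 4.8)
The plan is to establish the two single-operation equalities for $\V^v$ in part 1 directly from the potential/incidence description of the voltage space, to deduce the corresponding equalities for $\V^i$ in part 2 by orthogonal duality (Tellegen's Theorem \ref{thm:tellegen} together with the complementary-orthogonality identities of Theorem \ref{thm:dotcrossidentity}), and to obtain the remaining ``double-minor'' equalities by composing the single-operation ones, using $W\subseteq T$. Parts 3 and 4 I would handle separately: part 3 from the column structure of the incidence matrix, and part 4 from part 3 by $\perp$-duality.

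\emph{Step 1 (voltage space under deletion and contraction).} Write $\V^v(\G)$ as the space of potential-difference vectors, i.e. $v\in\V^v(\G)$ iff $v=A^T p$ for the incidence matrix $A$ of $\G$ and some node-potential vector $p$. For deletion I would prove $\V^v(\G\circ T)=\V^v(\G)\circ T$ by two inclusions: restricting any $A^T p$ to the coordinates $T$ is a potential-difference vector of $\G\circ T$, and conversely every potential assignment on the vertices of $\G\circ T$ extends to $\G$ (the deleted isolated vertices get arbitrary, irrelevant, potentials), so every voltage vector of $\G\circ T$ is a restriction of one of $\G$. For contraction I would prove $\V^v(\G\times T)=\V^v(\G)\times T$: a voltage vector of $\G$ that vanishes on $S-T$ is exactly one whose potential is constant along each contracted edge, hence factors through the potentials of the contracted graph, and conversely; restricting to $T$ identifies $\{f_T:(f_T,0_{S-T})\in\V^v(\G)\}$ with $\V^v(\G\times T)$.

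\emph{Step 2 (current space by duality).} By Tellegen, $\V^i(\G)=(\V^v(\G))^{\perp}$, and likewise $\V^i$ of a minor is the $\perp$ of the corresponding $\V^v$ minor. Applying Theorem \ref{thm:dotcrossidentity}(4), $\V_{SP}^{\perp}\times S=(\V_{SP}\circ S)^{\perp}$, with its $S$ taken as $T$ and its $P$ as $S-T$, gives $\V^i(\G\circ T)=(\V^v(\G)\circ T)^{\perp}=(\V^v(\G))^{\perp}\times T=\V^i(\G)\times T$. Applying Theorem \ref{thm:dotcrossidentity}(3), $\V_{SP}^{\perp}\circ P=(\V_{SP}\times P)^{\perp}$, with its $P$ as $T$, gives $\V^i(\G\times T)=(\V^v(\G)\times T)^{\perp}=(\V^v(\G))^{\perp}\circ T=\V^i(\G)\circ T$. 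The double-minor equalities then follow by composition: since $W\subseteq T$, applying the contraction case to the graph $\G\circ T$ yields $\V^v(\G\circ T\times W)=\V^v(\G\circ T)\times W=\V^v(\G)\circ T\times W$, and applying the deletion case to $\G\times T$ yields $\V^i(\G\times T\circ W)=\V^i(\G\times T)\times W=\V^i(\G)\circ T\times W$.

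\emph{Step 3 (column bases, parts 3 and 4).} A maximal independent set of rows of $A$ is a representative matrix of $\V^v(\G)$, so a set of columns is independent in $\V^v(\G)$ iff it is independent in $A$; the classical fact that the incidence columns along any oriented loop sum (with signs) to zero shows that a column set is dependent exactly when it contains a loop, so the independent column sets are the forests and the column bases are the spanning trees. Finally, $\V^i(\G)=(\V^v(\G))^{\perp}$ together with Theorem \ref{thm:perperp}(2) gives that $T$ is a column base of $\V^v(\G)$ iff $T$ is a column cobase of $\V^i(\G)$; hence the column bases of $\V^i(\G)$ are the complements of spanning trees, i.e. the cotrees. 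The only genuinely graph-theoretic content, and the main obstacle, is Step 1 (especially the contraction identity, where one must verify that vanishing on the contracted edges is equivalent to factoring through the quotient potentials) and the incidence-column$\leftrightarrow$forest fact; the rest is formal and is forced by Tellegen's Theorem and the complementary-orthogonality identities already in hand.
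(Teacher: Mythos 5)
Your proof is correct. The paper does not actually prove this lemma — it attributes it to \cite{tutte} and notes the proof is available in \cite{HNarayanan1997} — and your argument (the node-potential description of $\V^v(\G)$ to handle deletion and contraction, Tellegen's Theorem \ref{thm:tellegen} combined with parts 3 and 4 of Theorem \ref{thm:dotcrossidentity} to transfer everything to $\V^i(\G)$, composition of single minors for the double-minor identities, and incidence-column independence plus Theorem \ref{thm:perperp} for parts 3 and 4) is exactly the standard proof those references give, carried out with the paper's own toolkit.
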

(Proof also available at \cite{HNarayanan1997}).
\subsection{Sum and Intersection}
Let $\mathcal{K}_{SP}$, $\mathcal{K}_{PQ}$ be collections of vectors on sets $S\uplus P,$ $P\uplus Q,$ respectively, where $S,P,Q,$ are pairwise disjoint. The \nw{sum} $\mnw{\mathcal{K}_{SP}+\mathcal{K}_{PQ}}$ of $\mathcal{K}_{SP}$, $\mathcal{K}_{PQ}$ is defined over $S\uplus P\uplus Q,$ as follows:\\
 $\mathcal{K}_{SP} + \mathcal{K}_{PQ} \equivd  \{  (f_S,f_P,0_{Q}) + (0_{S},g_P,g_Q), \textrm{ where } (f_S,f_P)\in \mathcal{K}_{SP}, (g_P,g_Q)\in \mathcal{K}_{PQ} \}.$\\
Thus,
$\mathcal{K}_{SP} + \mathcal{K}_{PQ} \equivd (\mathcal{K}_{SP} \oplus \0_{Q}) + (\0_{S} \oplus \mathcal{K}_{PQ}).$\\
The \nw{intersection} $\mnw{\mathcal{K}_{SP} \cap \mathcal{K}_{PQ}}$ of $\mathcal{K}_{SP}$, $\mathcal{K}_{PQ}$ is defined over $S\uplus P\uplus Q,$ where $S,P,Q,$ are pairwise disjoint, as follows:
$\mathcal{K}_{SP} \cap \mathcal{K}_{PQ} \equivd \{ f_{SPQ} : f_{S P Q} = (f_S,h_P,g_{Q}),$
 $\textrm{ where } (f_S,h_P)\in\mathcal{K}_{SP}, (h_P,g_Q)\in\mathcal{K}_{PQ}.
\}.$\\
Thus,
$\mathcal{K}_{SP} \cap \mathcal{K}_{PQ}\equivd (\mathcal{K}_{SP} \oplus  \F_{Q}) \cap (\F_{S} \oplus \mathcal{K}_{PQ}).$\\

It is immediate from the definition of the operations that sum and intersection of
vector spaces remain vector spaces.

The following set of identities is useful.
\begin{theorem}
\label{thm:sumintersection}
Let $\V^1_A, \V^2_B, \V_S,\V'_S $ be vector spaces. Then
\begin{enumerate}
\item $r(\V_S)+r(\V'_S)=r(\V_S+\V'_S)+r(\V_S\cap \V'_S);$
\item $r(\V_A)+r(\V_B)=r(\V_A+\V_B)+r[(\V_A\times (A\cap B))\cap (\V_B\times (A\cap B))];$

\item $(\V^1_A+\V^2_B)^{\perp}=(\V^1_A)^{\perp}\cap (\V^2_B)^{\perp};$
\item $(\V^1_A\cap \V^2_B)^{\perp}=(\V^1_A)^{\perp}+ (\V^2_B)^{\perp}.$
\item (a) $(\V^1_A+\V^2_B)\circ X= \V^1_A\circ X + \V^2_B\circ X ,X\subseteq A\cap B;$\\
(b) $(\V^1_A\cap\V^2_B)\times X= \V^1_A\times X \cap \V^2_B\times X ,X\subseteq A\cap B.$
\end{enumerate}
\end{theorem}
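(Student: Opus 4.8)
The plan is to reduce every one of the five identities to standard facts about ordinary vector spaces living on a single common column set, obtained by padding $\V^1_A$ and $\V^2_B$ out to the ground set $M:=A\cup B$. Throughout write $P:=A\cap B$, so that $A=(A-P)\uplus P$, $B=P\uplus(B-P)$ and $M=(A-P)\uplus P\uplus(B-P)$. Unwinding the definitions of extended sum and intersection, for any vector spaces on $A$ and on $B$ we have $\V^1_A+\V^2_B=(\V^1_A\oplus\0_{B-P})+(\0_{A-P}\oplus\V^2_B)$ and $\V^1_A\cap\V^2_B=(\V^1_A\oplus\F_{B-P})\cap(\F_{A-P}\oplus\V^2_B)$, each being an \emph{ordinary} sum, respectively intersection, of two vector spaces on $M$. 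This identification is the whole point: once the objects sit on $M$, the extended operations are the familiar ones.

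For part 1 I would use the classical rank--nullity argument: the linear map $\V_S\oplus\V'_S\to\V_S+\V'_S$ sending $(u,v)\mapsto u+v$ is onto, and its kernel is $\{(w,-w):w\in\V_S\cap\V'_S\}$, which is isomorphic to $\V_S\cap\V'_S$; since $r(\V_S\oplus\V'_S)=r(\V_S)+r(\V'_S)$, the identity follows. For part 2 I would apply part 1 on $M$ to the zero-padded spaces $\V_A\oplus\0_{B-P}$ and $\0_{A-P}\oplus\V_B$. Their ranks are $r(\V_A)$ and $r(\V_B)$ (padding by a zero space does not change rank) and their ordinary sum is $\V_A+\V_B$. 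The one substantive step is to identify their ordinary intersection: a common vector $(f_A,0_{B-P})=(0_{A-P},g_B)$ forces the $A-P$ and $B-P$ components to vanish and the two $P$-components to agree, so it has the form $(0_{A-P},h_P,0_{B-P})$ with $h_P\in\V_A\times P$ and $h_P\in\V_B\times P$; hence the map $h_P\mapsto(0_{A-P},h_P,0_{B-P})$ is a linear bijection onto this intersection, giving $r[(\V_A\times P)\cap(\V_B\times P)]$ and therefore part 2.

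For parts 3 and 4 I would invoke the De Morgan law $(\mathcal U+\mathcal W)^{\perp}=\mathcal U^{\perp}\cap\mathcal W^{\perp}$ for ordinary spaces on $M$, together with the direct-sum rule $(\K_X\oplus\K_Y)^{\perp}=\K_X^{\perp}\oplus\K_Y^{\perp}$ noted in the preliminaries and the evident $\0^{\perp}=\F$, $\F^{\perp}=\0$. Concretely, $(\V^1_A+\V^2_B)^{\perp}=((\V^1_A\oplus\0_{B-P})+(\0_{A-P}\oplus\V^2_B))^{\perp}=((\V^1_A)^{\perp}\oplus\F_{B-P})\cap(\F_{A-P}\oplus(\V^2_B)^{\perp})$, which is exactly $(\V^1_A)^{\perp}\cap(\V^2_B)^{\perp}$ by the definition of extended intersection, proving part 3. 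Part 4 then follows by applying $\perp$ to part 3 with $(\V^1_A)^{\perp},(\V^2_B)^{\perp}$ in place of $\V^1_A,\V^2_B$ and using double complementation $((\cdot)^{\perp})^{\perp}=(\cdot)$ from Theorem \ref{thm:perperp}; alternatively one repeats the padding computation with the roles of $\F$ and $\0$ interchanged.

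Finally, for part 5 I would use that, for \emph{any} $X\subseteq M$, restriction commutes with sum and contraction with intersection of ordinary spaces on $M$, namely $(\mathcal U+\mathcal W)\circ X=\mathcal U\circ X+\mathcal W\circ X$ and $(\mathcal U\cap\mathcal W)\times X=(\mathcal U\times X)\cap(\mathcal W\times X)$, both immediate from the definitions. Taking the padded spaces and $X\subseteq P$, it remains only to check that $(\V^1_A\oplus\0_{B-P})\circ X=\V^1_A\circ X$ and $(\0_{A-P}\oplus\V^2_B)\circ X=\V^2_B\circ X$ for 5(a), and the analogous $(\V^1_A\oplus\F_{B-P})\times X=\V^1_A\times X$, $(\F_{A-P}\oplus\V^2_B)\times X=\V^2_B\times X$ for 5(b); these hold precisely because $X$ is disjoint from the private parts $A-P$ and $B-P$, which is exactly where the hypothesis $X\subseteq A\cap B$ is used (and is what guarantees both minors are defined). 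This yields 5(a) and 5(b); the latter can also be read off from 5(a) by complementation using parts 3, 4 and the $\circ$/$\times$ duality of Theorem \ref{thm:dotcrossidentity}. I do not expect any single deep obstacle: the real work is the index bookkeeping in part 2's intersection and the verification that $\0$- and $\F$-padding transform correctly under $\perp$, $\circ$ and $\times$; once everything is phrased on the common set $M$, each identity collapses to a cited preliminary fact.
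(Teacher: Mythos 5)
Your proof is correct. Note, however, that the paper never proves Theorem~\ref{thm:sumintersection} at all: it is stated as a preliminary imported from \cite{HNarayanan1997} (``The preliminary results and the notation used are from \cite{HNarayanan1997}''), so there is no in-paper argument to compare against line by line. The closest thing the paper offers is its proof of the matroid analogue, Theorem~\ref{thm:sumintersectionm}, and it is worth observing that your strategy for part~2 is exactly the one used there: pad the two objects to the common ground set ($\V_A\oplus\0_{B-P}$ and $\0_{A-P}\oplus\V_B$ in your case, $\M^1_A\oplus\0_{B-A}$ and $\M^2_B\oplus\0_{A-B}$ in the matroid proof), apply the same-ground-set rank identity of part~1, and then identify the intersection of the padded objects with the intersection of the contractions to $A\cap B$; your explicit check that a common vector must vanish on $A-P$ and $B-P$ and agree on $P$ is precisely the substantive step, and you carry it out correctly. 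Parts~3 and~4 via ordinary De~Morgan on $M$ plus the direct-sum rule $(\K_X\oplus\K_Y)^{\perp}=\K_X^{\perp}\oplus\K_Y^{\perp}$ and $\0^{\perp}=\F$, $\F^{\perp}=\0$, with part~4 obtained from part~3 by double complementation (legitimate, since extended sums and intersections of vector spaces are vector spaces, so Theorem~\ref{thm:perperp} applies), are sound; so is part~5, where you correctly isolate the only place the hypothesis $X\subseteq A\cap B$ enters, namely that $X$ avoids the private index sets so that $\0$-padding is transparent to $\circ$ and $\F$-padding is transparent to $\times$. One purely cosmetic remark: in part~1 your $\V_S\oplus\V'_S$ is the abstract external direct sum used for the rank--nullity map, not the paper's $\oplus$ (which requires disjoint column sets); the argument is standard and unambiguous, but a different symbol would avoid the clash.
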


\subsection{Networks and multiports}
\label{subsec:networks}
A (static) {\bf electrical network $\mathcal{N},$} or a `network' in short, is a pair $(\mathcal{G},\mathcal{K}),$ where $\mathcal{G}$
is a directed graph with vertex set $V(\G)$ and edge set $E(\G)$
and $\mathcal{K}, $ called the \nw{device characteristic} of the network, is a collection of pairs of vectors  $(v_{S'},i_{S"}),S\equivd E(\G)$ where  $S',S"$  are disjoint copies of $S,\ $  $v_{S'},i_{S"}$ are real or complex vectors on the edge set of the graph.
In this paper, for the most part, we deal with affine device characteristic
and with real vectors, unless otherwise stated. When the device characteristic $\K_{S'S"}$ is affine,
we say the network is \nw{linear}. If $\V_{S'S"}$ is the vector space
translate of $\K_{S'S"},$ we say that $\K_{S'S"}$ is the \nw{source accompanied}
 form of $\V_{S'S"}.$ An affine space $\A_{S'S"}$ is said to be \nw{proper}
iff its vector space
translate $\V_{S'S"}$ has dimension $|S'|=|S"|.$

A {\bf solution} of $\mathcal{N}\equivd (\G,\K)$ on graph $\G$
is a pair
 $(v_{S'},i_{S"}),S\equivd E(\G)$ satisfying\\
$v_{S'}\in \V^v(\mathcal{G}),\ \ i_{S"} \in \V^i(\mathcal{G})$
  (KVL,KCL)  and $(v_{S'},i_{S"})\in \mathcal{K}.$
The KVL,KCL conditions are also called \nw{topological} constraints.
Let   $S',S"$ be disjoint copies of $S,$
let $\V_{S'}\equivd \V^v(\mathcal{G}).$ Then by Theorem \ref{thm:tellegen}, we have $(\V^{\perp}_{S'})_{S"}= \V^i(\mathcal{G}).$ Let $
\ \K_{S'S"}$ be the device characteristic of $\N.$
The set of solutions of  $\mathcal{N}$ may be written, using the extended
definition of intersection as
$$\V_{S'}\cap (\V^{\perp}_{S'})_{S"}\cap \K_{S'S"}=[\V_{S'}\oplus (\V^{\perp}_{S'})_{S"}]\cap \K_{S'S"}.$$
This has the form `[Solution set of topological constraints]$\cap$ [Device characteristic]'.\\

A \nw{multiport $\mathcal{N}_P$ on graph $\G_{SP}$ and device characteristic $\K_{S'S"}$} is an ordered pair $ (\G_{SP},\K_{S'S"}),$ i.e.,  a network with some subset $P$ of its
edges which have no device characteristic constraints, specified as \nw{ports}. 
The multiport is said to be \nw{linear} iff its device characteristic is affine.
Let $\N_P$ be on graph $\G_{SP}$ with device characteristic $\K.$
Let $\V_{S'P'}\equivd (\V^v(\G_{SP}))_{S'P'},$ so that $ (\V^{\perp}_{S'P'})_{S"P"}= (\V^i(\G_{SP}))_{S"P"},$ and let $\K_{S'S"}, $ be the affine device characteristic 
on the edge set $S.$ The device characteristic of $\mathcal{N}_P,$
when the latter is regarded as a network, 
would be $\K\equivd \K_{S'S"} \oplus \F_{P'P"}.$
However, we would refer to $\K_{S'S"}$ as the device characteristic 
of the multiport $\N_P.$
\\
The set of solutions of  $\mathcal{N}_P$ may be writen, using the extended
definition of intersection as
$$\V_{S'P'}\cap (\V^{\perp}_{S'P'})_{S"P"}\cap \K_{S'S"}=[\V_{S'P'}\oplus (\V^{\perp}_{S'P'})_{S"P"}]\cap \K_{S'S"}.$$
We say the multiport is \nw{consistent} iff its set of solutions 
is nonvoid.

The multiport $\mathcal{N}_P$ would impose a relationship 
between
$v_{P'},i_{P"}.$ We call this the  \nw{multiport behaviour} (port behaviour for short) ${\K}_{P'P"}$ at $P,$  of $\N_P,$ defined 
by 
${\K}_{P'P"}\equivd [(\V_{S'P'}\oplus (\V^{\perp}_{S'P'})_{S"P"})\cap \K_{S'S"}]\circ P'P".$
When the device characteristic of $\mathcal{N}_P$ is affine, its {multiport behaviour} ${\K}_{P'P"}$ at $P$ would be 
affine if it were not void.
\begin{remark}
The usual definition of multiport behaviour would involve a change of sign
of the current variables. In the interest of readability, we have avoided this, since it  is not relevant for this paper.
\end{remark}
Let the multiports $\N_{RP},{\N}_{\tilde{P}Q}$ be on graphs $\G_{RSP},\G_{\tilde{P}MQ}$ respectively, with the primed and double primed sets obtained from
$R,S,P,\tilde{P},M,Q,$ being pairwise disjoint,
and let them have device characteristics $\K^{S},{\K}^{M}$ respectively.
Let $\K^{P\tilde{P}}$ denote a collection of vectors $\K^{P\tilde{P}}_{P'\tilde{P}'P"\tilde{P}"}.$
The multiport $\mnw{[\N_{RP}\oplus {\N}_{\tilde{P}Q}]\cap \K^{P\tilde{P}}},$
with ports $R\uplus Q$ obtained by \nw{connecting $\N_{RP},{\N}_{\tilde{P}Q}$
through $\K^{P\tilde{P}}$},
is on graph $\G_{RSP}\oplus\G_{\tilde{P}MQ}$ with device characteristic
$\K^{S}\oplus {\K}^{M}\oplus  \K^{P\tilde{P}}.$ 

\subsection{Matched and Skewed Composition}
\label{sec:matched}
In this section we introduce an operation between collections of vectors
motivated by the connection of multiport behaviours across ports.

Let $\Ksp,\Kpq,$ be collections of vectors respectively on $S\uplus P,P\uplus Q,$ with $S,P,Q,$ being pairwise disjoint.

The \nw{matched composition} $\mnw{\mathcal{K}_{SP} \leftrightarrow \mathcal{K}_{PQ}}$ is on $S\uplus Q$ and is defined as follows:
\begin{align*}
 \mathcal{K}_{SP} \leftrightarrow \mathcal{K}_{PQ} 
  &\equivd \{
                 (f_S,g_Q): (f_S,h_P)\in \Ksp, 
(h_P,g_Q)\in \Kpq\}.
\end{align*}
Matched composition is referred to as matched sum in \cite{HNarayanan1997}.

The \nw{skewed composition} $\mnw{\mathcal{K}_{SP} \rightleftharpoons \mathcal{K}_{PQ}}$ is on $S\uplus Q$ and is defined as follows:
\begin{align*}
 \mathcal{K}_{SP} \rightleftharpoons \mathcal{K}_{PQ} 
  &\equivd \{
                 (f_S,g_Q): (f_S,h_P)\in \Ksp, 
(-h_P,g_Q)\in \Kpq\}.
\end{align*}

We note that the port behaviour of the multiport $\N_P\equivd (\Gsp,\K_{S'S"})
$ can be written as\\ $[\V_{S'P'}\oplus (\V^{\perp}_{S'P'})_{S"P"}]\lrar \K_{S'S"}.$ 

The following result is from \cite{HNarayanan1997}. We give a new proof.
\begin{theorem}
\label{cor:ranklrar}
Let $\Vsp,\Vpq,$ be vector spaces on $S\uplus P, P\uplus Q,$ respectively,
with $S,P,Q,$ being pairwise disjoint.
Then
$r(\Vsp\lrar \Vpq) = r(\Vsp\times S)+r(\Vpq\times Q)+r(\Vsp\circ P\cap \Vpq\circ P) - r(\Vsp\times P\cap \Vpq\times P) .$
\end{theorem}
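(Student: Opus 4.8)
The plan is to realise $\Vsp\lrar\Vpq$ as the image of a coordinate projection applied to a ``glued'' subspace of the external direct sum of $\Vsp$ and $\Vpq$, and then to read off its rank by two successive applications of rank--nullity, finishing with the submodular identity of Theorem~\ref{thm:sumintersection}.

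First I would form the abstract direct sum $\mathcal{U}\equivd\Vsp\oplus\Vpq$, keeping the two copies of the $P$-coordinates \emph{separate}: a typical element is a pair $((f_S,h_P),(h'_P,g_Q))$ with $(f_S,h_P)\in\Vsp$ and $(h'_P,g_Q)\in\Vpq$, so that $r(\mathcal{U})=r(\Vsp)+r(\Vpq)$. Consider the linear map $\psi:\mathcal{U}\to\F_P$ sending such a pair to $h_P-h'_P$. Its image is exactly $\Vsp\circ P+\Vpq\circ P$ (the $h_P$ range over $\Vsp\circ P$, the $h'_P$ over $\Vpq\circ P$, and both subspaces are closed under negation), and its kernel $\mathcal{W}\equivd\ker\psi$ is the subspace on which the two $P$-copies agree. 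Rank--nullity then gives $r(\mathcal{W})=r(\Vsp)+r(\Vpq)-r(\Vsp\circ P+\Vpq\circ P)$.

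Next I would apply the projection $\pi:\mathcal{W}\to\F_{S\uplus Q}$ defined by $\pi((f_S,h_P),(h_P,g_Q))=(f_S,g_Q)$. By the very definition of matched composition its image is $\Vsp\lrar\Vpq$. An element of $\mathcal{W}$ lies in $\ker\pi$ iff $f_S=0_S$ and $g_Q=0_Q$, i.e. iff $(0_S,h_P)\in\Vsp$ and $(h_P,0_Q)\in\Vpq$ for the common vector $h_P$; thus $h_P$ ranges over $\Vsp\times P\cap\Vpq\times P$ and $\ker\pi$ is isomorphic to this intersection. A second application of rank--nullity yields
\[
r(\Vsp\lrar\Vpq)=r(\Vsp)+r(\Vpq)-r(\Vsp\circ P+\Vpq\circ P)-r(\Vsp\times P\cap\Vpq\times P).
\]

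Finally I would convert this into the stated form. Applying Theorem~\ref{thm:dotcrossidentity}(2) with the roles of the two index sets interchanged gives $r(\Vsp)=r(\Vsp\times S)+r(\Vsp\circ P)$ and $r(\Vpq)=r(\Vpq\times Q)+r(\Vpq\circ P)$. Substituting these, and then using the submodular identity $r(\Vsp\circ P)+r(\Vpq\circ P)=r(\Vsp\circ P+\Vpq\circ P)+r(\Vsp\circ P\cap\Vpq\circ P)$ from Theorem~\ref{thm:sumintersection}(1) (legitimate since $\Vsp\circ P$ and $\Vpq\circ P$ are both spaces on $P$), collapses the $\circ P$ terms to $r(\Vsp\circ P\cap\Vpq\circ P)$, producing precisely the claimed formula. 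The only delicate point is the bookkeeping in the two exact sequences, namely correctly identifying $\im\psi$ with the sum of restrictions and $\ker\pi$ with the intersection of contractions; but these are routine once the separated direct sum is set up, so I do not expect a genuinely hard step.
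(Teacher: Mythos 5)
Your proof is correct: the gluing space $\mathcal{W}$, the two rank--nullity computations, and the final conversion via $r(\Vsp)=r(\Vsp\times S)+r(\Vsp\circ P)$, $r(\Vpq)=r(\Vpq\times Q)+r(\Vpq\circ P)$ and the modular identity all check out, and the identification of $\im\psi$ with $\Vsp\circ P+\Vpq\circ P$ and of $\ker\pi$ with $\Vsp\times P\cap\Vpq\times P$ is accurate. The route, however, differs from the paper's in mechanism. The paper never leaves the ambient space $\F_{S\uplus P\uplus Q}$: it starts from $\Vsp\lrar\Vpq=(\Vsp+\V_{(-P)Q})\times SQ$, applies the rank identity $r(\V)=r(\V\circ P)+r(\V\times SQ)$, and then invokes its pre-packaged lemmas on ranks of internal sums (Theorem \ref{thm:sumintersection}, parts 1, 2 and 5(a)) applied to $\Vsp$ and the sign-flipped copy $\V_{(-P)Q}$, cleaning up the sign at the very end. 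You instead build an external direct sum with the two $P$-coordinates kept separate and extract the same intermediate identity $r(\Vsp\lrar\Vpq)=r(\Vsp)+r(\Vpq)-r(\Vsp\circ P+\Vpq\circ P)-r(\Vsp\times P\cap\Vpq\times P)$ by two explicit kernel/image computations, absorbing the sign into the map $\psi$ so that $\V_{(-P)Q}$ never appears. What your approach buys is self-containedness and transparency: you need only rank--nullity plus two standard identities, not the extended sum/intersection calculus for spaces on overlapping index sets. What the paper's approach buys is that it stays entirely inside the calculus of minors, sums and intersections that the rest of the paper is built on, and for that reason it translates line by line to the matroid setting (Theorem \ref{cor:ranklrarm} has the identical proof with $\vee,\wedge$ in place of $+,\cap$); your rank--nullity argument, resting on linear maps and kernels, has no such matroid analogue.
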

\begin{proof}
Here we have used part 2 of Theorem \ref{thm:dotcrossidentity}
 and part 2 of Theorem \ref{thm:sumintersection}.
\\$r(\Vsp\lrar \Vpq) \equivd r((\Vsp+ \V_{(-P)Q}) \times SQ)=
r(\Vsp+ \V_{(-P)Q})-r((\Vsp+ \V_{(-P)Q})\circ P)$\\$=r(\Vsp)+ r(\V_{(-P)Q}) - r(\Vsp\times P\cap 
 \V_{(-P)Q}\times P)-r(\Vsp\circ P+\V_{(-P)Q}\circ P)$\\$
= r(\Vsp)+ r(\V_{(-P)Q}) - r(\Vsp\times P\cap 
 \V_{(-P)Q}\times P)- r(\Vsp\circ P)-r(\V_{(-P)Q}\circ P)+ r((\Vsp\circ P)\cap (\V_{(-P)Q}\circ P))  $
\\$  = [r(\Vsp)- r(\Vsp\circ P)]+ [r(\V_{(-P)Q})-r(\V_{(-P)Q}\circ P)] - r(\Vsp\times P\cap 
 \V_{(-P)Q}\times P)+ r((\Vsp\circ P)\cap (\V_{(-P)Q}\circ P))  $\\$
=r(\Vsp\times S)+r(\V_{(-P)Q}\times Q)+r(\Vsp\circ P\cap \V_{(-P)Q}\circ P) - r(\Vsp\times P\cap \V_{(-P)Q}\times P) .$
\\The result follows noting that 
$\V_{(-P)Q}\times Q=\V_{PQ}\times Q, \V_{(-P)Q}\circ P=\V_{PQ}\circ P, \V_{(-P)Q}\times P=\V_{PQ}\times P.$
\end{proof}
The following result is immediate from the definition of matched
and skewed composition.
\begin{theorem}
\label{thm:matchedprop}
Let $\Ksp,\Kpq$ be collections of vectors on $S\uplus P,  P\uplus Q,$ respectively.
Then,
$$\Ksp\lrar \Kpq\ \ =\ \  (\Ksp+\K_{(-P)Q})\times SQ=(\Ksp\cap \Kpq)\circ SQ;$$
$$\Ksp\rightleftharpoons \Kpq\ \ =\ \  (\Ksp\cap \K_{(-P)Q})\circ SQ=(\Ksp+ \Kpq)\times SQ.$$
\end{theorem}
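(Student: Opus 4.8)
The plan is to prove each of the four set equalities directly by unwinding the definitions and chasing an arbitrary element $(f_S,g_Q)$ on $S\uplus Q$; since all four identities assert equality of collections built from the same data, the argument in each case reduces to recognising that the underlying quantifier structure is the same, so double inclusion is automatic. Throughout, the only nonstandard ingredient is the sign-flip notation: $\K_{(-P)Q}$ denotes $\{(-h_P,g_Q):(h_P,g_Q)\in\Kpq\}$, the copy of $\Kpq$ with its $P$-component negated.

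First I would treat the two descriptions of matched composition. For $(\Ksp\cap\Kpq)\circ SQ$, I unwind the extended intersection to the triples $(f_S,h_P,g_Q)$ with $(f_S,h_P)\in\Ksp$ and $(h_P,g_Q)\in\Kpq$; restriction to $S\uplus Q$ deletes the coordinate $h_P$ but retains the assertion that a common $h_P$ exists, which is verbatim the definition of $\Ksp\lrar\Kpq$. For $(\Ksp+\K_{(-P)Q})\times SQ$, the sum produces vectors $(f_S,f_P-g_P,g_Q)$ with $(f_S,f_P)\in\Ksp$ and $(g_P,g_Q)\in\Kpq$; contraction to $S\uplus Q$ selects those with $f_P-g_P=0_P$, i.e.\ $f_P=g_P=:h_P$, and again recovers the defining condition.

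The two descriptions of skewed composition run along identical lines, with the sign now carried by the other factor. For $(\Ksp\cap\K_{(-P)Q})\circ SQ$, the membership $(h_P,g_Q)\in\K_{(-P)Q}$ unwinds to $(-h_P,g_Q)\in\Kpq$, so restriction yields exactly $\{(f_S,g_Q):(f_S,h_P)\in\Ksp,\ (-h_P,g_Q)\in\Kpq\}$, the definition of $\Ksp\rightleftharpoons\Kpq$. For $(\Ksp+\Kpq)\times SQ$, the contraction now forces $f_P+g_P=0_P$, i.e.\ $g_P=-f_P$, which places $(-f_P,g_Q)\in\Kpq$ and again matches. I expect no genuine obstacle; the single point demanding care is keeping the restriction/contraction distinction straight --- restriction keeps the shared $P$-component as a free existential witness, whereas contraction forces it to vanish, converting ``the two $P$-components agree'' into ``their signed sum is zero'' --- together with tracking the sign in the skewed case so that the negation lands on the correct factor.
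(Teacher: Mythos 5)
Your proof is correct and matches the paper's treatment: the paper offers no separate argument, stating that the result is immediate from the definitions of matched and skewed composition, which is precisely the definition-unwinding you carry out. Your element chase, including the sign bookkeeping for $\K_{(-P)Q}$ and the distinction between restriction (existential witness on $P$) and contraction (forcing the $P$-component to vanish), is exactly the intended verification.
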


\section{Implicit Inversion and Implicit Duality}
\label{sec:iitidt}
In this section we present two results which can be regarded as lying at the foundation 
of Implicit Linear Algebra. They are essentially available in \cite{HNarayanan1986a}. More general versions are available in \cite{HNarayanan1997}, \cite{narayanan2020}. Proof of the version of implicit inversion theorem given 
below is available in \cite{narayanan2020}. 

\subsection{ Implicit Inversion Theorem}
\begin{theorem}
\label{thm:IIT}
Let  $\Vsp$ be a vector space on $S\uplus P$ and let  $ \Kpq,\Ksq$
be collections of vectors  with $S,P,Q,$ being pairwise disjoint.
Consider the equation
\begin{align}
\label{eqn:IITA}
\Vsp \lrar \Kpq =\Ksq, 
\end{align}
with $\Kpq$ treated as unknown.
We have the following.
\begin{enumerate}
\item given $\Vsp, \Ksq,$  there exists $ \hat{\K}_{PQ}, $ satisfying  Equation \ref{eqn:IITA}, only if   $\Vsp\circ S\supseteq \Ksq \circ S$\\ and $\Ksq+\Vsp\times S \subseteq \Ksq.$
\item given $\Vsp, \Ksq,$ if  $\Vsp\circ S\supseteq \Ksq \circ S$ and 
$\Ksq+\Vsp\times S \subseteq \Ksq,$
then $\hat{\K}_{PQ}\equivd \Vsp \lrar \Ksq $
satisfies the equation.

\item given $\Vsp, \Ksq,$ assuming that  Equation \ref{eqn:IITA}, with $\Kpq$ treated as unknown, is satisfied by some $\hat{\K}_{PQ} ,$ it is unique if  the additional conditions $\Vsp\circ P\supseteq \Kpq \circ P$ and
$\Kpq+ \Vsp\times P\subseteq \Kpq$ are imposed.
\end{enumerate}
\end{theorem}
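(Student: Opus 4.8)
The plan is to prove all three parts by direct element-chasing from the definition of matched composition, using only that $\Vsp$ is a vector space (closed under addition and subtraction) to handle the interface bookkeeping; the collections $\Ksq,\Kpq$ are kept arbitrary, with the displayed inclusion conditions playing the role that linearity would otherwise play. I would treat parts 2 and 3 as two mirror instances of a single self-reconstruction identity for $\Vsp\lrar(\cdot)$.

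For part 1 I would verify each necessary condition separately. Assuming some $\hat{\K}_{PQ}$ satisfies $\Vsp\lrar\hat{\K}_{PQ}=\Ksq$, any $f_S\in\Ksq\circ S$ extends to $(f_S,g_Q)\in\Ksq$, which by definition of matched composition factors as $(f_S,h_P)\in\Vsp$, $(h_P,g_Q)\in\hat{\K}_{PQ}$; hence $f_S\in\Vsp\circ S$, giving $\Vsp\circ S\supseteq\Ksq\circ S$. For the second condition, take $(f_S,g_Q)\in\Ksq$ and $h_S\in\Vsp\times S$; factoring as above gives $(f_S,k_P)\in\Vsp$, and since $(h_S,0_P)\in\Vsp$ and $\Vsp$ is a vector space, $(f_S+h_S,k_P)\in\Vsp$, so $(f_S+h_S,g_Q)\in\Vsp\lrar\hat{\K}_{PQ}=\Ksq$; this is exactly $\Ksq+\Vsp\times S\subseteq\Ksq$.

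For part 2 I would set $\hat{\K}_{PQ}\equivd\Vsp\lrar\Ksq$ and prove the identity $\Vsp\lrar(\Vsp\lrar\Ksq)=\Ksq$ by double inclusion. In the inclusion $\subseteq$, an element $(f_S,g_Q)$ on the left supplies $h_P$ with $(f_S,h_P)\in\Vsp$ together with some $m_S$ satisfying $(m_S,h_P)\in\Vsp$ and $(m_S,g_Q)\in\Ksq$; then $f_S-m_S\in\Vsp\times S$ because $\Vsp$ is a vector space, and the hypothesis $\Ksq+\Vsp\times S\subseteq\Ksq$ yields $(f_S,g_Q)\in\Ksq$. The reverse inclusion $\supseteq$ uses the first hypothesis: given $(f_S,g_Q)\in\Ksq$, the containment $\Ksq\circ S\subseteq\Vsp\circ S$ provides $h_P$ with $(f_S,h_P)\in\Vsp$, whence $(h_P,g_Q)\in\Vsp\lrar\Ksq$ and therefore $(f_S,g_Q)\in\Vsp\lrar(\Vsp\lrar\Ksq)$.

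Part 3 I would obtain as the mirror image of part 2. For any solution $\Kpq$ that additionally satisfies $\Vsp\circ P\supseteq\Kpq\circ P$ and $\Kpq+\Vsp\times P\subseteq\Kpq$, the same two-inclusion argument with the roles of $S$ and $P$ interchanged proves $\Vsp\lrar(\Vsp\lrar\Kpq)=\Kpq$; substituting $\Ksq=\Vsp\lrar\Kpq$ then gives $\Kpq=\Vsp\lrar\Ksq$. Since the right-hand side depends only on the data $\Vsp,\Ksq$, every solution meeting the two $P$-side conditions coincides with it, which is the asserted uniqueness. The only real hazard—and the step I would be most careful with—is tracking which common set each matched composition is taken over ($S$ for the outer and $P$ for the inner in part 2, and the reverse in part 3) and interpreting the extended sum $\Ksq+\Vsp\times S$ correctly when one summand lives only on $S$; once that is pinned down, the vector-space property of $\Vsp$ is precisely what reconciles two preimages of a common interface vector.
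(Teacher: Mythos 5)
Your proof is correct and is essentially the standard element-chasing argument via the self-reconstruction identity $\Vsp\lrar(\Vsp\lrar\K_{SQ})=\K_{SQ}$ (and its mirror over $P$); the paper itself gives no proof here, deferring to \cite{narayanan2020}, where this same approach is used, so your write-up is if anything more self-contained than the text. One trivial slip in your closing remark: the matching sets are swapped — in part 2 the outer composition is over $P$ and the inner over $S$ — but your displayed steps unwind the compositions correctly, so nothing in the argument is affected.
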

The case where $\Kpq,\Ksq$ are vector spaces is particularly important 
for us. Here we observe that the condition $\Ksq+\Vsp\times S \subseteq \Ksq,$
reduces to $\Vsp\times S \subseteq \Vsq \times S.$

\begin{theorem}
\label{thm:IITlinear}
Consider the equation
$$\Vsp \lrar \Vpq =\Vsq, $$
where $\Vsp, \Vpq, \Vsq$ are vector spaces respectively on $S\uplus P,P\uplus Q,S\uplus Q,$ with $S,P,Q,$ being pairwise disjoint.
We then have the following.
\begin{enumerate}
\item given $\Vsp, \Vsq,$  there exists $ \Vpq, $ satisfying the equation only if  $\Vsp\circ S\supseteq \Vsq \circ S$ and $\Vsp\times S \subseteq \Vsq \times S.$
\item given $\Vsp, \Vsq,$ if  $\Vsp\circ S\supseteq \Vsq \circ S$ and $\Vsp\times S \subseteq \Vsq \times S, $ then $\hat{\V}_{PQ}\equivd \Vsp \lrar \Vsq $
satisfies the equation.
\item given $\Vsp, \Vsq,$ assuming that the equation  $\Vsp \lrar \Vpq =\Vsq $
is satisfied by some $\hat{\V}_{PQ}, $ it is unique if  the additional conditions
 $\Vsp\circ P\supseteq \Vpq \circ P$ and $\Vsp\times P\subseteq \Vpq \times P$
are imposed.
\end{enumerate}
\end{theorem}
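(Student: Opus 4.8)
The plan is to obtain this theorem as an immediate specialization of the general Implicit Inversion Theorem (Theorem \ref{thm:IIT}), taking the unknown collection $\Kpq$ and the datum $\Ksq$ to be the vector spaces $\Vpq,\Vsq$. Under this specialization all four inclusion hypotheses of Theorem \ref{thm:IIT} either transfer verbatim or simplify. The restriction conditions $\Vsp\circ S\supseteq\Vsq\circ S$ (parts 1--2) and $\Vsp\circ P\supseteq\Vpq\circ P$ (part 3) are literally unchanged. The only real work is to verify that the two absorption conditions $\Ksq+\Vsp\times S\subseteq\Ksq$ and $\Kpq+\Vsp\times P\subseteq\Kpq$ collapse to the contraction inclusions $\Vsp\times S\subseteq\Vsq\times S$ and $\Vsp\times P\subseteq\Vpq\times P$ once the collections are vector spaces, which is exactly the reduction announced in the remark following Theorem \ref{thm:IIT}.

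The key step is the following elementary fact, which I would state and prove separately: for a vector space $\V_{XY}$ on $X\uplus Y$ and a vector space $\W_X$ on $X$, one has $\V_{XY}+\W_X\subseteq\V_{XY}$ if and only if $\W_X\subseteq\V_{XY}\times X$. Both directions come from unwinding the extended definition of the sum. For necessity, since $0_{XY}\in\V_{XY}$, every $g_X\in\W_X$ gives $(g_X,0_Y)=0_{XY}+g_X\in\V_{XY}+\W_X\subseteq\V_{XY}$, so that $g_X\in\V_{XY}\times X$. For sufficiency, if $g_X\in\W_X\subseteq\V_{XY}\times X$ then $(g_X,0_Y)\in\V_{XY}$, and for any $(f_X,f_Y)\in\V_{XY}$ closure under addition yields $(f_X+g_X,f_Y)\in\V_{XY}$, whence $\V_{XY}+\W_X\subseteq\V_{XY}$.

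With this lemma the three parts drop out. For parts 1 and 2 I apply it with $(X,Y)\equivd(S,Q)$, ambient space $\Vsq$, and $\W_X\equivd\Vsp\times S$, converting the absorption hypothesis of Theorem \ref{thm:IIT} into $\Vsp\times S\subseteq\Vsq\times S$; part 1 is then its necessity clause and part 2 its sufficiency clause. Part 2 additionally asserts that the proposed solution $\hat{\V}_{PQ}\equivd\Vsp\lrar\Vsq$ is a vector space; this holds because, by Theorem \ref{thm:matchedprop}, a matched composition is a contraction of a sum of two (possibly sign-changed) vector spaces, and sum, sign change and contraction each preserve the vector-space property. For part 3 I apply the lemma with $(X,Y)\equivd(P,Q)$, ambient space $\Vpq$, and $\W_X\equivd\Vsp\times P$, turning the uniqueness hypothesis $\Vpq+\Vsp\times P\subseteq\Vpq$ of Theorem \ref{thm:IIT} into $\Vsp\times P\subseteq\Vpq\times P$, while $\Vsp\circ P\supseteq\Vpq\circ P$ is inherited directly.

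I expect no genuine obstacle here: the substantive content is carried entirely by Theorem \ref{thm:IIT}, which may be assumed, and the present statement is a transparent reformulation for the vector-space case. The only care needed is bookkeeping with the extended definitions of sum and contraction over the pairwise disjoint sets $S,P,Q$, and applying the reduction lemma with the correct ambient space in each of the three parts.
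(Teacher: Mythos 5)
Your proposal is correct and takes essentially the same route as the paper: the paper also obtains Theorem \ref{thm:IITlinear} as a direct specialization of Theorem \ref{thm:IIT}, observing (in the remark immediately following Theorem \ref{thm:IIT}) that the absorption condition $\Ksq+\Vsp\times S \subseteq \Ksq$ reduces to $\Vsp\times S \subseteq \Vsq \times S$ when the collections are vector spaces. Your explicit reduction lemma merely supplies the one-line verification of this reduction that the paper leaves to the reader.
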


The following is  a useful affine space version of Theorem \ref{thm:IIT}
\cite{narayanan2020}.
\begin{theorem}
\label{thm:IIT2}
Let $\Asp,\Apq$ be affine spaces on $S\uplus P,P\uplus Q,$ where $S,P,Q$ 
are pairwise disjoint sets. Let $\Vsp,\Vpq$ respectively, be the vector 
space translates of $\Asp,\Apq.$ Let $\Asp\lrar \Apq$ be \nw{nonvoid} and let\\
$\alpha_{SQ}\in \Asp\lrar \Apq.$ Then,\\
1. $\Asp\lrar \Apq = \alpha_{SQ}+(\Vsp\lrar \Vpq).$\\ 
2. $\Apq = \Asp\lrar (\Asp\lrar \Apq)$ iff
$\Vsp\circ P\supseteq \Vpq \circ P$ and $\Vsp\times P\subseteq \Vpq \times P.$
\end{theorem}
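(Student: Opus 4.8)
The plan is to settle Part 1 by a direct double-inclusion, and then apply Part 1 \emph{twice} to collapse Part 2 into a purely vector-space identity, which I would prove by hand. For Part 1 I would fix base points, writing $\Asp = \alpha_{SP} + \Vsp$ and $\Apq = \beta_{PQ} + \Vpq$, and let $\alpha_{SQ} = (\alpha_S,\alpha_Q)$ be the given element of $\Asp \lrar \Apq$, so that by definition there is a witness $k_P$ with $(\alpha_S,k_P)\in\Asp$ and $(k_P,\alpha_Q)\in\Apq$. For $\subseteq$ I would take $(f_S,g_Q)\in\Asp\lrar\Apq$ with its witness $h_P$ and subtract the witnessed memberships, obtaining $(f_S-\alpha_S,h_P-k_P)\in\Vsp$ and $(h_P-k_P,g_Q-\alpha_Q)\in\Vpq$, whence $(f_S-\alpha_S,g_Q-\alpha_Q)\in\Vsp\lrar\Vpq$. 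The reverse inclusion is the same computation run backwards: given $(u_S,w_Q)\in\Vsp\lrar\Vpq$ with witness $v_P$, the vectors $(\alpha_S+u_S,k_P+v_P)$ and $(k_P+v_P,\alpha_Q+w_Q)$ lie in $\Asp$ and $\Apq$ and share their $P$-component, exhibiting $\alpha_{SQ}+(u_S,w_Q)$ in $\Asp\lrar\Apq$. This step is routine.

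For Part 2 I would set $\Asq\equivd\Asp\lrar\Apq$, so by Part 1 its translate is $\Vsq\equivd\Vsp\lrar\Vpq$. Since the Part 1 argument is purely formal in the names of the index sets, it applies verbatim to the composition over $S$: whenever $\Asp\lrar\Asq$ is nonvoid it is affine with translate $\Vsp\lrar\Vsq$. Using the witness $k_P$ above, I would note that $(k_P,\alpha_Q)$ lies in $\Apq$ and also, taking $f_S=\alpha_S$, in $\Asp\lrar\Asq$. Thus $\Apq$ and $\Asp\lrar\Asq$ are two affine spaces through a common point, so they coincide iff their translates coincide. This reduces Part 2 to the vector-space identity
$$\Vpq = \Vsp \lrar (\Vsp \lrar \Vpq) \iff \Vsp \circ P \supseteq \Vpq \circ P \ \text{ and }\ \Vsp \times P \subseteq \Vpq \times P.$$

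To prove the identity I would first record the explicit description obtained by unwinding both compositions: $(h_P,g_Q)\in\Vsp\lrar\Vsq$ exactly when there is $k_P\in\Vsp\circ P$ with $(k_P,g_Q)\in\Vpq$ and $h_P-k_P\in\Vsp\times P$. From this the two inclusions separate cleanly. The inclusion $\Vpq\subseteq\Vsp\lrar\Vsq$ (take $h_P=k_P$) holds iff every $k_P$ occurring in $\Vpq$ already lies in $\Vsp\circ P$, i.e. iff $\Vpq\circ P\subseteq\Vsp\circ P$. The inclusion $\Vsp\lrar\Vsq\subseteq\Vpq$ holds iff the slack $h_P-k_P\in\Vsp\times P$ can always be absorbed into $\Vpq$, i.e. iff $\Vsp\times P\subseteq\Vpq\times P$. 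For necessity I would exhibit explicit witnesses: any $(k_P,g_Q)\in\Vpq$ with $k_P\notin\Vsp\circ P$ escapes $\Vsp\lrar\Vsq$, and any $d_P\in\Vsp\times P\setminus\Vpq\times P$ yields $(d_P,0_Q)\in\Vsp\lrar\Vsq\setminus\Vpq$ (with witness $k_P=0$). Combining the two inclusions gives the identity, and with the reduction above, Part 2.

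I expect the vector-space identity, and specifically its necessity direction, to be the main obstacle: there one must produce the witnessing vectors showing that dropping either hypothesis genuinely breaks the equality, rather than merely verifying the sufficient directions. Everything else is formal. As a cross-check, the identity can also be extracted from the uniqueness clause of the Implicit Inversion Theorem, Theorem \ref{thm:IITlinear}(3), applied to $\Vsp\lrar\Vpq=\Vsq$, but I find the direct description of $\Vsp\lrar\Vsq$ above more transparent and self-contained.
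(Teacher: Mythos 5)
Your proof is correct. Part 1 is the standard base-point/double-inclusion argument, and your Part 2 is sound: the reduction of $\Apq = \Asp\lrar(\Asp\lrar\Apq)$ to the vector-space identity $\Vpq = \Vsp\lrar(\Vsp\lrar\Vpq)$ via the common point $(k_P,\alpha_Q)$ is legitimate, and your elementwise description of $\Vsp\lrar\Vsq$ (there exists $k_P\in\Vsp\circ P$ with $(k_P,g_Q)\in\Vpq$ and $h_P-k_P\in\Vsp\times P$) does split the equality cleanly into two independent equivalences, each controlled by exactly one of the stated conditions; I checked both "necessity" witnesses and they work. Where you differ from the paper: the paper gives no proof here at all — it presents Theorem \ref{thm:IIT2} as the affine-space specialization of the Implicit Inversion Theorem, with the proof deferred to the cited companion paper, so its intended route is to invoke Theorem \ref{thm:IITlinear} (existence via part 2, uniqueness via part 3, after noting that $\Vsp\lrar\Vsq$ itself satisfies the side conditions) rather than to unwind the double composition by hand. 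Your route is more elementary and self-contained, and it makes visible which hypothesis governs which inclusion ($\Vpq\circ P\subseteq\Vsp\circ P$ gives $\Vpq\subseteq\Vsp\lrar\Vsq$; $\Vsp\times P\subseteq\Vpq\times P$ gives the reverse), which the IIT route obscures; the paper's route buys economy, since the affine statement then rides on machinery already established and reused throughout the paper. Your closing remark that the identity can be extracted from Theorem \ref{thm:IITlinear}(3) is essentially a description of the paper's intended proof, so you have in effect supplied both arguments.
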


\subsection{Implicit Duality Theorem }
Implicit Duality Theorem is a part of network theory folklore.
It also has versions in many different areas of mathematics, such as linear 
inequalities and  linear equations with integral solutions \cite{HNarayanan1997}, linear and partial  differential equations \cite{HN2000,HN2000a,HNPS2013}, matroids and submodular functions \cite{STHN2014}.
A version in the context of Pontryagin Duality is available in \cite{forney2004}.
However, its applications are insufficiently emphasized in the literature.

\begin{theorem}
\label{thm:idt0}
{\bf Implicit Duality Theorem}
Let $\Vsp, \Vpq$ be vector spaces respectively on $S\uplus P,P\uplus Q,$ with $S,P,Q,$ being pairwise disjoint.%
 We then have,
$(\mathcal{V}_{SP}\leftrightarrow \mathcal{V}_{PQ})^\perp 
\ \equaln\ \mathcal{V}_{SP}^\perp \rightleftharpoons \mathcal{V}_{PQ}^\perp 
.$ In particular,
$(\mathcal{V}_{SP}\leftrightarrow \mathcal{V}_{P})^\perp \ \equaln\ \mathcal{V}_{SP}^\perp \leftrightarrow \mathcal{V}_{P}^\perp
.$
\end{theorem}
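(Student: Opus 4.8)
The plan is to reduce the matched composition to the extended sum and contraction operations, for which the behaviour under $\perp$ is already recorded, and then to read off the skewed composition on the dual side. Concretely, I would start from the identity $\Vsp\lrar\Vpq=(\Vsp+\mathcal{V}_{(-P)Q})\times(S\uplus Q)$ of Theorem~\ref{thm:matchedprop}, so that the left-hand side is displayed as a contraction, to $S\uplus Q$, of a sum of two spaces padded up to $S\uplus P\uplus Q$. The whole argument then amounts to pushing $\perp$ through one contraction and one sum, each governed by an identity stated earlier, and recognising what comes out.

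For the dualization I would chain the following equalities, taking every inner $\perp$ over $S\uplus P\uplus Q$ and the outer $\perp$ over $S\uplus Q$:
\begin{align*}
(\Vsp\lrar\Vpq)^{\perp}
&=\big((\Vsp+\mathcal{V}_{(-P)Q})\times(S\uplus Q)\big)^{\perp}\\
&=(\Vsp+\mathcal{V}_{(-P)Q})^{\perp}\circ(S\uplus Q)\\
&=\big(\Vsp^{\perp}\cap\mathcal{V}_{(-P)Q}^{\perp}\big)\circ(S\uplus Q)\\
&=\big(\Vsp^{\perp}\cap(\Vpq^{\perp})_{(-P)Q}\big)\circ(S\uplus Q)\\
&=\Vsp^{\perp}\rightleftharpoons\Vpq^{\perp}.
\end{align*}
The second equality is the contraction-to-restriction duality of part~3 of Theorem~\ref{thm:dotcrossidentity}, applied with the roles of the two blocks taken to be $P$ (contracted out) and $S\uplus Q$ (retained); the third is the sum-to-intersection duality of part~3 of Theorem~\ref{thm:sumintersection}; the fourth is the elementary observation that negating the $P$-coordinates commutes with $\perp$, i.e.\ $\mathcal{V}_{(-P)Q}^{\perp}=(\Vpq^{\perp})_{(-P)Q}$, which follows directly from bilinearity of the dot product; and the last equality is Theorem~\ref{thm:matchedprop} again, now read for the skewed composition $\Vsp^{\perp}\rightleftharpoons\Vpq^{\perp}=(\Vsp^{\perp}\cap(\Vpq^{\perp})_{(-P)Q})\circ(S\uplus Q)$.

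The particular case is then immediate: setting $Q=\emptyset$ turns $\Vpq$ into a vector space $\Vp$ on $P$ alone, and since a vector space is closed under multiplication by $-1$ we have $(\Vp^{\perp})_{(-P)}=\Vp^{\perp}$, so the skewed and matched compositions coincide and $(\Vsp\lrar\Vp)^{\perp}=\Vsp^{\perp}\lrar\Vp^{\perp}$. The part I expect to demand the most care is the bookkeeping of ground sets when invoking the two duality identities: parts~3 of Theorems~\ref{thm:dotcrossidentity} and~\ref{thm:sumintersection} are stated for specific label patterns, so I must justify that padding $\Vsp$ and $\mathcal{V}_{(-P)Q}$ up to $S\uplus P\uplus Q$ by $\oplus\,0$ and then dualizing yields exactly $\Vsp^{\perp}\oplus\mathcal{F}_Q$ and $\mathcal{F}_S\oplus\mathcal{V}_{(-P)Q}^{\perp}$, whose extended intersection is $\Vsp^{\perp}\cap\mathcal{V}_{(-P)Q}^{\perp}$. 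Once the zero/full-space identity $(0_X\oplus\mathcal{K}_Y)^{\perp}=\mathcal{F}_X\oplus\mathcal{K}_Y^{\perp}$ is in hand, every remaining step is routine.
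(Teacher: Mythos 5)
Your proof is correct, and it is the mirror image of the paper's own argument. Both proofs have the same skeleton: write $\Vsp\lrar\Vpq$ via Theorem \ref{thm:matchedprop} as a minor of an extended sum or intersection, dualize term by term using Theorems \ref{thm:dotcrossidentity} and \ref{thm:sumintersection}, and recognize the result, again via Theorem \ref{thm:matchedprop}, as $\Vsp^{\perp}\rightleftharpoons\Vpq^{\perp}$. The difference is which of the two expressions in Theorem \ref{thm:matchedprop} you start from. The paper uses $\Vsp\lrar\Vpq=(\Vsp\cap\Vpq)\circ SQ$, whose dual is $(\Vsp^{\perp}+\Vpq^{\perp})\times SQ$ (parts 4 of the two cited theorems), and this is literally the displayed sum/contraction form of the skewed composition; no sign reversal ever appears and the proof is two lines. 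You instead use the sum/contraction form $(\Vsp+\V_{(-P)Q})\times SQ$, which dualizes (parts 3 of the two cited theorems) to $(\Vsp^{\perp}\cap\V_{(-P)Q}^{\perp})\circ SQ$, and to identify this with the intersection/restriction form $(\Vsp^{\perp}\cap(\Vpq^{\perp})_{(-P)Q})\circ SQ$ of the skewed composition you need the auxiliary fact $\V_{(-P)Q}^{\perp}=(\Vpq^{\perp})_{(-P)Q}$; your justification (the dot product is unchanged when the $P$-coordinates of both vectors are negated) is adequate, as is your handling of the padding identity $(\0_X\oplus\K_Y)^{\perp}=\F_X\oplus\K_Y^{\perp}$, which the paper records in Section \ref{sec:Preliminaries}. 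So both routes rest on exactly the same two duality identities; the paper's choice of decomposition simply eliminates the sign bookkeeping that your choice makes explicit. Your treatment of the special case $Q=\emptyset$, using $\V_X=\V_{(-X)}$ for vector spaces, coincides with the paper's.
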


\begin{proof}
From Theorem \ref{thm:sumintersection}, we have that $(\V_X+\V_Y)^{\perp}=(\V^{\perp}_X\cap \V_Y^{\perp})$
and, using Theorem \ref{thm:perperp}, that $(\V_X\cap\V_Y)^{\perp}=(\V^{\perp}_X+\V_Y^{\perp}).$
We have, using Theorems \ref{thm:dotcrossidentity},$\ $\ref{thm:matchedprop},\\
$(\Vsp\lrar \Vpq)^{\perp}= [(\Vsp\cap \Vpq)\circ SQ]^{\perp}=
[(\Vsp^{\perp}+ \Vpq^{\perp})\times SQ]= (\Vsp^{\perp}\rightleftharpoons\Vpq^{\perp}).$
\\For any vector space $\V_X,$ we have $\V_X= \V_{(-X)}.$
Therefore $(\mathcal{V}_{SP}\leftrightarrow \mathcal{V}_{P})^{\perp} = (\mathcal{V}_{SP}^{\perp} \rightleftharpoons \mathcal{V}_{P}^{\perp})
= \mathcal{V}_{SP}^{\perp} \lrar \mathcal{V}_{P}^{\perp}
.$

\end{proof}
Theorem \ref{thm:idt0} is useful to derive results of the kind 
`if the device characteristic
of a multiport has a certain property, so does the port behaviour'.
To illustrate, consider the case of the device characteristic $\V_{S'S"}$ being  \nw{Dirac}, i.e.,
 satisfying the condition $(\V_{S'S"})_{S"S'}= \V_{S'S"}^{\perp}.$ 
Note that $[(\V^v(\Gsp))_{S'P'}\oplus (\V^i(\Gsp))_{S"P"}]^{\perp}
= [(\V^i(\Gsp))_{S'P'}\oplus (\V^v(\Gsp))_{S"P"}]= [(\V^v(\Gsp))_{S'P'}\oplus (\V^i(\Gsp))_{S"P"}]_{S"P"S'P'}$ (using Theorem \ref{thm:tellegen}),
so that $(\V^v(\Gsp))_{S'P'}\oplus (\V^i(\Gsp))_{S"P"}$ is Dirac.

We have the following well known result.
\begin{corollary}
\label{cor:reciprocalDirac}
Let $\V_{S'P'S"P"},\V_{S'S"},$ be Dirac. Then so is
$\V_{S'P'S"P"}\lrar \V_{S'S"}. $
\end{corollary}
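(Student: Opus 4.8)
The plan is to show that $\mathcal{W}\equivd \V_{S'P'S"P"}\lrar \V_{S'S"}$ (a space on $P'\uplus P"$) is Dirac, i.e.\ that $(\mathcal{W})_{P"P'}=\mathcal{W}^{\perp}$, by combining the Implicit Duality Theorem with the two Dirac hypotheses. The key structural remark is that $\V_{S'S"}$ is supported entirely on the set $S'S"$ over which the matched composition is performed, so this is exactly the ``in particular'' case of Theorem~\ref{thm:idt0}. First I would record
$$\mathcal{W}^{\perp}=(\V_{S'P'S"P"}\lrar \V_{S'S"})^{\perp}=\V_{S'P'S"P"}^{\perp}\lrar \V_{S'S"}^{\perp},$$
where the second equality is the special form of implicit duality in which the skewed composition collapses to a matched composition, because $\V_X=\V_{(-X)}$ for every vector space.

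Next I would substitute the two hypotheses. As $\V_{S'P'S"P"}$ and $\V_{S'S"}$ are Dirac, their orthogonal complements are the coordinate-swapped spaces $\V_{S'P'S"P"}^{\perp}=(\V_{S'P'S"P"})_{S"P"S'P'}$ and $\V_{S'S"}^{\perp}=(\V_{S'S"})_{S"S'}$. Inserting these into the display reduces the entire claim to the single formal identity
$$(\V_{S'P'S"P"})_{S"P"S'P'}\lrar (\V_{S'S"})_{S"S'}=(\V_{S'P'S"P"}\lrar \V_{S'S"})_{P"P'},$$
which asserts that the coordinate swap interchanging primed and double-primed copies commutes with matched composition.

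I expect this identity to be the only genuine content, and the main obstacle is the bookkeeping of which copy is identified with which. I would verify it by a direct element chase. Writing $\mathcal{A}\equivd(\V_{S'P'S"P"})_{S"P"S'P'}$ and $\mathcal{B}\equivd(\V_{S'S"})_{S"S'}$, a pair lies in $\mathcal{A}\lrar\mathcal{B}$ iff it is obtained by taking some $d\in\V_{S'P'S"P"}$ whose $S'S"$-part lies in $\V_{S'S"}$ and reading off its $P'P"$-part after the swap; the right-hand side is characterised in exactly the same way. The two sides coincide because the swap acts consistently on the shared block $S'S"$ in both factors, so matching the swapped $S'$- and $S"$-entries of $\mathcal{A}$ against those of $\mathcal{B}$ is equivalent to matching the original $S'$- and $S"$-entries of $d$ against a vector of $\V_{S'S"}$. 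Combining this identity with the earlier display yields $\mathcal{W}^{\perp}=(\mathcal{W})_{P"P'}$, which is precisely the statement that $\mathcal{W}$ is Dirac.

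Finally, I note that if one prefers to keep the general skewed form of Theorem~\ref{thm:idt0}, the same element chase goes through with $\rightleftharpoons$ in place of $\lrar$: the sign it places on the shared $S'S"$ variables is harmless because $\V_{S'S"}$ is closed under negation, so $(-m_{S'},-m_{S"})\in\V_{S'S"}$ iff $(m_{S'},m_{S"})\in\V_{S'S"}$. This is the mechanism by which the skewed composition of the swapped Dirac space behaves here like a matched composition.
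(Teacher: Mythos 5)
Your proposal is correct and follows essentially the same route as the paper's own proof: apply the matched-composition (``in particular'') case of the Implicit Duality Theorem, substitute the two Dirac hypotheses, and use the fact that the prime/double-prime swap commutes with $\lrar$. The only cosmetic difference is that you verify the swap-commutation identity by an explicit element chase and compare $\mathcal{W}^{\perp}$ with $(\mathcal{W})_{P"P'}$, whereas the paper asserts that identity within a single chain of equalities ending at $(\mathcal{W}^{\perp})_{P"P'}=\mathcal{W}$, which is the same statement.
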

\begin{proof}
Let $\V_{P'P"}\equivd (\V_{S'P'S"P"}\lrar \V_{S'S"}).$
We have  $ (\V_{P'P"}^{\perp})_{P"P'}= ((\V_{S'P'S"P"}\lrar \V_{S'S"})^{\perp})_{P"P'}=(\V_{S'P'S"P"}^{\perp}\lrar \V_{S'S"}^{\perp})_{P"P'}=  (\V_{S'P'S"P"}^{\perp})_{S"P"S'P'}\lrar (\V_{S'S"}^{\perp})_{S"S'}
={\V}_{S'P'S"P"}\lrar {\V}_{S'S"}={\V}_{P'P"} .$
\end{proof}

\section{Rigid  Multiports}
\label{sec:rigid}
%
%
%
\subsection{Introduction}
\label{subsec:regular}
We have seen before that a linear multiport is basically a pair $\{\V_{AB},\A_B\}, $ where $\V_{AB}$ is the solution space of the topological constraints and $\A_B$ is the device characteristic. 
\\We are particularly interested in multiports which are well behaved 
in the following sense: \\
(a) they are consistent for arbitrary values of sources, 
and
(b) for a given port condition (a voltage-current vector in the  
multiport behaviour) there is a unique solution for the  
multiport. Practically speaking, it is only such multiports that can be handled by freely
available circuit simulators after some preprocessing.
We call such multiports rigid. In this section we study their connections 
which result again in rigid multiports.
Testing for rigidity
is essentially linear algebraic,
 although of course, because rigid multiports are well behaved, this labour can be outsourced to circuit simulators. However, in some important special cases the computation can be made 
entirely matroidal, the matroids being very simple.
Fortunately, rigidity of matroid pairs can be studied in a manner 
entirely parallel to the study of rigid affine pairs. This we do in Subsection \ref{subsec:rigidpairsm}
 and the two studies are brought together
in Subsection \ref{subsec:connect} and \ref{sec:matroidalrigidity},
 to yield simple sufficient conditions for 
multiports involving controlled sources to be rigid and their port behaviour to have a hybrid representation.

%
%
\begin{definition}
\label{def:regular}
Let multiport $\N_P\equivd (\G_{SP}, \A_{S'S"}), $ 
where 
 $\A_{S'S"}
=\alpha_{S'S"}+\V_{S'S"}.$\\
The multiport $\N_P$ is said to be \nw{rigid
} iff every  multiport $\hat{\N}_P\equivd (\G_{SP}, \hat{\A}_{S'S"}), $
 where $\hat{\A}_{S'S"}=\hat{\alpha}_{S'S"}+\V_{S'S"},$
has a non void set of
solutions  
and has a unique solution corresponding to every vector in its multiport behaviour. 
A
 multiport $\N_P\equivd (\G_{SP}, \A_{S'S"})$ is  said to be \nw{proper} iff 
it is rigid and $\A_{S'S"}$ is proper.
\\Let network  $\N\equivd (\G_{S}, \A_{S'S"}), $ 
where 
 $\A_{S'S"}
=\alpha_{S'S"}+\V_{S'S"}.$\\
We say $\N$ is \nw{rigid},  equivalently \nw{proper}, iff every network $\hat{\N}\equivd (\G_{S}, \hat{\A}_{S'S"}), $
 where $\hat{\A}_{S'S"}=\hat{\alpha}_{S'S"}+\V_{S'S"},$
has a unique solution.
\end{definition}
\begin{example}
A multiport  $\N_P$ is rigid  in the following cases.
There are no topological conditions on the port edges $P.$
They may contain loops and cutsets.
\begin{enumerate}
\item $\N_P$ has  only positive resistors, voltage and current 
sources and the voltage sources do not form loops and current sources 
do not form cutsets. In this case the dimension of the port behaviour 
$\A_{P'P"}$ is $|P|.$
\item  $\N_P$ has  only  resistors and norators which form neither loops nor cutsets. In this case the dimension of the port behaviour
$\V_{P'P"}$ can be as large as $2|P|.$
\item $\N_P$ has  only resistors and nullators.
In this case the dimension of the port behaviour
$\V_{P'P"}$ can be as small as $0.$
\item $\N_P$ has  only nonzero resistors, independent and controlled sources,
 and the values of resistors and the parameters of the controlled 
sources are algebraically independent over $\Q.$ The independent 
and controlled sources satisfy topological conditions
of Theorem \ref{thm:purslowgraphnew}.
In this case the dimension of the port behaviour
$\A_{P'P"}$ is $|P|.$
\end{enumerate}
$\N_P$ is not rigid  in the following cases.
\begin{enumerate}
\item $\N_P$ has voltage sources that form loops or current sources 
that form cutsets. In this case the multiport will have no solution
 for some choice of source values.
\item $\N_P$ has norators which contain loops or cutsets.
In this case the port conditions cannot determine the internal conditions uniquely.
\end{enumerate}
It can be shown that any given affine space ${\A}_{P'P"},$
can be realized as the multiport behaviour of a rigid
 multiport.
\end{example}

\subsection{Rigid pairs of affine spaces}
\label{subsec:rigidpairs}
\begin{definition}
\label{def:affinerigid}
Let $\A_{AB},\A_{BC}$ be affine spaces on sets $A\uplus B, B\uplus C,$ respectively, A,B,C being pairwise disjoint.
Further, let $\A_{AB},\A_{BC}$ have vector  space translates $\V_{AB},\V_{BC},$
respectively.
\\
We say the pair $\{\A_{AB},\A_{BC}\}$ has the \nw{full sum property} iff
$\V_{AB}\circ B+\V_{BC}\circ B=\F_B.$\\
We say the pair $\{\A_{AB},\A_{BC}\}$ has the \nw{zero intersection property} iff
$\V_{AB}\times B\cap \V_{BC}\times B=\0_B.$\\
We say $\A_{AB}$ is \nw{rigid with respect to} $\A_{BC},$ or that the pair $\{\A_{AB},\A_{BC}\}$ is \nw{rigid,
} iff  it has the full sum property and the zero intersection property.
\\When $B=\emptyset,$ we take $\{\A_{A},\A_{C}\}$ to be  {rigid
}. 
\end{definition}

When affine spaces are direct sums of other spaces, it is clear 
from the definition that rigidity involving the former can be inferred from 
that involving the latter.
\begin{lemma}
\label{lem:directsum}
The pair $\{\A_{AB}\oplus\A_{CD},\A^1_B\oplus\A^2_C\}$ is rigid
iff the pairs $\{\A_{AB},\A^1_B\},$
$\{\A_{CD},\A^2_C\}$ are rigid.
\end{lemma}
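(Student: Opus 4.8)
The plan is to unwind the definition of rigidity (Definition \ref{def:affinerigid}) for the large pair and show that each of its two defining conditions splits blockwise into the corresponding conditions for the two small pairs. First I would fix notation: let $\V_{AB},\V_{CD}$ be the vector space translates of $\A_{AB},\A_{CD}$ and let $\V^1_B,\V^2_C$ be those of $\A^1_B,\A^2_C$. Since the translate of a direct sum of affine spaces is immediately seen to be the direct sum of the translates, $\A_{AB}\oplus\A_{CD}$ has translate $\V_{AB}\oplus\V_{CD}$ and $\A^1_B\oplus\A^2_C$ has translate $\V^1_B\oplus\V^2_C$. The shared column set of the large pair is $B\uplus C$, so by Definition \ref{def:affinerigid} rigidity of $\{\A_{AB}\oplus\A_{CD},\A^1_B\oplus\A^2_C\}$ means the full sum property $(\V_{AB}\oplus\V_{CD})\circ(B\uplus C)+(\V^1_B\oplus\V^2_C)\circ(B\uplus C)=\F_{BC}$ together with the zero intersection property $(\V_{AB}\oplus\V_{CD})\times(B\uplus C)\cap(\V^1_B\oplus\V^2_C)\times(B\uplus C)=\0_{BC}$.

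The key computational step is to push restriction and contraction through the direct sum. Working directly from the definitions of $\circ$, $\times$ and $\oplus$, a vector $(f_B,f_C)$ lies in $(\V_{AB}\oplus\V_{CD})\circ(B\uplus C)$ iff there are witnesses $f_A,f_D$ with $(f_A,f_B)\in\V_{AB}$ and $(f_C,f_D)\in\V_{CD}$, i.e. iff $f_B\in\V_{AB}\circ B$ and $f_C\in\V_{CD}\circ C$; hence this restriction equals $(\V_{AB}\circ B)\oplus(\V_{CD}\circ C)$. The analogous computation with zero witnesses gives $(\V_{AB}\oplus\V_{CD})\times(B\uplus C)=(\V_{AB}\times B)\oplus(\V_{CD}\times C)$. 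Since $\A^1_B\oplus\A^2_C$ already lives on $B\uplus C$, its restriction and contraction to $B\uplus C$ are both just $\V^1_B\oplus\V^2_C$.

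I would then record that sum and intersection of spaces that decompose over the \emph{disjoint} blocks $B$ and $C$ act blockwise: $(\mathcal{U}_B\oplus\mathcal{U}_C)+(\mathcal{W}_B\oplus\mathcal{W}_C)=(\mathcal{U}_B+\mathcal{W}_B)\oplus(\mathcal{U}_C+\mathcal{W}_C)$ and $(\mathcal{U}_B\oplus\mathcal{U}_C)\cap(\mathcal{W}_B\oplus\mathcal{W}_C)=(\mathcal{U}_B\cap\mathcal{W}_B)\oplus(\mathcal{U}_C\cap\mathcal{W}_C)$ for any spaces $\mathcal{U}_B,\mathcal{W}_B$ on $B$ and $\mathcal{U}_C,\mathcal{W}_C$ on $C$. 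Applying this to the expressions from the previous paragraph, the full sum condition for the large pair becomes $(\V_{AB}\circ B+\V^1_B)\oplus(\V_{CD}\circ C+\V^2_C)=\F_B\oplus\F_C$, and the zero intersection condition becomes $(\V_{AB}\times B\cap\V^1_B)\oplus(\V_{CD}\times C\cap\V^2_C)=\0_B\oplus\0_C$.

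Finally I would invoke the elementary fact that a direct sum over $B\uplus C$ equals $\F_B\oplus\F_C$ iff each block equals the full space, and equals $\0_B\oplus\0_C$ iff each block is zero (projecting onto $B$ and onto $C$ gives the forward direction, and the reverse is trivial). This makes the large full sum condition equivalent to the conjunction $\V_{AB}\circ B+\V^1_B=\F_B$ and $\V_{CD}\circ C+\V^2_C=\F_C$, namely the full sum properties of $\{\A_{AB},\A^1_B\}$ and $\{\A_{CD},\A^2_C\}$, and similarly the large zero intersection condition becomes the conjunction of the two small zero intersection properties. Combining, the large pair is rigid iff both small pairs are rigid, which is the claim. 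The argument is essentially bookkeeping; the one point I expect to require genuine care, rather than treat as obvious, is the blockwise behaviour of sum and intersection, which uses crucially that $B$ and $C$ are disjoint so that the two summands share no coordinate, and I would verify that step explicitly.
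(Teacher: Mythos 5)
Your proof is correct and follows exactly the route the paper intends: the paper states this lemma without proof, remarking only that it is ``clear from the definition,'' and your blockwise unwinding of the full sum and zero intersection properties (restriction/contraction distributing over the direct sum, then sum and intersection acting blockwise on the disjoint sets $B$ and $C$) is precisely the bookkeeping being alluded to. The one step you flag as needing care -- blockwise behaviour of sum and intersection, which uses disjointness of $B$ and $C$ -- is indeed the crux, and you verify it correctly.
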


The next result states
the basic facts about rigid
 pairs.
\begin{theorem}
\label{thm:regularrecursive}
Let $\{\A_{AB},\A_{BC}\}$ be a rigid
 pair and let $\V_{AB},\V_{BC}$ be the vector space translates of $\A_{AB},\A_{BC},$ respectively. Then
\begin{enumerate}
\item The full sum property  of $\{\A_{AB},\A_{BC}\}$ is equivalent to 
 $\hat{\A}_{AB}\lrar \hat{\A}_{BC}$ being nonvoid, whenever 
$\V_{AB},\V_{BC}$ are the vector space translates of $\hat{\A}_{AB}, \hat{\A}_{BC},$ respectively.
Further, 
 when the full sum property holds for $\{\A_{AB},\A_{BC}\},$  $\hat{\A}_{AB}\lrar \hat{\A}_{BC}$  has  vector space translate $\V_{AB}\lrar \V_{BC}.$
\item The zero intersection  property of $\{\A_{AB},\A_{BC}\}$ is equivalent to 
the statement that , \\if $(f_A, f_C)\in \A_{AB}\lrar \A_{BC}$ and $(f_A,f_B,f_C), (f_A,f'_B,f_C)\in \A_{AB}\cap \A_{BC},$
then $f_B=f'_B.$
\item  The pair  $\{\A_{AB},\A_{BC}\}$ is rigid
 \\iff
$r(\V_{AB}+\V_{BC})= r(\V_{AB})+r(\V_{BC})$ and $  r(\V_{AB}^{\perp}+\V_{BC}^{\perp})= r(\V_{AB}^{\perp})+r(\V_{BC}^{\perp}).$
\item The pair $\{\A_{AB},\A_{BC}\}$ has the zero intersection (full sum) property iff $\{\V^{\perp}_{AB},\V^{\perp}_{BC}\}$ has the full sum (zero intersection) property.
Therefore
 $\{\A_{AB},\A_{BC}\}$ is {rigid
} iff
$\{\V^{\perp}_{AB},\V^{\perp}_{BC}\}$ is rigid.
\end{enumerate}
\end{theorem}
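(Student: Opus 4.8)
The plan is to treat the four parts largely independently, drawing on the rank and orthogonality identities already assembled. Throughout I write $\A_{AB}=a_{AB}+\V_{AB}$ and $\A_{BC}=a_{BC}+\V_{BC}$, and I use repeatedly that, by Theorem \ref{thm:matchedprop}, $\A_{AB}\lrar \A_{BC}=(\A_{AB}\cap \A_{BC})\circ (A\uplus C)$, so the matched composition is nonvoid precisely when $(\A_{AB}\circ B)\cap(\A_{BC}\circ B)\neq\emptyset$. For part 1, I would first note that $\hat\A_{AB}\circ B=\hat a_{AB}|_B+\V_{AB}\circ B$ and $\hat\A_{BC}\circ B=\hat a_{BC}|_B+\V_{BC}\circ B$ are cosets of $\V_{AB}\circ B$ and $\V_{BC}\circ B$, and that two such cosets meet iff the difference $\hat a_{AB}|_B-\hat a_{BC}|_B$ lies in $\V_{AB}\circ B+\V_{BC}\circ B$. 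If the full sum property holds this sum is $\F_B$, so the cosets always meet and the matched composition is nonvoid for every choice of translate. Conversely, if the full sum property fails I would pick $w_B\notin \V_{AB}\circ B+\V_{BC}\circ B$ and use it as the $B$-component of a base point to force the two cosets apart, making the matched composition void; this establishes the stated equivalence. The final clause then follows directly from Theorem \ref{thm:IIT2}, part 1: once $\hat\A_{AB}\lrar \hat\A_{BC}$ is nonvoid its vector space translate is exactly $\V_{AB}\lrar \V_{BC}$.

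For part 2, I would argue by differences. If $(f_A,f_B,f_C)$ and $(f_A,f'_B,f_C)$ both lie in $\A_{AB}\cap \A_{BC}$, then subtracting gives $(0_A,f_B-f'_B)\in\V_{AB}$ and $(f_B-f'_B,0_C)\in\V_{BC}$, whence $f_B-f'_B\in \V_{AB}\times B\cap \V_{BC}\times B$; the zero intersection property then forces $f_B=f'_B$. For the converse I would take any $g_B\in\V_{AB}\times B\cap\V_{BC}\times B$, add $(0_A,g_B,0_C)$ to a point of $\A_{AB}\cap \A_{BC}$ to produce a second preimage of the same $(f_A,f_C)$, and conclude $g_B=0$ from uniqueness. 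Here I rely on $\A_{AB}\cap \A_{BC}$ being nonvoid, which is supplied by the full sum property (part 1) under the rigidity hypothesis.

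Parts 3 and 4 are purely linear-algebraic, and I would prove part 4 first since part 3 invokes it. For part 4, combining Theorem \ref{thm:dotcrossidentity} parts 3 and 4 with Theorem \ref{thm:sumintersection} parts 3 and 4 gives $\V_{AB}^{\perp}\circ B+\V_{BC}^{\perp}\circ B=(\V_{AB}\times B)^{\perp}+(\V_{BC}\times B)^{\perp}=(\V_{AB}\times B\cap \V_{BC}\times B)^{\perp}$ and, dually, $\V_{AB}^{\perp}\times B\cap \V_{BC}^{\perp}\times B=(\V_{AB}\circ B)^{\perp}\cap(\V_{BC}\circ B)^{\perp}=(\V_{AB}\circ B+\V_{BC}\circ B)^{\perp}$. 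Taking orthogonal complements and using that $\perp$ is an involution (Theorem \ref{thm:perperp}) converts the full sum property of $\{\V_{AB}^{\perp},\V_{BC}^{\perp}\}$ into the zero intersection property of $\{\V_{AB},\V_{BC}\}$ and vice versa, which is part 4; the ``Therefore'' statement is then immediate. For part 3, I would apply the rank identity Theorem \ref{thm:sumintersection} part 2 to $\{\V_{AB},\V_{BC}\}$, whose common index set is $B$, obtaining $r(\V_{AB})+r(\V_{BC})=r(\V_{AB}+\V_{BC})+r(\V_{AB}\times B\cap \V_{BC}\times B)$; hence $r(\V_{AB}+\V_{BC})=r(\V_{AB})+r(\V_{BC})$ iff the zero intersection property holds. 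Applying the same identity to the orthogonal pair and reading it through part 4 shows $r(\V_{AB}^{\perp}+\V_{BC}^{\perp})=r(\V_{AB}^{\perp})+r(\V_{BC}^{\perp})$ iff the full sum property holds, so the conjunction of the two rank equalities is exactly rigidity.

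The main obstacle I anticipate is in part 1 and the converse of part 2: the careful handling of affine base points and of the universal quantification over translates, ensuring that ``nonvoid for every translate'' is converted into the exact equality $\V_{AB}\circ B+\V_{BC}\circ B=\F_B$ rather than a weaker containment, and that the nonvoidness needed in the converse of part 2 is legitimately available from the full sum property. By contrast, the linear-algebraic parts 3 and 4 should reduce to disciplined bookkeeping with the stated minor, orthogonality, and rank identities.
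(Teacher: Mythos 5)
Your proposal is correct and follows essentially the same route as the paper's proof: the coset-intersection argument plus Theorem \ref{thm:IIT2} for part 1, the difference argument (with the same perturbation by $g_B$ for the converse) for part 2, the rank identity of Theorem \ref{thm:sumintersection} part 2 for part 3, and the orthogonality identities of Theorems \ref{thm:dotcrossidentity} and \ref{thm:sumintersection} for part 4. The only cosmetic difference is that you prove part 4 first and cite it in part 3, where the paper inlines the same duality computation; your explicit remark that the converse of part 2 needs nonvoidness of $\A_{AB}\cap\A_{BC}$ is a point the paper handles implicitly under the rigidity hypothesis.
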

\begin{proof} 
1.
Let $\hat{\A}_{AB}=(\alpha_A,\alpha_B)+\V_{AB}, \hat{\A}_{BC}=\beta_{BC}+\V_{BC}.$
\\By the definition of matched composition, $\hat{\A}_{AB}\lrar \hat{\A}_{BC}$ is nonvoid iff $\hat{\A}_{AB}\cap \hat{\A}_{BC}$ is nonvoid,\\ i.e., iff $\hat{\A}_{AB}\circ B\ \cap\  \hat{\A}_{BC}\circ B$ is nonvoid,\\
i.e., iff there exist $\lambda_B\in \V_{AB}\circ B, \sigma_B\in \V_{BC}\circ B,$
such that $\alpha_B+\lambda_B=\beta_B+\sigma_B,$\\
i.e., iff there exist $\lambda_B\in \V_{AB}\circ B, \sigma_B\in \V_{BC}\circ B,$
such that $\alpha_B-\beta_B=\sigma_B-\lambda_B.$\\
Clearly when $\V_{AB}\circ B+\V_{BC}\circ B=\F_B,$
there exist $\lambda_B\in \V_{AB}\circ B, \sigma_B\in \V_{BC}\circ B,$
such that $\alpha_B-\beta_B=\sigma_B-\lambda_B$
 so that $\hat{\A}_{AB}\lrar \hat{\A}_{BC}$ is nonvoid.
\\
On the other hand, if $\V_{AB}\circ B+\V_{BC}\circ B\ne \F_B,$
there exist $\alpha_B,\beta_B$ such that $\alpha_B-\beta_B \notin\V_{AB}\circ B+\V_{BC}\circ B,$ so that $\hat{\A}_{AB}\cap \hat{\A}_{BC}$ and therefore
$\hat{\A}_{AB}\lrar \hat{\A}_{BC}$ is void.\\
By Theorem \ref{thm:IIT2}, if $\hat{\A}_{AB}\lrar \hat{\A}_{BC}$ is nonvoid, its vector space translate is $\V_{AB}\lrar \V_{BC}.$

2. 
Let $(\V_{AB}\times B)\cap (\V_{BC}\times B)=\0_B.$
If $(f_A,f_C)\in \A_{AB}\lrar \A_{BC}$ and \\$(f_A,f_B,f_C), (f_A,f'_B,f_C)\in \A_{AB}\cap \A_{BC},$
then $(f_A,f_B)- (f_A,f'_B)=(0_A,(f_B-f'_B))\in \V_{AB},$ and  similarly $((f_B-f'_B),0_C)\in \V_{BC},$ so that $(f_B-f'_B)\in (\V_{AB}\times B)\cap (\V_{BC}\times B)=\0_B.$
\\
Next suppose whenever $(f_A,f_C)\in \A_{AB}\lrar \A_{BC}$ and $(f_A,f_B,f_C), (f_A,f'_B,f_C)\in \A_{AB}\cap \A_{BC},$       we have $f_B=f'_B.$
Suppose $(\V_{AB}\times B) \cap (\V_{BC}\times B)\ne \0_B.$
Let $g_B\in (\V_{AB}\times B) \cap (\V_{BC}\times B)$ and let $g_B\ne 0_B.$
\\If $(f_A,f_C)\in \A_{AB}\lrar \A_{BC},$ then there exists $f_B$ such that  $(f_A,f_B) \in \A_{AB}$ and
$(f_B,f_C) \in  \A_{BC}.$
Clearly $(f_A,f_B+g_B) \in \A_{AB}$ and
 $(f_B + g_B,f_C) \in  \A_{BC},$
so that we must have $(f_A,f_B+g_B,f_C) \in \A_{AB}\cap \A_{BC}.$
But this means $f_B=f_B+g_B,$
a contradiction.
We conclude that $(\V_{AB}\times B)\cap (\V_{BC}\times B)=\0_B.$

3. Let $\{\A_{AB},\A_{BC}\}$ be rigid
.
Then $(\V_{AB}\circ B+\V_{BC}\circ B)=\F_B$ and $(\V_{AB}\times B\cap \V_{BC}\times B)=\0_B.$\\
Now, using part 2 of Theorem \ref{thm:sumintersection} and the zero intersection property, \\ $r(\V_{AB}+\V_{BC})
= r(\V_{AB})+r(\V_{BC})-r(\V_{AB}\times B\cap \V_{BC}\times B)=  r(\V_{AB})+r(\V_{BC}).$
\\
Next $(\V_{AB}\circ B+\V_{BC}\circ B)=\F_B$ iff $(\V_{AB}\circ B+\V_{BC}\circ B)^{\perp}=\F_B^{\perp},$ i.e.,
$  (\V_{AB}^{\perp}\times B\cap\V_{BC}^{\perp}\times B)=\0_B.$
Using the above argument, we therefore have
$r(\V_{AB}^{\perp}+\V_{BC}^{\perp})=r(\V_{AB}^{\perp})+r(\V_{BC}^{\perp}).$
\\
On the other hand, suppose $r(\V_{AB}+\V_{BC})= r(\V_{AB})+r(\V_{BC})$ and $  r(\V_{AB}^{\perp}+\V_{BC}^{\perp})= r(\V_{AB}^{\perp})+r(\V_{BC}^{\perp}).$
\\
The former condition implies $r(\V_{AB}\times B\cap \V_{BC}\times B)=0,$ i.e., $(\V_{AB}\times B\cap \V_{BC}\times B)=\0_B.$
 and the latter implies $r(\V_{AB}^{\perp}\times B\cap \V_{BC}^{\perp}\times B)=0,$ i.e.,
$(\V_{AB}^{\perp}\times B\cap \V_{BC}^{\perp}\times B)=\0_B,$
i.e., $(\V_{AB}^{\perp}\times B\cap \V_{BC}^{\perp}\times B)^{\perp}=(\V_{AB}\circ B+ \V_{BC}\circ B)=\F_B$ (using Theorems \ref{thm:sumintersection},\ref{thm:dotcrossidentity}).

4. We have, as before,
$(\V_{AB}\circ B+\V_{BC}\circ B)^{\perp}=\V_{AB}^{\perp}\times B\ \cap \ \V_{BC}^{\perp}\times B=\F^{\perp}_B= \0_B,$
and\\
$(\V_{AB}\times B\ \cap\ \V_{BC}\times B)^{\perp}=\V_{AB}^{\perp}\circ B+ \V_{BC}^{\perp}\circ B=\0^{\perp}_B= \F_B.$
\end{proof}
\begin{remark}
1. Given a pair $\{\V_{AB},\V_B\}$ we can always construct a rigid
 pair
$\{\V_{AB},\tilde{\V}_B\}$ such that\\ $\V_{AB}\lrar \tilde{\V}_B=\V_{AB}\lrar \V_B.$
We give a construction for $\tilde{\V}_B$ below.\\
Since $\V_{AB}\lrar \V_B= \V_{AB}\lrar (\V_B\cap \V_{AB}\circ B)$
$= \V_{AB}\lrar (\V_B+\V_{AB}\times B),$
we can, without loss of generality, take $\V_{AB}\times B\subseteq \V_B\subseteq \V_{AB}\circ B.$
Split $\F_B$ as $\F_B=\V_{AB}\circ B + \V^1_B,$ where
the subspaces $\V_{AB}\circ B ,\V^1_B,$ have zero intersection.
Next split $ \V_B$ as $\V_{AB}\times B +\V^2_B,$
taking the subspaces $\V_{AB}\times B,\V^2_B,$ to have zero intersection.
Take $\tilde{\V}_B\equivd \V^1_B+\V^2_B.$\\
Clearly $\tilde{\V}_B+\V_{AB}\circ B =\F_B.$
Next,
$\tilde{\V}_B \cap \V_{AB}\times B =\0_B.$
Thus $\{\V_{AB},\tilde{\V}_B \} $ is rigid.

We will first show that $\V_{AB}\lrar \tilde{\V}_B\subseteq \V_{AB}\lrar \V_B.$
Let $f_A\in \V_{AB}\lrar \tilde{\V}_B.$
Then there exist $(f_{A},f_B)\in \V_{AB}$ and $f_B\in\tilde{\V}_B.$
We can write $f_B$ as $f_B=f^1_B+f^2_B,$ where $f^1_B\in \V^1_B,
f^2_B\in \V^2_B\subseteq \V_B\subseteq \V_{AB}\circ B .$ Now $f_B,f^2_B\in \V_{AB}\circ B,$ so that
$f^1_B\in \V_{AB}\circ B$ and therefore in $\V_{AB}\circ B \cap\V^1_B.$
This means that $f^1_B=0_B$ so that $f_B= f^2_B\in\V^2_B\subseteq \V_B.$
It follows that $\V_{AB}\lrar \tilde{\V}_B\subseteq \V_{AB}\lrar \V_B.$

To see the reverse containment, let $f_A\in \V_{AB}\lrar \V_B.$
Then there exist $(f_{A},f_B)\in \V_{AB}$ and $f_B\in\V_B.$
We can write $f_B$ as $f_B=f^3_B+f^2_B,$ where $f^3_B\in \V_{AB}\times B,
f^2_B\in \V^2_B.$ Clearly, $(f_{A},f_B)-(0_A,f^3_B)=(f_{A},f^2_B)\in \V_{AB}$
and by definition of $\tilde{\V}_B,$ we have $f^2_B\in \tilde{\V}_B.$Therefore $f_A\in \V_{AB}\lrar \tilde{\V}_B.$
\\
We thus see that  $\{\V_{AB},\tilde{\V}_B \} $ is rigid
and $\V_{AB}\lrar \tilde{\V}_B= \V_{AB}\lrar \V_B.$

Thus rigidity is a property of the pair $\{\V_{AB},\V_B\}$
 that cannot be inferred from $\V_{AB}\lrar {\V}_B$
 alone.

2. A consequence of part 1 is that whenever a multiport $\N_P$ has
a port behaviour ${\V}_{P'P"},$ we can alter the
multiport by changing the device characteristic
so that the resulting multiport $\tilde{\N_P}$ also has multiport behaviour ${\V}_{P'P"},$ but is rigid
.
Therefore every multiport behaviour is realizable as that of a rigid
 multiport. Use of parts 1 and 2 of Theorem \ref{thm:regularrecursive} will prove this fact for
 affine multiport behaviours also.

3. The above ideas do not appear to be true for matroids in general
unless we can somehow bring in the implicit inversion theorem,
which we know to be false for matroids in general.
\end{remark}

The next lemma is of use in characterizing rigid networks without ports
and, in the case of rigid multiports, the dimension of the multiport
of the multiport behaviour.
\begin{lemma}
\label{lem:rankfactsvectors}
\begin{enumerate}
\item Let $\A^1_B,\A^2_B$ have $\V^1_B,\V^2_B,$
respectively as their vector space translates.
\begin{enumerate}
\item
If $\{\V^1_B,\V^2_B\}$
is rigid, $\A^1_B\cap\A^2_B$ will contain exactly one vector.
\item
If $\{\V^1_B,\V^2_B\}$ is not rigid, then there exist $\hat{\A}_B^1, \hat{\A}_B^2,$  with $\V^1_B,\V^2_B,$ respectively, as their vector space translates,
such that $\hat{\A}_B^1\cap \hat{\A}_B^2$  is not a singleton.
\item If $r(\V^1_B)+r(\V^2_B)=|B|,$ then
the full sum property and zero intersection property are equivalent
and ${\A}_B^1\cap {\A}_B^2 $ is a singleton iff $\{\A^1_B, \A^2_B\}$
is rigid.
\end{enumerate}
\item If  $\{\A_{AB},\A_{BC}\}$ is rigid, 
then
$r(\V_{AB}\lrar \V_{BC})= r(\V_{AB})+r(\V_{BC})-|B|.$
\end{enumerate}
\end{lemma}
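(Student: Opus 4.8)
The plan is to treat Part 1 as an affine reformulation of a direct-sum decomposition and Part 2 as a rank computation assembled from identities already proved. For Part 1, I first observe that when both affine spaces live on the single set $B$, the operations $\circ B$ and $\times B$ act as the identity, so the full sum property reads $\V^1_B+\V^2_B=\F_B$ and the zero intersection property reads $\V^1_B\cap\V^2_B=\0_B$; thus $\{\V^1_B,\V^2_B\}$ is rigid exactly when $\V^1_B\oplus\V^2_B=\F_B$. For (a), writing $\A^i_B=\alpha^i_B+\V^i_B$, nonemptiness of $\A^1_B\cap\A^2_B$ is equivalent to $\alpha^1_B-\alpha^2_B\in\V^1_B+\V^2_B$, which the full sum property guarantees, while uniqueness follows because any two elements of the intersection differ by a vector lying in $\V^1_B\cap\V^2_B=\0_B$. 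This is the argument of parts 1 and 2 of Theorem \ref{thm:regularrecursive}, specialized to $A=C=\emptyset$.

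For (b), I split on which property fails. If the full sum property fails, I pick $w_B\notin\V^1_B+\V^2_B$ and take $\hat{\A}^1_B\equivd\V^1_B$, $\hat{\A}^2_B\equivd w_B+\V^2_B$; then $\hat{\A}^1_B\cap\hat{\A}^2_B$ is empty, hence not a singleton. If the zero intersection property fails, I take $\hat{\A}^1_B\equivd\V^1_B$, $\hat{\A}^2_B\equivd\V^2_B$, whose intersection $\V^1_B\cap\V^2_B$ contains a nonzero vector besides $0_B$ and so is not a singleton. For (c), I invoke part 1 of Theorem \ref{thm:sumintersection}: under $r(\V^1_B)+r(\V^2_B)=|B|$ we get $r(\V^1_B+\V^2_B)+r(\V^1_B\cap\V^2_B)=|B|$, so $\V^1_B+\V^2_B=\F_B$ holds iff $\V^1_B\cap\V^2_B=\0_B$, which is the claimed equivalence of the two properties. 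The singleton-versus-rigidity equivalence then follows: rigidity gives a singleton by (a), while conversely a singleton forces the zero intersection property (a nonzero $g_B\in\V^1_B\cap\V^2_B$ would produce a second intersection point $f_B+g_B$, mirroring Theorem \ref{thm:regularrecursive} part 2), and under the rank hypothesis this already implies the full sum property, hence rigidity.

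For Part 2, I apply Theorem \ref{cor:ranklrar} with $S\equivd A$, $P\equivd B$, $Q\equivd C$, which gives $r(\V_{AB}\lrar\V_{BC})=r(\V_{AB}\times A)+r(\V_{BC}\times C)+r(\V_{AB}\circ B\cap\V_{BC}\circ B)-r(\V_{AB}\times B\cap\V_{BC}\times B)$. Rigidity kills the last term by the zero intersection property, and via part 1 of Theorem \ref{thm:sumintersection} together with the full sum property it evaluates the third term as $r(\V_{AB}\circ B)+r(\V_{BC}\circ B)-|B|$. Finally, part 2 of Theorem \ref{thm:dotcrossidentity} gives $r(\V_{AB}\times A)+r(\V_{AB}\circ B)=r(\V_{AB})$ and $r(\V_{BC}\times C)+r(\V_{BC}\circ B)=r(\V_{BC})$, and substituting collapses everything to $r(\V_{AB})+r(\V_{BC})-|B|$.

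The only genuinely delicate point is the converse direction of (c): a singleton intersection by itself yields only the zero intersection property, and one must use the rank balance $r(\V^1_B)+r(\V^2_B)=|B|$ to upgrade it to the full sum property. Everything else is bookkeeping with the rank identities of Theorems \ref{thm:sumintersection} and \ref{thm:dotcrossidentity}, and the only care needed in Part 2 is pairing the $\times$ and $\circ$ terms correctly before applying the rank splitting identity.
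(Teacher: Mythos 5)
Your proof is correct, and Part 2 is exactly the paper's computation: Theorem \ref{cor:ranklrar} followed by the rank identity of Theorem \ref{thm:sumintersection} and the decomposition $r(\V_{AB})=r(\V_{AB}\times A)+r(\V_{AB}\circ B)$ from Theorem \ref{thm:dotcrossidentity}. Where you genuinely diverge is in Part 1(a) and 1(b): the paper works with representative matrices $M,N$ of $\V^1_B,\V^2_B$ and analyses solvability and uniqueness of the linear system $\alpha^T-\beta^T=(-\lambda^T|\sigma^T)\bigl(\begin{smallmatrix}M\\N\end{smallmatrix}\bigr)$, splitting the non-rigid case according to whether $r(\V^1_B)+r(\V^2_B)$ falls below or above $|B|$; you instead argue coordinate-free, characterizing nonemptiness of $\A^1_B\cap\A^2_B$ by $\alpha^1_B-\alpha^2_B\in\V^1_B+\V^2_B$ and uniqueness by differences lying in $\V^1_B\cap\V^2_B$, and in (b) you split directly on which of the two rigidity properties fails. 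Your case split is slightly cleaner: the paper's rank trichotomy, read literally, omits the case $r(\V^1_B)+r(\V^2_B)=|B|$ with the pair non-rigid (e.g.\ $\V^1_B=\V^2_B$ of half dimension), whereas your failure-of-full-sum branch covers it automatically, since under that rank balance non-rigidity forces the full sum property to fail. Your converse in (c) (singleton forces zero intersection, which under the rank hypothesis upgrades to full sum) is also a sound replacement for the paper's appeal to singularity of the stacked coefficient matrix.
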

\begin{proof}
1(a). Let $\A^1_B\equivd \alpha_B+\V^1_B$ and let $\A^2_B\equivd \beta_B+\V^2_B.$ Suppose $\{\V^1_B,\V^2_B\}$ is rigid.
Let $M,N$ be representative matrices of $\V^1_B,\V^2_B,$ respectively.
Consider the equation
$\alpha^T+\lambda ^TM= \beta ^T+\sigma ^TN,$
which characterizes the affine space $\A^1_B\cap\A^2_B.$
This equation is equivalent to
\begin{align}
\label{eqn:6a}
\alpha^T-\beta ^T=(-\lambda ^T| \sigma ^T)\ppmatrix{M\\N}.
\end{align}
This equation has a unique solution since $M,N$ have independent rows and
their ranks add up to $|B|.$

1(b). On the other hand, suppose $\{\V^1_B,\V^2_B\}$ is not rigid. Then the
ranks of $M,N$ either add up to a value below $|B|$ or above it.
In the former case,
we can choose $\alpha^T,\beta ^T$ in such a way that that the equation 
has no solution and in the latter, the equation will not have a unique solution. Thus, in either case, $\hat{\A}_B^1\cap \hat{\A}_B^2$
will be a non singleton.

1(c). We have $|B|=r(\V^1_B)+r(\V^2_B)= r(\V^1_B+\V^2_B) + r(\V^1_B\cap \V^2_B),$ so that $|B|=r(\V^1_B)+r(\V^2_B)= r(\V^1_B+\V^2_B) $ iff
$r(\V^1_B\cap \V^2_B)=0.$
Next, if $\{\A^1_B,\A^2_B\}$ is not rigid, Equation \ref{eqn:6a}
does not have a unique solution since it does not have a non singular coefficient matrix.

2. By Theorem \ref{cor:ranklrar}, we have\\
$r(\V_{AB}\lrar \V_{BC})=r(\V_{AB}\times A)+ r(\V_{BC}\times C) +r(\V_{AB}\circ B\cap \V_{BC}\circ B)-r(\V_{AB}\times B\cap \V_{BC}\times B).$
\\
Now $r(\V_{AB}\circ B\cap \V_{BC}\circ B)=r(\V_{AB}\circ B)+r(\V_{BC}\circ B)-
r(\V_{AB}\circ B+ \V_{BC}\circ B).$\\
Therefore, using Theorems \ref{thm:sumintersection},\ref{thm:dotcrossidentity}, we have
 $r(\V_{AB}\lrar \V_{BC})$\\$=r(\V_{AB}\times A)+ r(\V_{BC}\times C) +
r(\V_{AB}\circ B)+r(\V_{BC}\circ B)-
r(\V_{AB}\circ B+ \V_{BC}\circ B)-r(\V_{AB}\times B\cap \V_{BC}\times B)$
\\
$=r(\V_{AB})+r(\V_{BC})-r(\V_{AB}\circ B+\V_{BC}\circ B)-r(\V_{AB}\times B\cap \V_{BC}\times B).$ Therefore,
\\
we have $r(\V_{AB}\lrar \V_{BC})=r(\V_{AB})+r(\V_{BC})-|B|,$ if
$\V_{AB}\circ B+\V_{BC}\circ B=\F_B$ and $\V_{AB}\times B\cap \V_{BC}\times B=\0_B,$
i.e., if $(\A_{AB}, \A_{BC})$ is rigid.

\end{proof}
\begin{remark}
Let $r(\V^1_B)+r(\V^2_B)=|B|,$
let $M,N$ be representative matrices of $\V^1_B,\V^2_B,$
and let $Q,P$ be representative matrices of
$(\V^1_B)^{\perp},(\V^2_B)^{\perp},$ respectively.
The rigidity  of $\{\V^1_B,\V^2_B\}$ is equivalent to 
$(M^T|N^T)^T$ being invertible and the rigidity of $\{(\V^1_B)^{\perp},(\V^2_B)^{\perp}))\}$
is equivalent to $((P^T|Q^T)^T) $ being invertible. 
However, if $(\hat{P}^T|\hat{Q}^T)$
 is the inverse of $(M^T|N^T)^T,$ then $\hat{Q},\hat{P}$ are representative matrices of
$(\V^1_B)^{\perp},(\V^2_B)^{\perp},$
 respectively. It can be seen that $(\hat{P}^T|\hat{Q}^T)$ is invertible 
iff $(P^T|Q^T)^T $  is. Therefore, the equivalence of rigidity of the pairs $\{\V^1_B,\V^2_B\}$ and
$\{(\V^1_B)^{\perp},(\V^2_B)^{\perp}\}$ is clear.
\end{remark}

\begin{corollary}
\label{cor:adjointrigidity}
Let $\N_P\equivd (\G_{SP}, \A_{S'S"})$ be a multiport with   $\A_{S'S"}=\alpha_{S'S"}+\V_{S'S"}.$ We then have the following.
\begin{enumerate}
\item $\N_P$ is rigid iff $\{\V^v(\G_{SP})_{S'P'}\oplus (\V^i(\G_{SP})_{S"P"}, \V_{S'S"}\}$ is rigid, i.e., iff $\N^{hom}_P\equivd (\G_{SP}, \V_{S'S"})$ is rigid.
\item Let $\N_P, \N^{hom}_P$ have the multiport behaviours ${\A}_{P'P"},
{\V}_{P'P"}.$ Then, if $\N_P$ is rigid, ${\A}_{P'P"}$ is nonvoid and has ${\V}_{P'P"}$ as its vector space translate.
\item If $\N_P$ is rigid, its multiport behaviour is  proper iff $\A_{S'S"}$ is proper.
\end{enumerate}
\end{corollary}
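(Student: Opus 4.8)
The plan is to reduce all three parts to the abstract affine-pair results already established, namely Theorem \ref{thm:regularrecursive} and Lemma \ref{lem:rankfactsvectors}, after identifying the ingredients of the multiport with those of a pair $\{\V_{AB},\V_B\}$. First I would fix notation: write $A\equivd P'\uplus P"$, $B\equivd S'\uplus S"$, let $\V_{AB}\equivd \V^v(\G_{SP})_{S'P'}\oplus (\V^i(\G_{SP}))_{S"P"}$ be the (fixed) space of topological constraints, and set $\V_B\equivd \V_{S'S"}$. For any competing characteristic $\hat{\A}_{S'S"}=\hat{\alpha}_{S'S"}+\V_{S'S"}$, the solution set of $\hat{\N}_P$ is the extended intersection $\V_{AB}\cap \hat{\A}_{S'S"}$, and its port behaviour is $\V_{AB}\lrar \hat{\A}_{S'S"}$; these two descriptions agree by the matched-composition form of the port behaviour together with Theorem \ref{thm:matchedprop}.

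For part 1 I would match the two clauses of Definition \ref{def:regular} to the two clauses defining rigidity of $\{\V_{AB},\V_B\}$. Clause (a), consistency for every source value, asks that $\V_{AB}\cap \hat{\A}_{S'S"}$ be nonvoid for all $\hat{\alpha}$; since matched composition is nonvoid exactly when the intersection is, part 1 of Theorem \ref{thm:regularrecursive} (with the $C$ of that theorem empty) shows this is precisely the full sum property $\V_{AB}\circ B+\V_B\circ B=\F_B$. Clause (b), a unique internal completion for each vector of the behaviour, is verbatim the condition characterised in part 2 of Theorem \ref{thm:regularrecursive}, hence equivalent to the zero intersection property $\V_{AB}\times B\cap \V_B\times B=\0_B$. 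Together these say $\{\V_{AB},\V_B\}$ is rigid. Because Definition \ref{def:regular} quantifies over all characteristics sharing the translate $\V_{S'S"}$, and $\N^{hom}_P$ is simply the member with $\hat{\alpha}=0$, the families of competitors for $\N_P$ and $\N^{hom}_P$ coincide, so $\N_P$ rigid iff $\N^{hom}_P$ rigid follows at once.

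Parts 2 and 3 are then bookkeeping. For part 2, rigidity of $\N_P$ supplies the full sum property for $\{\V_{AB},\A_{S'S"}\}$, so part 1 of Theorem \ref{thm:regularrecursive} guarantees that $\A_{P'P"}=\V_{AB}\lrar \A_{S'S"}$ is nonvoid and has vector space translate $\V_{AB}\lrar \V_B=\V_{P'P"}$, the behaviour of $\N^{hom}_P$. For part 3 I would compute ranks. By Tellegen (Theorem \ref{thm:tellegen}) $\V^i(\G_{SP})=(\V^v(\G_{SP}))^{\perp}$, so part 1 of Theorem \ref{thm:perperp} gives $r(\V^v(\G_{SP}))+r(\V^i(\G_{SP}))=|S|+|P|$; as rank is additive over direct sums and unchanged by passing to copies, $r(\V_{AB})=|S|+|P|$. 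Since $\{\V_{AB},\V_B\}$ is rigid, part 2 of Lemma \ref{lem:rankfactsvectors} yields $r(\V_{P'P"})=r(\V_{AB})+r(\V_B)-|B|=|P|+r(\V_{S'S"})-|S|$. Hence $\A_{P'P"}$ is proper, i.e. $r(\V_{P'P"})=|P|$, iff $r(\V_{S'S"})=|S|$, i.e. iff $\A_{S'S"}$ is proper.

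The only genuinely delicate step is part 1: one must check that clauses (a) and (b) of the multiport definition translate \emph{exactly} into the full-sum and zero-intersection properties, and that the ``for every $\hat{\N}_P$'' quantifier is harmless because pair rigidity is a property of the translates alone, not of the affine shift. Once this bridge is in place, parts 2 and 3 reduce to the matched-composition representation of the behaviour and the single graph rank identity $r(\V_{AB})=|S|+|P|$.
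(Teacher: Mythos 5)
Your proposal is correct and follows essentially the same route as the paper: parts 1 and 2 are reduced to Theorem \ref{thm:regularrecursive} with $C=\emptyset$ (identifying $A=P'\uplus P"$, $B=S'\uplus S"$, $\V_{AB}$ the topological constraints, $\V_B=\V_{S'S"}$), and part 3 to part 2 of Lemma \ref{lem:rankfactsvectors} together with the rank identity $r(\V_{AB})=|S|+|P|$. Your only divergence is expository: you spell out the bridge between Definition \ref{def:regular} and pair rigidity via parts 1 and 2 of Theorem \ref{thm:regularrecursive}, which the paper leaves terse.
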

\begin{proof}
For parts 1 and 2 we apply Theorem \ref{thm:regularrecursive}, taking\\
$\A_{AB}= \V^v(\G_{SP})_{S'P'}\oplus \V^i(\G_{SP})_{S"P"},$
$\A_B= \A_{S'S"}, C=\emptyset.$\\
Part 1 follows from part 3 of Theorem \ref{thm:regularrecursive}.
\\
Part 2 follows from parts 1 and 2 of Theorem \ref{thm:regularrecursive}.
Parts 1 and 2.   

3. This follows from part 2 of Lemma \ref{lem:rankfactsvectors},
taking $C=\emptyset,$ $ A=P'\uplus P", B= S'\uplus S",$\\$\V_{AB}=\V^v(\G_{SP})_{S'P'}\oplus \V^i(\G_{SP})_{S"P"},$ and $\V_B= \V_{S'S"}.$ It can be seen that,
 in this case, since $r(\V_{AB})=|S|+|P|,$ we must have $ r(\V_B)=|S|,$  iff  
$r(\V_{AB}\lrar\V_B)=|P|.$ 
\end{proof}
We note that when the multiport is not rigid,  
 the dimension of the port behaviour can range from zero to twice the number of ports, even if the device characteristic
is proper. (For an interesting discussion of this situation see \cite{recski19}.)

When we set $P=\emptyset,$ in Corollary \ref{cor:adjointrigidity}, we get the following result.
\begin{corollary}
\label{cor:adjointrigidity2}
Let $\N\equivd (\G_{S}, \A_{S'S"})$ be a network with   $\A_{S'S"}=\alpha_{S'S"}+\V_{S'S"}.$ Then
\begin{enumerate}
\item $\N$ is rigid iff $\N^{hom}\equivd (\G_{S}, \V_{S'S"})$
is rigid;
\item if $\A_{S'S"}$ is proper, then $\N$ has a unique solution iff 
$\N^{hom}$ has a unique solution.
\end{enumerate}
\end{corollary}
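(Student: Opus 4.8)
The plan is to obtain both parts by specializing Corollary \ref{cor:adjointrigidity} to the case $P=\emptyset$. When $P=\emptyset$ the multiport $\N_P$ is simply the network $\N$, and the topological space $\V^v(\G_{SP})_{S'P'}\oplus \V^i(\G_{SP})_{S"P"}$ collapses to $\V^1_B\equivd \V^v(\G_{S})_{S'}\oplus \V^i(\G_{S})_{S"}$, a vector space on $B\equivd S'\uplus S"$. Writing $\V^2_B\equivd \V_{S'S"}$ for the device translate, Part 1 is then immediate: by Part 1 of Corollary \ref{cor:adjointrigidity}, $\N$ is rigid iff $\{\V^1_B,\V^2_B\}$ is rigid, which is exactly the statement that $\N^{hom}\equivd (\G_S,\V_{S'S"})$ is rigid.

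For Part 2 I would first record the governing rank identity. By Tellegen's Theorem (Theorem \ref{thm:tellegen}) we have $\V^i(\G_S)=\V^v(\G_S)^{\perp}$, so Part 1 of Theorem \ref{thm:perperp} gives $r(\V^1_B)=r(\V^v(\G_S))+r(\V^i(\G_S))=|S|$. When $\A_{S'S"}$ is proper, $r(\V^2_B)=r(\V_{S'S"})=|S|$ as well, whence $r(\V^1_B)+r(\V^2_B)=2|S|=|B|$. This is precisely the hypothesis of Part 1(c) of Lemma \ref{lem:rankfactsvectors}, and it is exactly here that properness enters: it fixes the ranks so that a singleton intersection is forced to coincide with rigidity of the pair.

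Now observe, using the extended intersection from the network definition, that the solution set of $\N$ is $\V^1_B\cap \A_{S'S"}$ and the solution set of $\N^{hom}$ is the vector space $\V^1_B\cap \V_{S'S"}$. Since rigidity of $\{\V^1_B,\V^2_B\}$ depends only on the common vector space translates $\V^1_B,\V^2_B$, I would apply Part 1(c) of Lemma \ref{lem:rankfactsvectors} to the \emph{same} pair twice: $\N$ has a unique solution iff $\V^1_B\cap\A_{S'S"}$ is a singleton iff $\{\V^1_B,\V^2_B\}$ is rigid, while $\N^{hom}$ has a unique solution iff its solution space $\V^1_B\cap\V^2_B$ reduces to $\0_B$ (the zero intersection property), which under the identity $r(\V^1_B)+r(\V^2_B)=|B|$ is again equivalent to rigidity of $\{\V^1_B,\V^2_B\}$. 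Thus both ``unique solution'' conditions are each equivalent to rigidity of the single pair $\{\V^1_B,\V^2_B\}$, and hence to each other.

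I expect no serious obstacle, as the argument is a clean $P=\emptyset$ reduction followed by one invocation of Lemma \ref{lem:rankfactsvectors}(1c). The only points requiring care are the bookkeeping in that reduction---confirming via Tellegen's Theorem that $\V^1_B$ still has rank $|S|$ so the rank hypothesis is met---and recognizing that for the homogeneous network ``unique solution'' means the solution space collapses to $\0_B$, which is exactly the zero intersection property rather than the singleton condition used in the affine case.
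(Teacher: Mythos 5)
Your proposal is correct and follows essentially the same route as the paper: Part 1 is the $P=\emptyset$ specialization of Corollary \ref{cor:adjointrigidity}, and Part 2 derives the rank identity $r(\V^1_B)+r(\V_{S'S"})=2|S|=|B|$ from properness (via Tellegen's Theorem and Theorem \ref{thm:perperp}) and then applies Lemma \ref{lem:rankfactsvectors}(1c) to both the affine pair and its homogeneous counterpart, using that rigidity depends only on the vector space translates. The only cosmetic difference is that you phrase the homogeneous uniqueness as the zero intersection condition $\V^1_B\cap\V_{S'S"}=\0_B$, whereas the paper invokes the singleton clause of Lemma \ref{lem:rankfactsvectors}(1c) directly; these are equivalent.
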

\begin{proof}
1. This is immediate from part 1 of Corollary \ref{cor:adjointrigidity}.

2. Let $\A_{S'S"}$ be proper.
Then $r((\V^v(\G_S))_{S'}\oplus (\V^i(\G_S))_{S"})+r(\V_{S'S"})=2|S|=|S'\uplus S"|.$ Therefore, by part 1(c)  of Lemma \ref{lem:rankfactsvectors}, 
$\N$ has a unique solution, i.e.,
$[(\V^v(\G_S))_{S'}\oplus (\V^i(\G_S))_{S"}]\cap  \A_{S'S"}$
is a singleton iff\\ $\{[(\V^v(\G_S))_{S'}\oplus (\V^i(\G_S))_{S"}], \A_{S'S"}\}$ is rigid, i.e., iff $\{[(\V^v(\G_S))_{S'}\oplus (\V^i(\G_S))_{S"}], \V_{S'S"}\}$ is rigid, \\
i.e., iff $[(\V^v(\G_S))_{S'}\oplus (\V^i(\G_S))_{S"}]\cap  \V_{S'S"}$ 
is a singleton, 
\end{proof}
The next theorem is useful for examining the rigidity
 of multiports, 
obtained from other such, by augmenting the device characteristic or by connection through an affine characteristic. 

\begin{theorem}
\label{lem:derivedregularity}
Let $\V_{WTV},\V_V, \V_T,$ be vector spaces on $W\uplus T\uplus V, V,T,$ 
respectively.
Then $\{\V_{WTV},\V_T\oplus\V_V\}$ is rigid 
\begin{enumerate}
\item
iff 
$\{\V_{WTV},\V_V\},$  $\{\V_{WTV}\lrar\V_V,\V_T\}$  are rigid;
\item iff
$\{\V_{WTV},\V_V\}$ is rigid and $\{\V_{WTV}\cap \V_V,\V_T\},$ 
$\{\V_{WTV}+ \V_V,\V_T\}$ satisfy  the full sum and zero intersection 
properties respectively;
\item iff $\{\V_{WTV},\V_V\},$ $\{\V_{WTV}\cap \V_V,\V_T\},$
$\{\V_{WTV}+ \V_V,\V_T\}$ are rigid.
\end{enumerate}
\end{theorem}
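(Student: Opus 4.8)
The plan is to unpack rigidity into its two defining pieces --- the full sum property and the zero intersection property --- to prove one ``factorization'' lemma for the full sum property, and to obtain the companion statement for the zero intersection property by orthogonal duality. Write $G$ for $\V_{WTV}$. Since the common column set of $G$ and $\V_T\oplus\V_V$ is all of $T\uplus V,$ rigidity of $\{G,\V_T\oplus\V_V\}$ unwinds to $G\circ TV+(\V_T\oplus\V_V)=\F_{TV}$ (full sum) together with $G\times TV\cap(\V_T\oplus\V_V)=\0_{TV}$ (zero intersection). I would treat part 2 as the substantive statement and then read parts 1 and 3 off it.

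The core claim is: $G\circ TV+(\V_T\oplus\V_V)=\F_{TV}$ holds if and only if $G\circ V+\V_V=\F_V$ (full sum of $\{G,\V_V\}$) and $(G\cap\V_V)\circ T+\V_T=\F_T$ (full sum of $\{G\cap\V_V,\V_T\}$). For the forward direction I would restrict the full-sum equality to $V,$ using that restriction distributes over a sum onto columns of the common set (Theorem \ref{thm:sumintersection}(5a)) and that $(G\circ TV)\circ V=G\circ V,$ to extract $G\circ V+\V_V=\F_V$; applying the hypothesis to the vectors $(a_T,0_V)$ and keeping the $V$-component inside $\V_V$ then yields the $T$-condition. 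For the reverse direction I would take an arbitrary $(a_T,b_V)\in\F_{TV},$ use $G\circ V+\V_V=\F_V$ to subtract an element of $G\circ TV$ and one of $\0_T\oplus\V_V,$ reducing to a vector $(a'_T,0_V),$ and then use $(G\cap\V_V)\circ T+\V_T=\F_T$ to write $a'_T$ through a vector of $G$ whose $V$-part already lies in $\V_V,$ absorbing the leftover $V$-part back into $\V_V.$ This explicit reconstruction, with correct bookkeeping of the extended $\circ,\cap,+$ operations across the three column sets, is the step I expect to be the main obstacle.

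The zero intersection counterpart then comes for free. By Theorem \ref{thm:regularrecursive}(4) the zero intersection property of $\{G,\V_T\oplus\V_V\}$ is the full sum property of $\{G^\perp,\V_T^\perp\oplus\V_V^\perp\}$ (using $(\V_T\oplus\V_V)^\perp=\V_T^\perp\oplus\V_V^\perp$). Applying the factorization lemma to these complements and then translating back --- via Theorem \ref{thm:regularrecursive}(4) again, the identity $(G+\V_V)^\perp=G^\perp\cap\V_V^\perp$ (Theorem \ref{thm:sumintersection}(4)), and $(\V^\perp)^\perp=\V$ --- shows it factors as the zero intersection property of $\{G,\V_V\}$ and the zero intersection property of $\{G+\V_V,\V_T\}.$ Combining both factorizations gives precisely part 2: $\{G,\V_V\}$ rigid, $\{G\cap\V_V,\V_T\}$ with the full sum property, and $\{G+\V_V,\V_T\}$ with the zero intersection property.

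Finally parts 1 and 3 are formal consequences. For part 1 I would expand $\V_{WTV}\lrar\V_V$ using the two identities of Theorem \ref{thm:matchedprop}, namely $(G+\V_V)\times WT$ and $(G\cap\V_V)\circ WT$ (with $\V_{(-V)}=\V_V$); restricting the second and contracting the first down to $T$ shows that the full sum and zero intersection properties of $\{\V_{WTV}\lrar\V_V,\V_T\}$ are verbatim those of $\{G\cap\V_V,\V_T\}$ and of $\{G+\V_V,\V_T\},$ so that $\{\V_{WTV}\lrar\V_V,\V_T\}$ is rigid exactly when both of the extra conditions in part 2 hold, giving 1 $\Leftrightarrow$ 2. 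For part 3 the forward implication is immediate, and the reverse needs only monotonicity of $\circ$ and $\times$ under $G\cap\V_V\subseteq G+\V_V$: the zero intersection property of $\{G\cap\V_V,\V_T\}$ drops out of that of $\{G+\V_V,\V_T\},$ and the full sum property of $\{G+\V_V,\V_T\}$ drops out of that of $\{G\cap\V_V,\V_T\},$ so part 2 already forces part 3.
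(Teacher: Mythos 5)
Your proposal is correct, and it shares the paper's overall skeleton: factor the full sum property across $V$ and $T$, transfer that factorization to the zero intersection property by orthogonal complementation, pass between parts 1 and 2 through the identities $\V_{WTV}\lrar\V_V=(\V_{WTV}\cap\V_V)\circ WT=(\V_{WTV}+\V_V)\times WT$ of Theorem \ref{thm:matchedprop}, and settle part 3 by the monotonicity $(\V_{WTV}\cap\V_V)\subseteq(\V_{WTV}+\V_V)$. Where you genuinely differ is in what you make primary and how you prove the core factorization. The paper proves part 1 first: its full sum step is a rank argument ($r(\V_X)\leq|X|$ combined with $r(\Vsp)=r(\Vsp\circ S)+r(\Vsp\times P)$ from Theorem \ref{thm:dotcrossidentity}, plus Lemma \ref{lem:factsvectorspaces} identifying the restriction to $V$ and contraction to $T$ of $\V_{WTV}\circ TV+\V_T\oplus\V_V$), and its zero intersection step dualizes that statement, which forces an appeal to the Implicit Duality Theorem (Theorem \ref{thm:idt0}) to identify $(\V_{WTV}^\perp\lrar\V_V^\perp)^\perp$ with $\V_{WTV}\lrar\V_V$; parts 2 and 3 are then read off. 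You instead make part 2 primary, prove the full sum factorization by explicit vector reconstruction (your bookkeeping in both directions is sound: the $T$-condition in the forward direction comes from decomposing $(a_T,0_V)$, and in the reverse direction the leftover $V$-components are correctly absorbed into $\V_V$), and, because your dual statement is phrased with $\cap$ and $+$ rather than $\lrar$, your duality step needs only $(\V^1+\V^2)^\perp=(\V^1)^\perp\cap(\V^2)^\perp$ --- which is part 3, not part 4, of Theorem \ref{thm:sumintersection} as you cite --- together with part 4 of Theorem \ref{thm:regularrecursive}; the Implicit Duality Theorem never enters. Your route is therefore more elementary and slightly shorter. What the paper's route buys is translatability: its rank-plus-lemma argument carries over line by line to the matroid analogue (Theorem \ref{thm:derivedregularitym} via Lemma \ref{lem:factsmatroids}), whereas your pick-a-vector-and-decompose argument has no direct matroid counterpart, since elements of a matroid cannot be added or subtracted.
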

\begin{proof}
 We need the following simple lemma, whose routine proof we omit.
\begin{lemma}
\label{lem:factsvectorspaces}
(a) $(\V_{WTV}\circ TV+\V_T\oplus\V_V)\circ V=\V_{WTV}\circ V+\V_V;$
\\(b) $(\V_{WTV}\circ TV+\V_T\oplus\V_V)\times T=(\V_{WTV}\circ TV+\V_V)\times T+\V_T;$
\\(c) $\V_{WTV}\circ TV\lrar \V_V=(\V_{WTV}\lrar \V_V)\circ T.$
\end{lemma}

1. We have $\V_{WTV}\circ TV+\V_T\oplus\V_V=\F_{TV}$ iff \\
$r(\V_{WTV}\circ TV+\V_T\oplus\V_V)=r((\V_{WTV}\circ TV+\V_T\oplus\V_V)\circ V)+r((\V_{WTV}\circ TV+\V_T\oplus\V_V)\times T)=|{T\uplus V}|,$
using Theorem \ref{thm:dotcrossidentity}.
Since $r(\V_X)\leq |X|,$
we have $\V_{WTV}\circ TV+\V_T\oplus\V_V=\F_{TV}$ iff \\
$(\V_{WTV}\circ TV+\V_T\oplus\V_V)\circ V=\F_{V},$
and
$(\V_{WTV}\circ TV+\V_T\oplus\V_V)\times T=\F_{T},$
\\i.e., iff
$\V_{WTV}\circ V+\V_V=\F_{V},$
and
$(\V_{WTV}\circ TV+\V_V)\times T+\V_T=\F_{T},$
(using Lemma \ref{lem:factsvectorspaces} parts  (a) and (b) above),
i.e., iff
$\V_{WTV}\circ V+\V_V=\F_{V},$
and
$(\V_{WTV}\circ TV\lrar \V_V)+\V_T= (\V_{WTV}\lrar \V_V)\circ T+\V_T=\F_{T},$
using Theorem \ref{thm:matchedprop}, Lemma \ref{lem:factsvectorspaces} part  (c) above  and the fact that $(\V_V)_{(-V)}=\V_V.$
\\Thus,
$\{\V_{WTV},\V_V\oplus \V_T\},$  satisfies the full sum property iff 
$\{\V_{WTV},\V_V\},$ $\{\V_{WTV}\lrar\V_V,\V_T\}$ satisfy the full sum property.

We will now use Theorems \ref{thm:perperp}, \ref{thm:sumintersection}, \ref{thm:dotcrossidentity} and 
Theorem \ref{thm:idt0} and get the second half
of the property of rigidity that is required to complete the proof.

From the above argument, we have
$\V_{WTV}^{\perp}\circ TV+\V_T^{\perp}\oplus\V_V^{\perp}=\F_{TV},$\\ iff
$\V_{WTV}^{\perp}\circ V+\V_V^{\perp}=\F_{V}$ and
$ (\V_{WTV}^{\perp}\lrar \V_V^{\perp})\circ T+\V_T^{\perp}=\F_{T},$
\\i.e., $(\V_{WTV}^{\perp}\circ TV+\V_T^{\perp}\oplus\V_V^{\perp})^{\perp}=\F^{\perp}_{TV}$\\ iff
$(\V_{WTV}^{\perp}\circ V+\V_V^{\perp})^{\perp}=\F^{\perp}_{V}$ and
$ ((\V_{WTV}^{\perp}\lrar \V_V^{\perp})\circ T+\V_T^{\perp})^{\perp}=\F^{\perp}_{T},$
\\i.e., (using Theorems \ref{thm:perperp}, \ref{thm:sumintersection})
$(\V_{WTV}\times TV)\ \cap\ (\V_T\oplus\V_V)=\0_{TV},$
\\iff
$(\V_{WTV}^{\perp}\circ V)^{\perp}\ \cap \ \V_V=\0_{V}$ and
$ (\V_{WTV}^{\perp}\lrar \V_V^{\perp})^{\perp}\times T\ \cap \ \V_T=\0_{T},$
\\i.e., (using Theorems \ref{thm:perperp}, \ref{thm:dotcrossidentity}, \ref{thm:idt0}) $(\V_{WTV}\times TV)\ \cap\ (\V_T\oplus\V_V)=\0_{TV},$
\\iff
$(\V_{WTV}\times V)\ \cap \ \V_V=\0_{V}$ and
$ (\V_{WTV}\lrar \V_V)\times T\ \cap \ \V_T=\0_{T}.$

Thus $\{\V_{WTV},\V_T\oplus\V_V\}$ satisfies the full sum and zero intersection 
properties iff $\{\V_{WTV},\V_V\},$\\  $\{\V_{WTV}\lrar\V_V,\V_T\}$
 satisfy the full sum and zero intersection
properties.

2. We have $\V_{WTV}\lrar \V_V\equivd (\V_{WTV}\cap \V_V)\circ WT=
(\V_{WTV}+ \V_V)\times WT.$
Rigidity of $\{\V_{WTV}\lrar \V_V, \V_T\}$ 
is equivalent to the full sum and zero intersection conditions\\
$(\V_{WTV}\lrar \V_V)\circ T+ \V_T=\F_T,$
and $(\V_{WTV}\lrar \V_V)\times T\cap \V_T=\0_T.$\\
The full sum condition is equivalent to $(\V_{WTV}\cap \V_V)\circ WT\circ T+\V_T=(\V_{WTV}\cap \V_V)\circ T+\V_T=\F_T$
and the zero intersection condition is equivalent to $(\V_{WTV}+ \V_V)\times WT\times T\cap \V_T=(\V_{WTV}+ \V_V)\times T\cap \V_T=\0_T.$

3. We observe that 
$(\V_{WTV}\cap \V_V)\subseteq (\V_{WTV}+ \V_V),$ so that 
$(\V_{WTV}\cap \V_V)\circ T\subseteq (\V_{WTV}+ \V_V)\circ T$ and $(\V_{WTV}+ \V_V)\times T\supseteq (\V_{WTV}\cap \V_V)\times T.$
\\ Therefore, if $\{\V_{WTV}\cap \V_V,\V_T\}$ satisfies the full
sum condition, so does  $\{\V_{WTV}+ \V_V,\V_T\}$
and if\\ $\{\V_{WTV}+ \V_V,\V_T\}$ satisfies the zero
intersection condition, so does  $\{\V_{WTV}\cap \V_V,\V_T\}.$
\\ 
The result now follows from the previous part.
\end{proof}
We will first use Theorem \ref{lem:derivedregularity} to consider the rigidity of a multiport when we assign a device characteristic to some of its ports. The statement of this result needs an extension of the definition of a multiport.

\begin{definition}
A \nw{generalized multiport} $\N^g_A$ is an ordered pair $(\V_{AB},\K_B),A\cap B=\emptyset.$ 
 where $\K_{B}$ is an arbitrary collection of real vectors on $B.$
We say $\N^g_A$ is \nw{linear} iff $\K_{B}=\A_{B},$
 that it is \nw{rigid} iff $\{\V_{AB},\A_{B}\}$
 is rigid.
\end{definition}

%
We note that a multiport $\N_P\equivd (\G_{SP},\K_{S'S"})$ can be regarded
as a 
generalized multiport \\$\N^g_{P'P"}\equivd ((\V^v(\G_{SP}))_{S'P'}\oplus (\V^i(\G_{SP}))_{S"P"},\K_{S'S"}).$ We will say, in this case (abusing notation),
 $\N_P=\N^g_{P'P"}.$
\\
Similarly, a network $\N\equivd (\G_{SP},\K_{S'S"})$ can be regarded
as a
generalized network \\$\N^g\equivd ((\V^v(\G_{S}))_{S'}\oplus (\V^i(\G_{S}))_{S"},\K_{S'S"}).$\\
Further, a generalized network can be regarded as a generalized multiport $\N^g_{P'P"},$ with $P=\emptyset.$
\begin{theorem}
\label{thm:derivedregularity}
Let $\N^g_{W}\equivd (\V_{WTV},\A_{T}\oplus \A_{V})$ be a generalized multiport 
and let $\V_{T},\V_{V}$ be vector space translates of $\A_{T},\A_{V},$
 repectively.
Then $\N^g_{W}$ is rigid iff the generalized multiports $\N^{1g}_{WT}\equivd (\V_{WTV},\V_{V})$
and  $\N^{2g}_W\equivd ((\V_{WTV}\lrar \V_{V}), \V_{T}),
$ are rigid.
\end{theorem}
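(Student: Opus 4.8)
The plan is to reduce this statement to the vector-space version already proved, namely Theorem \ref{lem:derivedregularity}, part 1, which carries all the real content. The only work remaining is to bridge the gap between the generalized-multiport/affine vocabulary of the present statement and the pure vector-space statement of that theorem, and this bridge is supplied by the fact that rigidity of a pair is a property of the vector space translates alone.

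First I would unwind the definitions. By the definition of a generalized multiport, $(\V_{AB},\A_B)$ is rigid iff the pair $\{\V_{AB},\A_B\}$ is rigid, and by Definition \ref{def:affinerigid} the rigidity of a pair of affine spaces is determined entirely by their vector space translates (it is phrased purely in terms of the full sum and zero intersection properties of those translates). Applying this to $\N^g_W\equivd(\V_{WTV},\A_T\oplus\A_V)$, I note that writing $\A_T=\alpha_T+\V_T$ and $\A_V=\beta_V+\V_V$ gives $\A_T\oplus\A_V=(\alpha_T,\beta_V)+(\V_T\oplus\V_V)$, so the vector space translate of $\A_T\oplus\A_V$ is $\V_T\oplus\V_V$. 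Hence $\N^g_W$ is rigid iff $\{\V_{WTV},\V_T\oplus\V_V\}$ is rigid.

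Next I would invoke Theorem \ref{lem:derivedregularity} part 1 verbatim: $\{\V_{WTV},\V_T\oplus\V_V\}$ is rigid iff $\{\V_{WTV},\V_V\}$ and $\{\V_{WTV}\lrar\V_V,\V_T\}$ are rigid. Finally I would reinterpret these two conditions in multiport language, which is immediate because the device characteristics of $\N^{1g}_{WT}$ and $\N^{2g}_W$ are \emph{already} vector spaces, so no affine offset has to be stripped off: $\N^{1g}_{WT}\equivd(\V_{WTV},\V_V)$ is rigid iff $\{\V_{WTV},\V_V\}$ is rigid, and $\N^{2g}_W\equivd(\V_{WTV}\lrar\V_V,\V_T)$ is rigid iff $\{\V_{WTV}\lrar\V_V,\V_T\}$ is rigid. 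Chaining the three equivalences yields the asserted biconditional.

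There is essentially no obstacle beyond this bookkeeping: the substance is entirely in Theorem \ref{lem:derivedregularity}, and the present theorem is a repackaging of its first part. The single point worth stating explicitly is that rigidity depends only on the vector space translate, so that the affine offset $(\alpha_T,\beta_V)$ of the device characteristic is irrelevant; this is precisely what legitimizes replacing $\A_T\oplus\A_V$ by $\V_T\oplus\V_V$ and thereby passing to the purely linear statement.
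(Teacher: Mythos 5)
Your proof is correct and follows exactly the paper's own route: reduce rigidity of the generalized multiport to rigidity of the pair $\{\V_{WTV},\V_T\oplus\V_V\}$ of vector space translates (using that affine-pair rigidity depends only on translates), then apply Theorem \ref{lem:derivedregularity} part 1 and reinterpret the two resulting conditions as rigidity of $\N^{1g}_{WT}$ and $\N^{2g}_W$. The only difference is that you spell out the bookkeeping (the translate of $\A_T\oplus\A_V$ being $\V_T\oplus\V_V$) which the paper leaves implicit.
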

\begin{proof}
We note that, by the definition of rigidity, a  pair $\{\V_{AB},\A_B\},$ is rigid iff $\{\V_{AB},\V_B\},$ is rigid, where $\V_B$ is the vector space translate of 
$\A_B.$
Therefore $\N^g_{W}\equivd (\V_{WTV},\A_{T}\oplus \A_{V})$ is rigid iff
$\{\V_{WTV},\V_{T}\oplus \V_{V}\}$ is rigid.
The result now follows  from Theorem \ref{lem:derivedregularity}.
\end{proof}

We next present conditions for connections of rigid multiports 
to result in a rigid multiport.
The result essentially states that the connection of two rigid multiports 
through an affine space  results in a rigid multiport iff the multiport behaviours are rigid with respect to the affine space.
\begin{theorem}
\label{thm:regularmultiportrecursive}
Let $\N^g_{A}\equivd (\V_{AB},\A_B),{\N}^g_{C}\equivd (\V_{CD},\A_D)$
be generalized multiports with $A,B,C,D,$ pairwise disjoint.
Let $\V_B,\V_D$ be vector space translates of $\A_B,\A_D$ respectively.
\\Let $[\N^g_{A}\oplus {\N}^g_{C}]\cap \A_{A_1C_1}\equivd ((\V_{AB}\oplus \V_{CD}),(\A_B\oplus \A_D\oplus \A_{A_1C_1})), A_1\subseteq A, C_1\subseteq C,$
and let \\ $\N^g_{(A-A_1)(C-C_1)}\equivd ((\V_{AB}\lrar\V_B)\oplus (\V_{CD}\lrar\V_D),\V_{A_1C_1}).$ 
Then the generalized multiport 
$[\N^g_{A}\oplus {\N}^g_{C}]\cap \A_{A_1C_1}$ is rigid iff the generalized 
multiports 
$\N^g_{A},{\N}^g_{C}$ and $\N^g_{(A-A_1)(C-C_1)}$ are rigid.
%
%
%
%
%
\end{theorem}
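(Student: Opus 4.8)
The plan is to reduce the claim to a single application of Theorem~\ref{lem:derivedregularity}, after which the two resulting conditions will match the three stated sub-multiports. First I would observe that rigidity of a generalized multiport depends only on the vector space translate of its device characteristic (as noted in the proof of Theorem~\ref{thm:derivedregularity}), so that $[\N^g_{A}\oplus {\N}^g_{C}]\cap \A_{A_1C_1}$ is rigid iff the pair $\{\V_{AB}\oplus\V_{CD},\ \V_B\oplus\V_D\oplus\V_{A_1C_1}\}$ is rigid, where $\V_{A_1C_1}$ is the vector space translate of $\A_{A_1C_1}$ and the translate of a direct sum of affine spaces is the direct sum of their translates.

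Next I would apply Theorem~\ref{lem:derivedregularity} with the identification $\V_{WTV}\equivd \V_{AB}\oplus\V_{CD}$, $V\equivd B\uplus D$ with $\V_V\equivd \V_B\oplus\V_D$, and $T\equivd A_1\uplus C_1$ with $\V_T\equivd\V_{A_1C_1}$; here $W=(A-A_1)\uplus(C-C_1)$ and $W\uplus T\uplus V=A\uplus B\uplus C\uplus D$ as required, and the device characteristic $\V_B\oplus\V_D\oplus\V_{A_1C_1}$ is exactly $\V_T\oplus\V_V$. Theorem~\ref{lem:derivedregularity} then gives that the pair above is rigid iff both $\{\V_{AB}\oplus\V_{CD},\ \V_B\oplus\V_D\}$ and $\{(\V_{AB}\oplus\V_{CD})\lrar(\V_B\oplus\V_D),\ \V_{A_1C_1}\}$ are rigid.

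It remains to identify these two conditions with the three multiports in the statement. For the first, I would invoke Lemma~\ref{lem:directsum}: since both the topological space $\V_{AB}\oplus\V_{CD}$ and the device characteristic $\V_B\oplus\V_D$ split as direct sums over the disjoint index pairs $(A\uplus B)$ and $(C\uplus D)$, rigidity of $\{\V_{AB}\oplus\V_{CD},\ \V_B\oplus\V_D\}$ is equivalent to rigidity of $\{\V_{AB},\V_B\}$ and $\{\V_{CD},\V_D\}$ separately, i.e.\ to rigidity of $\N^g_A$ and $\N^g_C$. For the second condition, the single genuinely new computation is the distributivity of matched composition over direct sum,
\[
(\V_{AB}\oplus\V_{CD})\lrar(\V_B\oplus\V_D)\ =\ (\V_{AB}\lrar\V_B)\oplus(\V_{CD}\lrar\V_D),
\]
which I would verify directly from the definition of $\lrar$: a vector $(f_A,f_C)$ lies in the left side iff there are $h_B\in\V_B,\,h_D\in\V_D$ with $(f_A,h_B)\in\V_{AB}$ and $(f_C,h_D)\in\V_{CD}$, and this is precisely the condition $f_A\in\V_{AB}\lrar\V_B$ and $f_C\in\V_{CD}\lrar\V_D$. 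With this identity the second condition reads $\{(\V_{AB}\lrar\V_B)\oplus(\V_{CD}\lrar\V_D),\ \V_{A_1C_1}\}$ rigid, which is exactly rigidity of $\N^g_{(A-A_1)(C-C_1)}$.

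I expect no serious obstacle: the argument is a bookkeeping reduction to Theorem~\ref{lem:derivedregularity}. The only points needing care are choosing the correct split $T=A_1\uplus C_1$, $V=B\uplus D$ so that the interface space $\V_T\oplus\V_V$ reproduces the composite device characteristic, and checking the matched-composition distributivity above; the direct-sum reduction is then immediate from Lemma~\ref{lem:directsum}. A minor caveat is that Lemma~\ref{lem:directsum} is stated with a particular labelling of the shared internal variables, so I would apply it after the evident relabelling that makes $B,D$ the internal sets and $A,C$ the port sets.
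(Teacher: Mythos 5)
Your proposal is correct and follows essentially the same route as the paper's proof: the same reduction to vector space translates, the same identification $W=(A-A_1)\uplus(C-C_1)$, $T=A_1\uplus C_1$, $V=B\uplus D$ in Theorem \ref{lem:derivedregularity}, the same direct-sum splitting (which the paper asserts and you justify via Lemma \ref{lem:directsum}), and the same distributivity identity $(\V_{AB}\oplus\V_{CD})\lrar(\V_B\oplus\V_D)=(\V_{AB}\lrar\V_B)\oplus(\V_{CD}\lrar\V_D)$, which the paper states without proof and you verify from the definition.
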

\begin{proof}
By the definition of rigidity,  the generalized multiport 
$((\V_{AB}\oplus \V_{CD}),(\A_B\oplus \A_D\oplus \A_{A_1C_1}))$ is rigid iff
$\{(\V_{AB}\oplus \V_{CD}),(\V_B\oplus \V_D\oplus \V_{A_1C_1})\}$ is rigid.
We now apply Theorem \ref{lem:derivedregularity}, taking
\\$W=(A-A_1)\uplus (C-C_1),V=B\uplus D, T=A_1\uplus C_1,
\V_{WTV}= \V_{AB}\oplus \V_{CD}, \V_V= \V_B\oplus \V_D,\V_T= \V_{A_1C_1}.$
We have, $\{(\V_{AB}\oplus \V_{CD}),(\V_B\oplus \V_D\oplus \V_{A_1C_1})\}$ is rigid \\iff $\{(\V_{AB}\oplus \V_{CD}), (\V_B\oplus \V_D)\},\{(\V_{AB}\oplus \V_{CD})\lrar (\V_B\oplus \V_D), \V_{A_1C_1}\},$ are rigid.
\\Now 
$\{(\V_{AB}\oplus \V_{CD}), (\V_B\oplus \V_D)\}$ is rigid iff 
$\{\V_{AB},\V_B\},\{\V_{CD},\V_D)\}$ are rigid and \\
$((\V_{AB}\oplus \V_{CD})\lrar (\V_B\oplus \V_D), \V_{A_1C_1})
=
((\V_{AB}\lrar \V_B)\oplus (\V_{CD}\lrar \V_D),\V_{A_1C_1}).$ 
\\ Next $(\V_{AB},\V_B),(\V_{CD},\V_D)$ are rigid  iff
$(\V_{AB},\A_B),(\V_{CD},\A_D)$ are rigid. 
\\The result follows.
\end{proof}

\subsection{Rigidity and complementary orthogonality}
\label{sec:rigiddual}
We show in this subsection that
when we have complementary orthogonality, we automatically have rigidity.
This suggests 
 rigidity can be regarded as  a weak form of complementary 
orthogonality.

\begin{lemma}
\label{lem:gyrator_r}
Let $\V_{AB},\V_{BC}$ be vector spaces on $A\uplus B,B\uplus C,$ with
$A,B,C,$ being pairwise disjoint. Let $\tilde{B}$ be a copy  of $B,$ disjoint
from $A,B,C$ and let $ \V_{\tilde{B}C}\equiv (\V_{BC})_{\tilde{B}C}.$
Let $\V_{B\tilde{B}}$
have the representative matrix $(K|I),$ where $K$ corresponds to columns $B$ and is invertible.
Let $\hat{\V}_{BC}\equivd \V_{\tilde{B}C} \lrar \V_{B\tilde{B}}.$
We then have the following.
\begin{enumerate}
\item
(a)$\V_{\tilde{B}C}= \hat{\V}_{BC}\lrar \V_{B\tilde{B}}.$
\\
(b)$\hat{\V}_{BC}^{\perp}=(\V^{\perp}_{\tilde{B}C} \rightleftharpoons \V^{\perp}_{B\tilde{B}}).$
\\
(c) $\V^{\perp}_{\tilde{B}C}= (\hat{\V}_{BC}^{\perp}\rightleftharpoons \V^{\perp}_{B\tilde{B}}).$
\item $\{\V_{AB}, \hat{\V}_{{B}C}\}$ is rigid iff $\{(\V_{AB}\oplus \V_{\tilde{B}C}), \V_{B\tilde{B}}\}$ is rigid.
\item
Let $r(\V_{AB}\circ B)+r(\V_{BC}\times B)= r(\V_{AB}\circ B+\V_{BC}\times B)=|B|,$
$r(\V^{\perp}_{AB}\times B)+r(\V^{\perp}_{BC}\circ B)= 
r(\V^{\perp}_{AB}\times B+\V^{\perp}_{BC}\circ B)
=|B|.$
Then,
  $\{\V_{AB}, \V_{BC}\}$ is rigid.
\item
Let $\V_{AB},\V_{BC}$ be real vector spaces and let $(\V_{AB}\circ B)^{\perp}= \V_{BC}\times B.$
Then,
(a)  $\{\V_{AB}, \V_{BC}\}$ is rigid;
(b) $\{\V_{AB}, (\V^{\perp}_{AB})_{\tilde{A}B}\},$ is rigid, where $\tilde{A}$ is
a copy of $A$ disjoint from $A,B;$
(c)  $\{\V_{AB}, \hat{\V}_{{B}C}\}$ is rigid if  $K$ is symmetric positive
or negative definite.
\end{enumerate}
\end{lemma}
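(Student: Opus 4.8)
The plan is to establish the three identities of part 1 first, since they feed the rigidity statements, and then to reduce parts 2--4 to the interface behaviour on $B$ together with the invertibility (and, for 4(c), the definiteness) of $K$. The basic observation I would record at the outset is that, because $\V_{B\tilde{B}}$ has representative matrix $(K\mid I)$ with $K$ invertible, it is the graph of an invertible map: writing a generic vector of $\V_{B\tilde{B}}$ as $(gK,g)$, one has $\V_{B\tilde{B}}\circ B=\F_B$, $\V_{B\tilde{B}}\times B=\0_B$ and symmetrically $\V_{B\tilde{B}}\circ \tilde{B}=\F_{\tilde{B}}$, $\V_{B\tilde{B}}\times \tilde{B}=\0_{\tilde{B}}$. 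For part 1(a) I would read $\hat{\V}_{BC}=\V_{\tilde{B}C}\lrar\V_{B\tilde{B}}$ as the equation $\V_{B\tilde{B}}\lrar \V_{\tilde{B}C}=\hat{\V}_{BC}$ with $\V_{\tilde{B}C}$ as the unknown, and apply the Implicit Inversion Theorem (Theorem~\ref{thm:IITlinear}) with $S=B$, $P=\tilde{B}$, $Q=C$. The existence conditions $\V_{B\tilde{B}}\circ B\supseteq\hat{\V}_{BC}\circ B$, $\V_{B\tilde{B}}\times B\subseteq\hat{\V}_{BC}\times B$ and the uniqueness conditions on $\tilde{B}$ are all automatic from the $\circ=\F$, $\times=\0$ facts above, so the unique solution is $\V_{\tilde{B}C}=\V_{B\tilde{B}}\lrar \hat{\V}_{BC}=\hat{\V}_{BC}\lrar\V_{B\tilde{B}}$. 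Parts 1(b) and 1(c) then follow by applying the Implicit Duality Theorem (Theorem~\ref{thm:idt0}) to the definition of $\hat{\V}_{BC}$ and to the identity of 1(a), respectively.

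For part 2 I would verify the full-sum and zero-intersection properties directly, eliminating $\tilde{B}$ via the graph relation $f_B=f_{\tilde{B}}K$. First compute $\hat{\V}_{BC}\circ B=(\V_{BC}\circ B)K$ and $\hat{\V}_{BC}\times B=(\V_{BC}\times B)K$. Then, for the pair $\{\V_{AB}\oplus\V_{\tilde{B}C},\V_{B\tilde{B}}\}$ (shared index $B\uplus\tilde{B}$), solving $(\V_{AB}\circ B)\oplus(\V_{\tilde{B}C}\circ\tilde{B})+\V_{B\tilde{B}}\ni(a,b)$ reduces, after substituting $g=b-w$, to $a-bK\in \V_{AB}\circ B+(\V_{BC}\circ B)K$; as $(a,b)$ ranges over $\F_{B\tilde{B}}$ this is equivalent to $\V_{AB}\circ B+\hat{\V}_{BC}\circ B=\F_B$. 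The identical elimination shows $[(\V_{AB}\times B)\oplus(\V_{\tilde{B}C}\times\tilde{B})]\cap\V_{B\tilde{B}}=\0_{B\tilde{B}}$ iff $\V_{AB}\times B\cap\hat{\V}_{BC}\times B=\0_B$. These are exactly the two defining conditions of rigidity for the two pairs, so the equivalence follows.

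For part 3, the first hypothesis gives $\V_{AB}\circ B+\V_{BC}\times B=\F_B$, and since $\V_{BC}\times B\subseteq\V_{BC}\circ B$ this yields the full-sum property of $\{\V_{AB},\V_{BC}\}$. Dualizing the second hypothesis with parts 3 and 4 of Theorem~\ref{thm:dotcrossidentity} turns it into $\V^{\perp}_{AB}\circ B+\V^{\perp}_{BC}\circ B=\F_B$, the full-sum property of $\{\V^{\perp}_{AB},\V^{\perp}_{BC}\}$, which by Theorem~\ref{thm:regularrecursive}(4) is the zero-intersection property of $\{\V_{AB},\V_{BC}\}$; hence rigidity. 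For 4(a) I set $U=\V_{AB}\circ B$, so the hypothesis reads $\V_{BC}\times B=U^{\perp}$; over $\mathbb{R}$ one has $U\cap U^{\perp}=\0_B$ and $U+U^{\perp}=\F_B$, whence full sum follows from $\V_{AB}\circ B+\V_{BC}\circ B\supseteq U+U^{\perp}=\F_B$ and zero intersection from $\V_{AB}\times B\cap\V_{BC}\times B\subseteq U\cap U^{\perp}=\0_B$. Part 4(b) is then 4(a) applied with $\V_{BC}:=(\V^{\perp}_{AB})_{\tilde{A}B}$, once I observe $(\V^{\perp}_{AB})_{\tilde{A}B}\times B=\V^{\perp}_{AB}\times B=(\V_{AB}\circ B)^{\perp}$ by Theorem~\ref{thm:dotcrossidentity}(4), so that the hypothesis of 4(a) is met.

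The main obstacle is part 4(c), where 4(a) does not apply verbatim: $\hat{\V}_{BC}\times B=(\V_{BC}\times B)K=U^{\perp}K$ need not equal $U^{\perp}$. Instead I would prove rigidity of $\{\V_{AB},\hat{\V}_{BC}\}$ directly, and the definiteness of $K$ enters precisely in killing the interface intersection. For $x\in U\cap U^{\perp}K$, write $x=yK$ with $y\in U^{\perp}$; then $\langle x,y\rangle=yKy^{T}$, which vanishes because $x\in U$ and $y\in U^{\perp}$, yet is nonzero for $y\neq 0$ by definiteness of $K$ (the negative-definite case being identical), forcing $y=0$ and $x=0$. Thus $U\cap U^{\perp}K=\0_B$, which gives zero intersection via $\V_{AB}\times B\cap\hat{\V}_{BC}\times B\subseteq U\cap U^{\perp}K$, while the count $\dim(U+U^{\perp}K)=\dim U+\dim U^{\perp}=|B|$ (using that $K$ is invertible, together with Theorem~\ref{thm:perperp}) gives the full-sum property. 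I expect this quadratic-form step to be the crux: it is the one place where linear-algebraic bookkeeping alone is insufficient and the metric positivity of $K$ must be used, and some care is needed to keep the row/column and $B$ versus $\tilde{B}$ conventions consistent so that $x=yK$ and $\langle x,y\rangle=yKy^{T}$ are the correct expressions.
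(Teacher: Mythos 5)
Your proof is correct, and for parts 1, 3, 4(a) and 4(b) it follows essentially the paper's own route: implicit inversion (Theorem \ref{thm:IITlinear}) for 1(a), implicit duality (Theorem \ref{thm:idt0}) for 1(b)--(c), monotonicity of restriction/contraction together with the full-sum/zero-intersection exchange (part 4 of Theorem \ref{thm:regularrecursive}) for part 3, and real self-orthogonality ($\V_X\cap\V_X^{\perp}=\0_X$) for 4(a)--(b). Where you genuinely diverge is in the \emph{full-sum halves} of parts 2 and 4(c). The paper proves only the zero-intersection equivalence directly (the same elimination $f_B^T=g_{\tilde{B}}^TK$ that you use) and then obtains the full-sum half by dualizing: it invokes 1(b), the representative matrix $((K^T)^{-1}|I)$ (resp.\ $(K^{-1}|I)$ in 4(c), which is where symmetry of $K$ enters) of $\V^{\perp}_{(-B)\tilde{B}}$, repeats the same elimination on the dual side, and converts back via part 4 of Theorem \ref{thm:regularrecursive}. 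You instead handle the full-sum conditions head on: in part 2 by running the identical elimination of $\tilde{B}$ on restrictions rather than contractions, and in 4(c) by the dimension count $r(U+U^{\perp}K)=r(U)+r(U^{\perp})=|B|$, which needs only invertibility of $K$ plus the zero intersection you have just established. Both arguments are sound; yours is more self-contained (no appeal to the implicit duality theorem or to the primal--dual exchange at those two places) and, notably, your 4(c) never uses symmetry of $K$ --- only definiteness of the quadratic form $y\mapsto yKy^{T}$ and invertibility --- whereas the paper's dual pass needs $K^{-1}$ to be symmetric definite. What the paper's route buys in exchange is an illustration of the duality mechanism (rigidity as a weak form of complementary orthogonality) that this section is designed to emphasize, and reuse of lemmas already in place. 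One cosmetic remark: in part 3 the second hypothesis is already stated in terms of the dual spaces, so no ``dualization via Theorem \ref{thm:dotcrossidentity}'' is needed; monotonicity $\V^{\perp}_{AB}\times B\subseteq\V^{\perp}_{AB}\circ B$ gives $\V^{\perp}_{AB}\circ B+\V^{\perp}_{BC}\circ B=\F_B$ at once, exactly as in the paper --- this does not affect correctness.
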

\begin{proof}
1(a). It is clear that   $\V_{B\tilde{B}}\circ \tilde{B}=\F_{\tilde{B}}.$
Further, since $K$ is invertible, it is clear that $\V_{B\tilde{B}}\times \tilde{B}=\0_{\tilde{B}}.$
By Theorem \ref{thm:IITlinear}, since $\V_{B\tilde{B}}\circ \tilde{B}\supseteq \V_{\tilde{B}C}\circ \tilde{B}$ and $ \V_{B\tilde{B}}\times \tilde{B}\subseteq \V_{\tilde{B}C}\times \tilde{B},$ from $\hat{\V}_{BC}=\V_{\tilde{B}C} \lrar \V_{B\tilde{B}}, $ it follows that 
$\V_{\tilde{B}C}=(\hat{\V}_{BC}\lrar \V_{B\tilde{B}}).$
%

We are given that $\hat{\V}_{BC}\equivd \V_{\tilde{B}C} \lrar \V_{B\tilde{B}}.$
Therefore, by Theorem \ref{thm:idt0}, and (a) above, we have
1(b) and 1(c).

2. We have $\hat{\V}_{BC}\equivd \V_{\tilde{B}C} \lrar \V_{B\tilde{B}}.$ 
We note that, when $P,Q,R,S,T$ are pairwise disjoint,\\
$(\V_{PQ}\lrar\V_{RS})\lrar\V_{ST}=(\V_{PQ}\lrar\V_{RS})\lrar\V_{ST}.$
Therefore $\hat{\V}_{BC}\times B= \hat{\V}_{BC}\lrar \0_C$\\$=  \V_{\tilde{B}C} \lrar \V_{B\tilde{B}}\lrar \0_C= \V_{\tilde{B}C} \lrar\0_C\lrar \V_{B\tilde{B}}=\V_{\tilde{B}C}\times \tilde{B}\lrar\V_{B\tilde{B}}.$
We are given that $(K|I)$ is a representative matrix of $\V_{B\tilde{B}}.$
Therefore,
$f_B \in (\V_{AB}\times B)\cap (\hat{\V}_{BC}\times B),$
iff $f_B\in (\V_{AB}\times B)$ and $f_B^T= g_B^TK,$ with $g_B\in ({\V}_{BC}\times B),$ i.e., iff $f_B\in (\V_{AB}\times B), g_{\tilde{B}}\in ({\V}_{BC}\times B)_{\tilde{B}},$ and $f_B^T= g_{\tilde{B}}^TK,$ i.e., iff \\$(f_B,g_{\tilde{B}})\in 
(\V_{AB}\times B \oplus ({\V}_{BC}\times B)_{\tilde{B}})\cap \V_{B\tilde{B}},$
i.e., iff $(f_B,g_{\tilde{B}})\in 
((\V_{AB}\oplus {\V}_{\tilde{B}C})\times B{\tilde{B}})\cap \V_{B\tilde{B}}.$
Thus the zero intersection property holds for $\{\V_{AB},\hat{\V}_{BC}\}$
iff it holds for $\{(\V_{AB}\oplus {\V}_{\tilde{B}C}),\V_{B\tilde{B}}\}.$

Next, we have $\hat{\V}_{BC}^{\perp}= (\V^{\perp}_{\tilde{B}C} \rightleftharpoons \V^{\perp}_{B\tilde{B}})= (\V^{\perp}_{\tilde{B}C} \lrar \V^{\perp}_{(-B)\tilde{B}})$ (by part 1 above), $((K^T)^{-1}|I)$ is a representative matrix 
for $\V^{\perp}_{(-B)\tilde{B}},$
and $(\V_{AB}\oplus {\V}_{\tilde{B}C})^{\perp}=\V_{AB}^{\perp}\oplus {\V}_{\tilde{B}C}^{\perp}.$ Therefore, by the same argument as above, it follows that
zero intersection property holds for $\{\V^{\perp}_{AB},\hat{\V}^{\perp}_{BC}\}$
iff it holds for $\{(\V^{\perp}_{AB}\oplus {\V}^{\perp}_{\tilde{B}C}),\V^{\perp}_{(-B)\tilde{B}}\},$ i.e., for $\{(\V^{\perp}_{AB}\oplus {\V}^{\perp}_{\tilde{B}C}),\V^{\perp}_{B\tilde{B}}\}.$
By part 4 of Theorem \ref{thm:regularrecursive}, we therefore have that the full sum property holds for $\{\V_{AB},\hat{\V}_{BC}\}$
iff it holds for $\{(\V_{AB}\oplus {\V}_{\tilde{B}C}),\V_{B\tilde{B}}\}.$

Thus $\{\V_{AB},\hat{\V}_{BC}\}$ is rigid iff $\{(\V_{AB}\oplus {\V}_{\tilde{B}C}),\V_{B\tilde{B}}\}$ is rigid.

3. Since $\V_{XY}\circ Y\supseteq \V_{XY}\times Y,$ for any vector space  $\V_{XY},$ it follows from the hypothesis, that the full sum condition holds for
$\{\V_{AB},\V_{BC}\}$ and for $\{\V^{\perp}_{AB},\V^{\perp}_{BC}\}.$
By part 4 of Theorem \ref{thm:regularrecursive}, it follows that
 both the full sum and the zero intersection properties hold for
$\{\V_{AB},\V_{BC}\}.$
We conclude that $\{\V_{AB},\V_{BC}\}$ is rigid.

4(a).
Since $\V_{AB}\times B\subseteq \V_{AB}\circ B,$ and $(\V_{AB}\circ B)^{\perp}=\V_{BC}\times B,$ we must have that $(\V_{AB}\times B)^{\perp}\supseteq \V_{BC}\times B.$ 
For any real vector space $\V_X,$ we have $\V_X\cap \V_X^{\perp}=\0_X$
 (no nonzero real vector can be orthogonal to itself).
 Since the vector spaces under consideration are real, it is therefore clear that
$\V_{AB}\times B\cap \V_{BC}\times B=\0_B.$ \\
Next $(\V_{AB}\circ B)^{\perp}=\V_{BC}\times B$
implies that $(\V_{AB}^{\perp}\times B)=\V^{\perp}_{BC}\circ B.$
Arguing as before, it is clear that \\$\V_{AB}^{\perp}\times B\cap \V_{BC}^{\perp}\times B=\0_B.$
Therefore by part 4 of Theorem \ref{thm:regularrecursive},
the full sum property holds for $\{\V_{AB}, \V_{BC}\}.$
We conclude that $\{\V_{AB}, \V_{BC}\}$ is rigid.

4(b). We note that $(\V_{AB}\circ B)^{\perp}=\V_{AB}^{\perp}\times B=
(\V^{\perp}_{AB})_{\tilde{A}B}\times B.$
The result now follows from 4(a) above.

4(c). 
We have,
$\hat{\V}_{BC}\times B= \hat{\V}_{BC}\lrar \0_C=
(\V_{\tilde{B}C} \lrar \V_{B\tilde{B}})\lrar \0_C=(\V_{\tilde{B}C}\lrar \0_C)\lrar \V_{B\tilde{B}}$\\$= \V_{\tilde{B}C}\times \tilde{B}\lrar \V_{B\tilde{B}}.$
Since $(K|I)$ is a representative matrix of $\V_{B\tilde{B}},$ it is clear that $f_B\in \hat{\V}_{BC}\times B$ iff there exists $g_{\tilde{B}}\in \V_{\tilde{B}C}\times \tilde{B},$ such that $f^T_B=g_{\tilde{B}}^TK.$
If $M$ is a  representative matrix for $\V_{\tilde{B}C}\times \tilde{B},$
i.e., for $({\V}_{BC}\times B)_{\tilde{B}},$
it follows that
$ MK$ is a representative matrix for $\hat{\V}_{BC}\times B.$
Let $N$ be a representative matrix of $\V_{AB}\times B.$
Since $\V_{AB}\times B\subseteq \V_{AB}\circ B,$ and $(\V_{AB}\circ B)^{\perp}=\V_{BC}\times B,$ it follows that $NM^T=0.$
Suppose $f_B\in (\V_{AB}\times B)\cap (\hat{\V}_{BC}\times B).$
Then $f_B^T=\lambda ^T N= \sigma^T MK,$ for some $\lambda , \sigma.$  Therefore
$f_B^TM^T\sigma = \lambda ^T NM^T\sigma = \sigma^T MKM^T\sigma =0.$
Since $K$ is positive or negative definite, so must $MKM^T$ be, and we must have $\sigma =0.$
 Therefore $f_B=\sigma^T MK=0.$ Thus $(\V_{AB}\times B)\cap (\hat{\V}_{BC}\times B)=\0_B.$
\\
To prove  that  the zero intersection property holds
 for $\{\V_{AB}^{\perp},\hat{\V}_{BC}^{\perp}\},$
we note that $\hat{\V}_{BC}^{\perp}=
{\V}_{{B}C}^{\perp}\lrar \V_{(-B)\tilde{B}}^{\perp},$  by part 1 above,
and since $(\V_{AB}\circ B)^{\perp}= \V_{BC}\times B,$ we must have
$((\V_{AB}\circ B)^{\perp})^{\perp}= (\V_{BC}\times B)^{\perp},$
i.e., $(\V_{AB}^{\perp}\times B)^{\perp}= \V_{BC}^{\perp}\circ B\supseteq \V_{BC}^{\perp}\times B.$
Next since $(K|I)$ is a representative matrix of $\V_{B\tilde{B}},$
and $K$ is symmetric positive or negative definite,
$(K^{-1}|I)$ is a representative matrix of $\V_{(-B)\tilde{B}}^{\perp}$
and $K^{-1}$ is symmetric positive or negative definite.
Therefore, using the same argument as the one for proving  the zero intersection property
of $\{\V_{AB},\hat{\V}_{BC}\}$
we can show that  the zero intersection property holds
 $\{\V_{AB}^{\perp},\hat{\V}_{BC}^{\perp}\}.$
By part 4 of Theorem \ref{thm:regularrecursive}, this means that  the full sum property holds for  $\{\V_{AB},\hat{\V}_{BC}\}.$
Thus $\{\V_{AB},\hat{\V}_{BC}\}$
is rigid.
\end{proof}
Lemma \ref{lem:gyrator_r} implies the following result about networks.

We need a few preliminary definitions.
Let $\N^g_{A}\equivd (\V_{AB},\A_B).$
We say $\N^g_{\tilde{A}}$ is a \nw{disjoint copy} of $\N^g_{A},$ iff
$\N^g_{\tilde{A}}\equivd ((\V_{AB})_{\tilde{A}\tilde{B}},\A_{\tilde{B}}),$
where $\tilde{A},\tilde{B} $ are copies respectively of $A,B,$ with
$A,B, \tilde{A},\tilde{B} $ being pairwise disjoint.
\\The \nw{dual} of $\N^g_{A},$ denoted $\N^{gdual}_{A},$
is defined by $\N^{gdual}_{A}\equivd (\V^{\perp}_{AB},\V^{\perp}_B).$
\\It is clear that the dual of $\N^{gdual}_{A}$ is $(\N^g_A)^{hom}\equivd (\V_{AB},\V_B),
$ where $\V_B$ is the vector space associate of $\A_B.$
\begin{theorem}
Let $A,B,D,\tilde{A}, \tilde{B}, \tilde{D},$ be pairwise disjoint with $\tilde{A},\tilde{B},\tilde{D},$ being copies respectively of $A,B,D.$
Let $\N^g_{AB}\equivd (\V_{ABD},\A_D)$ and let
$\N^{gdual}_{\tilde{A}\tilde{B}}$ be a disjoint copy of $\N^{gdual}_{AB}.$
\begin{enumerate}
\item If $\N^g_{AB}$ is rigid,   then  the generalized
multiport
$[\N^g_{AB}\oplus \N^{gdual}_{\tilde{A}\tilde{B}}]\cap I_{B\tilde{B}}$ is also rigid,
where
$I_{B\tilde{B}}$ has the representative matrix $(I|I)$ with the
identity submatrices corresponding to columns $B,\tilde{B}.$
\item If $\N^g_{AB}$ is rigid,   then  the generalized
multiport
$[\N^g_{AB}\oplus \N^{gdual}_{\tilde{A}\tilde{B}}]\cap \V_{B\tilde{B}}$ is also rigid,
where
$\V_{B\tilde{B}}$ has the representative matrix $(K|I),$ 
$K$ symmetric positive or negative definite, with the
identity submatrix corresponding to columns $\tilde{B}.$
\end{enumerate}
\end{theorem}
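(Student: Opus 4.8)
The plan is to treat $[\N^g_{AB}\oplus \N^{gdual}_{\tilde A\tilde B}]\cap \mathcal{Z}_{B\tilde B}$ (where $\mathcal{Z}_{B\tilde B}$ stands for $I_{B\tilde B}$ in part 1 and for $\V_{B\tilde B}$ in part 2) as the connection of the two generalized multiports $\N^g_{AB}\equivd(\V_{ABD},\A_D)$ and $\N^{gdual}_{\tilde A\tilde B}\equivd((\V^\perp_{ABD})_{\tilde A\tilde B\tilde D},(\V^\perp_D)_{\tilde D})$ through the port variables $B$ and $\tilde B$, and to invoke Theorem \ref{thm:regularmultiportrecursive}. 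In the notation of that theorem I would take the first multiport's port set to be $A\uplus B$ with internal set $D$, the second's port set to be $\tilde A\uplus\tilde B$ with internal set $\tilde D$, and the connecting subsets to be $A_1=B$, $C_1=\tilde B$, with $\A_{A_1C_1}=\mathcal{Z}_{B\tilde B}$. Theorem \ref{thm:regularmultiportrecursive} then reduces rigidity of the connection to three conditions: rigidity of $\N^g_{AB}$, rigidity of $\N^{gdual}_{\tilde A\tilde B}$, and rigidity of the interface multiport $((\V_{ABD}\lrar\V_D)\oplus((\V^\perp_{ABD})_{\tilde A\tilde B\tilde D}\lrar(\V^\perp_D)_{\tilde D}),\mathcal{Z}_{B\tilde B})$.

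The first condition is the hypothesis. For the second, rigidity of $\N^g_{AB}$ is by definition rigidity of the pair $\{\V_{ABD},\V_D\}$, which by part 4 of Theorem \ref{thm:regularrecursive} is equivalent to rigidity of $\{\V^\perp_{ABD},\V^\perp_D\}$; since the minor, sum, intersection and orthogonality operations all commute with relabeling of columns, rigidity is preserved under taking disjoint copies, so $\{(\V^\perp_{ABD})_{\tilde A\tilde B\tilde D},(\V^\perp_D)_{\tilde D}\}$ is rigid, i.e. $\N^{gdual}_{\tilde A\tilde B}$ is rigid. For the interface I would write $\hat\V_{AB}\equivd\V_{ABD}\lrar\V_D$ for the port behaviour of $\N^g_{AB}$ and use the Implicit Duality Theorem (Theorem \ref{thm:idt0}, with $S=A\uplus B$, $P=D$) to get $\hat\V_{AB}^\perp=\V^\perp_{ABD}\lrar\V^\perp_D$. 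Relabeling, the dual's port behaviour is $(\V^\perp_{ABD})_{\tilde A\tilde B\tilde D}\lrar(\V^\perp_D)_{\tilde D}=(\hat\V_{AB}^\perp)_{\tilde A\tilde B}$. Thus the interface pair is exactly $\{\hat\V_{AB}\oplus(\hat\V_{AB}^\perp)_{\tilde A\tilde B},\mathcal{Z}_{B\tilde B}\}$: the original port behaviour, its complementary-orthogonal copy, and the coupling $\mathcal{Z}_{B\tilde B}$.

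This interface pair is precisely of the form handled by Lemma \ref{lem:gyrator_r}. Applying part 2 of that lemma with its $\V_{AB}$ taken to be $\hat\V_{AB}$, its $\V_{\tilde B C}$ taken to be $(\hat\V_{AB}^\perp)_{\tilde A\tilde B}$ (the copy of $\V_{BC}\equivd(\hat\V_{AB}^\perp)_{B\tilde A}$), and its $\V_{B\tilde B}=\mathcal{Z}_{B\tilde B}$ (whose representative matrix $(K|I)$ has $K$ invertible in both cases), rigidity of the interface pair becomes equivalent to rigidity of $\{\hat\V_{AB},(\hat\V_{AB}^\perp)_{\tilde A\tilde B}\lrar\mathcal{Z}_{B\tilde B}\}$. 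In part 1, where $K=I$, the matched composition $(\hat\V_{AB}^\perp)_{\tilde A\tilde B}\lrar I_{B\tilde B}$ is just the relabeling $(\hat\V_{AB}^\perp)_{\tilde A B}$, and $\{\hat\V_{AB},(\hat\V_{AB}^\perp)_{\tilde A B}\}$ is rigid by part 4(b) of Lemma \ref{lem:gyrator_r} (using $(\hat\V_{AB}\circ B)^\perp=\hat\V_{AB}^\perp\times B$ from Theorem \ref{thm:dotcrossidentity}). In part 2, where $K$ is symmetric positive or negative definite, the same identity $(\hat\V_{AB}\circ B)^\perp=\hat\V_{AB}^\perp\times B$ verifies the hypothesis of part 4(c) of Lemma \ref{lem:gyrator_r} with $\V_{BC}=(\hat\V_{AB}^\perp)_{B\tilde A}$, so $\{\hat\V_{AB},(\hat\V_{AB}^\perp)_{\tilde A\tilde B}\lrar\V_{B\tilde B}\}$ is rigid. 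In both cases the interface is rigid, and Theorem \ref{thm:regularmultiportrecursive} yields the claim.

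I expect the main obstacle to be the careful bookkeeping of the copy (relabeling) operations — keeping straight which of $A,B$ is sent to which of $\tilde A,\tilde B$ under the successive copies — together with the conceptual step of identifying, via the Implicit Duality Theorem, the dual multiport's port behaviour as the complementary-orthogonal copy of the original's. Once that identification is in place, the genuine content lies entirely in Lemma \ref{lem:gyrator_r}: part 4(b) supplies part 1, while part 4(c), where the positive or negative definiteness of $K$ is actually used, supplies part 2. The realness of the network spaces is needed exactly so that parts 4(b) and 4(c) apply, since these rest on $\V_X\cap\V_X^\perp=\0_X$.
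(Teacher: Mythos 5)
Your proposal is correct and follows essentially the same route as the paper: reduce via Theorem \ref{thm:regularmultiportrecursive} to rigidity of the two multiports and the interface pair, identify the dual's port behaviour as $(\hat{\V}_{AB}^{\perp})_{\tilde{A}\tilde{B}}$ via the Implicit Duality Theorem, and finish with parts 2, 4(b), 4(c) of Lemma \ref{lem:gyrator_r} (including the same verification $(\hat{\V}_{AB}\circ B)^{\perp}=\hat{\V}_{AB}^{\perp}\times B$ of the part-4 hypothesis). The only difference is cosmetic ordering — the paper applies 4(b)/4(c) first and then part 2, whereas you apply part 2 first — which does not change the logical content.
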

\begin{proof}
1. By  part 4 of Theorem \ref{thm:regularrecursive},
if  $\N^g_{AB}\equivd (\V_{ABD},\A_D)$ is rigid, then so is
$ \N^{gdual}_{A{B}}\equivd (\V^{\perp}_{ABD},\V^{\perp}_D).$
\\Therefore the disjoint copy $\N^{gdual}_{\tilde{A}\tilde{B}}$ of
$ \N^{gdual}_{A{B}}$ is rigid. Since $\N^g_{AB}, \N^{gdual}_{\tilde{A}\tilde{B}}$ are rigid, 
 by Theorem \ref{thm:regularmultiportrecursive},\\ $[\N^g_{AB}\oplus \N^{gdual}_{\tilde{A}\tilde{B}}]\cap I_{B\tilde{B}}$ is rigid iff $\{(\V_{ABD}\lrar \V_D)\oplus 
(\V^{\perp}_{\tilde{A}\tilde{B}\tilde{D}}\lrar \V^{\perp}_{\tilde{D}}), I_{B\tilde{B}}\}$ is rigid, where $\V_D$ is the vector space translate of $\A_D.$
By Theorem \ref{thm:idt0}, $(\V_{ABD}\lrar \V_D)^{\perp}=
\V^{\perp}_{ABD}\lrar \V^{\perp}_D.$
\\Let $\V_{AB}\equivd \V_{ABD}\lrar \V_D, {\V}'_{AB}\equivd (\V^{\perp}_{ABD}\lrar \V^{\perp}_D)$ and let $ {\V}'_{\tilde{A}{B}}, {\V}'_{\tilde{A}\tilde{B}}$ be copies of ${\V}'_{AB}.$
\\By part 4(b) of Lemma \ref{lem:gyrator_r}, $\{\V_{AB}, {\V}'_{\tilde{A}B}\}$ is rigid.
By part 2 of Lemma \ref{lem:gyrator_r},
$\{\V_{AB}, {\V}'_{\tilde{A}B}\}$ is rigid iff\\ 
$\{\V_{AB}\oplus {\V}'_{\tilde{A}\tilde{B}},I_{B\tilde{B}}\}$ 
is rigid.
Thus $\{\V_{AB}\oplus {\V}'_{\tilde{A}\tilde{B}},I_{B\tilde{B}}\}\equivd\{(\V_{ABD}\lrar \V_D)\oplus 
(\V^{\perp}_{\tilde{A}\tilde{B}\tilde{D}}\lrar \V^{\perp}_{\tilde{D}}), I_{B\tilde{B}}\}$ is rigid and, as we saw before, this is equivalent
to  $[\N^g_{AB}\oplus \N^{gdual}_{\tilde{A}\tilde{B}}]\cap I_{B\tilde{B}}$ being rigid.
\\2. By part 2 of Lemma \ref{lem:gyrator_r},
$\{\V_{AB}\oplus {\V}'_{\tilde{A}\tilde{B}},I_{B\tilde{B}}\}$
is rigid iff $\{\V_{AB}, {\V}'_{\tilde{A}{B}}\}$ is rigid.
By part 4(c) of Lemma \ref{lem:gyrator_r}, 
$\{\V_{AB}, {\V}'_{\tilde{A}{B}}\}$ is rigid iff 
$\{\V_{AB}, \hat{\V}_{\tilde{A}{B}}\}$ is rigid
where $\hat{\V}_{\tilde{A}{B}}=({\V}'_{\tilde{A}{B}})_{\tilde{A}\tilde{B}}\lrar \V_{B\tilde{B}}.$ By part 2 of  Lemma \ref{lem:gyrator_r},
$\{\V_{AB}, \hat{\V}_{\tilde{A}{B}}\}$ is rigid iff 
 $\{\V_{AB}\oplus {\V}'_{\tilde{A}\tilde{B}},\V_{B\tilde{B}}\} $ is rigid. But this is the same as $[\N^g_{AB}\oplus \N^{gdual}_{\tilde{A}\tilde{B}}]\cap \V_{B\tilde{B}}$ being rigid.
\end{proof}

\begin{remark}
We have that $\{\V_X,\V_X^{\perp}\}$ is rigid.  By Theorem \ref{thm:idt0}, $\{\V_A\lrar \V_B, \V^{\perp}_A\lrar \V^{\perp}_B\},B\subseteq A,$ is also rigid.
We have that $\{\V^1_A+ \V^2_A, (\V^1_A)^{\perp}\cap  (\V^2_A)^{\perp}\}$ is rigid since $(\V^1_A+ \V^2_A)^{\perp}=(\V^1_A)^{\perp}\cap  (\V^2_A)^{\perp}.$
\\There appear to be no counterparts to these facts in terms of rigidity
without using complementary orthogonality.
\end{remark}

\subsection{Rigid families of affine spaces}
\label{subsec:rigidfamiliesaffine}

The motivation for the notion of rigidity is to capture the properties of existence
and uniqueness of solutions for linear networks treating them as
a connection of multiports. The rigidity of pairs of affine spaces appears to be the
right starting point. We need to
carry this idea over to whole families of affine spaces in order to
speak of many multiports going to make up a larger network or multiport.
For this purpose  we first give a definition of rigidity
for families relevant to our problem (`associative families') and then justify the definition by describing
the properties of rigid associative families through subsequent lemmas.
Finally in Theorem \ref{thm:associativerigidrecursiven}, we have a recursive result which expresses the rigidity
of an associative  family of affine spaces in terms of a partition 
into rigid subfamilies and the rigidity of another family where the blocks of the 
partition are treated as appropriate  derived affine spaces.
\begin{definition}
\label{def:gen_lrar1}
An  \nw{associative family}  $\mathcal{H}\equivd \{\K_{Y_i},i=1, \cdots , m\}$ is a family of collections of vectors where no element in $Y\equivd \bigcup_{i=1}^mY_i$ belongs to more than two of the $Y_i.$ 
When $Y_i=\emptyset,$ we will take $\K_{Y_i}=\K_{\emptyset}\equivd \emptyset .$
Let $\mathcal{H}_{nonvoid}\equivd \{\K_{Y_i},i=1, \cdots , t\}$ be the subfamily 
of $\mathcal{H}$ whose elements are non void collections that belong to 
$\mathcal{H}.$ 
For $i=1, \cdots , t,$ let $Z_i$ be the subset of elements of $Y_i$ 
which belong to none of the $Y_j, j\ne i$ and let $Z\equivd \bigcup_{i=1}^tZ_i.$\\
We define $\mnw{\lrar(\mathcal{H})} \equivd$\\$\{(f_{Z_1}, \cdots , f_{Z_t}):(f_{Z_i},f_{Y_i-Z_i})\in \K_{Y_i},i=1, \cdots , t \ \mbox{and} \ f_{Y_i-Z_i}(e)=f_{Y_j-Z_j}(e) \ \mbox{whenever}\ e \in Y_i\cap Y_j, i\ne j  \}.$
We define
 $\mnw{\rightleftharpoons(\mathcal{H})}\equivd$\\$\{(f_{Z_1}, \cdots , f_{Z_t}):(f_{Z_i},f_{Y_i-Z_i})\in \K_{Y_i},i=1, \cdots , t \ \mbox{and} \ f_{Y_i-Z_i}(e)=-f_{Y_j-Z_j}(e) \ \mbox{whenever}\ e \in Y_i\cap Y_j, i\ne j   \}.$
\end{definition}
\begin{example}
\label{eg:associativefamily}
Let $\mathcal{H}$ be a $0,1$ matrix such that every column has atmost two nonzero elements
 as shown below
\begin{align}
\mathcal{H}=\ppmatrix{
1&0&1&0&1&0&1&0&0\\
0&0&1&0&1&0&1&0&0\\
0&0&0&0&0&0&0&0&1\\
0&1&0&1&0&1&0&1&0\\
0&1&0&1&0&0&0&1&0\\
0&0&0&0&0&0&0&0&0}.
\end{align}
Let us name the columns as $e_1, \cdots, e_9$ and 
 treat the rows of $\mathcal{H}$ as characteristic vectors of sets of columns. 
Then the sets corresponding to rows $1$ to $9$ would be 
$Y_1\equivd\{e_1,e_3,e_5,e_7\}, Y_2\equivd\{e_3,e_5,e_7\},Y_3\equivd\{e_9\},Y_4\equivd\{e_2,e_4,e_6,e_8\},Y_5\equivd\{e_2,e_4,e_8\},Y_6\equivd\emptyset .$
Let $\mathcal{H}\equivd \{\K_{Y_1}, \K_{Y_2},\K_{Y_3},\K_{Y_4},\K_{Y_5},\K_{Y_6}\}.$
Then $\mathcal{H}$ is an associative family.
\end{example}

One could equivalently use a graph (with selfloops permitted)
to construct an associative family, using the sets in the family to be the 
edges incident at the different vertices.
\begin{definition}
For an associative family $\mathcal{H}\equivd \{\K_{Y_1}, \cdots , \K_{Y_m}\},$
the graph $\mnw{\G_{\mathcal{H}}}$ is defined to have  vertices $\K_{Y_1}, \cdots , \K_{Y_m},$
edges $Y_1\cup \cdots \cup Y_m$ with an edge $e$ lying between 
vertices $\K_{Y_i},\K_{Y_j}$ iff $e\in Y_i\cap Y_j.$
We will refer to $\G_{\mathcal{H}}$ as the graph of $\mathcal{H}.$
\end{definition}
\begin{remark}
We note the following.
\begin{enumerate}
\item Both the operations $\lrar(\cdot)$ and $\rightleftharpoons(\cdot ),$ are well defined over associative families and they agree with the corresponding 
operations over pairs of collections of vectors.
If $\mathcal{H}\equivd \{\K_{Y_1},\K_{Y_2},\K_{Y_3}\},$ is an associative family, it is clear that
\\$\lrar(\mathcal{H})= (\K_{Y_1}\lrar\K_{Y_2})\lrar \K_{Y_3}= \K_{Y_1}\lrar(\K_{Y_2}\lrar \K_{Y_3})$ and\\ 
$\rightleftharpoons(\mathcal{H})= [(\K_{Y_1}\rightleftharpoons\K_{Y_2})\rightleftharpoons \K_{Y_3}]= [\K_{Y_1}\rightleftharpoons(\K_{Y_2}\rightleftharpoons \K_{Y_3})].$
By induction it follows that, treated as binary operations, both $\lrar(\cdot)$ and $\rightleftharpoons(\cdot ),$ are associative over an associative family 
$\mathcal{H}$ and independent of the way the brackets are constructed these operations
reduce to $\lrar(\mathcal{H})$ and $\rightleftharpoons(\mathcal{H} ),$ respectively.
\item Suppose $\mathcal{H}\equivd \{\K_{Y_i},i=1, \cdots , m\}$ is an associative 
family with every element of $\bigcup _{i=1}^{m}\K_{Y_i}$ belonging to exactly 
two of the $Y_i.$ Then $(\lrar(\mathcal{H}))=(\rightleftharpoons(\mathcal{H}))=\emptyset.$ For consistency of notation,
 in this case we denote $\lrar(\mathcal{H})$ and $\rightleftharpoons(\mathcal{H})$ by $\K_{\emptyset}.$
We note that, by the definition of operations $\lrar(\cdot)$ and $\rightleftharpoons(\cdot ),$ we must have  $(\K_{\emptyset}\lrar \K_X)= \K_X,$ and $(\K_{\emptyset}\rightleftharpoons \K_X)= \K_X,$ 
whenever $\K_X= (\lrar(\mathcal{H})),$ or $\K_X =  (\rightleftharpoons(\mathcal{H})).$
If the graph $\G_{\mathcal{H}}$ has edges and is connected, then 
it does not have isolated vertices. This means $\mathcal{H}$ has no element 
which is an empty collection of vectors. Further, if it is connected and  has at least one selfloop, no subfamily $\mathcal{H}_1$ of $\mathcal{H}$ would be such that $\lrar(\mathcal{H}_1)=\K_{\emptyset}.$ 
\item Let $\mathcal{H}=\mathcal{H}_1\uplus \mathcal{H}_2.$ Then $\lrar(\mathcal{H})\ =\ (\lrar(\mathcal{H}_1)) \lrar  (\lrar(\mathcal{H}_2)),$
and\\ $\rightleftharpoons(\mathcal{H})\ =\ (\rightleftharpoons(\mathcal{H}_1)) \rightleftharpoons(\rightleftharpoons(\mathcal{H}_2)) .$
\item By induction on the size of $\mathcal{H},$ one can prove the following generalization of 
Theorem \ref{thm:idt0}:
\begin{theorem}
\label{thm:idtassociative}
Let $\mathcal{H}\equivd \{\V_{Y_i},i=1, \cdots , m\}$ be an associative family of vector 
spaces
and let $\mathcal{H}^{\perp}\equivd \{\V^{\perp}_{Y_i},i=1, \cdots , m\}.$
Then
$(\lrar(\mathcal{H}))^{\perp}=(\rightleftharpoons(\mathcal{H}^{\perp})).$
\end{theorem}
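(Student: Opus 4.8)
The plan is to induct on $m$, the number of members of $\mathcal{H}$, using the two-space Implicit Duality Theorem (Theorem \ref{thm:idt0}) as the engine of the inductive step. The base case $m=1$ is immediate: here $\lrar(\mathcal{H})=\V_{Y_1}$ and $\rightleftharpoons(\mathcal{H}^{\perp})=\V^{\perp}_{Y_1}$, so $(\lrar(\mathcal{H}))^{\perp}=\V^{\perp}_{Y_1}=\rightleftharpoons(\mathcal{H}^{\perp})$. (Members with $Y_i=\emptyset$ contribute nothing and may be discarded, so I may assume each $Y_i\neq\emptyset$, whence each $\V_{Y_i}$ is nonvoid.)

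For the inductive step I would peel off the single space $\V_{Y_1}$, writing $\mathcal{H}=\mathcal{H}_1\uplus\mathcal{H}_2$ with $\mathcal{H}_1=\{\V_{Y_1}\}$ and $\mathcal{H}_2=\{\V_{Y_2},\ldots ,\V_{Y_m}\}$. By the decomposition identity for $\lrar$ recorded above, $\lrar(\mathcal{H})=\V_{Y_1}\lrar(\lrar(\mathcal{H}_2))$. The crucial bookkeeping step is to verify that this is genuinely a two-space matched composition of the form covered by Theorem \ref{thm:idt0}. Setting $Y'\equivd\bigcup_{i\ge 2}Y_i$ and $P_1\equivd Y_1\cap Y'$, I would show, using the associativity hypothesis that no element lies in more than two of the $Y_i$, that the index set of $\lrar(\mathcal{H}_2)$ is exactly $(\bigcup_{i\ge 2}Z_i)\uplus P_1$, and that the three sets $Z_1$, $\bigcup_{i\ge 2}Z_i$ and $P_1$ are pairwise disjoint. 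Thus $\V_{Y_1}$ lives on $Z_1\uplus P_1$, $\lrar(\mathcal{H}_2)$ lives on $(\bigcup_{i\ge 2}Z_i)\uplus P_1$, their overlap is precisely $P_1$, and $\lrar(\mathcal{H})$ lives on $Z=Z_1\uplus(\bigcup_{i\ge 2}Z_i)$.

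With this identification (taking $S=Z_1$, $P=P_1$, $Q=\bigcup_{i\ge 2}Z_i$), Theorem \ref{thm:idt0} gives $(\V_{Y_1}\lrar(\lrar(\mathcal{H}_2)))^{\perp}=\V^{\perp}_{Y_1}\rightleftharpoons(\lrar(\mathcal{H}_2))^{\perp}$. Applying the induction hypothesis to the smaller family $\mathcal{H}_2$ yields $(\lrar(\mathcal{H}_2))^{\perp}=\rightleftharpoons(\mathcal{H}_2^{\perp})$, so $(\lrar(\mathcal{H}))^{\perp}=\V^{\perp}_{Y_1}\rightleftharpoons(\rightleftharpoons(\mathcal{H}_2^{\perp}))$. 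Finally, the decomposition identity for $\rightleftharpoons$ recorded above, applied to $\mathcal{H}^{\perp}=\{\V^{\perp}_{Y_1}\}\uplus\mathcal{H}_2^{\perp}$, gives $\rightleftharpoons(\mathcal{H}^{\perp})=\V^{\perp}_{Y_1}\rightleftharpoons(\rightleftharpoons(\mathcal{H}_2^{\perp}))$, which matches the expression just obtained, completing the induction.

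I expect the main obstacle to be the middle, purely combinatorial, step: confirming that peeling off $\V_{Y_1}$ presents the remaining composite $\lrar(\mathcal{H}_2)$ with shared index set exactly $P_1=Y_1\cap Y'$ and with the private parts disjoint, so that Theorem \ref{thm:idt0} applies verbatim. This is where the associativity assumption does the real work: it guarantees that an element shared between $Y_1$ and some $Y_i$ with $i\ge 2$ is private \emph{within} $\mathcal{H}_2$, and hence survives as a genuine column of $\lrar(\mathcal{H}_2)$ rather than being contracted away. Once these disjointness facts are in place, the remainder is a direct appeal to Theorem \ref{thm:idt0} together with the two decomposition identities.
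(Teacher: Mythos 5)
Your proposal is correct and takes essentially the same route as the paper: the paper proves this theorem by induction on the size of $\mathcal{H}$, using the two-space Implicit Duality Theorem (Theorem \ref{thm:idt0}) together with the decomposition identities $\lrar(\mathcal{H})=(\lrar(\mathcal{H}_1))\lrar(\lrar(\mathcal{H}_2))$ and $\rightleftharpoons(\mathcal{H})=(\rightleftharpoons(\mathcal{H}_1))\rightleftharpoons(\rightleftharpoons(\mathcal{H}_2))$, which is exactly your peeling-off induction. Your verification that associativity forces every element of $Y_1\cap Y'$ to be private within $\mathcal{H}_2$, so that it survives as a column of $\lrar(\mathcal{H}_2)$ and Theorem \ref{thm:idt0} applies with $S=Z_1$, $P=Y_1\cap Y'$, $Q=\bigcup_{i\ge 2}Z_i$, is precisely the bookkeeping the paper leaves implicit.
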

\end{enumerate}
\end{remark}
We know that, by definition,  $(\K_{Y_1}\rightleftharpoons\K_{Y_2})=(\K_{Y_1}\lrar\K_{(-Y_1\cap Y_2)(Y_2-Y_1)}).$ It is convenient to extend
this identity to associative families. We need the following definition
for this purpose.
\begin{definition}
\label{def:skewedpair}
Let $\mathcal{H}= \{\K_{Y_i}, i=1, \cdots , m\}, $
 be an associative family.
We construct  an associative family $\hat{\mathcal{H}}_{1,\cdots,m}$  as follows.
Let $Y_{i1}\equivd\{e, e\in Y_i\cap Y_j, j>i\}$
 and let $Y_{i2}\equivd Y_i-Y_{i1}.$
Define $\hat{\K}_{Y_i}\equivd (\K_{Y_i})_{(-Y_{i1})Y_{i2}}.$
 Let $\hat{\mathcal{H}}_{1,\cdots,m}\equivd \{\hat{\K}_{Y_i}, i=1, \cdots , m\}.$
We say that $\{\mathcal{H},\hat{\mathcal{H}}_{1,\cdots,m}\}$ is a \nw{skewed pair} of associative families.
We omit the subscript when the order is clear from the context and 
denote $\hat{\mathcal{H}}_{1,\cdots,m}$ simply by $\hat{\mathcal{H}}.$
\end{definition}
For the Example \ref{eg:associativefamily}, we note that $Y_{11}=\{e_3,e_5,e_7\}, Y_{21}=\emptyset, Y_{31}=\emptyset, Y_{41}= \{e_2,e_4,e_8\}, Y_{51}=\emptyset, Y_{61}=\emptyset.$

We now have the following useful lemma about the operations $\lrar(\cdot)$ and
$\rightleftharpoons(\cdot).$
\begin{lemma}
\label{lem:lrarassociative}
Let $\mathcal{H}= \{\K_{Y_i}, i=1, \cdots , m\}, \hat{\mathcal{H}}\equivd \{\hat{\K}_{Y_i}, i=1, \cdots , m\}$ and let $\{\mathcal{H},\hat{\mathcal{H}}\}$ be a {skewed pair}
 of associative families.
Let $Z$ be the set of all elements of $\bigcup_i Y_i$ which belong to
only one of the $Y_i.$
Then
\begin{enumerate}
\item $\lrar(\mathcal{H})\ =\ (\bigcap_i \K_{Y_i})\circ Z= (\Sigma_i \hat{\K}_{Y_i})\times Z\ =\  (\rightleftharpoons(\hat{\mathcal{H}})),$
\item 
$\rightleftharpoons(\mathcal{H})\ =\ (\Sigma_i \K_{Y_i})\times Z=\ (\bigcap_i \hat{\K}_{Y_i})\circ Z\ =\   (\lrar(\hat{\mathcal{H}})),$
\end{enumerate}
where the intersection and summation operations are over all 
nonvoid collections in $\mathcal{H},\hat{\mathcal{H}}.$
\end{lemma}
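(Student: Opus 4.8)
The plan is to deduce all six equalities from two ingredients: a pair of \emph{direct identities} that hold for \emph{any} associative family, and a single \emph{sign--tracking bijection} relating a family to its skewed partner. Throughout I work only with the nonvoid subfamily, and $Z$ denotes the set of elements lying in exactly one of the corresponding $Y_i$, consistent with Definition \ref{def:gen_lrar1}.

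First I would establish, for an arbitrary associative family $\mathcal{G}\equivd\{\mathcal{L}_{Y_i}\}$ with associated set $Z$, the two direct identities $\lrar(\mathcal{G})=(\bigcap_i \mathcal{L}_{Y_i})\circ Z$ and $\rightleftharpoons(\mathcal{G})=(\Sigma_i \mathcal{L}_{Y_i})\times Z$, by unwinding Definition \ref{def:gen_lrar1} against the extended sum and intersection. Indeed $\bigcap_i \mathcal{L}_{Y_i}$ on $Y\equivd\bigcup_i Y_i$ is precisely the set of $f_Y$ with $f_Y|_{Y_i}\in\mathcal{L}_{Y_i}$ for every $i$; the agreement clause $f_{Y_i-Z_i}(e)=f_{Y_j-Z_j}(e)$ for $e\in Y_i\cap Y_j$ is exactly the requirement that the shared parts glue into such an $f_Y$, and restricting to $Z$ gives the first identity. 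Dually, a member of $\Sigma_i \mathcal{L}_{Y_i}$ is a zero-padded sum $\sum_i \tilde g^{(i)}$ with $g^{(i)}\in\mathcal{L}_{Y_i}$; insisting that all coordinates outside $Z$ vanish forces $g^{(i)}(e)=-g^{(j)}(e)$ on each shared $e\in Y_i\cap Y_j$, which is the anti-agreement clause defining $\rightleftharpoons$, and contracting to $Z$ yields the second identity. Since $\mathcal{G}$ is associative, every element lies in at most two of the $Y_i$, so these per-coordinate conditions are unambiguous.

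The crux is then the hat identities $\lrar(\mathcal{H})=\rightleftharpoons(\hat{\mathcal{H}})$ and $\rightleftharpoons(\mathcal{H})=\lrar(\hat{\mathcal{H}})$, which I would obtain from the sign bookkeeping forced by Definition \ref{def:skewedpair}. Fix a shared element $e$, lying in exactly two sets $Y_i,Y_j$ with $i<j$. Then $e\in Y_{i1}$ (it is shared with a later-indexed set) while $e\in Y_{j2}$ (it is shared with no set of index exceeding $j$), so the hat operation $\hat{\K}_{Y_i}\equivd(\K_{Y_i})_{(-Y_{i1})Y_{i2}}$ negates the $e$-coordinate in $\hat{\K}_{Y_i}$ and leaves it fixed in $\hat{\K}_{Y_j}$; for $e\in Z$ no negation occurs in its unique host. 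Hence passing from $\K_{Y_i}$ to $\hat{\K}_{Y_i}$ flips the sign of each shared coordinate in exactly one of the two relevant members, converting the agreement condition $f_{Y_i}(e)=f_{Y_j}(e)$ into $\hat f_{Y_i}(e)=-\hat f_{Y_j}(e)$ and conversely, while fixing all $Z$-coordinates. This is precisely a bijection carrying $\lrar(\mathcal{H})$ onto $\rightleftharpoons(\hat{\mathcal{H}})$ and $\rightleftharpoons(\mathcal{H})$ onto $\lrar(\hat{\mathcal{H}})$. (Alternatively these follow by induction from the binary identity $\K_{Y_1}\rightleftharpoons\K_{Y_2}=\K_{Y_1}\lrar\K_{(-Y_1\cap Y_2)(Y_2-Y_1)}$ together with the associativity of $\lrar(\cdot)$ and $\rightleftharpoons(\cdot)$ over associative families noted after Theorem \ref{thm:idtassociative}.)

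Finally I would assemble the statement. For part 1: the direct identity on $\mathcal{H}$ gives $\lrar(\mathcal{H})=(\bigcap_i \K_{Y_i})\circ Z$; applying the direct identity for $\rightleftharpoons$ to $\hat{\mathcal{H}}$ (whose underlying sets, and hence $Z$, coincide with those of $\mathcal{H}$) gives $(\Sigma_i \hat{\K}_{Y_i})\times Z=\rightleftharpoons(\hat{\mathcal{H}})$; and the hat identity gives $\rightleftharpoons(\hat{\mathcal{H}})=\lrar(\mathcal{H})$, chaining to all three equalities. Part 2 is symmetric, using the direct identity for $\rightleftharpoons$ on $\mathcal{H}$, the direct identity for $\lrar$ on $\hat{\mathcal{H}}$, and the hat identity $\lrar(\hat{\mathcal{H}})=\rightleftharpoons(\mathcal{H})$. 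The one delicate point, and the only place where real care is needed, is the sign bookkeeping of the previous paragraph: verifying that the asymmetric definition $Y_{i1}=\{e\in Y_i\cap Y_j: j>i\}$ together with the at-most-two-membership property of associative families negates each shared coordinate in exactly one of its two hosts while never touching a $Z$-coordinate.
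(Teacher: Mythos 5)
Your proof is correct. Note that the paper states Lemma \ref{lem:lrarassociative} without proof (it is offered as a routine consequence of the definitions), so there is nothing to compare against line by line; your argument is precisely the intended one — the multi-set generalization of the binary identities of Theorem \ref{thm:matchedprop} (namely $\lrar$ as extended intersection followed by restriction to $Z$, and $\rightleftharpoons$ as extended sum followed by contraction to $Z$), combined with the key observation that the hat operation of Definition \ref{def:skewedpair} negates each shared coordinate in exactly one of its two hosts (since $e\in Y_i\cap Y_j$ with $i<j$ forces $e\in Y_{i1}$ and $e\in Y_{j2}$ by the at-most-two-membership property) while fixing every $Z$-coordinate, which turns agreement into anti-agreement and conversely.
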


\begin{definition}
\label{def:rigidn}
Let ${\mathcal{H}}\equivd \{\A_{Y_1},\cdots , \A_{Y_n}\}$ be an associative family of affine spaces with vector space translates $\{\V_{Y_1},\cdots , \V_{Y_n}\}.$
We say ${\mathcal{H}}$ is \nw{rigid} iff
the following rank conditions are satisfied.
\begin{itemize}
\item (primal) $r(\Sigma_i\V_{Y_i})=
\Sigma_ir(\V_{Y_i});$
\item (dual) $r(\Sigma_i\V^{\perp}_{Y_i})=
\Sigma_ir(\V^{\perp}_{Y_i}).$
\end{itemize}
\end{definition}
We note that the definition of rigid families agrees with that of a pair 
of affine spaces (by part 3 of Theorem \ref{thm:regularrecursive}).
We further have the following simple lemma.
\begin{lemma}
\label{lem:rigidsubfamily}
Let ${\mathcal{H}}\equivd \{\A_{Y_1},\cdots , \A_{Y_n}\}$ be an associative family of affine spaces with vector space translates $\{\V_{Y_1},\cdots , \V_{Y_n}\}.$ Let  $\G_{\mathcal{H}}$ be connected.
If $\mathcal{H}$ is rigid then 
 every subfamily of ${\mathcal{H}}$ is rigid.
\end{lemma}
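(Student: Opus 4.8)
The plan is to verify directly that both rank conditions of Definition~\ref{def:rigidn} are inherited by an arbitrary subfamily, since rigidity of a family is by definition nothing more than these two conditions. Fix a subfamily $\mathcal{H}'\equivd \{\A_{Y_i}:i\in S\}$ with $S\subseteq\{1,\cdots,n\}$ and set $Y'\equivd \bigcup_{i\in S}Y_i$. First I would record the routine bookkeeping that makes the reduction legitimate: when each $\V_{Y_i}$ is embedded in the ambient space $\F_Y$ by padding with zeros on $Y-Y_i$, its rank is unchanged, and the rank of $\Sigma_{i\in S}\V_{Y_i}$ is the same whether computed in $\F_{Y'}$ or in $\F_Y$; moreover each orthogonal complement $\V^{\perp}_{Y_i}$ is formed inside $\F_{Y_i}$ and so is literally the same subspace for $\mathcal{H}$ and for $\mathcal{H}'$. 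Consequently the two conditions required of $\mathcal{H}'$ are exactly the restrictions to the index set $S$ of the corresponding conditions known to hold for $\mathcal{H}$.

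The key step is a single elementary fact about ranks of sums of subspaces: if $W_1,\cdots,W_n$ lie in a common space and $r(\Sigma_{i=1}^n W_i)=\Sigma_{i=1}^n r(W_i)$, then $r(\Sigma_{i\in S}W_i)=\Sigma_{i\in S}r(W_i)$ for every $S$. I would prove this from submodularity of rank, whose two-term form $r(\V+\V')\le r(\V)+r(\V')$ is immediate from part~1 of Theorem~\ref{thm:sumintersection}. Writing $S^c$ for the complement of $S$, the crucial chain reads
\begin{align*}
\Sigma_{i=1}^n r(W_i)
&=\ r\Big(\Sigma_{i=1}^n W_i\Big)
\ \le\ r\Big(\Sigma_{i\in S} W_i\Big)+r\Big(\Sigma_{i\in S^c} W_i\Big)\\
&\le\ \Sigma_{i\in S}r(W_i)+\Sigma_{i\in S^c}r(W_i)
\ =\ \Sigma_{i=1}^n r(W_i),
\end{align*}
so every inequality is forced to be an equality, giving $r(\Sigma_{i\in S}W_i)=\Sigma_{i\in S}r(W_i)$ as claimed.

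To finish, I would apply this fact twice: with $W_i\equivd \V_{Y_i}$ the primal condition for $\mathcal{H}$ yields the primal condition for $\mathcal{H}'$, and with $W_i\equivd \V^{\perp}_{Y_i}$ the dual condition for $\mathcal{H}$ yields the dual condition for $\mathcal{H}'$. Hence $\mathcal{H}'$ satisfies both conditions of Definition~\ref{def:rigidn} and is therefore rigid. I do not expect a serious obstacle: the substance is the one-line submodularity argument, and the only place needing genuine care is the bookkeeping of the first paragraph, namely checking that zero-padding and the intrinsic formation of $\V^{\perp}_{Y_i}$ are both compatible with deleting collections from the family. I note in passing that the argument uses only submodularity and never invokes the connectedness of $\G_{\mathcal{H}}$, so the conclusion in fact holds for every subfamily irrespective of whether its own graph is connected; connectedness is simply the standing hypothesis of the subsection.
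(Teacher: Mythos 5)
Your proof is correct and follows essentially the same route as the paper's: both rest on the subadditivity of rank applied to the split of the family into the subfamily and its complement, with the hypothesis forcing all inequalities in the chain to be equalities, applied once to the $\V_{Y_i}$ and once to the $\V^{\perp}_{Y_i}$. Your extra bookkeeping about zero-padding and the intrinsic formation of $\V^{\perp}_{Y_i}$, and your observation that connectedness of $\G_{\mathcal{H}}$ is never used, are both accurate but do not change the substance of the argument.
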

\begin{proof}
Let $\A_{Y_1},\cdots , \A_{Y_n}$ have vector space translates $\V_{Y_1},\cdots , \V_{Y_n},$ respectively. Without loss of generality, it is adequate 
to show that $\{\A_{Y_1},\cdots , \A_{Y_m}\}, m<n $ is rigid.
Now $r(\V_{Y_1}+\cdots+ \V_{Y_m})\leq r(\V_{Y_1})+\cdots+ r(\V_{Y_m})$
and $r(\Sigma_{i=1}^n\V_{Y_i})\leq r(\Sigma_{i=1}^m\V_{Y_i})+r(\Sigma_{i=m+1}^n\V_{Y_i}).$
Therefore the primal rank condition of rigidity is true for $\{\A_{Y_1},\cdots , \A_{Y_n}\}$  only if it is true for $\{\A_{Y_1},\cdots , \A_{Y_m}\}.$ 
The proof for the dual rank condition is essentially the same.
\end{proof}

By induction on the size of $\mathcal{H},$ we can prove the following 
generalization of part 1 of Theorem \ref{thm:regularrecursive}.

\begin{lemma}
\label{lem:associativetranslate}
Let ${\mathcal{H}}\equivd \{\A_{Y_1},\cdots , \A_{Y_n}\}$ be an associative family of affine spaces with vector space translates $\{\V_{Y_1},\cdots , \V_{Y_n}\}.$ Further let $e\in Y_1- \bigcup_{i=2}^n Y_i.$
If $\mathcal{H}$ is rigid then 
\begin{enumerate}
\item $\lrar({\mathcal{H}})$ is nonvoid and 
has the vector space translate $\lrar(\{\V_{Y_1},\cdots , \V_{Y_n}\}),$ and
\item $\rightleftharpoons({\mathcal{H}})$ is nonvoid and
has the vector space translate $\rightleftharpoons(\{\V_{Y_1},\cdots , \V_{Y_n}\}).$
\end{enumerate}
\end{lemma}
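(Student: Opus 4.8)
The plan is to prove the two claims together, treating nonvoidness as the substantive content and the identification of the translate as a routine consequence. First I would reduce the $n$-ary composition to an intersection: by Lemma \ref{lem:lrarassociative}, $\lrar(\mathcal{H}) = (\bigcap_i \A_{Y_i})\circ Z$ and $\rightleftharpoons(\mathcal{H}) = (\Sigma_i\A_{Y_i})\times Z$, where $Z$ is the set of elements of $Y\equivd\bigcup_i Y_i$ lying in exactly one $Y_i$, and the intersection and sum are the extended operations inside $\F_Y$. The hypothesis $e\in Y_1-\bigcup_{i\ge2}Y_i$ guarantees $e\in Z$, hence $Z\neq\emptyset$; this is precisely what rules out the degenerate saturated case of the preceding remark (in which $\lrar(\mathcal{H})=\K_\emptyset$) and makes the nonvoidness assertion meaningful.

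Next I would establish nonvoidness of the extended intersection $\bigcap_i\A_{Y_i}$ from the dual rank condition of rigidity. Writing $\A_{Y_i}=\alpha_{Y_i}+\V_{Y_i}$ and embedding each as the affine subspace $\alpha_{Y_i}+(\V_{Y_i}\oplus\F_{Y-Y_i})$ of $\F_Y$, the intersection is nonvoid iff the associated system of affine constraints is consistent. By the standard consistency (Fredholm) criterion, consistency holds iff every tuple $(g_i)$ with $g_i\in(\V_{Y_i}\oplus\F_{Y-Y_i})^{\perp}=\V_{Y_i}^{\perp}\oplus\0_{Y-Y_i}$ and $\Sigma_i g_i=0$ satisfies $\Sigma_i\la g_i,\alpha_{Y_i}\ra=0$. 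But the dual condition of Definition \ref{def:rigidn}, namely $r(\Sigma_i\V_{Y_i}^{\perp})=\Sigma_i r(\V_{Y_i}^{\perp})$, says exactly that the embedded spaces $\V_{Y_i}^{\perp}\oplus\0_{Y-Y_i}$ are independent, so the only such tuple is the zero tuple and the condition is vacuously satisfied. Hence $\bigcap_i\A_{Y_i}\neq\emptyset$, and since restriction to the nonempty set $Z$ preserves nonvoidness, $\lrar(\mathcal{H})$ is nonvoid. This step is the direct analogue, for families, of the equivalence `full sum property $\iff$ nonvoid composition' from part 1 of Theorem \ref{thm:regularrecursive}. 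Equivalently, one may organise it as an induction on $|\mathcal{H}|$: peel off $\A_{Y_1}$ using the associativity of $\lrar(\cdot)$ from the remark, apply Lemma \ref{lem:rigidsubfamily} to obtain rigidity of the subfamily, and close with Theorem \ref{thm:regularrecursive} on the resulting pair.

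For the translate, once $\bigcap_i\A_{Y_i}$ is nonvoid, rebasing at any point $x_0$ in it shows its vector space translate is $\bigcap_i\V_{Y_i}$ (the extended intersection of the translates); restriction of a nonvoid affine space to $Z$ then has translate $(\bigcap_i\V_{Y_i})\circ Z$, which by the vector space case of Lemma \ref{lem:lrarassociative} equals $\lrar(\{\V_{Y_1},\cdots,\V_{Y_n}\})$. This proves claim 1. Claim 2 follows by the symmetric argument on $\rightleftharpoons(\mathcal{H})=(\Sigma_i\A_{Y_i})\times Z$, or more economically by passing to the skewed pair $\hat{\mathcal{H}}$ of Definition \ref{def:skewedpair}: since $\rightleftharpoons(\mathcal{H})=\lrar(\hat{\mathcal{H}})$ by Lemma \ref{lem:lrarassociative}, and $\hat{\mathcal{H}}$ differs from $\mathcal{H}$ only by sign changes on the doubly covered coordinates, $\hat{\mathcal{H}}$ is again rigid with the same free element, so claim 1 applied to $\hat{\mathcal{H}}$ yields claim 2, with Theorem \ref{thm:idtassociative} used to match translates.

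The main obstacle I expect is the nonvoidness step, and within it two points require care. First, one must pin down the correct solvability criterion and verify that the dual rank condition is exactly the independence of the embedded orthogonal complements; this is the place where rigidity is genuinely consumed. Second, one must handle the degeneracy bookkeeping around $\K_\emptyset$: the free element hypothesis is used only to force $Z\neq\emptyset$, and any inductive organisation must track this condition through subfamilies, since a rigid subfamily all of whose coordinates are doubly covered contributes $\K_\emptyset$ rather than a genuine affine space and would otherwise break the peeling argument.
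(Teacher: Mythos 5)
Your proposal is correct, and it reaches the lemma by a genuinely different route than the paper. The paper disposes of this lemma in one line ("by induction on the size of $\mathcal{H}$" as a generalization of part 1 of Theorem \ref{thm:regularrecursive}), so its intended argument peels off one affine space at a time and closes each step with the two-space result; that route silently requires the full sum property of each interface pair, which is essentially the bookkeeping later carried out in Lemma \ref{lem:rankcondition} and Theorem \ref{thm:associativerigidrecursive}. You instead argue directly: writing $\lrar(\mathcal{H})=(\bigcap_i\A_{Y_i})\circ Z$ via Lemma \ref{lem:lrarassociative}, you reduce nonvoidness to consistency of the combined affine system and observe that the dual rank condition $r(\Sigma_i\V^{\perp}_{Y_i})=\Sigma_ir(\V^{\perp}_{Y_i})$ is exactly the independence of the embedded annihilators $\V_{Y_i}^{\perp}\oplus\0_{Y-Y_i}$, so the Fredholm-type obstruction vanishes identically; rebasing at a common point then gives the translate, and claim 2 follows from claim 1 applied to the skewed family $\hat{\mathcal{H}}$, which is rigid exactly when $\mathcal{H}$ is since the sign changes commute with $\perp$ and preserve all ranks. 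Your instinct to route claim 2 through $\hat{\mathcal{H}}$ rather than the "symmetric" argument on $(\Sigma_i\A_{Y_i})\times Z$ is the right call: the contraction step there is not a mirror image and is not directly controlled by the rank conditions. What your route buys: it is non-inductive, needs no interface-pair analysis, and makes visible that only the dual half of rigidity is consumed for nonvoidness of $\lrar(\mathcal{H})$, so the lemma in fact holds under a weaker hypothesis; what the paper's route buys is direct reuse of the pair machinery and consistency with the recursive style of the section. Two minor corrections: the translates in claim 2 are matched by the vector space case of Lemma \ref{lem:lrarassociative} (giving $\lrar(\{\hat{\V}_{Y_i}\})=\rightleftharpoons(\{\V_{Y_i}\})$), not by Theorem \ref{thm:idtassociative}; and your inductive "equivalent" sketch is under-specified, since closing with Theorem \ref{thm:regularrecursive} still owes the full sum property of the pair $\{\V_{Y_1},\lrar(\{\V_{Y_i}\}_{i\ge 2})\}$, which does not follow from Lemma \ref{lem:rigidsubfamily} alone.
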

The following lemma addresses the rigidity of a common class of associative families. 

\begin{lemma}
\label{lem:rigidpairrank}
Let $\mathcal{H}_1\equivd \{\A_{X_1},\cdots ,\A_{X_n}\},$ and let $\mathcal{H}_2\equivd \{\A_{Y_1},\cdots ,\A_{Y_m}\},$ with the $X_i,$ being pairwise disjoint and the $Y_j$
also. Let $\V_{X_i},\V_{Y_j},$ be the vector space associates of
$\A_{X_i},\A_{Y_j},$ respectively.
Then the associative family $\mathcal{H}_1\uplus \mathcal{H}_{2}$ is rigid iff the
pair $\{\oplus \V_{X_i},\oplus \V_{Y_j}\},$ is rigid.
\end{lemma}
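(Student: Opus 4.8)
The plan is to observe that both the rigidity of the family $\mathcal{H}_1 \uplus \mathcal{H}_2$ (Definition \ref{def:rigidn}) and the rigidity of the pair $\{\oplus_i \V_{X_i}, \oplus_j \V_{Y_j}\}$ (via part 3 of Theorem \ref{thm:regularrecursive}) reduce to the same two rank equalities, one primal and one dual. All the work is in rewriting the two family conditions in terms of the two aggregated spaces.

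First I would record the three elementary facts that drive the reduction. (i) Since the $X_i$ are pairwise disjoint, the extended sum $\sum_i \V_{X_i}$ coincides with the direct sum $\oplus_i \V_{X_i}$, and likewise $\sum_j \V_{Y_j} = \oplus_j \V_{Y_j}$. (ii) The rank of a direct sum is additive, so $r(\oplus_i \V_{X_i}) = \sum_i r(\V_{X_i})$ and $r(\oplus_j \V_{Y_j}) = \sum_j r(\V_{Y_j})$. (iii) The orthogonal complement of a direct sum over disjoint column sets is the direct sum of the complements (the identity $(\K_X\oplus\K_Y)^{\perp} = \K_X^{\perp}\oplus\K_Y^{\perp}$ noted in the Preliminaries); hence $(\oplus_i \V_{X_i})^{\perp} = \oplus_i \V^{\perp}_{X_i} = \sum_i \V^{\perp}_{X_i}$, and similarly for the $Y_j$.

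Next I would note that $\mathcal{H}_1 \uplus \mathcal{H}_2$ is genuinely an associative family: each element lies in at most one $X_i$ and at most one $Y_j$, hence in at most two of the underlying sets. Its sum over all members factors, by commutativity and associativity of the extended sum, as $(\sum_i \V_{X_i}) + (\sum_j \V_{Y_j}) = (\oplus_i \V_{X_i}) + (\oplus_j \V_{Y_j})$ using (i). Thus the primal condition of Definition \ref{def:rigidn}, $r(\sum_i \V_{X_i} + \sum_j \V_{Y_j}) = \sum_i r(\V_{X_i}) + \sum_j r(\V_{Y_j})$, becomes, by (ii), exactly $r((\oplus_i \V_{X_i}) + (\oplus_j \V_{Y_j})) = r(\oplus_i \V_{X_i}) + r(\oplus_j \V_{Y_j})$. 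Applying the same factoring to the complemented spaces and invoking (iii), the dual condition $r(\sum_i \V^{\perp}_{X_i} + \sum_j \V^{\perp}_{Y_j}) = \sum_i r(\V^{\perp}_{X_i}) + \sum_j r(\V^{\perp}_{Y_j})$ becomes $r((\oplus_i \V_{X_i})^{\perp} + (\oplus_j \V_{Y_j})^{\perp}) = r((\oplus_i \V_{X_i})^{\perp}) + r((\oplus_j \V_{Y_j})^{\perp})$.

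Finally I would compare with the pair. Writing $B$ for the columns common to $\biguplus_i X_i$ and $\biguplus_j Y_j$, the two aggregated spaces have exactly the form $\{\V_{AB},\V_{BC}\}$ required by Theorem \ref{thm:regularrecursive}, whose part 3 asserts that $\{\oplus_i \V_{X_i}, \oplus_j \V_{Y_j}\}$ is rigid iff precisely these two rank equalities hold. Since the primal equality matches the family's primal condition and the dual equality matches its dual condition, the two notions of rigidity are equivalent, as claimed. The only step needing genuine care is fact (iii), the passage from the complement of a direct sum to the direct sum of complements; everything else is bookkeeping with additivity of rank and commutativity of the extended sum.
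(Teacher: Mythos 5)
Your proof is correct and follows essentially the same route as the paper's: the paper likewise records the additivity of rank over the direct sums (for both the spaces and their orthogonal complements) and then invokes part 3 of Theorem \ref{thm:regularrecursive} to identify the family rank conditions with the pair rank conditions. Your write-up merely makes explicit two steps the paper leaves implicit — that the extended sum over the disjoint $X_i$ (resp. $Y_j$) coincides with the direct sum, and that $(\oplus_i\V_{X_i})^{\perp}=\oplus_i\V^{\perp}_{X_i}$ — so no gap remains.
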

\begin{proof}
We have $ r(\V_{X_1}\oplus \cdots \oplus \V_{X_n})=r(\V_{X_1})+ \cdots + r(\V_{X_n}),\ 
 r(\V_{Y_1}\oplus \cdots \oplus \V_{Y_m})=r(\V_{Y_1})+ \cdots + r(\V_{Y_n}),$\\$
 r(\V^{\perp}_{X_1}\oplus \cdots \oplus \V^{\perp}_{X_n})=r(\V^{\perp}_{X_1})+ \cdots + r(\V^{\perp}_{X_n}),\ 
 r(\V^{\perp}_{Y_1}\oplus \cdots \oplus \V^{\perp}_{Y_m})=r(\V^{\perp}_{Y_1})+ \cdots + r(\V^{\perp}_{Y_n}).$
\\ The result now follows from part 3 of Theorem \ref{thm:regularrecursive}.
\end{proof}
We now have a result which expresses rigidity of associative families 
recursively.
\begin{theorem}
\label{thm:associativerigidrecursive}
Let ${\mathcal{H}}\equivd \{\A_{Y_1},\cdots , \A_{Y_n}\}$ be an associative family of affine spaces with $\G_{\mathcal{H}}$ connected and with non empty edge 
set.
Let ${\mathcal{H}}$ be partitioned as ${\mathcal{H}_1}\uplus {\mathcal{H}_2}.$
Then
\begin{enumerate}
\item ${\mathcal{H}}$ is rigid iff ${\mathcal{H}_1},{\mathcal{H}_2}$ and $\{\lrar(\mathcal{H}_1), \lrar(\mathcal{H}_2)\}$
are rigid.
\item ${\mathcal{H}}$ is rigid iff ${\mathcal{H}_1},{\mathcal{H}_2}$ and $\{\rightleftharpoons(\mathcal{H}_1), \rightleftharpoons(\mathcal{H}_2)\}$
are rigid.
\end{enumerate}
\end{theorem}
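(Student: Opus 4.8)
The plan is to work entirely with vector space translates, since both the rigidity of a family (Definition \ref{def:rigidn}) and the rigidity of a pair (Definition \ref{def:affinerigid}) depend only on translates; so I may assume $\mathcal{H}=\{\V_{Y_1},\dots,\V_{Y_n}\}$ consists of vector spaces, with $\mathcal{H}_1=\{\V_{Y_1},\dots,\V_{Y_k}\}$ and $\mathcal{H}_2=\{\V_{Y_{k+1}},\dots,\V_{Y_n}\}$. Write $Y^{(1)}=\bigcup_{i\le k}Y_i$, $Y^{(2)}=\bigcup_{i>k}Y_i$, and let $C_{12}=Y^{(1)}\cap Y^{(2)}$ be the set of ``cross'' elements; since $\mathcal{H}$ is associative, each such element lies in exactly one member of $\mathcal{H}_1$ and one member of $\mathcal{H}_2$. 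Put $\V^{(1)}=\sum_{i\le k}\V_{Y_i}$ and $\V^{(2)}=\sum_{i>k}\V_{Y_i}$, so that $\sum_i\V_{Y_i}=\V^{(1)}+\V^{(2)}$, the sum being over the common set $C_{12}$. I will establish the primal and the dual rank conditions of Definition \ref{def:rigidn} separately and then conjoin them.

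For the primal condition, apply part 2 of Theorem \ref{thm:sumintersection} to get $r(\V^{(1)})+r(\V^{(2)})=r(\V^{(1)}+\V^{(2)})+r[(\V^{(1)}\times C_{12})\cap(\V^{(2)}\times C_{12})]$. Since $r(\V^{(1)})\le\sum_{i\le k}r(\V_{Y_i})$ and $r(\V^{(2)})\le\sum_{i>k}r(\V_{Y_i})$ always hold, the same inequality-chasing used in the proof of Lemma \ref{lem:rigidsubfamily} shows that $r(\sum_i\V_{Y_i})=\sum_i r(\V_{Y_i})$ holds iff $\mathcal{H}_1$ and $\mathcal{H}_2$ satisfy the primal condition and $(\V^{(1)}\times C_{12})\cap(\V^{(2)}\times C_{12})=\0_{C_{12}}$. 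It then remains to read off this last intersection as the zero-intersection property of the interface pair. By Lemma \ref{lem:lrarassociative}, $\lrar(\mathcal{H}_1)$ is the contraction to the boundary of $\mathcal{H}_1$ of a signed sum of the $\V_{Y_i}$, and contracting further to $C_{12}$ gives $\lrar(\mathcal{H}_1)\times C_{12}$. The key observation is that a cross element is a \emph{boundary} element of the subfamily $\mathcal{H}_1$, hence is never sign-flipped by the internal skewing in Lemma \ref{lem:lrarassociative}; therefore $\lrar(\mathcal{H}_1)\times C_{12}=\V^{(1)}\times C_{12}$, and likewise for $\mathcal{H}_2$. Thus the primal condition for $\mathcal{H}$ is equivalent to $\mathcal{H}_1,\mathcal{H}_2$ being primal together with $\{\lrar(\mathcal{H}_1),\lrar(\mathcal{H}_2)\}$ having the zero-intersection property.

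For the dual condition I run the identical argument on the associative family $\mathcal{H}^{\perp}=\{\V_{Y_i}^{\perp}\}$ (same graph, same $C_{12}$), giving: $\mathcal{H}$ satisfies the dual condition iff $\mathcal{H}_1,\mathcal{H}_2$ do and $(\V^{\perp(1)}\times C_{12})\cap(\V^{\perp(2)}\times C_{12})=\0_{C_{12}}$, where $\V^{\perp(j)}$ is the sum of the $\V_{Y_i}^{\perp}$ over the appropriate block. Using $\lrar(\mathcal{H}_j)^{\perp}=\rightleftharpoons(\mathcal{H}_j^{\perp})$ (Theorem \ref{thm:idtassociative}) together with part 2 of Lemma \ref{lem:lrarassociative}, one gets $\lrar(\mathcal{H}_j)^{\perp}\times C_{12}=\V^{\perp(j)}\times C_{12}$; and by part 4 of Theorem \ref{thm:regularrecursive} the full-sum property of $\{\lrar(\mathcal{H}_1),\lrar(\mathcal{H}_2)\}$ is exactly the zero-intersection property of $\{\lrar(\mathcal{H}_1)^{\perp},\lrar(\mathcal{H}_2)^{\perp}\}$, i.e.\ $(\V^{\perp(1)}\times C_{12})\cap(\V^{\perp(2)}\times C_{12})=\0_{C_{12}}$. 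Hence the dual condition for $\mathcal{H}$ is equivalent to $\mathcal{H}_1,\mathcal{H}_2$ being dual together with $\{\lrar(\mathcal{H}_1),\lrar(\mathcal{H}_2)\}$ having the full-sum property. Conjoining the primal and dual equivalences yields part 1: $\mathcal{H}$ is rigid iff $\mathcal{H}_1,\mathcal{H}_2$ are rigid and $\{\lrar(\mathcal{H}_1),\lrar(\mathcal{H}_2)\}$ is rigid.

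Part 2 then follows from part 1 once I check that $\lrar(\mathcal{H}_j)$ and $\rightleftharpoons(\mathcal{H}_j)$ have the same restriction and the same contraction to the shared set $C_{12}$: by Lemma \ref{lem:lrarassociative} both contractions equal $\V^{(j)}\times C_{12}$, and both restrictions equal $(\bigcap_i\V_{Y_i})\circ C_{12}$ over the block, because the internal skewing flips signs only on interior elements of the subfamily and so leaves the projection onto $C_{12}$ unchanged. Since pair-rigidity depends only on these restrictions and contractions (Definition \ref{def:affinerigid}), $\{\lrar(\mathcal{H}_1),\lrar(\mathcal{H}_2)\}$ is rigid iff $\{\rightleftharpoons(\mathcal{H}_1),\rightleftharpoons(\mathcal{H}_2)\}$ is, giving part 2. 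The main obstacle, and the step needing the most care, is precisely this sign bookkeeping: one must verify that the elements of $C_{12}$ behave as boundary elements within each subfamily, so that the matched and skewed compositions agree on them, which is exactly what lets the abstract rank identity of Theorem \ref{thm:sumintersection} be reinterpreted as a statement about the interface pair. The degenerate case $C_{12}=\emptyset$ is absorbed by the empty-set conventions of Definitions \ref{def:affinerigid} and \ref{def:rigidn}.
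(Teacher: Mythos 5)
Your argument breaks at the step you yourself single out as the ``key observation.'' The identity $\lrar(\mathcal{H}_1)\times C_{12}=\V^{(1)}\times C_{12}$ is false. What Lemma \ref{lem:lrarassociative} actually says is that the \emph{skewed} composition is the contraction of the plain sum, $\rightleftharpoons(\mathcal{H}_1)=(\Sigma_{i\le k}\V_{Y_i})\times Z_1$, so that $\rightleftharpoons(\mathcal{H}_1)\times C_{12}=\V^{(1)}\times C_{12}$; the \emph{matched} composition is the contraction of the internally sign-flipped (hatted) sum, and although no column of $C_{12}$ is flipped, the interior flips still change the contraction to $C_{12}$, because contraction imposes cancellation conditions exactly on the interior columns where the flips live. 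Concretely, take $\mathcal{H}_1=\{\V_{Y_1},\V_{Y_2}\}$ with $Y_1=\{a,b\}$, $Y_2=\{b,c\}$, each $\V_{Y_i}$ the one-dimensional space spanned by the all-ones vector, $b$ interior to $\mathcal{H}_1$ and $a,c\in C_{12}$. Then $\lrar(\mathcal{H}_1)$ is spanned by $(1_a,1_c)$, whereas $\V^{(1)}\times C_{12}=(\V_{Y_1}+\V_{Y_2})\times\{a,c\}$ is spanned by $(1_a,-1_c)$, which is $\rightleftharpoons(\mathcal{H}_1)\times C_{12}$, not $\lrar(\mathcal{H}_1)\times C_{12}$. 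The same example refutes the claim, used in your derivation of part 2, that $\lrar(\mathcal{H}_j)$ and $\rightleftharpoons(\mathcal{H}_j)$ have the same restriction and contraction to $C_{12}$.

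The consequence is that your two chains prove statements about two \emph{different} interface pairs and cannot be conjoined as you do. Done correctly, your primal chain yields: the primal condition for $\mathcal{H}$ holds iff it holds for $\mathcal{H}_1,\mathcal{H}_2$ and the zero-intersection property holds for the skewed pair $\{\rightleftharpoons(\mathcal{H}_1),\rightleftharpoons(\mathcal{H}_2)\}$. Your dual chain is sound (the implicit duality theorem turns $(\lrar(\mathcal{H}_j))^{\perp}$ into $\rightleftharpoons(\mathcal{H}_j^{\perp})$, which \emph{is} the plain contraction of the plain sum of the orthogonal complements), and it yields: the dual condition for $\mathcal{H}$ holds iff it holds for the blocks and the full-sum property holds for the matched pair $\{\lrar(\mathcal{H}_1),\lrar(\mathcal{H}_2)\}$. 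Conjoining gives a mixed statement, which is neither part 1 nor part 2. Moreover the two pairs cannot be identified: extend the example by $\mathcal{H}_2=\{\V_{Y_3}\}$, $Y_3=\{a,c\}$, $\V_{Y_3}$ spanned by $(1_a,1_c)$; then both blocks are rigid and the skewed pair (spans of $(1,-1)$ and $(1,1)$) is rigid, while the matched pair (two copies of the span of $(1,1)$) is not, and the full family satisfies the primal but fails the dual rank condition. The paper's route around this obstruction is the device you are missing: the skewed family $\hat{\mathcal{H}}$ of Definition \ref{def:skewedpair} together with part 4 of Lemma \ref{lem:rankcondition1} (invariance of the rank conditions under the internal sign flips), which is what converts a skewed-interface statement into a matched-interface one; a pointwise identification of the interface spaces cannot replace it. You should also note that the example above shows this sign bookkeeping is not a technicality that any argument can wave through --- it is where the entire content of the theorem (and the delicacy of the paper's own hat-invariance lemma) is concentrated.
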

The proof of Theorem \ref{thm:associativerigidrecursive} is based on the following lemmas.

\begin{lemma}
\label{lem:rankcondition1}
Let $\mathcal{H}\equivd\{\V_{Y_1},\cdots , \V_{Y_m}\}$ be a collection of vector spaces.
We have the following.
\begin{enumerate}
\item  $r(\Sigma_{i=1}^m\V_{Y_i})=
\Sigma_{i=1}^mr(\V_{Y_i})$
 iff every set of nonzero vectors $\{f_{Y_1}, \cdots, f_{Y_m}\}, f_{Y_i}\in \V_{Y_i}$ is linearly independent.
\item  $r(\Sigma_{i=1}^m\V_{Y_i})=
\Sigma_{i=1}^mr(\V_{Y_i})$
 iff for each $t<m, $ we have $r(\Sigma_{i=t}^m\V_{Y_i})=
r(\V_{Y_t})+r(\Sigma_{i=t+1}^m\V_{Y_i}).$
\item If $r(\Sigma_{i=1}^m\V_{Y_i})=
\Sigma_{i=1}^mr(\V_{Y_i}),$ and $T\subseteq \bigcup_i Y_i,$ then 
 $r((\Sigma_{i=1}^m\V_{Y_i})\times T)= \Sigma_{i=1}^mr(\V_{Y_i}\times {Y_i}\cap T)$\\
(where $r(\V_{Y_i}\times {Y_i}\cap T)$ is taken to be zero if ${Y_i}\cap T=\emptyset$).
\item Let $\mathcal{H}$ be an associative family. For each $Y_i$ let $Y_{i1}$ denote the
set of all $e$ such that $e\in Y_i\cap Y_j$ for some $j>i.$
Let ${Y_{i2}}\equivd Y_i- {Y_{i1}}, \ \hat{\V}_{Y_i}\equivd ({\V}_{Y_i})_{(-Y_{i1})Y_{i2}}, i= 1, \cdots, m.$
Then $r(\Sigma_{i=1}^m\V_{Y_i})=
\Sigma_{i=1}^mr(\V_{Y_i})$\\ iff for each $Y_t$ we have
$r[(\V_{Y_t}\times Y_{t1})\cap  ((\Sigma_{i=t+1}^m\V_{Y_i})\times Y_{t1})]=0,$
\\equivalently, iff for each $Y_t$ we have $r[(\hat{\V}_{Y_t}\times Y_{t1})\cap  ((\Sigma_{i=t+1}^m\hat{\V}_{Y_i})\times Y_{t1})]=0,$
\\equivalently, iff $r(\Sigma_{i=1}^m\hat{\V}_{Y_i})=
\Sigma_{i=1}^mr(\hat{\V}_{Y_i}).$
\end{enumerate}
\end{lemma}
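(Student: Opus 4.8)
The plan is to prove the four parts in sequence, since each one feeds the next through the notion of rank-additivity (``directness'') of the sum $\Sigma_i\V_{Y_i}$.

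For part 1 I would fix a basis $B_i$ of each $\V_{Y_i}$ and note that $\bigcup_i B_i$ spans $\Sigma_i\V_{Y_i}$ and has exactly $\Sigma_i r(\V_{Y_i})$ elements, so the rank equality holds iff $\bigcup_i B_i$ is linearly independent. Writing an arbitrary relation $\Sigma_i c_i f_{Y_i}=0$ (with $f_{Y_i}\in\V_{Y_i}$) in terms of the $B_i$ and using independence of $\bigcup_i B_i$ forces $c_i=0$ at every index where $f_{Y_i}\neq 0$; conversely, a nontrivial dependence inside $\bigcup_i B_i$ regroups as $\Sigma_i g_i=0$ with $g_i\in\V_{Y_i}$ not all zero, exhibiting a dependent family of nonzero vectors. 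Part 2 is a telescoping consequence of the submodular inequality $r(\V+\W)\le r(\V)+r(\W)$: the chain $r(\Sigma_{i=1}^m\V_{Y_i})\le r(\V_{Y_1})+r(\Sigma_{i=2}^m\V_{Y_i})\le\cdots\le\Sigma_i r(\V_{Y_i})$ is an equality overall iff each of its one-step inequalities is, which is exactly the stated condition.

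For part 3 I would convert the contraction into restrictions using part 2 of Theorem \ref{thm:dotcrossidentity}. Writing $\V\equivd\Sigma_i\V_{Y_i}$ on $Y\equivd\bigcup_iY_i$, that identity gives $r(\V\times T)=r(\V)-r(\V\circ(Y-T))$, and since restriction commutes with sums, $\V\circ(Y-T)=\Sigma_i(\V_{Y_i}\circ(Y_i-T))$. Applying the same rank identity inside each $\V_{Y_i}$ rewrites $r(\V_{Y_i})-r(\V_{Y_i}\circ(Y_i-T))$ as $r(\V_{Y_i}\times(Y_i\cap T))$, so the assertion reduces to the rank-additivity $r(\Sigma_i(\V_{Y_i}\circ(Y_i-T)))=\Sigma_ir(\V_{Y_i}\circ(Y_i-T))$ of the complementary minors. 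I expect this to be the main obstacle: it is \emph{not} a formal consequence of the rank-additivity of the $\V_{Y_i}$ themselves, and the argument must use how $T$ meets the overlaps of the $Y_i$, so that a nonzero selection from each restricted minor remains independent (via part 1). I would isolate exactly this hypothesis on $T$ and verify it in the form in which part 3 is later invoked.

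For part 4 I would start from the telescoped form in part 2, rewriting directness as the one-step conditions $r(\Sigma_{i=t}^m\V_{Y_i})=r(\V_{Y_t})+r(\Sigma_{i=t+1}^m\V_{Y_i})$, and expand each with part 2 of Theorem \ref{thm:sumintersection}. With $\W\equivd\Sigma_{i>t}\V_{Y_i}$, the associative hypothesis forces the common ground set $Y_t\cap\bigcup_{i>t}Y_i$ to be exactly $Y_{t1}$, so $r(\V_{Y_t})+r(\W)=r(\V_{Y_t}+\W)+r[(\V_{Y_t}\times Y_{t1})\cap(\W\times Y_{t1})]$ and the one-step condition becomes $r[(\V_{Y_t}\times Y_{t1})\cap((\Sigma_{i>t}\V_{Y_i})\times Y_{t1})]=0$, the first stated equivalence. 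The skewed equivalences then follow from a single observation: $\hat{\V}_{Y_t}\times Y_{t1}=\V_{Y_t}\times Y_{t1}$, because the contraction to $Y_{t1}$ lands in a space every coordinate of which is sign-flipped by the skew, and $\V_X=\V_{(-X)}$ for a vector space. Associativity makes each element of $Y_{t1}$ lie in $Y_t$ and exactly one later set $Y_s$, where it sits in $Y_{s2}$ and is hence unflipped, so $(\Sigma_{i>t}\hat{\V}_{Y_i})\times Y_{t1}=(\Sigma_{i>t}\V_{Y_i})\times Y_{t1}$ as well; the hatted intersection condition is therefore literally the same as the unhatted one, and applying part 2 to $\hat{\mathcal{H}}$ (which has the same underlying sets and hence the same $Y_{t1}$) delivers the equivalence with $r(\Sigma_i\hat{\V}_{Y_i})=\Sigma_ir(\hat{\V}_{Y_i})$.
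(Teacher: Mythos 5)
Your proofs of parts 1 and 2 are correct, and they are precisely the arguments the paper compresses into ``routine'' and ``by induction''; your derivation of the first equivalence in part 4 from part 2 and Theorem \ref{thm:sumintersection} also coincides with the paper's. For part 3, your suspicion is correct and can be sharpened: the reduction you describe cannot be completed, because the statement itself fails without further hypotheses on $T$. Over $\Re$, take $Y_1=\{a,b\}$, $Y_2=\{b,c\}$, with $\V_{Y_1}$, $\V_{Y_2}$ each spanned by the all-ones vector, and $T=\{a,c\}$. Then $r(\V_{Y_1}+\V_{Y_2})=2=r(\V_{Y_1})+r(\V_{Y_2})$, but $(\V_{Y_1}+\V_{Y_2})\times T$ is spanned by $(1,-1)$ and has rank $1$, while $\V_{Y_1}\times\{a\}=\0_{\{a\}}$ and $\V_{Y_2}\times\{c\}=\0_{\{c\}}$, so the right-hand side is $0$. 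Exactly as you predicted, rank-additivity of the restrictions $\V_{Y_i}\circ(Y_i-T)$ is what is needed and what can fail; the paper's one-line assertion that part 3 ``follows from part 1'' is therefore not a proof either.

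In part 4 your argument has a genuine gap, and it is the same gap as in the paper's own proof. The identity $(\Sigma_{i>t}\hat{\V}_{Y_i})\times Y_{t1}=(\Sigma_{i>t}\V_{Y_i})\times Y_{t1}$ does not follow from the (correct) observation that the coordinates in $Y_{t1}$ are unflipped in every $\hat{\V}_{Y_i}$ with $i>t$. The contraction to $Y_{t1}$ is computed inside the sum $\Sigma_{i>t}\hat{\V}_{Y_i}$, and that sum enforces cancellation on the overlap coordinates among the later sets themselves, i.e.\ on elements of $Y_i\cap Y_j$ with $t<i<j$; on such a coordinate exactly one of $\hat{\V}_{Y_i}$, $\hat{\V}_{Y_j}$ is sign-flipped, and this changes the contraction. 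Concretely, take the associative family $Y_1=\{a,b\}$, $Y_2=\{a,c\}$, $Y_3=\{b,c\}$ with each $\V_{Y_i}$ spanned by the all-ones vector. Here $Y_{11}=\{a,b\}$, $\hat{\V}_{Y_2}$ flips $c$, and $\hat{\V}_{Y_3}$ flips nothing; one computes that $(\V_{Y_2}+\V_{Y_3})\times Y_{11}$ is spanned by $(1,-1)$ while $(\hat{\V}_{Y_2}+\hat{\V}_{Y_3})\times Y_{11}$ is spanned by $(1,1)$. Moreover the conclusion of part 4 itself fails on this family: $r(\V_{Y_1}+\V_{Y_2}+\V_{Y_3})=3=\Sigma_{i}r(\V_{Y_i})$, yet $\hat{\V}_{Y_1}+\hat{\V}_{Y_2}+\hat{\V}_{Y_3}$ is spanned by $(1,1,0)$, $(1,0,-1)$, $(0,1,1)$, which has rank $2$ because $(1,1,0)-(1,0,-1)=(0,1,1)$. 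So the hatted equivalences are false for a general associative family and cannot be recovered by any local repair of the sign argument; they do hold when the sets $Y_i$ with $i>t$ are pairwise disjoint (then $\Sigma_{i>t}\hat{\V}_{Y_i}$ is a direct sum and contraction distributes over it), which is the situation in Lemma \ref{lem:rigidpairrank}, but not in the generality claimed here or used in the proof of Theorem \ref{thm:associativerigidrecursive}.
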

\begin{proof}
Part 1 is routine.
Part 2 can be proved  simply by  induction.
Part 3 follows from part 1.
\\ 4. By Theorem \ref{thm:sumintersection}, $r(\V_A+\V_B)= r(\V_A)+r(\V_B)-r([\V_A\times (A\cap B)]\cap [\V_B\times (A\cap B)]).$ 
\\Therefore $r(\V_A+\V_B)= r(\V_A)+r(\V_B)$ iff  $r([\V_A\times (A\cap B)]\cap [\V_B\times (A\cap B)])=0.$
\\We note that $Y_{t1} = Y_t\cap (Y_{t+1}\cup \cdots \cup Y_m).$
Therefore,\\
$r(\Sigma_{i=t}^m\V_{Y_i})=
r(\V_{Y_t})+r(\Sigma_{i=t+1}^m\V_{Y_i})$
iff $r[(\V_{Y_t}\times Y_{t1})\cap  ((\Sigma_{i=t+1}^m\V_{Y_i})\times Y_{t1})]=0.$
\\ Using part 2, we therefore have $r(\Sigma_{i=1}^m\V_{Y_i})=
\Sigma_{i=1}^mr(\V_{Y_i})$ \\iff $r[(\V_{Y_t}\times Y_{t1})\cap  ((\Sigma_{i=t+1}^m\V_{Y_i})\times Y_{t1})]=0,$
 for each $Y_t.$
\\Next $\hat{\V}_{Y_t}\times Y_{t1}= ({\V}_{Y_i})_{(-Y_{i1})Y_{i2}}\times Y_{t1}={\V}_{Y_t}\times Y_{t1}$ and, since  in $\hat{\V}_{Y_i}, {\V}_{Y_i}, i>t,$
the columns $Y_{t1}$ have the same sign,  we have  
$(\Sigma_{i=t+1}^m\hat{\V}_{Y_i})\times Y_{t1}=(\Sigma_{i=t+1}^m\V_{Y_i})\times Y_{t1}.$
\\Therefore $(\V_{Y_t}\times Y_{t1})\cap  ((\Sigma_{i=t+1}^m\V_{Y_i})\times Y_{t1})= (\hat{\V}_{Y_t}\times Y_{t1})\cap  ((\Sigma_{i=t+1}^m\hat{\V}_{Y_i})\times Y_{t1}).$
Thus  $r(\Sigma_{i=1}^m\V_{Y_i})=
\Sigma_{i=1}^mr(\V_{Y_i})$ \\iff $r[(\hat{\V}_{Y_t}\times Y_{t1})\cap  ((\Sigma_{i=t+1}^m\hat{\V}_{Y_i})\times Y_{t1})]=0,$
for each $Y_t.$
We have shown above that $r(\Sigma_{i=1}^m\hat{\V}_{Y_i})=
\Sigma_{i=1}^mr(\hat{\V}_{Y_i})$ iff $r[(\hat{\V}_{Y_t}\times Y_{t1})\cap  ((\Sigma_{i=t+1}^m\hat{\V}_{Y_i})\times Y_{t1})]=0.$
Therefore, $r(\Sigma_{i=1}^m\V_{Y_i})=
\Sigma_{i=1}^mr(\V_{Y_i})$ iff $r(\Sigma_{i=1}^m\hat{\V}_{Y_i})=
\Sigma_{i=1}^mr(\hat{\V}_{Y_i}).$
\end{proof}
\begin{lemma}
\label{lem:rankcondition}
Let ${\mathcal{H}}\equivd \{\V_{Y_i},i=1, \cdots , m\}$
and let ${\mathcal{H}}_1\equivd \{\V_{Y_i},i=1, \cdots , t\}, t<m,$\\
${\mathcal{H}}_2\equivd   \{\V_{Y_i},i=t+1, \cdots , m\}$
be subfamilies of $\hat{\mathcal{H}}.$
Let $Y^1=\bigcup^t_{i=1} Y_i$ and let $Y^2=\bigcup^m_{i=t+1} Y_i.$
Then ${\mathcal{H}}$ satisfies the rank conditions of Definition \ref{def:rigidn}
 iff
${\mathcal{H}}_1,{\mathcal{H}}_2$ satisfy them and
further\\
$ r([\Sigma^t_{i=1}\V_{Y_i}]\times (Y^1\cap Y^2) \bigcap [\Sigma^m_{i=t+1}\V_{Y_i}]\times (Y^1\cap Y^2))=0$ and\\
$ r([\Sigma^t_{i=1}\V^{\perp}_{Y_i}]\times (Y^1\cap Y^2) \bigcap [\Sigma^m_{i=t+1}\V^{\perp}_{Y_i}]\times (Y^1\cap Y^2))=0.$
\end{lemma}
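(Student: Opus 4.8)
The plan is to reduce everything to the rank identity in part 2 of Theorem \ref{thm:sumintersection}, applied once to the given spaces and once to their orthogonal complements. Write $\mathcal{U}_1 \equivd \Sigma_{i=1}^t \V_{Y_i}$ and $\mathcal{U}_2 \equivd \Sigma_{i=t+1}^m \V_{Y_i}$, so that $\mathcal{U}_1$ is a vector space on $Y^1$, $\mathcal{U}_2$ is a vector space on $Y^2$, and, by associativity of the extended sum operation, $\Sigma_{i=1}^m \V_{Y_i} = \mathcal{U}_1 + \mathcal{U}_2$. Part 2 of Theorem \ref{thm:sumintersection} (taken with $A = Y^1$, $B = Y^2$, so that $A \cap B = Y^1 \cap Y^2$) then gives
\[
r(\mathcal{U}_1) + r(\mathcal{U}_2) = r(\mathcal{U}_1 + \mathcal{U}_2) + r\big[ (\mathcal{U}_1 \times (Y^1 \cap Y^2)) \cap (\mathcal{U}_2 \times (Y^1 \cap Y^2)) \big].
\]

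First I would establish the primal half. Iterated subadditivity of rank under the sum (itself a consequence of the displayed identity, since the intersection term is nonnegative) gives $r(\mathcal{U}_1) \leq \Sigma_{i=1}^t r(\V_{Y_i})$ and $r(\mathcal{U}_2) \leq \Sigma_{i=t+1}^m r(\V_{Y_i})$, with equality precisely when $\mathcal{H}_1$, respectively $\mathcal{H}_2$, satisfies the primal rank condition of Definition \ref{def:rigidn}. Setting $a \equivd r(\mathcal{U}_1)$, $A \equivd \Sigma_{i=1}^t r(\V_{Y_i})$, $b \equivd r(\mathcal{U}_2)$, $B \equivd \Sigma_{i=t+1}^m r(\V_{Y_i})$, and letting $c$ denote the interface intersection rank above, the identity reads $r(\mathcal{U}_1 + \mathcal{U}_2) = a + b - c$, whereas the primal condition for $\mathcal{H}$ is exactly $r(\mathcal{U}_1 + \mathcal{U}_2) = A + B$. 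Since $a \leq A$, $b \leq B$, and $c \geq 0$, we have $a + b - c \leq A + B$ always, with equality iff $a = A$, $b = B$, and $c = 0$ simultaneously. This is precisely the assertion that $\mathcal{H}$ satisfies the primal condition iff $\mathcal{H}_1$ and $\mathcal{H}_2$ do and the first interface rank vanishes.

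The dual half is the same argument applied verbatim to the associative family $\{\V^{\perp}_{Y_i}\}$, which has the same index sets $Y_i$; hence $\Sigma_{i=1}^m \V^{\perp}_{Y_i} = (\Sigma_{i=1}^t \V^{\perp}_{Y_i}) + (\Sigma_{i=t+1}^m \V^{\perp}_{Y_i})$ as spaces on $Y^1$ and $Y^2$, and the same instance of Theorem \ref{thm:sumintersection} produces the second interface condition $r[\,\cdots\,] = 0$ with the $\V^{\perp}_{Y_i}$. Taking the conjunction of the primal and dual equivalences, and recalling that ``$\mathcal{H}_j$ satisfies the rank conditions'' means that both its primal and its dual condition hold, yields the lemma.

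I do not expect a genuine obstacle here: the content is entirely the modular rank identity together with the elementary fact that equality in a chain of inequalities forces termwise equality. The only points meriting a word of care are the associativity of the extended sum (so that the $m$-fold sum splits as $\mathcal{U}_1 + \mathcal{U}_2$ with the correct column sets $Y^1, Y^2$) and the derivation of iterated subadditivity from Theorem \ref{thm:sumintersection}; both are routine.
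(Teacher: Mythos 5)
Your proposal is correct and follows essentially the same route as the paper: the paper also applies part 2 of Theorem \ref{thm:sumintersection} to the two partial sums $\Sigma^t_{i=1}\V_{Y_i}$ and $\Sigma^m_{i=t+1}\V_{Y_i}$, deduces the equivalence by the same subadditivity/equality-forcing argument (which you merely spell out more explicitly), and handles the dual condition by repeating the argument with the $\V^{\perp}_{Y_i}$.
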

\begin{proof}
When $Y_1\cap Y_2=\emptyset,$ it is clear that if the rank condition is satisfied 
for ${\mathcal{H}_1},{\mathcal{H}_2}$ the lemma is trivially true. Let $Y_1\cap Y_2$ be 
nonvoid.
By Theorem \ref{thm:sumintersection}, $r(\V_A+\V_B)= r(\V_A)+r(\V_B)-r([\V_A\times (A\cap B)]\cap [\V_B)\times (A\cap B)].$ Therefore,
$r(\Sigma^m_{i=1}\V_{Y_i}) =r(\Sigma^t_{i=1}\V_{X_i})+r(\Sigma^m_{i=t+1}\V_{X_i})-  r([\Sigma^t_{i=1}\V_{Y_i}]\times (Y^1\cap Y^2) \bigcap [\Sigma^m_{i=t+1}\V_{Y_i}]\times (Y^1\cap Y^2)).$
\\
It follows that $r(\Sigma^m_{i=1}\V_{Y_i}) = \Sigma^m_{i=1}r(\V_{Y_i})$
iff $r(\Sigma^t_{i=1}\V_{Y_i}) = \Sigma^t_{i=1}r(\V_{Y_i}), t<m,$ \\
$r(\Sigma^m_{i=t+1}\V_{Y_i}) = \Sigma^m_{i=t+1}r(\V_{Y_i})$
and $ r([\Sigma^t_{i=1}\V_{Y_i}]\times (Y^1\cap Y^2) \bigcap [\Sigma^m_{i=t+1}\V_{Y_i}]\times (Y^1\cap Y^2))=0.$
\\ Working with $\V^{\perp}_{Y_i}$ and repeating the argument, we get the
second half of the statement of the lemma.
\end{proof}
\begin{proof}
(Proof of Theorem \ref{thm:associativerigidrecursive})\\
Let $\A_{Y_1}, \cdots , \A_{Y_m}$ 
have vector space translates $\V_{Y_1},\cdots , \V_{Y_m},$
 respectively.
We note that 
rigidity of  $\mathcal{H}\equivd \{\A_{Y_1}, \cdots , \A_{Y_m}\}$    
is defined in terms of rigidity of $\{\V_{Y_1}, \cdots , \V_{Y_m}\}.$ 
Further
by Lemma \ref{lem:associativetranslate}, we know that 
 when $\mathcal{H}_i$ is rigid  and 
$\lrar(\mathcal{H}_i), \rightleftharpoons(\mathcal{H}_i)$ are nonvoid, then these latter have vector space translates\\
$\lrar(\{\V_{Y_1}, \cdots , \V_{Y_m}\}), \rightleftharpoons(\{\V_{Y_1}, \cdots , \V_{Y_m}\})$ respectively.  
Since $\G_{\mathcal{H}}$ is connected, we must have
$\lrar(\mathcal{H}_i), \rightleftharpoons(\mathcal{H}_i), i=1,2,$ not
equal to $\K_{\emptyset}.$
We therefore prove the theorem taking $\mathcal{H}\equivd \{\V_{Y_1}, \cdots , \V_{Y_m}\}.$

By Lemma \ref{lem:rankcondition}, the primal  rank condition of Definition \ref{def:rigidn} ($r(\Sigma_{i}\V_{Y_i}) = \Sigma_{i}r(\V_{Y_i})$)
 holds for 
$\mathcal{H}$ iff it holds for ${\mathcal{H}_1},{\mathcal{H}_2},$ and $ r([\Sigma^t_{i=1}\V_{Y_i}]\times (Y^1\cap Y^2) \bigcap [\Sigma^m_{i=t+1}\V_{Y_i}]\times (Y^1\cap Y^2))=0,$ 
 where $Y^1=\bigcup^t_{i=1} Y_i$ and  $Y^2=\bigcup^m_{i=t+1} Y_i.$
We have $r([\Sigma^t_{i=1}\V_{Y_i}]\times (Y^1\cap Y^2) \bigcap [\Sigma^m_{i=t+1}\V_{Y_i}]\times (Y^1\cap Y^2))$\\$=
r([\Sigma^t_{i=1}\V_{Y_i}]\times Y^1\times (Y^1\cap Y^2)\bigcap [\Sigma^m_{i=t+1}\V_{Y_i}]\times Y^2\times (Y^1\cap Y^2)$\\$
=r([\rightleftharpoons(\mathcal{H}_1))\times (Y^1\cap Y^2)]\cap  [(\rightleftharpoons(\mathcal{H}_2))\times(Y^1\cap Y^2)].$
Thus the primal rank condition 
holds for
$\mathcal{H}$ iff it holds for $\mathcal{H}_1,\mathcal{H}_2$ and 
the zero intersection property holds for $\{(\rightleftharpoons(\mathcal{H}_1)), (\rightleftharpoons(\mathcal{H}_2))\}.$

As in Definition \ref{def:skewedpair}, we take $\hat{\mathcal{H}}\equivd \{\hat{\V}_{Y_1}, \cdots , \hat{\V}_{Y_m}\}.$
Let $\hat{\mathcal{H}}\equivd \hat{\mathcal{H}}_{1, \cdots ,m}, \hat{\mathcal{H}}_1\equivd (\hat{\mathcal{H}}_1)_{1, \cdots ,t}, $\\$ \hat{\mathcal{H}}_2\equivd (\hat{\mathcal{H}}_2)_{t+1, \cdots ,m}.$
We note that $\hat{\hat{\mathcal{H}}}=\mathcal{H}$ and 
we have $\lrar(\mathcal{H})= (\rightleftharpoons(\hat{\mathcal{H}})),$ for any associative 
family $\mathcal{H}.$ By part 4 of Lemma \ref{lem:rankcondition1}, we know that 
the primal rank condition holds for $\mathcal{H}$ iff it holds for $\hat{\mathcal{H}}.$
By the above argument, the primal rank condition holds for $\hat{\mathcal{H}}$ 
iff it holds for $\hat{\mathcal{H}}_1,\hat{\mathcal{H}}_2$, i.e., for $\mathcal{H}_1,\mathcal{H}_2, $  and
the zero intersection property holds for $\{(\rightleftharpoons(\hat{\mathcal{H}}_1)), (\rightleftharpoons(\hat{\mathcal{H}}_1))\}= \{(\lrar(\mathcal{H}_1)), (\lrar(\mathcal{H}_2))\}.$
It follows that
the primal rank condition holds for $\mathcal{H}$ 
iff it holds for $\mathcal{H}_1,\mathcal{H}_2$ and
the zero intersection property holds for $\{(\lrar(\mathcal{H}_1)), (\lrar(\mathcal{H}_2))\}
.$

The dual rank condition for $\mathcal{H}$ is the same as the primal rank condition 
for $\mathcal{H}^{\perp}.$ When $\mathcal{H}$ is associative, so is $\mathcal{H}^{\perp}.$
 Working with $\mathcal{H}^{\perp}$ 
we can show that the primal rank condition holds for
$\mathcal{H}^{\perp}$ iff it holds for $\mathcal{H}^{\perp}_1,\mathcal{H}^{\perp}_2$ and
the zero intersection property holds for 
$\{(\rightleftharpoons(\mathcal{H}^{\perp}_1)), (\rightleftharpoons(\mathcal{H}^{\perp}_2))\}.$
By Theorem \ref{thm:idtassociative}, \\$\{(\rightleftharpoons(\mathcal{H}^{\perp}_1)), (\rightleftharpoons(\mathcal{H}^{\perp}_2))\}
=
\{(\lrar(\mathcal{H}_1))^{\perp})), (\lrar(\mathcal{H}_2))^{\perp}))\}.$
Now by 
 by part 4 of Theorem \ref{thm:regularrecursive}, the 
zero intersection property holds for $\{(\lrar(\mathcal{H}_1))^{\perp})), (\lrar(\mathcal{H}_2))^{\perp}))\}$
iff 
the 
full sum property holds 
for\\ $\{(\lrar(\mathcal{H}_1)), (\lrar(\mathcal{H}_2))\}
.$
We thus see that the primal and dual rank conditions of rigidity hold for $\mathcal{H}$ iff they hold for $\mathcal{H}_1, \mathcal{H}_2$ and $\{(\lrar(\mathcal{H}_1)), (\lrar(\mathcal{H}_2))\}
$ is rigid.
Thus $\mathcal{H}$ is rigid iff $\mathcal{H}_1, \mathcal{H}_2, \{(\lrar(\mathcal{H}_1)), (\lrar(\mathcal{H}_2))\}$ 
are rigid.

%
2. As before, we only prove the result taking the families to be made up 
of vector spaces. 





As in Definition \ref{def:skewedpair}, we
define $\hat{\mathcal{H}}, \hat{\mathcal{H}}_1, \hat{\mathcal{H}}_2.$
By part 4 of Lemma \ref{lem:rankcondition1}, we know that the
primal and dual rank condition holds for $\mathcal{H}$
 iff it holds for $\hat{\mathcal{H}}.$
From the proof above, $\hat{\mathcal{H}}$ is rigid iff $\hat{\mathcal{H}_1},\hat{\mathcal{H}_2}$ and $\{\lrar(\hat{\mathcal{H}_1}) , \lrar(\hat{\mathcal{H}_2})\}$
are rigid.
By Lemma \ref{lem:lrarassociative}, $\lrar(\hat{\mathcal{H}})\ = \ \rightleftharpoons(\mathcal{H}), \lrar(\hat{\mathcal{H}}_i)\ = \ \rightleftharpoons(\mathcal{H}_i).$
The result now follows.

%

\end{proof}

The following is a more convenient version of Theorem \ref{thm:associativerigidrecursive}.
\begin{theorem}
\label{thm:associativerigidrecursiven}
Let ${\mathcal{H}}\equivd \{\A_{Y_1},\cdots , \A_{Y_m}\}$ be an associative family of affine spaces, 
with $\G_{\mathcal{H}}$ connected and with non empty edge set.
  Let ${\mathcal{H}}$ be partitioned as ${\mathcal{H}_1}\uplus \cdots \uplus {\mathcal{H}_n}.$
Then
\begin{enumerate}
\item ${\mathcal{H}}$ is rigid iff ${\mathcal{H}_1},\cdots ,{\mathcal{H}_n}$ and $\{\lrar(\mathcal{H}_1),\cdots , \lrar(\mathcal{H}_n)\}$
are rigid.
\item ${\mathcal{H}}$ is rigid iff ${\mathcal{H}_1},\cdots ,{\mathcal{H}_n}$ and $\{\rightleftharpoons(\mathcal{H}_1), \cdots ,\rightleftharpoons(\mathcal{H}_n)\}$
are rigid.
\end{enumerate}
\end{theorem}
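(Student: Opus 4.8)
The plan is to induct on the number $n$ of blocks in the partition, using the two-block case (Theorem \ref{thm:associativerigidrecursive}) as the engine. The case $n=1$ is trivial, since the one-element family $\{\lrar(\mathcal{H}_1)\}=\{\lrar(\mathcal{H})\}$ is automatically rigid (both the primal and dual rank conditions of Definition \ref{def:rigidn} hold trivially), and the case $n=2$ is exactly Theorem \ref{thm:associativerigidrecursive}. As usual, since rigidity of a family of affine spaces is by definition rigidity of the family of its vector space translates, I would carry out the whole argument for families of vector spaces.

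Two structural facts would be recorded first. First, the derived family $\mathcal{G}\equivd\{\lrar(\mathcal{H}_1),\cdots,\lrar(\mathcal{H}_n)\}$ is again associative: the column set of $\lrar(\mathcal{H}_i)$ consists of the elements lying in exactly one member of $\mathcal{H}_i$, and since each element of $\bigcup_iY_i$ lies in at most two of the $Y_i$, each surviving element lies in at most two blocks, hence in at most two members of $\mathcal{G}$; moreover $\G_{\mathcal{G}}$ is the quotient of $\G_{\mathcal{H}}$ obtained by contracting each block to a vertex, so it inherits connectedness. Second, the composite identity
$$\lrar(\mathcal{H}_1\uplus\cdots\uplus\mathcal{H}_{n-1})=\lrar(\{\lrar(\mathcal{H}_1),\cdots,\lrar(\mathcal{H}_{n-1})\})$$
holds as an identity of collections of vectors, obtained by iterating the associativity of $\lrar(\cdot)$ over associative families (the remark following Definition \ref{def:gen_lrar1}); the analogous identity holds for $\rightleftharpoons(\cdot)$.

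For the inductive step write $\mathcal{H}'\equivd\mathcal{H}_1\uplus\cdots\uplus\mathcal{H}_{n-1}$, so $\mathcal{H}=\mathcal{H}'\uplus\mathcal{H}_n$. Applying Theorem \ref{thm:associativerigidrecursive} to this two-block partition gives that $\mathcal{H}$ is rigid iff $\mathcal{H}'$, $\mathcal{H}_n$ and $\{\lrar(\mathcal{H}'),\lrar(\mathcal{H}_n)\}$ are rigid. The induction hypothesis applied to $\mathcal{H}'$ gives that $\mathcal{H}'$ is rigid iff $\mathcal{H}_1,\cdots,\mathcal{H}_{n-1}$ and $\mathcal{G}'\equivd\{\lrar(\mathcal{H}_1),\cdots,\lrar(\mathcal{H}_{n-1})\}$ are rigid. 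It then remains only to show that ``$\mathcal{G}'$ rigid and $\{\lrar(\mathcal{H}'),\lrar(\mathcal{H}_n)\}$ rigid'' is equivalent to ``$\mathcal{G}$ rigid''. This I would obtain by a second application of Theorem \ref{thm:associativerigidrecursive}, now to the two-block partition $\mathcal{G}=\mathcal{G}'\uplus\{\lrar(\mathcal{H}_n)\}$ (legitimate since $\G_{\mathcal{G}}$ is connected by the first structural fact): the singleton $\{\lrar(\mathcal{H}_n)\}$ is trivially rigid, and by the composite identity $\lrar(\mathcal{G}')=\lrar(\mathcal{H}')$ while $\lrar(\{\lrar(\mathcal{H}_n)\})=\lrar(\mathcal{H}_n)$, so this application reads exactly ``$\mathcal{G}$ rigid iff $\mathcal{G}'$ rigid and $\{\lrar(\mathcal{H}'),\lrar(\mathcal{H}_n)\}$ rigid''. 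Chaining the three equivalences yields Part 1. Part 2 follows by the same scheme with $\rightleftharpoons(\cdot)$ in place of $\lrar(\cdot)$ and part 2 of Theorem \ref{thm:associativerigidrecursive} in place of part 1; alternatively one passes to the skewed family $\hat{\mathcal{H}}$, uses $\rightleftharpoons(\mathcal{H}_i)=\lrar(\hat{\mathcal{H}}_i)$ (Lemma \ref{lem:lrarassociative}) together with the fact that $\mathcal{H}$ and $\hat{\mathcal{H}}$ are simultaneously rigid (part 4 of Lemma \ref{lem:rankcondition1}), and invokes Part 1 for $\hat{\mathcal{H}}$.

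The step I expect to be the main obstacle is the application of the induction hypothesis to $\mathcal{H}'$, because $\G_{\mathcal{H}'}$ need not be connected even when $\G_{\mathcal{H}}$ is, whereas Theorem \ref{thm:associativerigidrecursive} is stated for connected families. I would dispose of this with an auxiliary component-decomposition observation: the connected components of $\G_{\mathcal{H}'}$ partition $\bigcup_{i<n}Y_i$ into disjoint supports, so both $\Sigma\V_{Y_i}$ and $\Sigma\V^{\perp}_{Y_i}$ split as direct sums over the components, whence the primal and dual rank conditions of Definition \ref{def:rigidn} for $\mathcal{H}'$ hold iff they hold on each component-subfamily. This reduces the inductive application to connected families, within which the degenerate compositions $\lrar(\cdot)=\K_{\emptyset}$ do not arise; and the conventions $\K_{\emptyset}\lrar\K_X=\K_X$ guarantee that any empty-support blocks produced by the bookkeeping behave as identities and leave the rank conditions unchanged. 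Managing these $\K_{\emptyset}$ conventions and the connectedness reduction, rather than any genuinely new rank inequality, is the only delicate point.
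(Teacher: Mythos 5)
Your proof is correct, and its skeleton---induction on $n$, splitting off one block and invoking the two-block result (Theorem \ref{thm:associativerigidrecursive})---is the same as the paper's; the genuine difference lies in how the remaining gluing step is discharged. The paper denotes $\lrar(\mathcal{H}_i)$ by $\A_{Z_i}$ and proves, by a direct rank computation (the zero intersection property together with Theorem \ref{thm:sumintersection}), the single implication that rigidity of $\{\A_{Z_1},\A_{Z_2}\lrar\cdots\lrar\A_{Z_n}\}$ and of $\{\A_{Z_2},\cdots,\A_{Z_n}\}$ forces rigidity of $\{\A_{Z_1},\cdots,\A_{Z_n}\}$. You instead apply Theorem \ref{thm:associativerigidrecursive} a second time, to the derived family $\mathcal{G}$ partitioned as $\mathcal{G}'\uplus\{\lrar(\mathcal{H}_n)\}$, after checking that $\mathcal{G}$ is associative and $\G_{\mathcal{G}}$ connected; your check is sound, since the cross-block elements are exactly the edges between distinct vertices of $\G_{\mathcal{G}}$, contraction of blocks preserves connectivity, and for $n\geq 2$ connectivity forces at least one cross-block element, so the edge set of $\G_{\mathcal{G}}$ is nonempty. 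This route buys two things: it avoids redoing rank arithmetic, and it delivers \emph{both} directions of the gluing equivalence at once---note that the backward direction of the theorem (blocks and derived family rigid $\Rightarrow$ $\{\lrar(\mathcal{H}_1),\lrar(\mathcal{H}_2\uplus\cdots\uplus\mathcal{H}_n)\}$ rigid) requires precisely such a second application of the two-block theorem to the derived family, which the paper's write-up leaves implicit, its explicit rank argument covering only the forward gluing. What the paper's computation buys in exchange is independence from the hypotheses of the two-block theorem for the derived family (connectivity, nonvoidness of the $\A_{Z_i}$ via Lemma \ref{lem:associativetranslate}), which you must and do verify. Finally, the obstacle you flag---that $\G_{\mathcal{H}'}$ need not be connected, so the induction hypothesis does not literally apply---is real, afflicts the paper's own recursion on $\mathcal{H}_2\uplus\cdots\uplus\mathcal{H}_n$ equally, and your component-decomposition repair (the rank conditions of Definition \ref{def:rigidn} split as direct sums over components with disjoint supports, with the $\K_{\emptyset}$ conventions absorbing empty blocks) is the correct fix for a point on which the paper is silent.
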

\begin{proof}
1. The proof is by induction over $n.$ We have seen in Theorem \ref{thm:associativerigidrecursive}, that the result is true when $n=2.$
We then have ${\mathcal{H}}$ is rigid iff ${\mathcal{H}_1}, ({\mathcal{H}_2}\uplus \cdots \uplus {\mathcal{H}_n}), \{\lrar(\mathcal{H}_1), \lrar({\mathcal{H}_2}\uplus \cdots \uplus \mathcal{H}_n)\}$ are rigid. 
\\
By induction, ${\mathcal{H}_2}\uplus \cdots \uplus {\mathcal{H}_n}$ is rigid iff ${\mathcal{H}_2},\cdots ,{\mathcal{H}_n}$ and $\{\lrar(\mathcal{H}_2),\cdots , \lrar(\mathcal{H}_n)\}$
are rigid.
\\ We therefore need only show that, when $\mathcal{H}_1,\cdots , \mathcal{H}_n$ are rigid, rigidity of $\{\lrar(\mathcal{H}_1), \lrar({\mathcal{H}_2}\uplus \cdots \uplus \mathcal{H}_n)\}$ and 
$\{\lrar(\mathcal{H}_2),\cdots , \lrar(\mathcal{H}_n)\}$ implies rigidity of 
$\{\lrar(\mathcal{H}_1),\cdots , \lrar(\mathcal{H}_n)\}.$

Let us denote $\lrar(\mathcal{H}_1),\cdots , \lrar(\mathcal{H}_n),$ respectively by $\A_{Z_1},\cdots , \A_{Z_n}.$ Since $\mathcal{H}$ is an associative 
family, by the definition of $\lrar(\mathcal{H}_i),$ it is clear that no element belongs to more than two of the $Z_i.$ Further, since $\G_{\mathcal{H}}$ is connected, the $Z_i$  are non void. Since $\mathcal{H}_1,\cdots , \mathcal{H}_n,$ are rigid, by Lemma \ref{lem:associativetranslate}, the $\A_{Z_i}$ are nonvoid.
Let $\A_{Z_1},\cdots , \A_{Z_n},$
 have vector space translates $\V_{Z_1},\cdots , \V_{Z_n},$
 respectively. 
We thus need to show that rigidity of $\{\V_{Z_1},(\V_{Z_2}\lrar \cdots \lrar  \V_{Z_n})\}$ and $\{\V_{Z_2},\cdots , \V_{Z_n}\}$
 implies the rigidity of $\{\V_{Z_1},\cdots , \V_{Z_n}\}.$

Suppose $\{\V_{Z_1},(\V_{Z_2}\lrar \cdots \lrar  \V_{Z_n})\}$ and $\{\V_{Z_2},\cdots , \V_{Z_n}\}$
are rigid. Let $P$ be the set of all elements which belong 
precisely to one of $Z_2,\cdots , Z_m$ and let $T\equivd Z_1\cap \bigcup_{i=2}^mZ_i.$ Since no element belongs to more than 
two of $Z_1,\cdots , Z_n,$ it is clear that $T\subseteq P.$ 
From the zero intersection property of $\{\V_{Z_1},(\V_{Z_2}\lrar \cdots \lrar  \V_{Z_n})\},$ we have that 
$r((\V_{Z_1}\times T)\cap 
(\V_{Z_2}\lrar \cdots \lrar  \V_{Z_n})\times T)=
r((\V_{Z_1}\times T)\cap[(\V_{Z_2}+\cdots + \V_{Z_n})\times P\times T])$\\$=
 r((\V_{Z_1}\times T)\cap[(\V_{Z_2}+\cdots + \V_{Z_n})\times  T])=
0.$
By Theorem \ref{thm:sumintersection}, it follows that 
$r(\V_{Z_1}+\cdots + \V_{Z_m})= r(\V_{Z_1})+r(\V_{Z_2}+\cdots + \V_{Z_m}).$
Since $\{\V_{Z_2},\cdots , \V_{Z_n}\}$
 is rigid, we have $r(\V_{Z_2}+\cdots + \V_{Z_m})= r(\V_{Z_1})+r(\V_{Z_2})+\cdots + r(\V_{Z_n}).$
Therefore $r(\V_{Z_1}+\cdots + \V_{Z_m})= r(\V_{Z_1})+\cdots + r(\V_{Z_n}).$

The proof of the dual rank condition for $\{\V_{Z_1},\cdots , \V_{Z_n}\}$
is by replacing $\V_{Z_i}$ by $\V_{Z_i}^{\perp}$ everywhere in the above proof and by using the fact that rigidity of $\{\V_{Z_2},\cdots , \V_{Z_n}\}$ is equivalent to rigidity of $\{\V^{\perp}_{Z_2},\cdots , \V^{\perp}_{Z_n}\}.$
\\
We thus see that rigidity of $\{\V_{Z_1},(\V_{Z_2}\lrar \cdots \lrar  \V_{Z_n})\}$ and $\{\V_{Z_2},\cdots , \V_{Z_n}\}$
 implies the rigidity of $\{\V_{Z_1},\cdots , \V_{Z_n}\}.$\\
This proves that 
${\mathcal{H}}$ is rigid iff ${\mathcal{H}_1},\cdots ,{\mathcal{H}_n}$ and $\{\lrar(\mathcal{H}_1),\cdots , \lrar(\mathcal{H}_n)\}$
are rigid.

2. 
As before, we need only prove the result for the case where $\mathcal{H}$ 
is a family of vector spaces.

As in Definition \ref{def:skewedpair}, we 
define $\hat{\mathcal{H}}, \hat{\mathcal{H}}_1,\cdots \hat{\mathcal{H}}_n.$
By part 4 of Lemma \ref{lem:rankcondition1}, we know that the 
primal and dual rank condition holds for $\mathcal{H}$ 
 iff it holds for $\hat{\mathcal{H}}.$
From the proof above, $\hat{\mathcal{H}}$ is rigid iff $\hat{\mathcal{H}_1},\cdots ,\hat{\mathcal{H}_n}$ and $\{\lrar(\hat{\mathcal{H}_1}),\cdots , \lrar(\hat{\mathcal{H}_n})\}$
are rigid.
By Lemma \ref{lem:lrarassociative}, $\lrar(\hat{\mathcal{H}})\ = \ \rightleftharpoons(\mathcal{H}), \lrar(\hat{\mathcal{H}}_i)\ = \ \rightleftharpoons(\mathcal{H}_i).$
The result now follows.
\end{proof}

\section{Matroidal rigidity}
\label{sec:matroidrigidity}
\subsection{Matroid preliminaries}
\label{subsec:matroidprelim}
This section follows \cite{STHN2014}, particularly the definition of 
the `$\lrar$' operation for matroids.

{\bf Matroids} are defined using various cryptomorphic sets of axioms. We will use the base axioms to define a matroid $\M_S$ on a finite set $S$ as an ordered pair $(S,{\cal{B}})$, where ${\cal{B}}$ is a collection of subsets of $S$, called `bases', satisfying the following equivalent axioms.

\begin{itemize}
\item If $b_1, b_2 \in {\cal{B}}$ and if $e_2 \in b_2 - b_1$, then there exists $e_1 \in b_1 - b_2$ such that $(b_1 - \{e_1\}) \cup \{e_2\}$ is a member of ${\cal{B}}$.

\item If $b_1, b_2 \in {\cal{B}}$ and if $e_1 \in b_1 - b_2$, then there exists $e_2 \in b_2 - b_1$ such that $(b_1 - \{e_1\}) \cup \{e_2\}$ is a member of ${\cal{B}}$.
\end{itemize}

It follows from the base axioms that all bases of a matroid have the same size. Subsets of bases are called independent sets. {\bf Rank} function $r:2^S \rightarrow \mathbb{Z^+}$, corresponding to a matroid, is defined by size of a largest independent set contained in the given subset. 

The \nw{independence oracle} for matroid $\M_S$ returns for the query `is $X\subseteq S $ 
independent in the matroid $\M_S?$ with a `Yes' or a `No', with the 
former when $X$ is independent and the latter when it is not.
Usually, the complexity of algorithms related to matroids 
is evaluated in terms of the number of queries to the oracle 
needed in the worst case to complete the algorithm.

A matroid on $S$, with only null set as a base is called {\bf zero} matroid, denoted $\0_S$. A matroid on $S$ with only full set $S$ as a base is called a {\bf full} or {\bf complete} matroid, denoted $\F_S$. 
(Note that we use $\0_S$ to denote the zero vector space as well as the 
zero matroid and $\F_S$ to denote the full vector space as well as the
full matroid. This abuse of notation will not cause confusion since the context
would make it clear which entity is involved.)
When sets $S$ and $P$ are disjoint, we denote a matroid on the set $S \cup P$ by $\M_{SP}$ (and whenever we write $\M_{SP}$, it is assumed that $S$ and $P$ are disjoint). Given a matroid $\M_S$ on $S$ and $\M_P$ on $P$, with $S$ and $P$ disjoint, {\bf direct sum} of $\M_S$ and $\M_P$ is a matroid $\M_S \oplus \M_P$ on $S \cup P$, whose bases are unions of a base of $\M_S$ with a base of $\M_P$.

For any subset $b_S$ of $S$, its complement w.r.t. $S$ is denoted by $\bar{b}_S = S - b_S$. Given a matroid $\M = (S, {\cal{B}})$, its {\bf dual} matroid, denoted $\M^*$, is defined to be $(S, {\cal{B}}^*)$, where ${\cal{B}}^*$ is the collection of complements of the subsets present in ${\cal{B}}$. We can see that dual of the dual is same as original matroid ($\M^{**} = \M$). 

We summarize these ideas in the following theorem.
\begin{theorem}
 \label{thm:perperpm}
Let $\M_X$ be a matroid on $X.$ Then
\begin{enumerate}
\item
$r(\M_X)+r(\M^{*}_X)=|X|$ and $(\M_X)^{**}=\M_X;$
\item $T$ is a base of $\M_X$ iff $T$ is a cobase of $\M^{*}_X.$
\end{enumerate}
\end{theorem}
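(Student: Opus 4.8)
The plan is to derive both parts directly from the definitions of the dual matroid, base, cobase, and rank, exactly paralleling the proof of the vector-space analogue (Theorem \ref{thm:perperp}). The two facts I will lean on are, first, that the base axioms force all bases of a matroid to have the same cardinality, so that $r(\M_X)$---the size of the largest independent set contained in the full set $X$---equals the common size of a base of $\M_X$; and second, that by definition the bases of $\M^{*}_X$ are precisely the complements (with respect to $X$) of the bases of $\M_X$.

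For part 1, I would fix an arbitrary base $b$ of $\M_X$, so that $r(\M_X)=|b|$. By the definition of the dual, $X-b$ is a base of $\M^{*}_X$, and since all bases of $\M^{*}_X$ share a common cardinality, $r(\M^{*}_X)=|X-b|=|X|-|b|$. Adding the two equalities gives $r(\M_X)+r(\M^{*}_X)=|X|$. For the involution $(\M_X)^{**}=\M_X$, I would apply the defining property of the dual twice: a base of $\M^{**}_X$ is the complement of a base of $\M^{*}_X$, which is in turn the complement of a base of $\M_X$; since complementation with respect to $X$ is itself an involution, the bases of $\M^{**}_X$ coincide with those of $\M_X$, whence the two matroids are equal. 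This is the statement already anticipated in the preliminary remark that $\M^{**}=\M$.

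For part 2, I would argue purely symbolically. By the definition of the dual, $T$ is a base of $\M_X$ if and only if $X-T$ is a base of $\M^{*}_X$. Writing $T=X-(X-T)$, the latter condition says exactly that $T$ is the complement of a base of $\M^{*}_X$, i.e.\ that $T$ is a cobase of $\M^{*}_X$. This chain of equivalences is the desired statement.

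I do not expect any genuine obstacle here: the result is a direct unwinding of definitions. The only point that relies on the axioms (rather than on mere definitions) is the equicardinality of bases, which underlies the claim that $r(\M_X)$ equals the size of a single base; this is recorded immediately after the base axioms in the preliminaries, so it may simply be cited rather than reproved.
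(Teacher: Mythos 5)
Your proof is correct and matches the paper's treatment: the paper states this theorem merely as a summary of the definitions of dual, base, and rank (remarking ``we can see that dual of the dual is same as original matroid'') and offers no further argument, so your careful unwinding of those definitions---complementation of bases, equicardinality of bases, and the involutive property of complementation---is exactly the intended justification.
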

A {\bf circuit} of a matroid $\M$ is a minimal dependent (not independent) set in $\M$. Circuits of $\M^*$ are called {\bf bonds} of $\M$.
For a matroid $\M$ on $S$, given a subset $T \subseteq S$, we define {\bf restriction} of $\M$ to $T$, denoted $\M \circ T$, as a matroid, whose independent sets are all subsets of $T$ which are independent in $\M$ (equivalently, bases in $\M \circ T$ are maximal intersections of bases of $\M$ with $T$). {\bf Contraction} of $\M$ to $T$, denoted $\M \times T$, is defined by the matroid whose independent sets are precisely those $X \subseteq T$ which satisfy the property that $X \cup b_{S-T}$ is independent in $\M$ whenever $b_{S-T}$ is a base of $\M \circ (S-T)$ (equivalently, bases of $\M \times T$ are minimal intersections of bases of $\M$ with $T$). A {\bf minor} of $\M$ is a matroid of the form $(\M \times T_1) \circ T_2$ or $(\M \circ T_1) \times T_2$, where $T_2 \subseteq T_1 \subseteq S.$
We usually omit the bracket when we speak of minors of matroids (eg $\M \times T_1 \circ T_2$ in place of $(\M \times T_1) \circ T_2$ ).

We have the following set of basic results regarding minors and duals.

\begin{theorem}
\label{thm:dotcrossidentitym}
Let $S\cap P=\emptyset, T_2\subseteq T_1\subseteq S,$
$\M_S$ be a matroid on $S,$ $\Msp$ be a matroid on $S\uplus P.$
We then have the following.
\begin{enumerate}
\item $(\M_S \times T_1) \circ T_2 = (\M_S \circ (S - (T_1 - T_2))) \times T_2.$
\item $r(\Msp)=r(\Msp\circ S)+r(\Msp\times P).$
\item $\M_{SP}^{*}\circ P= (\M_{SP}\times P)^{*}.$
\item $\M_{SP}^{*}\times S= (\M_{SP}\circ S)^{*}.$
\item
$P_1\subseteq P$ is a base of $\Msp\times P$ iff
there exists $S_1\subseteq S$ such that  $S_1$ is a base of $\Msp\circ S$
and $S_1\cup P_1$ is a base of $\Msp.$
\item If $P_1\subseteq P$ is a base of $\Msp\times P,$
then there exists a base $P_2$ of $\Msp\circ P,$
that contains it.
\item
Let $A,B\subseteq S\uplus P,  A\cap B=\emptyset,$ and let there exist a base $D_1$  of $\Msp$ that contains $A$ and a cobase  $E_2$  of $\Msp$ that
contains $B.$ Then there exists a base $D$ of $\Msp$  that contains $A$ but does not intersect $B.$
\end{enumerate}
\end{theorem}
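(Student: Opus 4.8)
The plan is to treat each part as the matroidal counterpart of the corresponding item of Theorem \ref{thm:dotcrossidentity}, deriving everything from two basic tools: the commutation of restriction and contraction, and the minor--dual interchange together with a contraction rank formula. Since in this paper $\circ$ and $\times$ denote restriction and contraction \emph{to} a subset, I read $\Msp\circ S$ as deletion of $P$ and $\Msp\times P$ as contraction of $S$, so that all the classical deletion/contraction identities apply. I would first establish Part 2, the rank identity $r(\Msp)=r(\Msp\circ S)+r(\Msp\times P)$, directly from the definition of contraction given in the preliminaries: fixing a base $b_S$ of $\Msp\circ S$, a set $X\subseteq P$ is independent in $\Msp\times P$ iff $X\cup b_S$ is independent in $\Msp$; taking $X$ to be a base of $\Msp\times P$ makes $X\cup b_S$ a base of $\Msp$, which gives the cardinality identity. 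Part 2 is the analogue of part 2 of Theorem \ref{thm:dotcrossidentity} and is used repeatedly below.

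Part 1 is the commutativity of minors: both sides read as a deletion of $T_1-T_2$ together with a contraction of $S-T_1$, and these two operations act on \emph{disjoint} sets (using $T_2\subseteq T_1$ to see $(S-(T_1-T_2))-T_2=S-T_1$), so they coincide because deletion and contraction of disjoint subsets commute; I would verify this at the level of independent sets. Parts 3 and 4 are the minor--dual interchange. Here I would invoke Theorem \ref{thm:perperpm} (so $\M^{**}=\M$ and bases of $\M^{*}$ are complements of bases of $\M$) together with the fact that the dual of a deletion is the contraction of the dual, and conversely; both are immediate from the complementation-of-bases description of $\M^{*}$. Part 3 then follows by applying this to $\Msp\times P$ (contraction of $S$) and Part 4 to $\Msp\circ S$ (deletion of $P$).

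For Part 5 I would argue both directions via Part 2. If $P_1$ is a base of $\Msp\times P$, choose a base $S_1$ of $\Msp\circ S$; by the definition of contraction $S_1\cup P_1$ is independent, and by Part 2 it has size $r(\Msp\circ S)+r(\Msp\times P)=r(\Msp)$, hence is a base of $\Msp$. Conversely, if $S_1$ is a base of $\Msp\circ S$ and $S_1\cup P_1$ a base of $\Msp$, then $P_1$ is independent in $\Msp\times P$ and, again by Part 2, has size $r(\Msp\times P)$, so it is a base. Part 6 follows because every independent set of $\Msp\times P$ is independent in $\Msp$, hence in $\Msp\circ P$; thus the base $P_1$ of $\Msp\times P$ extends, by the matroid augmentation property, to a base $P_2$ of $\Msp\circ P$.

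Part 7 is the only genuinely combinatorial statement, and I expect it to be the main obstacle. The cobase $E_2$ containing $B$ says precisely that $B$ is independent in $\M_{SP}^{*}$, i.e. coindependent in $\Msp$; equivalently $(S\uplus P)-B$ is spanning, so deleting $B$ leaves the rank unchanged. Since $A\subseteq D_1$ is independent and $A\cap B=\emptyset$, $A$ is an independent set of the full-rank deletion $\Msp\circ((S\uplus P)-B)$; extending $A$ to a base of this deletion by augmentation yields a set $D$ which, having the same size as a base of $\Msp$, is itself a base of $\Msp$, contains $A$, and misses $B$. The care required is in translating ``cobase containing $B$'' into ``$(S\uplus P)-B$ spanning'' through Theorem \ref{thm:perperpm}, and in checking that a base of the rank-preserving deletion is indeed a base of the whole matroid.
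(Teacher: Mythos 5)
Your proof is correct in all seven parts. Note, however, that the paper itself gives no proof of Theorem \ref{thm:dotcrossidentitym}: it appears in the matroid preliminaries as a list of standard facts (the section explicitly follows the cited literature, and the theorem is deliberately worded to mimic Theorem \ref{thm:dotcrossidentity}, which is likewise stated without proof), so there is no in-paper argument to compare yours against. Your derivation from the paper's own definitions is sound, and the route you take --- the rank identity (part 2) first, then commutation of disjoint deletion and contraction (part 1), the minor--dual interchange via complementation of bases and Theorem \ref{thm:perperpm} (parts 3 and 4), the base-splitting and base-extension arguments (parts 5 and 6), and the spanning-deletion argument for part 7 --- is the natural one.

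One small point you should make explicit. The paper defines $X$ to be independent in $\Msp\times P$ iff $X\cup b_S$ is independent in $\Msp$ \emph{whenever} $b_S$ is a base of $\Msp\circ S$. In parts 2 and 5 you fix a single base $b_S$ (respectively $S_1$) and from independence of the union conclude independence in the contraction; this uses the standard equivalence that the defining condition may be tested against any one base of $\Msp\circ S$ (equivalently, the paper's parenthetical characterization of bases of $\M\times T$ as minimal intersections of bases with $T$). This is a routine exchange argument, but since you are working strictly from the paper's definition it deserves a sentence of justification; once it is in place, every step you give goes through.
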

%
%

We associate matroids with vector spaces and graphs as follows.
If $\V_S$ is a vector space $\M(\V_S)$ is defined to be the matroid whose 
bases are the column bases of $\V_S.$ 
It is clear that $(\M(\V))^* = \M(\V^{\perp})$ since, $(I|K)$ is a representative matrix for 
$\V,$ iff $(-K^T|I)$ is a representative matrix for $\V^{\perp}.$ If $\G_S$ is a graph, $\M(\G_S)$ is defined to be the matroid whose
bases are the column bases of $\V^v(\G_S).$
We then have the following results.
\begin{theorem}
\label{thm:tellegenm}
 $\M(\mathcal{G})\equivd \M(\V^v(\mathcal{G}))=(\M(\V^i(\mathcal{G})))^*.$
\end{theorem}
\begin{lemma}
\label{lem:minorvectorspacem}
Let $\V_S$ be a vector space.
Let $W\subseteq T\subseteq S.$
\begin{enumerate}
\item $ \M(\mathcal{V}\circ T)= (\M(\mathcal{V}))\circ T, \ \ \  \M(\mathcal{V}\times T)= (\M(\mathcal{V}))\times T,\ \ \M(\mathcal{V}\circ T\times W)= (\M(\mathcal{V}))\circ T \times W.$
\item $ (\M(\mathcal{V}\circ T))^*= (\M(\mathcal{V})))^*\times T, \ \ \  (\M(\mathcal{V}\times T))^*= (\M(\mathcal{V})))^*\circ T,\ \ (\M(\mathcal{V}\circ T\times W))^*= (\M(\mathcal{V})))^*\times T \circ W.$
\item A subset is a  base of $\M(\V)$ iff it is a column base of
$\V.$
\item A subset  is a  base of $(\M(\V))^{*}$ iff it is a column cobase of
$\V.$
\end{enumerate}
\end{lemma}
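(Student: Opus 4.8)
The plan is to reduce everything to a single rank identity and then propagate it through the minor operations by duality. The key observation I would establish first is that the rank function of the vector matroid $\M(\V_S)$ is precisely $X\mapsto r(\V_S\circ X)$ for $X\subseteq S$. Indeed, if $B$ is a representative matrix of $\V_S$, then $\V_S\circ X=\mathrm{row}(B|_X)$, where $B|_X$ is the submatrix of $B$ on the columns $X$; hence $r(\V_S\circ X)=\mathrm{rank}(B|_X)$, which is the size of a largest independent subset of the columns $X$, i.e. the rank of $X$ in $\M(\V_S)$. In particular $X$ is a set of independent columns of $\V_S$ iff $r(\V_S\circ X)=|X|$, and this notion is independent of the chosen representative matrix, as already noted before Theorem~\ref{thm:perperp}. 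With this in hand, parts 3 and 4 are immediate: part 3 is the definition of $\M(\V)$ (its bases are the column bases of $\V$), and part 4 follows because, by definition of the dual matroid, the bases of $(\M(\V))^{*}$ are the complements of the bases of $\M(\V)$, that is, the column cobases of $\V$.

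For the restriction identity in part 1 I would compare rank functions. For $X\subseteq T$ one has $(\V\circ T)\circ X=\V\circ X$, so the rank function of $\M(\V\circ T)$ sends $X$ to $r(\V\circ X)$; on the other hand the rank function of $(\M(\V))\circ T$ is just the restriction of the rank function of $\M(\V)$ to subsets of $T$, which also sends $X$ to $r(\V\circ X)$ by the key observation. Two matroids on $T$ with the same rank function are equal, so $\M(\V\circ T)=(\M(\V))\circ T$.

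The contraction identity I would obtain by dualizing the restriction identity, using the definition $\M\times T=(\M^{*}\circ T)^{*}$ together with $(\M(\V))^{*}=\M(\V^{\perp})$ (the observation recorded just before Theorem~\ref{thm:tellegenm}). Explicitly, $(\M(\V))\times T=((\M(\V))^{*}\circ T)^{*}=(\M(\V^{\perp})\circ T)^{*}=(\M(\V^{\perp}\circ T))^{*}=\M((\V^{\perp}\circ T)^{\perp})=\M(\V\times T)$, where the third equality is the restriction identity applied to $\V^{\perp}$ and the last uses $(\V^{\perp}\circ T)^{\perp}=\V\times T$, which is part 3 of Theorem~\ref{thm:dotcrossidentity}. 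The composite minor identity $\M(\V\circ T\times W)=(\M(\V))\circ T\times W$ then follows by applying the contraction identity to $\V\circ T$ and then the restriction identity. Finally, the three identities of part 2 follow by taking matroid duals of the three identities of part 1, again using $(\M(\V))^{*}=\M(\V^{\perp})$ and the matroid minor--duality relations $(\M\circ T)^{*}=\M^{*}\times T$ and $(\M\times T)^{*}=\M^{*}\circ T$ of Theorem~\ref{thm:dotcrossidentitym} (parts 4 and 3).

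I expect no deep obstacle here: the entire argument is a translation, into matroid language, of the corresponding vector-space minor and duality facts, exactly parallel to Lemma~\ref{lem:minorgraphvectorspace}. The one genuinely substantive ingredient is the opening rank identity $r(\V_S\circ X)=r_{\M(\V_S)}(X)$, which pins down the matroid rank of a column set as the dimension of the corresponding projection; every subsequent step is formal manipulation with restriction, contraction, and the two duality statements ($\perp$ for vector spaces and $*$ for matroids) kept in correspondence through $(\M(\V))^{*}=\M(\V^{\perp})$. The only point requiring care is bookkeeping the roles of $S-T$ and $T$ when invoking Theorem~\ref{thm:dotcrossidentity}, so that the complementary-orthogonality identities are applied with the correct index sets.
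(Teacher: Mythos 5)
Your proof is correct. The paper itself states Lemma \ref{lem:minorvectorspacem} without proof, treating it as a standard preliminary, so there is no paper argument to diverge from; your route --- identifying the rank function of $\M(\V_S)$ as $X\mapsto r(\V_S\circ X)$, getting the restriction identity by comparing rank functions, and then transferring contraction and part 2 through $(\M(\V))^{*}=\M(\V^{\perp})$ together with Theorem \ref{thm:dotcrossidentity} part 3 and Theorem \ref{thm:dotcrossidentitym} parts 3 and 4 --- is exactly the argument implicit in the paper's setup (the definition of $\M(\V)$ via column bases and the duality observation stated just before Theorem \ref{thm:tellegenm}), and every citation is applied with the correct index sets.
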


\begin{lemma}
\label{lem:minorgraphvectorspacem}
Let $\G$ be a graph on edge set $S.$
Let $W\subseteq T\subseteq S.$
\begin{enumerate}
\item $ \M(\mathcal{G}\circ T)= (\M(\mathcal{G}))\circ T, \ \ \  \M(\mathcal{G}\times T)= (\M(\mathcal{G}))\times T,\ \ \M(\mathcal{G}\circ T\times W)= (\M(\mathcal{G}))\circ T \times W.$
\item $ (\M(\mathcal{G}\circ T))^*= (\M(\mathcal{G})))^*\times T, \ \ \  (\M(\mathcal{G}\times T))^*= (\M(\mathcal{G})))^*\circ T,\ \ (\M(\mathcal{G}\circ T\times W))^*= (\M(\mathcal{G})))^*\times T \circ W.$
\item A subset is a  base of $\M(\G)$ iff it is a tree of
$\G.$
\item A subset  is a  base of $(\M(\G))^{*}$ iff it is a cotree of
$\G.$
\end{enumerate}
\end{lemma}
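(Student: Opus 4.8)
The plan is to reduce every assertion to facts already established for vector spaces and for the graphic vector spaces $\V^v(\G)$ and $\V^i(\G)$, using the definition $\M(\G)\equivd \M(\V^v(\G))$ (Theorem \ref{thm:tellegenm}) as the bridge. The two workhorses will be Lemma \ref{lem:minorvectorspacem}, which lets me pass $\M(\cdot)$ through minors and duals of a vector space, and Lemma \ref{lem:minorgraphvectorspace} (Tutte), which tells me how $\V^v$ and $\V^i$ behave under the graph operations $\circ$ and $\times$. Once these are in hand the whole lemma is a sequence of rewrites with no genuinely new content.

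For part 1 I would compute $\M(\G\circ T)=\M(\V^v(\G\circ T))=\M(\V^v(\G)\circ T)=(\M(\V^v(\G)))\circ T=(\M(\G))\circ T$, where the second equality is part 1 of Lemma \ref{lem:minorgraphvectorspace} and the third is part 1 of Lemma \ref{lem:minorvectorspacem}. The contraction identity $\M(\G\times T)=(\M(\G))\times T$ follows by the identical chain with $\times$ in place of $\circ$, and the combined statement $\M(\G\circ T\times W)=(\M(\G))\circ T\times W$ follows either by composing the two single-operation identities or, more directly, by invoking the combined forms in part 1 of each of the two cited lemmas.

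For part 2 I would dualize: $(\M(\G\circ T))^{*}=(\M(\V^v(\G)\circ T))^{*}=((\M(\V^v(\G)))^{*}\times T)=(\M(\G))^{*}\times T$, using Lemma \ref{lem:minorgraphvectorspace} part 1 and then Lemma \ref{lem:minorvectorspacem} part 2; the other two dual identities are obtained the same way. For part 3, a subset is a base of $\M(\G)=\M(\V^v(\G))$ iff it is a column base of $\V^v(\G)$ (Lemma \ref{lem:minorvectorspacem} part 3) iff it is a tree of $\G$ (Lemma \ref{lem:minorgraphvectorspace} part 3). For part 4 I would use Theorem \ref{thm:tellegenm} to write $(\M(\G))^{*}=(\M(\V^v(\G)))^{*}=\M(\V^i(\G))$, so a subset is a base of $(\M(\G))^{*}$ iff it is a column base of $\V^i(\G)$ (Lemma \ref{lem:minorvectorspacem} part 3) iff it is a cotree of $\G$ (Lemma \ref{lem:minorgraphvectorspace} part 4); equivalently one can argue via Theorem \ref{thm:perperpm} part 2, that bases of $(\M(\G))^{*}$ are exactly cobases of $\M(\G)$, hence complements of trees, hence cotrees.

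There is no serious obstacle here: the lemma is essentially a corollary of Lemma \ref{lem:minorvectorspacem} and Lemma \ref{lem:minorgraphvectorspace}, and the proof is a deliberate mirror of the vector-space development. The only point requiring mild care is bookkeeping in the combined-minor cases $\circ T\times W$, where I must make sure the order of operations matches in the graphic, vector-space, and matroid settings; since all three cited results state the combined forms explicitly, this is straightforward, and the matching between the dual statements in part 2 and the contraction/restriction swap in Theorem \ref{thm:dotcrossidentitym} can be used as a consistency check.
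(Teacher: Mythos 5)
Your proof is correct and is exactly the intended derivation: the paper states this lemma without proof, treating it as an immediate consequence of Lemma \ref{lem:minorvectorspacem} (passing $\M(\cdot)$ through minors and duals of vector spaces) and Lemma \ref{lem:minorgraphvectorspace} (behaviour of $\V^v,\V^i$ under graph minors), bridged by the definition $\M(\G)\equivd\M(\V^v(\G))$. Your chains of rewrites, including the cotree argument via $(\M(\G))^{*}=\M(\V^i(\G))$, are precisely the mechanism the paper's parallel development presupposes.
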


Let $\M_1$ and $\M_2$ be matroids on $S$. The {\bf union} of these matroids, denoted $\M_1 \vee \M_2$, is defined to be $(S,{\cal{B_\vee}})$, where ${\cal{B}}_{\vee}$ is the collection of maximal sets of the form $b_1 \cup b_2$, where $b_1$ is a base of $\M_1$ and $b_2$ is a base of $\M_2$. It can be shown that $\M_1 \vee \M_2$ is again a matroid.  
The 
 {\bf intersection} of matroids $\M_1$ and $\M_2$, denoted $\M_1 \wedge \M_2$, is defined to be  the matroid  $(S,{\cal{B_\wedge}})$ whose bases are minimal sets of the form $b_1 \cap b_2$, where $b_1$ is a base of $\M_1$ and $b_2$ is a base of $\M_2$. Matroid union is related to this intersection through dualization, $(\M_1 \vee \M_2)^* = \M_1^* \wedge \M_2^*$. Let $S,P,Q$ be pairwise disjoint sets. Let $\M_{SP}$ and $\M_{PQ}$ be matroids on $S \uplus P$ and $P \uplus Q$ respectively. Then, their union and intersection operations are defined by 

\[
\M_{SP} \vee \M_{PQ} \equivd (\M_{SP} \oplus \0_Q) \vee (\M_{PQ} \oplus \0_S)
\]
\[
\M_{SP} \wedge \M_{PQ} \equivd (\M_{SP} \oplus \F_Q) \wedge (\M_{PQ} \oplus \F_S)
.\]

Bases $b_{SP} , b_{PQ}$ of $\M_{SP},\M_{PQ},$ respectively, such that $b_{SP} \cup b_{PQ}$ is a base of $\M_{SP} \vee \M_{PQ}$, are said to be {\bf maximally
distant}.

Let $\M_{SP}, \M_{PQ}$ be matroids defined on sets $S\uplus P,P\uplus Q$ respectively, where $S,P,Q$ are pairwise disjoint sets. 
Then the {\bf linking} of $\M_{SP}, \M_{PQ}$, denoted $\M_{SP} \leftrightarrow \M_{PQ}$ is defined on the set $S\uplus Q$ to be \\$(\M_{SP} \vee \M_{PQ}) \times (S \cup Q).$

We need the following basic identities about ranks of matroids, matroid unions,
 minors and duals for the development
in subsequent sections.

\begin{theorem}
\label{thm:sumintersectionm}
Let $\M^1_A, \M^2_B, \M_S,\M'_S $ be matroids. Then
\begin{enumerate}
\item $r(\M_S)+r(\M'_S)=r(\M_S\vee\M'_S)+r(\M_S\wedge \M'_S);$
\item $r(\M^1_A)+r(\M^2_B)=r(\M^1_A\vee\M^2_B)+r[(\M^1_A\times (A\cap B))\wedge (\M^2_B\times (A\cap B))];$

\item $(\M^1_A\vee\M^2_B)^{*}=(\M^1_A)^{*}\wedge (\M^2_B)^{*};$
\item $(\M^1_A\wedge \M^2_B)^{*}=(\M^1_A)^{*}\vee (\M^2_B)^{*};$
\item (a) $(\M^1_A\vee\M^2_B)\circ X= \M^1_A\circ X \vee \M^2_B\circ X ,X\subseteq A\cap B;$\\
(b) $(\M^1_A\wedge\M^2_B)\times X= \M^1_A\times X \wedge \M^2_B\times X ,X\subseteq A\cap B.$
\end{enumerate}
\end{theorem}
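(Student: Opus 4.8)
The plan is to prove the five parts of Theorem \ref{thm:sumintersectionm} in the order 3, 4, 5, 1, 2, so that the duality identities and the good behaviour of minors under $\vee,\wedge$ are available to reduce the two rank identities to a single genuine computation. Throughout I write $A=S\uplus P$, $B=P\uplus Q$ so that $A\cap B=P$, and I recall from the preliminaries that on a common ground set $(\M_1\vee\M_2)^*=\M_1^*\wedge\M_2^*$. For part 3 I would start from the definitions of $\vee$ on overlapping ground sets, $\M^1_A\vee\M^2_B=(\M^1_A\oplus\0_Q)\vee(\M^2_B\oplus\0_S)$, a matroid on $S\uplus P\uplus Q$, and dualize: using $(\M\oplus\N)^*=\M^*\oplus\N^*$, $\0_X^*=\F_X$, and the single-ground-set identity, each $\0$ becomes an $\F$ and $\vee$ becomes $\wedge$, yielding $(\M^1_A\vee\M^2_B)^*=((\M^1_A)^*\oplus\F_Q)\wedge((\M^2_B)^*\oplus\F_S)=(\M^1_A)^*\wedge(\M^2_B)^*$. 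Part 4 is then immediate: apply part 3 to $(\M^1_A)^*,(\M^2_B)^*$, use $\M^{**}=\M$ (Theorem \ref{thm:perperpm}), and dualize once more.

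Next I would treat part 5. For 5(a), with $X\subseteq A\cap B$, I use the standard description of the independent sets of a matroid union: $I$ is independent in $\M^1_A\vee\M^2_B$ iff $I=I_1\cup I_2$ with $I_j$ independent in the respective matroid. Imposing $I\subseteq X$ lets one take $I_j\subseteq X$, so the independent subsets of $X$ in $\M^1_A\vee\M^2_B$ are exactly those of $(\M^1_A\circ X)\vee(\M^2_B\circ X)$; hence the two matroids on $X$ coincide. Part 5(b) I would obtain from 5(a) by dualization, writing contraction as $\M\times X=(\M^*\circ X)^*$ (Theorem \ref{thm:dotcrossidentitym}): apply part 4, then 5(a) to the duals, then part 3, and finally rewrite each $((\M^1_A)^*\circ X)^*$ as $\M^1_A\times X$ (and similarly for $\M^2_B$).

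The heart of the proof is part 1, the rank identity on a common ground set $S$; this is the one non-formal step and is where I expect the main difficulty. Here I would invoke the Nash--Williams matroid union rank formula $r(\M_S\vee\M'_S)=\min_{Y\subseteq S}\bigl(|S-Y|+r_1(Y)+r_2(Y)\bigr)$, where $r_1,r_2$ are the rank functions of $\M_S,\M'_S$, together with the dual rank formula $r_i^*(Y)=|Y|+r_i(S-Y)-r_i(S)$. Using part 4 (with $A=B=S$) I rewrite $r(\M_S\wedge\M'_S)=|S|-r(\M_S^*\vee(\M'_S)^*)$, substitute the dual rank formula into the union formula for $\M_S^*\vee(\M'_S)^*$, and change the minimization variable to $Z=S-Y$. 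The minimand collapses to $2|S|-r_1(S)-r_2(S)+\bigl(r_1(Z)+r_2(Z)-|Z|\bigr)$, and since $\min_Z\bigl(r_1(Z)+r_2(Z)-|Z|\bigr)=r(\M_S\vee\M'_S)-|S|$ by the union formula again, everything telescopes to $r(\M_S)+r(\M'_S)=r(\M_S\vee\M'_S)+r(\M_S\wedge\M'_S)$.

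Finally, part 2 reduces to part 1. Padding as above, I apply part 1 to $\M^1_A\oplus\0_Q$ and $\M^2_B\oplus\0_S$ on $S\uplus P\uplus Q$; since $r(\M\oplus\0)=r(\M)$ and $(\M^1_A\oplus\0_Q)\vee(\M^2_B\oplus\0_S)=\M^1_A\vee\M^2_B$, the left side becomes $r(\M^1_A)+r(\M^2_B)$ and the first right-hand term becomes $r(\M^1_A\vee\M^2_B)$. It remains to identify $r\bigl((\M^1_A\oplus\0_Q)\wedge(\M^2_B\oplus\0_S)\bigr)$ with $r\bigl((\M^1_A\times P)\wedge(\M^2_B\times P)\bigr)$. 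Every base of the padded intersection is a minimal $b_1\cap b_2$ with $b_1\subseteq A$, $b_2\subseteq B$, hence contained in $P$, so $S\uplus Q$ consists of loops; Theorem \ref{thm:dotcrossidentitym}(2) then gives $r(\cdot)=r\bigl(\,\cdot\times P\bigr)$, and by part 5(b) together with $(\M^1_A\oplus\0_Q)\times P=\M^1_A\times P$ and $(\M^2_B\oplus\0_S)\times P=\M^2_B\times P$ this contraction equals $(\M^1_A\times P)\wedge(\M^2_B\times P)$. Combining with part 1 yields part 2.
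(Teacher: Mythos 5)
Your proof is correct, and your reordering introduces no circularity (parts 3, 4, 5 use only the preliminaries; part 1 uses part 4; part 2 uses parts 1 and 5(b)). For parts 3, 4 and 5 you follow essentially the paper's own route: padding with $\0_S,\0_Q$ (resp.\ $\F_S,\F_Q$), the single-ground-set duality $(\M_1\vee\M_2)^*=\M_1^*\wedge\M_2^*$ recorded in the preliminaries, and for 5(b) exactly the paper's dualization chain via $\M\times X=(\M^*\circ X)^*.$ The genuine divergence is in part 1, which is the only non-formal step. The paper proves it in two lines directly from its base-oriented definitions: take a maximally distant pair of bases $b_1,b_2$ of $\M_S,\M'_S$; then $b_1\cup b_2$ is a base of $\M_S\vee\M'_S$ and $b_1\cap b_2$ is a base of $\M_S\wedge\M'_S$ (maximizing $|b_1\cup b_2|$ and minimizing $|b_1\cap b_2|$ over pairs of bases are the same task, since $|b_1|+|b_2|$ is fixed), and the set identity $|b_1|+|b_2|=|b_1\cup b_2|+|b_1\cap b_2|$ finishes the argument. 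You instead invoke the Nash--Williams rank formula for matroid union together with the dual rank formula and telescope the two minimizations; your computation is correct (the minimand does collapse to $2|S|-r_1(S)-r_2(S)+(r_1(Z)+r_2(Z)-|Z|)$ as claimed), but it rests on a substantial external theorem that the paper never states, whereas the paper's argument is elementary and self-contained, needing only the fact that $\wedge$, defined through minimal intersections of bases, is a matroid. Conversely, your part 2 is tighter than the paper's: where the paper merely asserts ("it can be seen") that minimal intersections of bases of the $\0$-padded matroids coincide with minimal intersections of bases of $\M^1_A\times(A\cap B)$ and $\M^2_B\times(A\cap B)$, you actually prove the needed rank equality, by observing that every base of the padded intersection lies inside $A\cap B$ so that all remaining elements are loops, and then applying part 2 of Theorem \ref{thm:dotcrossidentitym} together with your part 5(b). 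In short: same skeleton, a heavier but valid hammer for the crux in part 1, and a more rigorous treatment of the contraction step in part 2.
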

\begin{proof}

1. A base of $\M_S\vee\M'_S$ is a union of two maximally distant 
bases of $\M_S,\M'_S.$
By definition,  a base of $\M_S\wedge \M'_S,$
is the intersection of two maximally distant
bases of $\M_S,\M'_S.$
The result follows from the identity $|X|+|Y|=|X\cup Y|+|X\cap Y|,$ 
where $X,Y,$ are sets.

2. We have $\M^1_A\vee\M^2_B\equivd (\M^1_A\oplus \0_{B-A})\vee(\M^2_B\oplus \0_{A-B}).$
From the previous part, we have\\
$r(\M^1_A)+r(\M^2_B)=r(\M^1_A\oplus \0_{B-A})+r(\M^2_B\oplus \0_{A-B})$\\$=
r((\M^1_A\oplus \0_{B-A})\vee (\M^2_B\oplus \0_{A-B}))+
r((\M^1_A\oplus \0_{B-A})\wedge (\M^2_B\oplus \0_{A-B})).$\\
Now it can be seen that a minimal intersection of a pair of bases of 
$\M^1_A\oplus \0_{B-A}, \M^2_B\oplus \0_{A-B}
$ is the same as a minimal intersection of a pair of bases of 
$\M^1_A\times (A\cap B), \M^2_B\times (A\cap B)$
 so that\\ $r((\M^1_A\oplus \0_{B-A})\wedge (\M^2_B\oplus \0_{A-B}))=r[(\M^1_A\times (A\cap B))\wedge (\M^2_B\times (A\cap B))]$
 and the result follows.

Parts 3 and 4 follow from the definition of  
$\M^1_A\vee\M^2_B, \M^1_A\wedge\M^2_B, (\M^1_A)^*\vee(\M^2_B)^*, (\M^1_A)^*\wedge(\M^2_B)^*.$

5(a) follows from the definition of the matroid union operation.

5(b) We have $(\M^1_A\wedge\M^2_B)\times X= ((\M^1_A\wedge\M^2_B)\times X)^{**}
= ((\M^1_A\wedge\M^2_B)^*\circ X)^{*}= ((\M^1_A)^*\vee(\M^2_B)^*)\circ X)^*
= ((\M^1_A)^*\circ X\vee(\M^2_B)^*\circ X)^*= ((\M^1_A)^*\circ X)^*\wedge((\M^2_B)^*\circ X)^*
= \M^1_A\times X\wedge \M^2_B\times X.$

\end{proof}

\begin{theorem}
\label{cor:ranklrarm}
{\it Let $\Msp,\Mpq,$ be matroids on $S\uplus P, P\uplus Q,$ respectively,
with $S,P,Q,$ being pairwise disjoint.
Then\\
$r(\Msp\lrar \Mpq) = r(\Msp\times S)+r(\Mpq\times Q)+r(\Msp\circ P\wedge \Mpq\circ P) - r(\Msp\times P\wedge \Mpq\times P) .$
}
\end{theorem}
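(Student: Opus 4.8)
The plan is to mimic, essentially line by line, the proof of the vector-space analogue (Theorem \ref{cor:ranklrar}), replacing each vector-space identity by its matroidal counterpart. The only structural simplification is that matroid linking carries no sign twist: by definition $\Msp\lrar\Mpq = (\Msp\vee\Mpq)\times(S\uplus Q)$, so I would work directly with the union $\Msp\vee\Mpq$ rather than with a copy carrying negated columns. Every rank identity needed is already available, namely part 2 of Theorem \ref{thm:dotcrossidentitym} (the rank split $r(\M_{XY})=r(\M_{XY}\circ X)+r(\M_{XY}\times Y)$ for a matroid on $X\uplus Y$), together with parts 1, 2, and 5(a) of Theorem \ref{thm:sumintersectionm} (the submodular rank identity for union and intersection, the mixed formula whose correction term is a contraction--intersection rank, and the distribution of restriction over union).

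First I would apply part 2 of Theorem \ref{thm:dotcrossidentitym} to the matroid $\Msp\vee\Mpq$ on $S\uplus P\uplus Q$, contracting $P$, to obtain
$$r(\Msp\lrar\Mpq) \ =\ r(\Msp\vee\Mpq) - r((\Msp\vee\Mpq)\circ P).$$
Next I would expand the first term by part 2 of Theorem \ref{thm:sumintersectionm}; since the overlap of the two ground sets is exactly $P$, its correction term is $r[(\Msp\times P)\wedge(\Mpq\times P)]$. For the second term I would invoke part 5(a) of Theorem \ref{thm:sumintersectionm} to write $(\Msp\vee\Mpq)\circ P = \Msp\circ P\vee\Mpq\circ P$, and then part 1 of Theorem \ref{thm:sumintersectionm} to split its rank as $r(\Msp\circ P)+r(\Mpq\circ P)-r(\Msp\circ P\wedge\Mpq\circ P)$.

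Substituting these expansions and regrouping, the combinations $r(\Msp)-r(\Msp\circ P)$ and $r(\Mpq)-r(\Mpq\circ P)$ emerge. Applying part 2 of Theorem \ref{thm:dotcrossidentitym} once more---with $S$ and $P$ (respectively $Q$ and $P$) interchanging the roles of kept and contracted set---collapses these to $r(\Msp\times S)$ and $r(\Mpq\times Q)$, which leaves precisely the claimed formula. Unlike the vector-space proof, no closing step to discard sign twists (the identities $\V_{(-P)Q}\times Q=\Vpq\times Q$ and the like) is required here.

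I expect no genuine obstacle, as the whole argument is formal rank bookkeeping. The one point demanding care is the consistent use of part 2 of Theorem \ref{thm:dotcrossidentitym}: it must be applied with the correct designation of the kept versus contracted ground set at each occurrence (contracting $P$ in the opening step, then splitting off $S$ and $Q$ against $P$ at the end), which is legitimate because the identity holds for any bipartition of the ground set and is only stated for one labelling. Likewise, the use of part 5(a) of Theorem \ref{thm:sumintersectionm} with $X=P$ is valid precisely because $P=(S\uplus P)\cap(P\uplus Q)$.
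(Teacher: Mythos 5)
Your proposal is correct and follows essentially the same route as the paper's own proof: starting from the definition $\Msp\lrar\Mpq=(\Msp\vee\Mpq)\times(S\uplus Q)$, splitting the rank via part 2 of Theorem \ref{thm:dotcrossidentitym}, expanding $r(\Msp\vee\Mpq)$ and $r((\Msp\vee\Mpq)\circ P)$ via Theorem \ref{thm:sumintersectionm}, and regrouping to collapse $r(\Msp)-r(\Msp\circ P)$ and $r(\Mpq)-r(\Mpq\circ P)$ into $r(\Msp\times S)$ and $r(\Mpq\times Q)$. Your version is in fact slightly more explicit than the paper's, since you cite parts 1 and 5(a) of Theorem \ref{thm:sumintersectionm} where the paper uses them silently, and you correctly note that no sign-twist cleanup step is needed in the matroid setting.
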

\begin{proof}
Here we have used part 2 of Theorem \ref{thm:sumintersectionm}
 and part 2 of Theorem \ref{thm:dotcrossidentitym}.$
\\
$$r(\Msp\lrar \Mpq) \equivd r((\Msp\vee \Mpq) \times SQ)$\\$=
r(\Msp\vee \Mpq)-r((\Msp\vee \Mpq)\circ P)$\\$=r(\Msp)+ r(\Mpq) - r(\Msp\times P\wedge 
 \Mpq\times P)-r(\Msp\circ P\vee\Mpq\circ P)$\\$
= r(\Msp)+ r(\Mpq) - r(\Msp\times P\wedge 
 \Mpq\times P)- r(\Msp\circ P)-r(\Mpq\circ P)+ r((\Msp\circ P)\wedge (\Mpq\circ P))  $
\\$  = [r(\Msp)- r(\Msp\circ P)]+ [r(\Mpq)-r(\Mpq\circ P)] - r(\Msp\times P\wedge 
 \Mpq\times P)+ r((\Msp\circ P)\wedge (\Mpq\circ P))  $\\$
=r(\Msp\times S)+r(\Mpq\times Q)+r(\Msp\circ P\wedge \Mpq\circ P) - r(\Msp\times P\wedge \Mpq\times P) .$

\end{proof}

The following pair of results are from \cite{STHN2014}.
\begin{theorem}
$\M_{SP} \leftrightarrow \M_{PQ}=(\M_{SP} \vee \M_{PQ}) \times (S \cup Q) = (\M_{SP} \wedge \M_{PQ}) \circ (S \cup Q)$
\label{thm:matchedpropm}
\end{theorem}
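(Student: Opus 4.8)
The first equality, $\Msp\lrar\Mpq=(\Msp\vee\Mpq)\times(S\cup Q)$, is exactly the definition of linking given just above, so nothing is required there; the whole content is the second equality. Write $T\equivd S\uplus Q$ and $U\equivd S\uplus P\uplus Q$, and set $N\equivd\Msp\vee\Mpq$ and $N'\equivd\Msp\wedge\Mpq$, both matroids on $U$. The plan is to show that $N\times T$ and $N'\circ T$ are the same matroid on $T$; since a matroid is determined by its rank function, I would do this by comparing ranks. For $X\subseteq T$ one has $r_{N\times T}(X)=r_N(X\cup P)-r_N(P)$ (rank of a contraction, with $U-T=P$) and $r_{N'\circ T}(X)=r_{N'}(X)$ (rank of a restriction). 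Hence the claim reduces to the single identity $r_N(X\cup P)-r_N(P)=r_{N'}(X)$ for every $X\subseteq T$.

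To see what these ranks are, I would pass to the appropriate minors and invoke the already-proven rank formula. Writing $X=X_S\uplus X_Q$ with $X_S\subseteq S$, $X_Q\subseteq Q$, the left-hand side is the rank of the link $\Msp\circ(X_S\cup P)\lrar\Mpq\circ(P\cup X_Q)$, which Theorem \ref{cor:ranklrarm} evaluates as a sum of minor-ranks of the two matroids together with a $\wedge$-correction term; the right-hand side is computed from $N'=\Msp\wedge\Mpq$ using parts $1$ and $2$ of Theorem \ref{thm:sumintersectionm} and the minor identities of Theorem \ref{thm:dotcrossidentitym}. At the level of bases the two sides are transparently of the same shape: a base of $N\times T$ is a minimal intersection $b\cap T$ where $b=b_{SP}\cup b_{PQ}$ is a union of maximally distant bases of $\Msp,\Mpq$, giving $b\cap T=(b_{SP}\cap S)\cup(b_{PQ}\cap Q)$, while a base of $N'\circ T$ is a maximal intersection $c\cap T$ where $c=(b'_{SP}\cup Q)\cap(b'_{PQ}\cup S)$, and a short set computation again gives $c\cap T=(b'_{SP}\cap S)\cup(b'_{PQ}\cap Q)$. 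So in both cases the bases are sets of the form $(b_{SP}\cap S)\cup(b_{PQ}\cap Q)$, and the task is to show the two optimisations select the same family.

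It is worth noting at the outset that the duality identities $(\Msp\vee\Mpq)^{*}=\Msp^{*}\wedge\Mpq^{*}$ and $(\Msp\wedge\Mpq)^{*}=\Msp^{*}\vee\Mpq^{*}$ (Theorem \ref{thm:sumintersectionm}, parts $3$ and $4$), together with $(\M\times T)^{*}=\M^{*}\circ T$ and $(\M\circ T)^{*}=\M^{*}\times T$ (Theorem \ref{thm:dotcrossidentitym}), show that dualising the desired equation $N\times T=N'\circ T$ returns the same equation for the matroids $\Msp^{*},\Mpq^{*}$ with the two sides interchanged. Thus the statement is self-dual. This symmetry is useful for organising the bookkeeping, but, being invariant under $\Msp,\Mpq\mapsto\Msp^{*},\Mpq^{*}$, it cannot by itself force equality.

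Consequently the main obstacle is precisely the non-self-dual input: reconciling the minimal-intersection description coming from the contraction of the union with the maximal-intersection description coming from the restriction of the intersection. This is where the genuine matroid-union content enters, through the exchange property and the maximal-distance structure of bases, and it is the step I expect to have to carry out directly (either by confirming the rank identity $r_N(X\cup P)-r_N(P)=r_{N'}(X)$ via Theorems \ref{thm:sumintersectionm}, \ref{thm:dotcrossidentitym} and \ref{cor:ranklrarm}, or by an explicit base-exchange argument) rather than by formal duality manipulation alone.
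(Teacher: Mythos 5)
Your setup is sound: the first equality is indeed definitional, the reduction to the single rank identity $r_N(X\cup P)-r_N(P)=r_{N'}(X)$ for all $X\subseteq S\cup Q$ (with $N\equivd\M_{SP}\vee\M_{PQ}$, $N'\equivd\M_{SP}\wedge\M_{PQ}$) is the right target, the description of the bases of both sides as sets of the form $(b_{SP}\cap S)\cup(b_{PQ}\cap Q)$ is correct, and your observation that the statement is self-dual --- so that the duality identities of Theorems \ref{thm:sumintersectionm} and \ref{thm:dotcrossidentitym}, being symmetric under $\M\mapsto\M^{*}$, cannot by themselves force the equality --- is exactly the right diagnosis. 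But that is where the proposal stops: the identity is never proved. You explicitly defer "the step I expect to have to carry out directly," and neither of the two routes you gesture at is executed. The first route has a concrete obstruction you do not address: to evaluate $r_{N'}(X)=r\bigl((\M_{SP}\wedge\M_{PQ})\circ X\bigr)$ you cannot push the restriction inside the $\wedge$, because $\wedge$ commutes with contraction (part 5(b) of Theorem \ref{thm:sumintersectionm}) but \emph{not} with restriction; so "computing the right-hand side from parts 1 and 2 of Theorem \ref{thm:sumintersectionm}" does not go through as stated, and the natural fix --- dualizing to turn the $\wedge$ into a $\vee$ --- lands you back on the union side, which is precisely the circularity your self-duality remark warns about. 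The second route (base exchange) is where the real content lies: writing things out, maximality of $|b_{SP}\cup b_{PQ}|$ amounts to \emph{minimizing} the overlap $|b_{SP}\cap b_{PQ}\cap P|$, whereas minimality of $|(b_{SP}\cup Q)\cap(b_{PQ}\cup S)|$ amounts to \emph{maximizing} the coverage $|(b_{SP}\cup b_{PQ})\cap P|$; these are genuinely different optimisations over pairs of bases, and showing they induce the same family of traces on $S\cup Q$ needs Edmonds' matroid-union rank formula or an equivalent exchange argument on maximally distant bases --- a tool not among the results quoted in this paper. As it stands, nothing beyond correct restatements and reductions has been established.

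For the comparison you were asked against: the paper itself contains no proof of Theorem \ref{thm:matchedpropm}; it is imported from \cite{STHN2014}, and is then \emph{used} to prove Theorem \ref{thm:idt0m}. Since, by the very manipulations you wrote down, the second equality of Theorem \ref{thm:matchedpropm} is equivalent to Theorem \ref{thm:idt0m} modulo Theorems \ref{thm:sumintersectionm} and \ref{thm:dotcrossidentitym}, any self-contained proof must be independent of both, which makes the missing combinatorial step unavoidable rather than optional.
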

\begin{theorem}(Implicit Duality Theorem (IDT))
\label{thm:idt0m}
\begin{equation*}
(\M_{SP} \leftrightarrow \M_{PQ})^* = \M_{SP}^* \leftrightarrow \M_{PQ}^*
\end{equation*}
\label{thm:implicitDuality2}
\end{theorem}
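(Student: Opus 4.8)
The plan is to imitate, essentially line by line, the proof of the vector-space Implicit Duality Theorem (Theorem \ref{thm:idt0}), replacing the vector-space intersection $\cap$ by the matroid intersection $\wedge$, the sum $+$ by the matroid union $\vee$, and the complementary-orthogonal operation $\perp$ by the matroid dual $*$, while keeping $\circ$ and $\times$ unchanged. The conceptual point that makes the matroid version \emph{cleaner} than its vector-space model is that matroids carry no sign information: bases are unsigned subsets, so there is no analog of the sign twist that forced the skewed composition $\rightleftharpoons$ to appear in the vector-space identity $(\Vsp\lrar\Vpq)^\perp=\Vsp^\perp\rightleftharpoons\Vpq^\perp$. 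As a result the matched composition $\lrar$ turns out to be genuinely self-dual under $*$, and no auxiliary operation is needed on the right-hand side.

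Concretely, I would begin from the intersection--restriction form of the matched composition supplied by Theorem \ref{thm:matchedpropm}, namely $\Msp \lrar \Mpq = (\Msp \wedge \Mpq) \circ (S \cup Q)$, and dualize both sides. The first step is to push $*$ through the restriction using the minor--duality identity $(\M \circ T)^* = \M^* \times T$, which is exactly part 4 of Theorem \ref{thm:dotcrossidentitym} (with $S\cup Q$ playing the role of the retained block and $P$ the deleted block of the ground set $S\uplus P\uplus Q$ of $\Msp\wedge\Mpq$). This yields
$$(\Msp \lrar \Mpq)^* = (\Msp \wedge \Mpq)^* \times (S \cup Q).$$
The second step is to push $*$ through the matroid intersection, which is precisely part 4 of Theorem \ref{thm:sumintersectionm}: $(\Msp \wedge \Mpq)^* = \Msp^* \vee \Mpq^*$. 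Substituting gives
$$(\Msp \lrar \Mpq)^* = (\Msp^* \vee \Mpq^*) \times (S \cup Q),$$
and by the defining union--contraction form of the matched composition (Theorem \ref{thm:matchedpropm} again, now applied to $\Msp^*$ and $\Mpq^*$) the right-hand side is exactly $\Msp^* \lrar \Mpq^*$, completing the chain.

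I do not expect any genuine obstacle here: the argument reduces entirely to two already-established duality identities together with the two equivalent forms of matched composition in Theorem \ref{thm:matchedpropm}. The single point meriting a moment's care is the first step, where I invoke the general minor--duality fact $(\M\circ T)^*=\M^*\times T$; I would note explicitly that Theorem \ref{thm:dotcrossidentitym}(4), although stated for a two-set splitting $S\uplus P$, applies to \emph{any} partition of the ground set into two blocks, so its specialization to the partition $(S\cup Q)\uplus P$ of the ground set of $\Msp\wedge\Mpq$ is immediate. Everything else is a direct transcription of the vector-space proof, in keeping with the paper's stated philosophy that the matroidal development parallels the vector-space one.
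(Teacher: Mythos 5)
Your proof is correct and is essentially the paper's own argument run in mirror image: the paper starts from the union--contraction definition of $\Msp\lrar\Mpq$, pushes $*$ through $\times$ and $\vee$ (parts 3 of Theorems \ref{thm:dotcrossidentitym} and \ref{thm:sumintersectionm}), and finishes by invoking Theorem \ref{thm:matchedpropm} on the dual matroids, whereas you start from the intersection--restriction form given by Theorem \ref{thm:matchedpropm}, push $*$ through $\circ$ and $\wedge$ (parts 4 of the same two theorems), and finish with the definition. Since both chains use exactly the same three results and differ only in which endpoint of the equivalence in Theorem \ref{thm:matchedpropm} is taken as the starting form, this is the same approach and needs no further discussion.
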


\proof

\begin{eqnarray*}
(\M_{SP} \leftrightarrow \M_{PQ})^* & = & [(\M_{SP} \vee \M_{PQ}) \times (S \cup Q)]^*\\
& = & (\M_{SP} \vee \M_{PQ})^* \circ (S \cup Q) \mbox{ (using Theorem \ref{thm:dotcrossidentitym}).}\\
& = & (\M_{SP}^* \wedge \M_{PQ}^*) \circ (S \cup Q)\\
& = & (\M_{SP}^* \vee \M_{PQ}^*) \times (S \cup Q)  \mbox{ (using Theorem \ref{thm:matchedpropm}).} \\
& = & \M_{SP}^* \leftrightarrow \M_{PQ}^*.
\end{eqnarray*}

\qed 

We have deliberately written the above results in a form that mimics 
that of corresponding results for vector spaces.

The substitution rules are 
\begin{align*}
&{Vector\  space\  entity}&\ \ &{Matroid\  entity}&{Vector\  space\  entity}&\ \ &{Matroid\  entity}\\
&\mbox{column base}&\ &\ \ \ \  \mbox{base}&\ \ \ \ \mbox{column cobase}&\ &\mbox{cobase}\\
&\ \ \ \ \ \V_X&\ &\ \ \ \  \M_X
&\ \ \ \ \V_X^{\perp}&\ &\ \ \ \  \M_X^*\\
&\ \ \ \ \F_A&\ &\ \ \ \  \F_A
&\ \ \ \ \0_A&\ &\ \ \ \  \0_A\\
&\ \ \ \ \V_X\oplus \V_Y&\ &\ \ \ \  \M_X\oplus\M_Y&\ \ \ \ \V_X+\V_Y&\ &\ \ \ \  \M_X\vee\M_Y\\
&\ \ \ \ \V_X\cap\V_Y&\ &\ \ \ \  \M_X\wedge\M_Y&\ \ \ \ \V_X\lrar\V_Y&\ &\ \ \ \  \M_X\lrar\M_Y\\
&\ \ \ \ \V_{AB}\circ B&\ &\ \ \ \  \M_{AB}\circ B&\ \ \ \ \V_{AB}\times B&\ &\ \ \ \  \M_{AB}\times B\\
\end{align*}

The corresponding pairs are 
Theorems \ref{thm:perperpm},\ref{thm:perperp}, Theorems \ref{thm:dotcrossidentitym},\ref{thm:dotcrossidentity}, Theorems \ref{thm:tellegenm},\ref{thm:tellegen}, Lemmas \ref{lem:minorgraphvectorspacem},\ref{lem:minorgraphvectorspace}, 
\\Theorems \ref{thm:sumintersectionm},\ref{thm:sumintersection},
Theorems \ref{cor:ranklrarm},\ref{cor:ranklrar},
Theorems \ref{thm:matchedpropm},\ref{thm:matchedprop}  and Theorems \ref{thm:implicitDuality2},\ref{thm:idt0}.

\subsection{Rigidity of matroid pairs}
\label{subsec:rigidpairsm}
We show in this subsection, that there is a notion analogous to rigidity 
of vector space pairs in the matroid context  and the properties of 
rigid pairs of matroids mimic those of rigid pairs of vector spaces.
This fact is used finally in Theorem \ref{thm:rigidmatroidvector}, to examine under what 
conditions rigidity of a vector space pair can be inferred from 
that of the associated matroid pair.

\begin{definition}
\label{def:matroidrigid}
Let $\M_{AB},\M_{BC}$ be  matroids on sets $A\uplus B, B\uplus C,$ respectively, A,B,C being pairwise disjoint.
We say the pair $\{\M_{AB},\M_{BC}\}$ has the \nw{full sum property} iff
$\M_{AB}\circ B\vee \M_{BC}\circ B=\F_B.$\\
We say the pair $\{\M_{AB},\M_{BC}\}$ has the \nw{zero intersection property} iff
$\M_{AB}\times B\wedge \M_{BC}\times B=\0_B.$\\
We say $\M_{AB}$ is \nw{rigid with respect to} $\M_{BC},$ or that the pair $\{\M_{AB},\M_{BC}\}$ is \nw{rigid,
} iff  it has the full sum property and the zero intersection property.
\\When $B=\emptyset,$ we take $\{\M_{A},\M_{C}\}$ to be  {rigid
}.
\end{definition}
When matroids are direct sums of other matroids, it is clear from the definition that 
rigidity involving the former can be inferred from rigidity involving the 
latter.
\begin{lemma}
\label{lem:directsumm}
The pair $\{\M_{AB}\oplus\M_{CD},\M^1_B\oplus\M^2_C\}$ is rigid
iff the pairs $\{\M_{AB},\M^1_B\},$
$\{\M_{CD},\M^2_C\}$ are rigid.
\end{lemma}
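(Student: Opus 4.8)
The plan is to unpack the rigidity of the composite pair $\{\M_{AB}\oplus\M_{CD},\,\M^1_B\oplus\M^2_C\}$ directly into its two defining conditions over the shared ground set, and to show that each condition splits, summand by summand, into the corresponding condition for the two smaller pairs. Throughout I take $A,B,C,D$ to be pairwise disjoint, so that $\M_{AB}\oplus\M_{CD}$ is a matroid on $A\uplus B\uplus C\uplus D$ while $\M^1_B\oplus\M^2_C$ is a matroid on $B\uplus C$; the common ground set of the composite pair is therefore $B\uplus C$.

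First I would record three elementary facts about direct sums, each immediate from the base-axiom description of $\oplus$. (i) For a subset that is a union of subsets of the individual supports, restriction and contraction of a direct sum split: $(\M_{AB}\oplus\M_{CD})\circ(B\uplus C)=(\M_{AB}\circ B)\oplus(\M_{CD}\circ C)$ and $(\M_{AB}\oplus\M_{CD})\times(B\uplus C)=(\M_{AB}\times B)\oplus(\M_{CD}\times C)$, while $(\M^1_B\oplus\M^2_C)\circ(B\uplus C)=(\M^1_B\oplus\M^2_C)\times(B\uplus C)=\M^1_B\oplus\M^2_C$. (ii) Union and intersection of two direct sums over disjoint supports split componentwise: if $\N_B,\N'_B$ are on $B$ and $\N_C,\N'_C$ on $C$, then $(\N_B\oplus\N_C)\vee(\N'_B\oplus\N'_C)=(\N_B\vee\N'_B)\oplus(\N_C\vee\N'_C)$, and likewise with $\wedge$. (iii) $\F_{B\uplus C}=\F_B\oplus\F_C$ and $\0_{B\uplus C}=\0_B\oplus\0_C$, and a direct sum $\N_B\oplus\N_C$ equals $\F_{B\uplus C}$ (resp. $\0_{B\uplus C}$) iff $\N_B=\F_B$ and $\N_C=\F_C$ (resp. $\N_B=\0_B$ and $\N_C=\0_C$), since ranks add under $\oplus$.

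Combining (i) and (ii), the full-sum matroid of the composite pair is $[(\M_{AB}\circ B)\oplus(\M_{CD}\circ C)]\vee[\M^1_B\oplus\M^2_C]=[(\M_{AB}\circ B)\vee\M^1_B]\oplus[(\M_{CD}\circ C)\vee\M^2_C]$, and by (iii) this equals $\F_{B\uplus C}$ iff $(\M_{AB}\circ B)\vee\M^1_B=\F_B$ and $(\M_{CD}\circ C)\vee\M^2_C=\F_C$, i.e. iff $\{\M_{AB},\M^1_B\}$ and $\{\M_{CD},\M^2_C\}$ both have the full sum property (recall $\M^1_B\circ B=\M^1_B$). An identical computation with $\times$ and $\wedge$ in place of $\circ$ and $\vee$ shows the zero-intersection matroid of the composite pair to be $[(\M_{AB}\times B)\wedge\M^1_B]\oplus[(\M_{CD}\times C)\wedge\M^2_C]$, which equals $\0_{B\uplus C}$ iff both small pairs have the zero intersection property. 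Since, by Definition \ref{def:matroidrigid}, rigidity is exactly the conjunction of the full sum and zero intersection properties, the composite pair is rigid iff both $\{\M_{AB},\M^1_B\}$ and $\{\M_{CD},\M^2_C\}$ are rigid.

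The only genuine content is fact (ii), the distribution of matroid union and intersection over direct sums with disjoint supports; everything else is bookkeeping. I expect this to be the main (though still routine) obstacle: one verifies it from the base-axiom definitions of $\vee$, $\wedge$, and $\oplus$ — a maximal set of the form $b_1\cup b_2$ must, because the two supports are disjoint and each matroid ignores the other support, be maximal in each block separately, and the statement for $\wedge$ then follows by dualizing via the relation $(\M^1_A\vee\M^2_B)^{*}=(\M^1_A)^{*}\wedge(\M^2_B)^{*}$ of Theorem \ref{thm:sumintersectionm}. This mirrors, line for line, the (omitted) proof of the vector-space analogue, Lemma \ref{lem:directsum}.
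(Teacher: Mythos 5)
Your proof is correct and is exactly the argument the paper has in mind: the paper states Lemma \ref{lem:directsumm} without proof, remarking only that it is "clear from the definition," and your componentwise splitting of $\circ$, $\times$, $\vee$, $\wedge$ over direct sums with disjoint supports is precisely that omitted verification. No gaps; fact (ii) and its dualization via $(\M^1\vee\M^2)^{*}=(\M^1)^{*}\wedge(\M^2)^{*}$ are handled correctly.
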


The next result states
the basic facts about rigid
 pairs of matroids.

\begin{theorem}
\label{thm:regularrecursivem}
Let $\{\M_{AB},\M_{BC}\}$ be a rigid
 pair of matroids.
We then have the following
\begin{enumerate}
\item The full sum property  of $\{\M_{AB},\M_{BC}\}$ is equivalent to 
$(\M_{AB}\vee \M_{BC})\circ  B=\F_B,$
equivalently, to 
 there being bases $b_{AB},b_{BC}$ of $\M_{AB},\M_{BC},$ 
respectively such that $b_{AB}\cup b_{BC}\supseteq B.$
\item The zero intersection  property of $\{\M_{AB},\M_{BC}\}$ is equivalent to $(\M_{AB}\wedge \M_{BC})\times B=\0_B,$
equivalently, to there being bases $b_{AB},b_{BC}$ of $\M_{AB},\M_{BC},$
respectively such that $b_{AB}\cap b_{BC}=\emptyset .$
\item  The pair  $\{\M_{AB},\M_{BC}\}$ is rigid
 \\iff
$r(\M_{AB}\vee\M_{BC})= r(\M_{AB})+r(\M_{BC})$ and $  r(\M_{AB}^{*}\vee \M_{BC}^{*})= r(\M_{AB}^{*})+r(\M_{BC}^{*}).$
\item The pair $\{\M_{AB},\M_{BC}\}$ has the zero intersection (full sum) property iff $\{\M^{*}_{AB},\M^{*}_{BC}\}$ has the full sum (zero intersection) property.
Therefore
 $\{\M_{AB},\M_{BC}\}$ is {rigid
} iff
$\{\M^{*}_{AB},\M^{*}_{BC}\}$ is rigid.
\item $\{\M_{AB},\M_{BC}\}$ is rigid
 iff
there exist disjoint bases $b_{AB},b_{BC}$ of $\M_{AB},\M_{BC}$ respectively
such that $b_{AB}\cup b_{BC}\supseteq B.$
\end{enumerate}
\end{theorem}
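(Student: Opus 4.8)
The plan is to deduce part 5 from parts 1--4 together with the rank identity of Theorem \ref{thm:sumintersectionm}. Recall that rigidity of $\{\M_{AB},\M_{BC}\}$ means that both the full sum property and the zero intersection property hold. By part 1 the full sum property is equivalent to the existence of bases $b_{AB},b_{BC}$ of $\M_{AB},\M_{BC}$ with $b_{AB}\cup b_{BC}\supseteq B$, and by part 2 the zero intersection property is equivalent to the existence of bases $b'_{AB},b'_{BC}$ with $b'_{AB}\cap b'_{BC}=\emptyset$. The converse direction of part 5 is then immediate: a single disjoint pair $b_{AB},b_{BC}$ with $b_{AB}\cup b_{BC}\supseteq B$ simultaneously witnesses both existence statements, so both properties hold and the pair is rigid.

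The forward direction is the substantive part, and its difficulty is exactly that parts 1 and 2 produce two \emph{a priori different} pairs of bases---one covering $B$, one disjoint---whereas we must exhibit a \emph{single} pair that is both. The plan is to force this through the union matroid. First I would observe, using part 2 of Theorem \ref{thm:sumintersectionm}, that the zero intersection property is equivalent to $r(\M_{AB}\vee\M_{BC})=r(\M_{AB})+r(\M_{BC})$, since $r[(\M_{AB}\times B)\wedge(\M_{BC}\times B)]=0$ exactly when that intersection matroid equals $\0_B$. By the definition of matroid union every base of $\M_{AB}\vee\M_{BC}$ has the form $b_{AB}\cup b_{BC}$ for a maximally distant pair, so its cardinality equals $r(\M_{AB}\vee\M_{BC})=r(\M_{AB})+r(\M_{BC})=|b_{AB}|+|b_{BC}|$; since $|b_{AB}\cup b_{BC}|=|b_{AB}|+|b_{BC}|$ forces $b_{AB}\cap b_{BC}=\emptyset$, rigidity makes \emph{every} maximally distant pair disjoint.

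It then remains to place $B$ inside one such pair. By part 1 the full sum property gives $(\M_{AB}\vee\M_{BC})\circ B=\F_B$, that is, $B$ is independent in $\M_{AB}\vee\M_{BC}$. Extending $B$ to a base of $\M_{AB}\vee\M_{BC}$ and writing that base as a maximally distant union $b_{AB}\cup b_{BC}$, the previous paragraph shows $b_{AB}\cap b_{BC}=\emptyset$, while $B\subseteq b_{AB}\cup b_{BC}$ by construction. This is the required disjoint pair covering $B$, completing the forward direction. The main obstacle, as noted, is precisely the reconciliation of the two separate witnesses into one pair; routing everything through the rank equality $r(\M_{AB}\vee\M_{BC})=r(\M_{AB})+r(\M_{BC})$ and the maximally-distant description of the bases of the union is what makes this possible.
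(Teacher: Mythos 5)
Your proposal proves only part 5, taking parts 1--4 as given; since the statement in question is the full five-part theorem, the proofs of parts 1--4 (which occupy most of the paper's argument) are simply missing from your attempt, and your part-5 argument leans on them: on parts 1 and 2 directly, and on the content of part 3 implicitly, via the equivalence of the zero intersection property with $r(\M_{AB}\vee\M_{BC})=r(\M_{AB})+r(\M_{BC})$, which you rederive from Theorem \ref{thm:sumintersectionm}. One point you should make explicit: parts 1 and 2 are stated under a blanket rigidity hypothesis, but you use them as unconditional equivalences (in your converse direction you apply them to a pair not yet known to be rigid). This is legitimate --- the paper's own proofs of parts 1 and 2 nowhere use rigidity --- but it deserves a remark.

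For part 5 itself your argument is correct and takes a somewhat cleaner route than the paper's. Both proofs run through the union matroid, and both use the rank equality $r(\M_{AB}\vee\M_{BC})=r(\M_{AB})+r(\M_{BC})$ to force a maximally distant pair of bases to be disjoint. They differ in how coverage of $B$ is obtained: the paper starts from a base $b_{AC}$ of $(\M_{AB}\vee\M_{BC})\times (A\uplus C)$, extends it to a base of the union, splits that base into disjoint bases via the rank equality, and then runs a cardinality count (using $r((\M_{AB}\vee\M_{BC})\circ B)=|B|$, i.e.\ the full sum property) to conclude that the base contains all of $B$; you instead observe that the full sum property says precisely that $B$ is independent in $\M_{AB}\vee\M_{BC}$, extend $B$ itself to a base of the union, and get the coverage for free, with disjointness again supplied by the rank equality. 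Your converse also differs slightly (you invoke the existence characterizations of parts 1 and 2 directly, where the paper reverts to the rank characterization of part 3), but both are immediate. The net effect is that your proof of part 5 avoids the counting argument and the detour through the matched composition $\M_{AB}\lrar\M_{BC}$; what it does not do is discharge the other four parts of the theorem.
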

\begin{proof} 
1. 
Note that $\M_{AB}\vee \M_{BC}\equivd (\M_{AB}\oplus \0_C)\vee 
(\M_{BC}\oplus \0_A).$\\
It is clear that $(\M_{AB}\circ B) \vee  (\M_{BC}\circ B)=(\M_{AB}\vee \M_{BC})\circ B,$ so that $(\M_{AB}\circ B) \vee  (\M_{BC}\circ B)=\F_B$  is equivalent to
$(\M_{AB}\vee \M_{BC})\circ B=\F_B.$\\
The latter statement is the same as
there being bases $b_{AB},b_{BC}$ of $\M_{AB},\M_{BC},$
respectively such that $b_{AB}\cup b_{BC}\supseteq B.$

2. Note that $\M_{AB}\wedge \M_{BC}\equivd (\M_{AB}\oplus \F_C)\wedge 
(\M_{BC}\oplus \F_A).$\\
We have  $(\M_{AB}\times B) \wedge  (\M_{BC}\times B)=
 (\M_{AB}\wedge \M_{BC})\times B $ (using part 5 of Theorem \ref{thm:sumintersectionm}),
so that \\$(\M_{AB}\times B) \wedge  (\M_{BC}\times B)=\0_B$  is equivalent to
$(\M_{AB}\wedge \M_{BC})\times B=\0_B.$\\
The latter statement is the same as
there being bases $b_{AB},b_{BC}$ of $\M_{AB},\M_{BC},$
respectively such that $b_{AB}\cap b_{BC}=\emptyset .$

3. Let $\{\M_{AB},\M_{BC}\}$ be rigid
.
Then $(\M_{AB}\circ B\vee\M_{BC}\circ B)=\F_B$ and $(\M_{AB}\times B\wedge \M_{BC}\times B)=\0_B.$\\
Now, using Theorem \ref{thm:sumintersectionm} and the zero intersection property, \\ $r(\M_{AB}\vee\M_{BC})
= r(\M_{AB})+r(\M_{BC})-r(\M_{AB}\times B\wedge \M_{BC}\times B)=  r(\M_{AB})+r(\M_{BC}).$
\\
Next $(\M_{AB}\circ B\vee \M_{BC}\circ B)=\F_B$ iff $(\M_{AB}\circ B\vee \M_{BC}\circ B)^{*}=\F_B^{*},$ i.e.,
$  (\M_{AB}^{*}\times B\wedge\M_{BC}^{*}\times B)=\0_B.$
Using the above argument, we therefore have
$r(\M_{AB}^{*}\vee\M_{BC}^{*})=r(\M_{AB}^{*})+ r(\M_{BC}^{*}).$
\\
On the other hand, suppose $r(\M_{AB}\vee\M_{BC})= r(\M_{AB})+r(\M_{BC})$ and $  r(\M_{AB}^{*}+\M_{BC}^{*})$\\$= r(\M_{AB}^{*})+r(\M_{BC}^{*}).$
\\
The former condition implies $r(\M_{AB}\times B\wedge \M_{BC}\times B)=0,$ i.e., $(\M_{AB}\times B\wedge \M_{BC}\times B)=\0_B.$
 and the latter implies $r(\M_{AB}^{*}\times B\wedge \M_{BC}^{*}\times B)=0,$ i.e.,
$(\M_{AB}^{*}\times B\wedge \M_{BC}^{*}\times B)=\0_B,$
i.e., $(\M_{AB}^{*}\times B\wedge \M_{BC}^{*}\times B)^{*}=(\M_{AB}\circ B\vee \M_{BC}\circ B)=\F_B$ (using parts 3 and 4 of Theorems \ref{thm:sumintersectionm},\ref{thm:dotcrossidentitym}).

4. We have,
$(\M_{AB}\circ B\vee \M_{BC}\circ B)^{*}=\M_{AB}^{*}\times B\ \wedge \ \M_{BC}^{*}\times B.$ Therefore $(\M_{AB}\circ B\vee \M_{BC}\circ B)=\F_B$
iff $\M_{AB}^{*}\times B\ \wedge \ \M_{BC}^{*}\times B= \0_B.$
Next
$(\M_{AB}\times B\ \wedge\ \M_{BC}\times B)^{*}=\M_{AB}^{*}\circ B\vee \M_{BC}^{*}\circ B.$ Therefore $(\M_{AB}\times B\ \wedge\ \M_{BC}\times B) =\0_B$ iff $\M_{AB}^{*}\circ B\vee \M_{BC}^{*}\circ B= \F_B.$

5. 
Let $\{\M_{AB},\M_{BC}\}$ be rigid and let
 $b_{AC}$ be a base of $\M_{AB}\lrar \M_{BC}= (\M_{AB}\vee \M_{BC})\times AC.$
Then there exists a base $b_{ABC}$  of $\M_{AB}\vee \M_{BC}$ such that 
$b_{AC}=b_{ABC}\cap (A\uplus C).$ 
 Since $\{\M_{AB},\M_{BC}\}$ is rigid, by part 3 above, we must have
$r(\M_{AB}\vee \M_{BC})= r(\M_{AB}) +r(\M_{BC}).$
Therefore 
there exist disjoint bases $b_{AB},b_{BC}$ of $\M_{AB},\M_{BC},$ respectively
such that $b_{ABC}=b_{AB}\uplus b_{BC}.$
We have,\\
$|b_{AC}|=|b_{AB}\uplus b_{BC}|-|(b_{AB}\uplus b_{BC})\cap B|= r(\M_{AB}\lrar \M_{BC})= r((\M_{AB}\vee \M_{BC})\times AC)$\\$=
r(\M_{AB}\vee \M_{BC}) - r((\M_{AB}\vee \M_{BC})\circ  B)=
r(\M_{AB})+r(\M_{BC})-|B|$ (using full sum property of $\{\M_{AB},\M_{BC}\}$).
We conclude that $(b_{AB}\uplus b_{BC})\cap B=B.$
\\Thus rigidity of $\{\M_{AB},\M_{BC}\}$
implies that there exist disjoint bases of $\M_{AB},\M_{BC}$ 
which cover $B.$

On the other hand suppose that there exist disjoint bases of $\M_{AB},\M_{BC}$
which cover $B.$ It is clear that we must have 
$r(\M_{AB}\vee \M_{BC})= r(\M_{AB}) +r(\M_{BC})$
 and $r(\M^*_{AB}\vee \M^*_{BC})= r(\M^*_{AB}) +r(\M^*_{BC}),$
so that the rigidity of $\{\M_{AB},\M_{BC}\}$ follows.

\end{proof}
The next lemma is of use for characterizing rigid networks without ports
and, in the case of rigid multiports, the dimension of the multiport
of the multiport behaviour. Proof is given in \ref{subsec:rigidpairsmp}.
\begin{lemma}
\label{lem:rankfactsmatroids}
\begin{enumerate}
\item Let $\M^1_B,\M^2_B$ be matroids on $B.$
If $r(\M^1_B)+r(\M^2_B)=|B|,$ then
the full sum property and zero intersection property are equivalent
\item If  $\{\M_{AB},\M_{BC}\}$ is rigid
 then 
$r(\M_{AB}\lrar \M_{BC})= r(\M_{AB})+r(\M_{BC})-|B|.$
\end{enumerate}
\end{lemma}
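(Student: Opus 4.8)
The plan is to treat this as the matroidal mirror of Lemma \ref{lem:rankfactsvectors}, using only the matroid rank identities already established in Theorems \ref{thm:sumintersectionm}, \ref{cor:ranklrarm}, and \ref{thm:dotcrossidentitym}, exactly as the vector space proof used their orthogonal-space counterparts.

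For part 1, I would specialize the definitions of full sum and zero intersection to the case $A=C=\emptyset$, so that (using $\M^i_B\circ B=\M^i_B=\M^i_B\times B$) the two properties read $\M^1_B\vee\M^2_B=\F_B$ and $\M^1_B\wedge\M^2_B=\0_B$ respectively. Part 1 of Theorem \ref{thm:sumintersectionm} gives $r(\M^1_B)+r(\M^2_B)=r(\M^1_B\vee\M^2_B)+r(\M^1_B\wedge\M^2_B)$, so the hypothesis $r(\M^1_B)+r(\M^2_B)=|B|$ forces $|B|=r(\M^1_B\vee\M^2_B)+r(\M^1_B\wedge\M^2_B)$. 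Since a matroid on $B$ has rank $|B|$ iff it is $\F_B$ and rank $0$ iff it is $\0_B$, the full sum property ($r(\M^1_B\vee\M^2_B)=|B|$) holds iff $r(\M^1_B\wedge\M^2_B)=0$, i.e. iff the zero intersection property holds. This gives the claimed equivalence.

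For part 2, I would begin with the linking rank formula of Theorem \ref{cor:ranklrarm}, which with $(S,P,Q)=(A,B,C)$ reads
$$r(\M_{AB}\lrar \M_{BC}) = r(\M_{AB}\times A)+r(\M_{BC}\times C)+r(\M_{AB}\circ B\wedge \M_{BC}\circ B) - r(\M_{AB}\times B\wedge \M_{BC}\times B).$$
I would then rewrite the third term using part 1 of Theorem \ref{thm:sumintersectionm} as $r(\M_{AB}\circ B)+r(\M_{BC}\circ B)-r(\M_{AB}\circ B\vee \M_{BC}\circ B)$, and combine terms via part 2 of Theorem \ref{thm:dotcrossidentitym} in the forms $r(\M_{AB}\times A)+r(\M_{AB}\circ B)=r(\M_{AB})$ and $r(\M_{BC}\times C)+r(\M_{BC}\circ B)=r(\M_{BC})$. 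This collapses the expression to
$$r(\M_{AB}\lrar \M_{BC}) = r(\M_{AB})+r(\M_{BC})-r(\M_{AB}\circ B\vee \M_{BC}\circ B)-r(\M_{AB}\times B\wedge \M_{BC}\times B).$$
Finally, rigidity of $\{\M_{AB},\M_{BC}\}$ supplies the full sum property $r(\M_{AB}\circ B\vee \M_{BC}\circ B)=|B|$ and the zero intersection property $r(\M_{AB}\times B\wedge \M_{BC}\times B)=0$, yielding $r(\M_{AB}\lrar \M_{BC})=r(\M_{AB})+r(\M_{BC})-|B|$.

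There is no serious obstacle: every step is a substitution into an identity already proved in the matroid setting, and the argument is a faithful translation of the vector space proof of Lemma \ref{lem:rankfactsvectors}. The only point requiring a moment's care is applying part 2 of Theorem \ref{thm:dotcrossidentitym} with the ground-set blocks in both orders, since I need the form $r(\M_{AB})=r(\M_{AB}\times A)+r(\M_{AB}\circ B)$ rather than the literal $r(\M_{AB})=r(\M_{AB}\circ A)+r(\M_{AB}\times B)$; this is simply the same identity applied to the partition $B\uplus A$, and likewise for $\M_{BC}$ with the partition $C\uplus B$.
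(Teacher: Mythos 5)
Your proof is correct and follows essentially the same route as the paper: part 1 via the rank identity $r(\M^1_B)+r(\M^2_B)=r(\M^1_B\vee\M^2_B)+r(\M^1_B\wedge\M^2_B)$ of Theorem \ref{thm:sumintersectionm}, and part 2 by substituting the wedge-rank identity into the linking formula of Theorem \ref{cor:ranklrarm} and then invoking $r(\M_{AB})=r(\M_{AB}\times A)+r(\M_{AB}\circ B)$ from Theorem \ref{thm:dotcrossidentitym}, exactly as the paper does in its appendix proof. The extra care you take in noting that the decomposition identity is applied with the two blocks in reversed roles is implicit in the paper but harmless and correct.
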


The following result is parallel to Theorem \ref{thm:derivedregularity}.
Its proof is a line by line translation of that of Theorem \ref{thm:derivedregularity} and is shifted to \ref{subsec:rigidpairsmp}.

\begin{theorem}
\label{thm:derivedregularitym}
Let $\M_{WTV},\M_V, \M_T,$ be matroids on $W\uplus T\uplus V, V,T,$ \\
respectively.
Then $\{\M_{WTV},\M_T\oplus\M_V\}$ is rigid
\begin{enumerate}
\item
iff
$\{\M_{WTV},\M_V\},$  $\{\M_{WTV}\lrar\M_V,\M_T\}$  are rigid;
\item iff
$\{\M_{WTV},\M_V\}$ is rigid, $\{\M_{WTV}\wedge \M_V,\M_T\},$
$\{\M_{WTV}\vee \M_V,\M_T\}$ satisfy  the full sum and zero intersection
properties respectively;
\item iff $\{\M_{WTV},\M_V\},$ $\{\M_{WTV}\wedge \M_V,\M_T\},$
$\{\M_{WTV}\vee \M_V,\M_T\}$ are rigid.
\end{enumerate}
\end{theorem}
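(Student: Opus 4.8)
The plan is to exploit the structural parallelism the authors have carefully established between matroids and vector spaces. Theorem \ref{thm:derivedregularitym} is the matroid analogue of Theorem \ref{lem:derivedregularity}, and the excerpt explicitly states that its proof is intended to be a ``line by line matroidal translation'' of the vector space case. So my overall approach would be to transcribe the proof of Theorem \ref{lem:derivedregularity}, substituting matroid operations for their vector space counterparts according to the substitution table given after Theorem \ref{thm:implicitDuality2}: namely $\V_X \mapsto \M_X$, $\V_X^\perp \mapsto \M_X^*$, $+ \mapsto \vee$, $\cap \mapsto \wedge$, $\lrar$ (vector) $\mapsto \lrar$ (matroid), $\circ, \times$ for vector spaces $\mapsto \circ, \times$ for matroids, and $\F,\0$ preserved. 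The key enabling results are all available in matroid form: Theorem \ref{thm:perperpm} (duality of rank), Theorem \ref{thm:sumintersectionm} (the union/intersection rank and distributivity identities), Theorem \ref{thm:dotcrossidentitym} (minor identities), Theorem \ref{thm:matchedpropm} (the matched-composition characterization), and crucially Theorem \ref{thm:idt0m} (the matroid Implicit Duality Theorem $(\M_{SP}\lrar\M_{PQ})^* = \M_{SP}^*\lrar\M_{PQ}^*$).

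For part 1, I would first prove a matroid version of the auxiliary Lemma \ref{lem:factsvectorspaces}, establishing the three identities
\begin{align*}
&(\M_{WTV}\circ TV\vee\M_T\oplus\M_V)\circ V=\M_{WTV}\circ V\vee\M_V,\\
&(\M_{WTV}\circ TV\vee\M_T\oplus\M_V)\times T=(\M_{WTV}\circ TV\vee\M_V)\times T\vee\M_T,\\
&\M_{WTV}\circ TV\lrar \M_V=(\M_{WTV}\lrar \M_V)\circ T,
\end{align*}
all of which follow from the distributivity of restriction/contraction over union/intersection in Theorem \ref{thm:sumintersectionm} parts 5(a),(b). Then I would run the rank argument: using part 2 of Theorem \ref{thm:dotcrossidentitym} to split $r(\M_{WTV}\circ TV\vee\M_T\oplus\M_V)$ into its restriction and contraction ranks, and using $r(\M_X)\leq |X|$, deduce that the full sum condition $\M_{WTV}\circ TV\vee\M_T\oplus\M_V=\F_{TV}$ is equivalent to the conjunction of $\M_{WTV}\circ V\vee\M_V=\F_V$ (full sum for $\{\M_{WTV},\M_V\}$) and $(\M_{WTV}\lrar\M_V)\circ T\vee\M_T=\F_T$ (full sum for $\{\M_{WTV}\lrar\M_V,\M_T\}$), invoking Theorem \ref{thm:matchedpropm} and the auxiliary identities. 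The dual (zero intersection) half is obtained by applying exactly the same full-sum argument to the starred matroids and then dualizing via Theorem \ref{thm:perperpm}, Theorem \ref{thm:sumintersectionm} parts 3,4, and the matroid IDT (Theorem \ref{thm:idt0m}) to convert $(\M_{WTV}^*\lrar\M_V^*)^*$ back into $\M_{WTV}\lrar\M_V$. Parts 2 and 3 then follow as in the vector space proof: part 2 rewrites rigidity of $\{\M_{WTV}\lrar\M_V,\M_T\}$ using $\M_{WTV}\lrar\M_V=(\M_{WTV}\wedge\M_V)\circ WT=(\M_{WTV}\vee\M_V)\times WT$ (Theorem \ref{thm:matchedpropm}) together with the minor-composition identities of Theorem \ref{thm:dotcrossidentitym}, and part 3 uses the containment $\M_{WTV}\wedge\M_V$ is a weak-image/minor of $\M_{WTV}\vee\M_V$ to relate the two interface conditions.

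The main obstacle I anticipate is the one structural disanalogy between the two settings. The vector space proof of part 1 leans on identities like $(\V_V)_{(-V)}=\V_V$ and on the clean interplay of $\lrar$ versus $\rightleftharpoons$ under complementary orthogonality; for matroids there is no signing, so the matroid $\lrar$ is self-dual under $*$ in exactly the way Theorem \ref{thm:idt0m} asserts, and there is no separate skewed operation to track. This should actually simplify the translation rather than complicate it, but I must verify that every appeal to the vector IDT (Theorem \ref{thm:idt0}) is replaced by the correctly-shaped appeal to Theorem \ref{thm:idt0m}, where the conclusion is $\lrar$-to-$\lrar$ rather than $\lrar$-to-$\rightleftharpoons$. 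The delicate point is therefore confirming that the step ``$(\V_{WTV}^\perp\lrar\V_V^\perp)^\perp\times T\cap\V_T=\0_T$ iff $(\V_{WTV}\lrar\V_V)\times T\cap\V_T=\0_T$'' translates faithfully: here I would use that $(\M_{WTV}^*\lrar\M_V^*)^*=\M_{WTV}\lrar\M_V$ directly from Theorem \ref{thm:idt0m} and part 1 of Theorem \ref{thm:perperpm}, which is cleaner than the vector case since no sign-flip bookkeeping intervenes. Everything else is routine rank-identity manipulation justified by the cited theorems, so once the auxiliary lemma and the correct IDT application are in place, the proof closes by the same logical skeleton as Theorem \ref{lem:derivedregularity}.
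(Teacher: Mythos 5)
Your overall plan is exactly the paper's: the paper proves Theorem \ref{thm:derivedregularitym} in \ref{subsec:rigidpairsmp} by a line-by-line translation of the proof of Theorem \ref{lem:derivedregularity}, built on a matroid analogue (Lemma \ref{lem:factsmatroids}) of Lemma \ref{lem:factsvectorspaces}, with the matroid IDT (Theorem \ref{thm:idt0m}) replacing the vector-space IDT in the dualization step, and with parts 2 and 3 handled via Theorem \ref{thm:matchedpropm} and the order relation between $\M_{WTV}\vee\M_V$ and $\M_{WTV}\wedge\M_V$, just as you describe.

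However, there is one genuine gap, and it sits precisely at the one point where the translation is \emph{not} routine. You claim that all three identities of the auxiliary lemma ``follow from the distributivity of restriction/contraction over union/intersection in Theorem \ref{thm:sumintersectionm} parts 5(a),(b).'' That is true for the first identity (restriction distributes over union) and, combined with part 1 of Theorem \ref{thm:dotcrossidentitym}, for the third. But the second identity,
\begin{equation*}
[\M_{WTV}\circ TV\vee(\M_T\oplus\M_V)]\times T=[(\M_{WTV}\circ TV\vee\M_V)\times T]\vee\M_T,
\end{equation*}
is a statement about \emph{contraction of a union}, and Theorem \ref{thm:sumintersectionm} part 5 gives only $\circ$ over $\vee$ and $\times$ over $\wedge$; matroid contraction does not distribute over matroid union in general. (Take $\M_1=\M_2=U_{1,2}$ on $\{a,b\}$: then $(\M_1\vee\M_2)\times\{b\}=\F_{\{b\}}$ while $\M_1\times\{b\}\vee\M_2\times\{b\}=\0_{\{b\}}$.) In the vector-space case the corresponding identity is a one-line computation, which is why Lemma \ref{lem:factsvectorspaces} could be called routine; in the matroid case it is true only because the matroid $\M_T$ lives entirely on the set $T$ being contracted to, and the paper proves it (Lemma \ref{lem:factsmatroids}, part 2) by an explicit two-way base argument: a base of $(\hat\M_{TV}\times T)\vee\M_T$ is extended, via a base $b^1_{TV}$ of $\hat\M_{TV}$ whose trace on $V$ is a base of $\hat\M_{TV}\circ V$, to a base of $(\hat\M_{TV}\vee\M_T)$ meeting $V$ maximally, and conversely. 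Your proof needs this dedicated argument; the blanket appeal to distributivity would fail at exactly this step, and without the identity the equivalence between the full sum property of $\{\M_{WTV},\M_T\oplus\M_V\}$ and that of the two derived pairs does not go through.
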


\subsection{Testing rigidity of vector space pairs through associated matroid pairs}
Matroids associated with rigid pairs of vector  spaces are always rigid.
But  under certain 
commonly occurring situations, the converse is also true. This fact is useful for testing the 
rigidity of the original pairs of vector spaces avoiding linear algebraic computations. 


\begin{theorem}
\label{thm:rigidmatroidvector}
Let $\V_{AB},\V_{BC}$ be vector spaces on $A\uplus B,B\uplus C,$ respectively
with $A,B,C$ pairwise disjoint.
We then have the following.
\begin{enumerate}
\item  If $\{\V_{AB},\V_{BC}\}$ is rigid then so is $\{\M(\V_{AB}), \M(\V_{BC})\}.$
\item
Suppose $\M(\V_{AB}+\V_{BC})=\M(\V_{AB})\vee \M(\V_{BC})$ and
$\M(\V^{\perp}_{AB}+\V^{\perp}_{BC})=\M(\V^{\perp}_{AB})\vee \M(\V^{\perp}_{BC}).$\\
Then
\begin{enumerate}
\item $\M(\V_{AB}\cap\V_{BC})= \M(\V_{AB})\wedge\M(\V_{BC}),
\M(\V_{AB}^{\perp}\cap\V_{BC}^{\perp})= (\M(\V_{AB}))^*\wedge(\M(\V_{BC}))^*$
\item $\M(\V_{AB}\lrar\V_{BC})= \M(\V_{AB})\lrar  \M(\V_{BC});$
\item
$\{\V_{AB},\V_{BC}\}$ is rigid iff $\{\M(\V_{AB}), \M(\V_{BC})\}$
is rigid.
\end{enumerate}
\end{enumerate}
\end{theorem}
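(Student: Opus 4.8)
The crux of this result is a single \emph{weak-map inequality} relating the matroid of a vector-space sum to the union of the component matroids, and the plan is to isolate it first. I would prove the lemma: for vector spaces $\V_{AB},\V_{BC}$, every set of columns independent in $\V_{AB}+\V_{BC}$ splits as a disjoint union of a set independent in $\V_{AB}$ and a set independent in $\V_{BC}$; consequently
$$r(\V_{AB}+\V_{BC})\ \le\ r(\M(\V_{AB})\vee\M(\V_{BC}))\ \le\ r(\V_{AB})+r(\V_{BC}),$$
the right inequality being the standard subadditivity of matroid-union rank (a base of the union is a union of an independent set of each matroid). To prove the splitting I would stack representative matrices $B_{AB},B_{BC}$ into the block matrix $M=\bigl(\begin{smallmatrix}B_{AB}&0\\0&B_{BC}\end{smallmatrix}\bigr)$, whose row space is $\V_{AB}+\V_{BC}$ (the top rows vanish on the $C$-columns, the bottom rows on the $A$-columns). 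Given independent columns $T$, pick a nonsingular $|T|\times|T|$ submatrix using $p$ top rows and $q=|T|-p$ bottom rows and expand its determinant by generalized Laplace expansion along the top rows: some term must be nonzero, producing $S\subseteq T$ with a nonzero top minor on $S$ and a nonzero bottom minor on $T-S$. The top minor forces $S$ to avoid the $C$-columns and to be independent in $\M(\V_{AB})$, the bottom minor forces $T-S$ to avoid the $A$-columns and be independent in $\M(\V_{BC})$; this is exactly independence of $T$ in $\M(\V_{AB})\vee\M(\V_{BC})$.

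Given this lemma, Part 1 is immediate. By Theorem \ref{thm:regularrecursive}(3), rigidity of $\{\V_{AB},\V_{BC}\}$ means $r(\V_{AB}+\V_{BC})=r(\V_{AB})+r(\V_{BC})$ together with the analogous equality for $\{\V_{AB}^{\perp},\V_{BC}^{\perp}\}$. Feeding the first equality into the lemma (and using $r(\M(\V_X))=r(\V_X)$) forces both displayed inequalities to be equalities, so $r(\M(\V_{AB})\vee\M(\V_{BC}))=r(\M(\V_{AB}))+r(\M(\V_{BC}))$. Applying the lemma to the perp pair and invoking $\M(\V_X^{\perp})=(\M(\V_X))^{*}$ yields the dual equality $r((\M(\V_{AB}))^{*}\vee(\M(\V_{BC}))^{*})=r((\M(\V_{AB}))^{*})+r((\M(\V_{BC}))^{*})$. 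By Theorem \ref{thm:regularrecursivem}(3) these two equalities say precisely that $\{\M(\V_{AB}),\M(\V_{BC})\}$ is rigid.

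For Part 2(a) I would argue purely by duality. Since $\V_{AB}\cap\V_{BC}=(\V_{AB}^{\perp}+\V_{BC}^{\perp})^{\perp}$ (Theorem \ref{thm:sumintersection}, parts 3--4) and $\M(\V^{\perp})=(\M(\V))^{*}$, one gets $\M(\V_{AB}\cap\V_{BC})=(\M(\V_{AB}^{\perp}+\V_{BC}^{\perp}))^{*}$; the second hypothesis rewrites the inside as $(\M(\V_{AB}))^{*}\vee(\M(\V_{BC}))^{*}$, and dualizing via Theorem \ref{thm:sumintersectionm}(4) gives $\M(\V_{AB})\wedge\M(\V_{BC})$. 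The companion identity follows symmetrically from $\V_{AB}^{\perp}\cap\V_{BC}^{\perp}=(\V_{AB}+\V_{BC})^{\perp}$ and the first hypothesis, now using Theorem \ref{thm:sumintersectionm}(3). Part 2(b) then drops out: by Theorem \ref{thm:matchedprop}, $\V_{AB}\lrar\V_{BC}=(\V_{AB}\cap\V_{BC})\circ(A\uplus C)$, so $\M(\V_{AB}\lrar\V_{BC})=\M(\V_{AB}\cap\V_{BC})\circ(A\uplus C)$ by Lemma \ref{lem:minorvectorspacem}; substituting (a) and reading Theorem \ref{thm:matchedpropm} backwards identifies this with $(\M(\V_{AB})\wedge\M(\V_{BC}))\circ(A\uplus C)=\M(\V_{AB})\lrar\M(\V_{BC})$.

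Finally Part 2(c): the forward implication is Part 1 (the hypotheses are not even needed there), while for the converse the first hypothesis gives $r(\V_{AB}+\V_{BC})=r(\M(\V_{AB})\vee\M(\V_{BC}))$ and the second the corresponding dual identity, so the primal and dual rank equalities characterizing vector-space rigidity (Theorem \ref{thm:regularrecursive}(3)) coincide termwise with those characterizing matroid rigidity (Theorem \ref{thm:regularrecursivem}(3)); hence one pair is rigid iff the other is. Everything after the lemma is bookkeeping through the established duality and minor identities; the genuine obstacle is making the Laplace-expansion splitting precise, in particular verifying that the surviving nonzero minors avoid the $C$- and $A$-columns respectively, since that is the one step lying outside the paper's existing rank-identity toolkit.
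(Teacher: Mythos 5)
Your proof is correct and follows essentially the same skeleton as the paper's: rigidity is converted into the primal and dual rank equalities of part 3 of Theorems \ref{thm:regularrecursive} and \ref{thm:regularrecursivem}, the identity $\M(\V_X^{\perp})=(\M(\V_X))^{*}$ transports the dual statements, and parts 2(a), 2(b) are the same duality and minor manipulations. The one place you add genuine content is the weak-map inequality $r(\M(\V_{AB}+\V_{BC}))\le r(\M(\V_{AB})\vee \M(\V_{BC}))$: the paper simply lists this as fact (d) in its proof without justification, whereas you derive it from the stronger splitting statement (every column-independent set of the sum splits into disjoint independent sets of the summands) via generalized Laplace expansion of a nonsingular submatrix of the stacked representative matrix --- the same technique the paper deploys only later, in Lemma \ref{lem:generalitytomaxdistancenew}, for the algebraic-independence results; your verification that the surviving minors avoid the $C$- and $A$-columns is exactly right. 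Your two routing differences are harmless and arguably cleaner: in 2(b) you use $\V_{AB}\lrar \V_{BC}=(\V_{AB}\cap \V_{BC})\circ (A\uplus C)$ from Theorem \ref{thm:matchedprop} together with $\M(\V\circ X)=\M(\V)\circ X$, instead of the paper's sum-and-contract route $\M((\V_{AB}+\V_{BC})\times AC)$, which quietly sidesteps the sign adjustment hidden in $\V_{(-B)C}$; and in 2(c) you observe that under the two hypotheses the rank characterizations of vector-space and matroid rigidity coincide term by term, instead of translating the full sum and zero intersection properties through 2(a) as the paper does. Both variants are valid, and your 2(c) argument is the shorter of the two.
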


\begin{proof}
We will use the following facts.
\\(a)  $\M(\V_X\circ Z)=(\M(\V_X))\circ Z, \M(\V_X\times Z)=(\M(\V_X))\times Z,$\\
$(\M(\V_X)^{\perp})= (\M(\V_X))^*, Z\subseteq X$ (Lemma \ref{lem:minorvectorspacem}).
\\(b) $\M^1_X\wedge \M^2_X\equivd ((\M^1_X)^*\vee (\M^2_X)^*)^*.$
\\When $A,B,C$ are pairwise disjoint, $\M_{AB}\wedge \M_{BC}\equivd (\M_{AB}\oplus \F_C)\wedge(\M_{BC}\oplus \F_A)$\\$= ((\M_{AB}\oplus \F_C)^*\vee(\M_{BC}\oplus \F_A)^*)^*= (\M_{AB}^*\oplus \0_C)\vee(\M_{BC}^*\oplus \0_A))^* = (\M_{AB}^*\vee\M_{BC}^*)^*$ (Theorem \ref{thm:idt0m}).
\\(c) When $A,B,C$ are pairwise disjoint, $(\M_{AB}\vee \M_{BC})\circ B=
(\M_{AB}\circ B)\vee (\M_{BC}\circ B), (\M_{AB}\wedge \M_{BC})\times B=
(\M_{AB}\times B)\wedge (\M_{BC}\times B)$ (Theorem \ref{thm:sumintersectionm}).
\\(d) $r(\M(\V_{AB}+\V_{BC}))\leq r(\M(\V_{AB})\vee \M(\V_{BC})).$

1.  $\{\V_{AB},\V_{BC}\}$ is rigid iff  
$r(\V_{AB})+r(\V_{BC})=r(\V_{AB}+\V_{BC})$ and 
$r(\V_{AB}^{\perp})+ r(\V_{BC}^{\perp})$\\$= r(\V_{AB}^{\perp}+ \V_{BC}^{\perp}),$
 by part 4 of Theorem \ref{thm:regularrecursive}.
Therefore, if $\{\V_{AB},\V_{BC}\}$ is rigid 
then we have, \\
$r(\M(\V_{AB}))+r(\M(\V_{BC}))=r(\M(\V_{AB}+\V_{BC}))\leq r(\M(\V_{AB})\vee \M(\V_{BC}))$ and
$r(\M(\V_{AB}^{\perp}))+ r(\M(\V_{BC}^{\perp}))$\\$= r(\M(\V_{AB}^{\perp}+ \V_{BC}^{\perp}))
\leq r(\M(\V_{AB}^{\perp})\vee \M(\V_{BC}^{\perp})).$
But, by definition of the matroid union operation, $r(\M(\V_{AB}))+r(\M(\V_{BC}))\geq r(\M(\V_{AB})\vee \M(\V_{BC}))$
and $r(\M(\V_{AB}^{\perp}))+ r(\M(\V_{BC}^{\perp}))$\\$\geq r(\M(\V_{AB}^{\perp})\vee \M(\V_{BC}^{\perp})).$
We therefore conclude that 
$r(\M(\V_{AB}))+r(\M(\V_{BC}))= r(\M(\V_{AB})\vee \M(\V_{BC}))$
and $r((\M(\V_{AB}))^*)+r((\M(\V_{BC}))^*)=r(\M(\V_{AB}^{\perp}))+ r(\M(\V_{BC}^{\perp}))= 
r((\M(\V_{AB}))^*\vee (\M(\V_{BC}))^{*})$
 and by part 3 of Theorem \ref{thm:regularrecursivem}, it follows that $\{\M(\V_{AB}), \M(\V_{BC})\}$ is rigid.

2(a) We are given that $\M(\V_{AB}+\V_{BC})=\M(\V_{AB})\vee \M(\V_{BC}).$ 
\\We then have
$\M(\V_{AB}\cap\V_{BC})= \M((\V_{AB}^{\perp}+\V_{BC}^{\perp})^{\perp})=
 (\M(\V_{AB}^{\perp})\vee\M(\V_{BC}^{\perp}))^*$\\$=((\M(\V_{AB}))^*\vee(\M(\V_{BC}))^*)^* = \M(\V_{AB})\wedge \M(\V_{BC}),$
using facts (a) and (b).
\\ Similarly, from $\M(\V^{\perp}_{AB}+\V^{\perp}_{BC})=\M(\V^{\perp}_{AB})\vee \M(\V^{\perp}_{BC}),$ it follows that\\ 
$\M(\V_{AB}^{\perp}\cap\V_{BC}^{\perp})= (\M(\V_{AB}))^*\wedge(\M(\V_{BC}))^*.$

2(b) We have $\M(\V_{AB}\lrar \V_{BC})= \M((\V_{AB}+\V_{BC})\times B)=
(\M(\V_{AB}+\V_{BC}))\times B$\\$ =(\M(\V_{AB})\vee\M(\V_{BC}))\times B
= \M(\V_{AB})\lrar \M(\V_{BC}),$
using the hypothesis of the theorem,  fact (a), and the definition of
the `$\lrar$' operation for matroids.

2(c) The pair  $\{\V_{AB},\V_{BC}\}$ is  rigid iff
$\V_{AB}\circ B+\V_{BC}\circ B=\F_B$ and $\V_{AB}\times B\cap \V_{BC}\times B=\0_B,$\\ i.e., iff
$(\V_{AB}+\V_{BC})\circ B=\F_B$ and $(\V_{AB}\cap \V_{BC})\times B=\0_B,$\\ i.e., iff
$\M((\V_{AB}+\V_{BC})\circ B)=\F_B$ and $\M((\V_{AB}\cap \V_{BC})\times B)=\0_B,$
\\
\i.e., iff  $(\M(\V_{AB})\vee\M(\V_{BC}))\circ B=\F_B$ and  $(\M(\V_{AB})\wedge\M(\V_{BC}))\times B=\0_B$
\\i.e., iff $\{\M(\V_{AB}), \M(\V_{BC})\}$
is rigid, using  fact (c), hypothesis of part 2 of the theorem  and part 2(a) above.
\end{proof}

\subsection{Rigidity and matroid duality}
\label{sec:rigiddualm}
We show in this subsection that
when we have matroid duality, we automatically have rigidity.
This suggests
 rigidity can be regarded as  a weak form of 
matroid duality.
\begin{lemma}
\label{lem:gyrator_r2m}
Let $\M_{AB},\M_{BC}$ be matroids, $A,B,C,$ being pairwise disjoint,
such that\\  $(\M_{AB}\circ B)^{*}= \M_{BC}\times B.$
We then have the following.
\begin{enumerate}
\item
  $\{\M_{AB}, \M_{BC}\}$ is rigid;
\item $\{\M_{AB}, (\M^{*}_{AB})_{\tilde{A}B}\},$ is rigid, where $\tilde{A}$ is
a copy of $A$ disjoint from $A,B;$
\end{enumerate}
\end{lemma}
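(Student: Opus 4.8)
The plan is to mirror the vector-space argument of Lemma \ref{lem:gyrator_r}, parts 4(a) and 4(b), replacing the ingredient ``$\V_X\cap\V_X^{\perp}=\0_X$ over $\Re$'' by its matroid counterpart: for any matroid $\N_B$ on $B$ one has $\N_B\wedge\N_B^{*}=\0_B$, since for any base $c$ of $\N_B$ its complement $B-c$ is a base of $\N_B^{*}$ (Theorem \ref{thm:perperpm}) and is disjoint from $c$, so the minimal intersection of a base of $\N_B$ with a base of $\N_B^{*}$ is empty. This observation is exactly what makes the matroidal statement go through without any self-orthogonality hypothesis.

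The one technical lemma I would isolate is a monotone version of the above: if $\N_B$ is a matroid on $B$ and $\M'_B$ is a matroid on $B$ every one of whose independent sets is independent in $\N_B$ (i.e.\ $\M'_B$ lies below $\N_B$ in weak order), then $\M'_B\wedge\N_B^{*}=\0_B$. The proof is short: a base $b_1$ of $\M'_B$ is independent in $\N_B$, hence contained in some base $c$ of $\N_B$, and then $B-c$ is a base of $\N_B^{*}$ disjoint from $b_1$, so the minimal intersection of a base of $\M'_B$ with a base of $\N_B^{*}$ is empty. I would also record that contraction lies below restriction, i.e.\ every independent set of $\M_{XY}\times Y$ is independent in $\M_{XY}\circ Y$; this is immediate from the definition of contraction in Subsection \ref{subsec:matroidprelim} (if $X\cup b_{S-T}$ is independent in $\M$, so is its subset $X$).

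For part 1 I verify the two defining conditions of Definition \ref{def:matroidrigid}. For the zero intersection property I apply the key lemma with $\N_B=\M_{AB}\circ B$ and $\M'_B=\M_{AB}\times B$: by hypothesis $\N_B^{*}=\M_{BC}\times B$, and $\M_{AB}\times B$ lies below $\M_{AB}\circ B$, so $\M_{AB}\times B\wedge\M_{BC}\times B=\0_B$. For the full sum property I use part 4 of Theorem \ref{thm:regularrecursivem}, reducing it to the zero intersection property of the dual pair $\{\M_{AB}^{*},\M_{BC}^{*}\}$, namely $\M_{AB}^{*}\times B\wedge\M_{BC}^{*}\times B=\0_B$. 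Dualizing the hypothesis via parts 3 and 4 of Theorem \ref{thm:dotcrossidentitym} gives $\M_{AB}^{*}\times B=(\M_{AB}\circ B)^{*}$ and $\M_{BC}^{*}\circ B=(\M_{BC}\times B)^{*}=\M_{AB}\circ B$. Setting $\N_B=\M_{AB}\circ B$ once more, I have $\N_B^{*}=\M_{AB}^{*}\times B$, while $\M_{BC}^{*}\times B$ lies below $\M_{BC}^{*}\circ B=\N_B$; the key lemma then yields $\M_{BC}^{*}\times B\wedge\M_{AB}^{*}\times B=\0_B$. Both properties hold, so $\{\M_{AB},\M_{BC}\}$ is rigid.

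Part 2 I obtain as a special case of part 1, taking $C=\tilde A$ and the second matroid to be $(\M^{*}_{AB})_{\tilde{A}B}$: the only thing to check is the hypothesis $(\M_{AB}\circ B)^{*}=(\M^{*}_{AB})_{\tilde{A}B}\times B$. Since contraction commutes with the relabelling of the copy, $(\M^{*}_{AB})_{\tilde{A}B}\times B=\M_{AB}^{*}\times B$, and by part 4 of Theorem \ref{thm:dotcrossidentitym} this equals $(\M_{AB}\circ B)^{*}$, so the hypothesis holds and part 1 applies. I expect the only real friction to be the bookkeeping with the minor/dual identities of Theorem \ref{thm:dotcrossidentitym} (keeping the roles of the ground sets straight when contracting or restricting the dual matroids); the weak-order monotonicity lemma carries all the genuine content and substitutes cleanly for the real-field self-orthogonality step of the vector-space proof.
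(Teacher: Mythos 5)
Your proof is correct and is essentially the paper's argument run in mirror image: the paper establishes the full sum property directly (the hypothesis makes $\M_{AB}\circ B$ and $\M_{BC}\times B$ dual, so $\M_{AB}\circ B\vee \M_{BC}\times B=\F_B$, and since every independent set of $\M_{BC}\times B$ is independent in $\M_{BC}\circ B$ this upgrades to $\M_{AB}\circ B\vee \M_{BC}\circ B=\F_B$) and then obtains zero intersection by dualizing the hypothesis and invoking part 4 of Theorem \ref{thm:regularrecursivem}, whereas you establish zero intersection directly via your weak-order lemma and obtain full sum by the same dualization. Both proofs rest on exactly the same ingredients — complementarity of bases of dual matroids, contraction lying below restriction, parts 3--4 of Theorem \ref{thm:dotcrossidentitym}, and part 4 of Theorem \ref{thm:regularrecursivem} — and your part 2 reduction to part 1 is identical to the paper's, so the two arguments coincide up to which of the two defining properties is handled directly.
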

\begin{proof}
1.
Since $(\M_{AB}\circ B)^{*}= \M_{BC}\times B,$ we must have 
$\M_{AB}\circ B\vee \M_{BC}\times B=\F_B.$
Now every independent set of $\M_{BC}\times B$ is contained 
in an independent set of $\M_{BC}\circ B.$ 
Therefore $\M_{AB}\circ B\vee \M_{BC}\circ B=\F_B,$
i.e., the full sum property holds for $\{\M_{AB}, \M_{BC}\}.$
\\ Next $(\M_{AB}\circ B)^{**}= (\M_{BC}\times B)^*.$ 
Therefore, $ (\M_{AB}^*\times B)^{*}= \M_{BC}^*\circ B,$
i.e., $ (\M_{BC}^*\circ B)^{*}= \M_{AB}^*\times B.$
By the argument above, this means $ \M_{BC}^*\circ B\vee \M_{AB}^*\times B=
\F_B,$ i.e., the full sum property holds for $\{\M_{AB}^*, \M_{BC}^*\}.$ By part 4 of Theorem \ref{thm:regularrecursivem}, this means that the zero intersection 
property holds for $\{\M_{AB}, \M_{BC}\}.$

We conclude that $\{\M_{AB}, \M_{BC}\}$ is rigid.

2. We note that $(\M_{AB}\circ B)^{*}=\M_{AB}^{*}\times B=
(\M^{*}_{AB})_{\tilde{A}B}\times B.$
The result now follows from part 1 above.
\end{proof}
\begin{remark}
We have that $\{\M_X,\M_X^{*}\}$ is rigid.  By Theorem \ref{thm:idt0m}, $\{\M_A\lrar \M_B, \M^{*}_A\lrar \M^{*}_B\},B\subseteq A,$ is also rigid.
We have that $\{\M^1_A\vee \M^2_A, (\M^1_A)^{*}\wedge  (\M^2_A)^{*}\}$ is rigid since $(\M^1_A\vee \M^2_A)^{*}=(\M^1_A)^{*}\wedge  (\M^2_A)^{*}.$
\\There appear to be no counterparts to these facts in terms of rigidity
 without using duality.
\end{remark}

\subsection{Computing a column base of $\V_{AB}\lrar\V_{BC}$}
Finding a column base of $\V_{AB}\lrar \V_{BC}$
requires linear algebraic
computations. However, when the conditions of part 2 of Theorem \ref{thm:rigidmatroidvector} are satisfied,
we have $\M(\V_{AB})\lrar \M(\V_{BC})= \M(\V_{AB}\lrar \V_{BC}).$
Bases of $\M(\V_{AB}\lrar \V_{BC})$ and  column bases of
$\V_{AB}\lrar \V_{BC}$ are identical.
The problem of computing a base of $\M_{AB}\lrar\M_{BC}$ given 
an `independence oracle' of the matroids $\M_{AB},\M_{BC}$ 
is one whose solution is nontrivial but known.
First, we find a pair of maximally distant bases $b^1_B,b^2_B$ for the matroids $\M_{AB}\circ B,\M_{BC}\circ B$ using the  well known matroid union algorithm essentially due to
J. Edmonds \cite{edm65a}.
 It follows that $b^1_B\cup b^2_B$ is a basis for $(\M_{AB}\vee\M_{BC})\circ B= \M_{AB}\circ B\vee \M_{BC}\circ B.$
We then extend these  bases to $b^1_{AB},b^2_{BC}$ of $\M_{AB},\M_{BC}$
respectively. Then $b^1_{AB}\cup b^2_{BC}$ is a basis of 
$\M_{AB}\vee \M_{BC}$ that contains the basis $b^1_B\cup b^2_B$
of $(\M_{AB}\vee\M_{BC})\circ B.$ Therefore, $(b^1_{AB}-b^1_B)\uplus 
(b^2_{BC}-b^2_B)$ is a base of $(\M_{AB}\vee\M_{BC})\times (A\uplus C)=
\M_{AB}\lrar\M_{BC}.$
A detailed discussion of the matroid union algorithm may be found in
\cite{recski89},\cite{HNarayanan1997}.
Fortunately for us, in the most important cases where we have to find maximally distant bases, we can do so by computing one of them to satisfy some  
simple properties.

\subsection{Rigid families of matroids}
\label{subsec:rigidfamiliesmatroid}
The notion of rigidity of families carries over from vector spaces 
to matroids routinely. 
 For this purpose  we first give a definition of rigidity
for families relevant to our problem (`associative families') and then justify the definition by describing
the properties of rigid associative families through subsequent lemmas.

\begin{definition}
\label{def:gen_lrar1m}
An  \nw{associative family}  $\mathcal{H}\equivd \{\M_{Y_i},i=1, \cdots , m\}$ is a family of matroids where no element in $Y\equivd \bigcup_{i=1}^mY_i$ belongs to more than two of the $Y_i.$
When $Y_i=\emptyset,$ we will take $\M_{Y_i}=\M_{\emptyset}\equivd \emptyset .$
Let $\mathcal{H}_{nonvoid}\equivd \{\M_{Y_i},i=1, \cdots , t\}$ be the subfamily 
of $\mathcal{H}$ whose elements are non void matroids that belong to
$\mathcal{H}.$
For $i=1, \cdots , t,$ let $Z_i$ be the subset of elements of $Y_i$
which belong to none of the $Y_j, j\ne i$ and let $Z\equivd \bigcup_{i=1}^tZ_i.$\\
We define $\mnw{\lrar(\mathcal{H})} \equivd
(\bigvee_{i=1}^t \M_{Y_i}))\times Z= (\M_{Y_1}\vee \cdots \vee \M_{Y_t})\times Z.$ 
\end{definition}
\begin{definition}
For an associative family $\mathcal{H}\equivd \{\M_{Y_1}, \cdots , \M_{Y_m}\},$
the graph $\mnw{\G_{\mathcal{H}}}$ is defined to have  vertices $\M_{Y_1}, \cdots , \M_{Y_m},$
edges $Y_1\cup \cdots \cup Y_m$ with an edge $e$ lying between
vertices $\M_{Y_i},\M_{Y_j}$ iff $e\in Y_i\cap Y_j.$
We will refer to $\G_{\mathcal{H}}$ as the graph of $\mathcal{H}.$
\end{definition}

\begin{remark}
We note the following.
\begin{enumerate}
\item The operation $\lrar(\cdot)$ is well defined over associative families and agrees with the corresponding 
operation over pairs of matroids.
If $\mathcal{H}\equivd \{\M_{Y_1},\M_{Y_2},\M_{Y_3}\},$ is an associative family, it can be shown that
\\$\lrar(\mathcal{H})= (\M_{Y_1}\lrar\M_{Y_2})\lrar \M_{Y_3}= \M_{Y_1}\lrar(\M_{Y_2}\lrar \M_{Y_3})$ \cite{STHN2014}. 
By induction it follows that, treated as a binary operation,  $\lrar(\cdot)$  is associative over an associative family 
$\mathcal{H}$ and, independent of the way the brackets are constructed this operation
reduces to $\lrar(\mathcal{H}).$
\item Suppose $\mathcal{H}\equivd \{\M_{Y_i},i=1, \cdots , m\}$ is an associative
family with every element of $\bigcup _{i=1}^{m}\M_{Y_i}$ belonging to exactly
two of the $Y_i.$ Then $(\lrar(\mathcal{H}))=\emptyset.$ For consistency of notation,
 in this case we denote $\lrar(\mathcal{H})$ by $\M_{\emptyset}.$
We note that, by the definition of the operations $\vee, \wedge, \lrar(\cdot),$  we must have  $(\M_{\emptyset}\vee \M_X)= (\M_{\emptyset}\wedge \M_X)= (\M_{\emptyset}\lrar \M_X)= \M_X.$ 
The following result is known \cite{STHN2014}.
\begin{lemma}
\label{lem:veewedgeassociative}
Let $\mathcal{H}\equivd \{\M_{Y_1}, \cdots , \M_{Y_m}\}$ be an associative family of matroids. Let $Z$ be the subset of elements of $\bigcup_iY_i$ that belong
 to only one of the $Y_i.$
Then \\
 $\mnw{\lrar(\mathcal{H})}$ $ \equivd
(\bigwedge_{i=1}^m \M_{Y_i}))\circ Z= (\M_{Y_1}\wedge \cdots \wedge \M_{Y_m})\circ Z.$
\end{lemma}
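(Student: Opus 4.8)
The plan is to prove the identity by a duality argument that reduces it to the pairwise implicit duality theorem, Theorem \ref{thm:idt0m}, in exact parallel with the vector-space Lemma \ref{lem:lrarassociative}. Throughout I may assume every member of $\mathcal{H}$ is nonvoid, since $\M_{\emptyset}$ is an identity for both $\vee$ and $\wedge$ (recorded in the Remark preceding the lemma), so the void members drop out of both sides. Writing $Z$ for the set of elements of $\bigcup_i Y_i$ lying in exactly one $Y_i$, and recalling that $\lrar(\mathcal{H})\equivd(\M_{Y_1}\vee\cdots\vee\M_{Y_m})\times Z$ by Definition \ref{def:gen_lrar1m}, the assertion to be proved is precisely
\[
(\M_{Y_1}\vee\cdots\vee\M_{Y_m})\times Z \ =\ (\M_{Y_1}\wedge\cdots\wedge\M_{Y_m})\circ Z.
\]
The case $m=2$ is already Theorem \ref{thm:matchedpropm}, which serves as the base of the induction.

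First I would establish the associative form of the implicit duality theorem for matroids, namely $(\lrar(\mathcal{H}))^{*}=\lrar(\mathcal{H}^{*})$ where $\mathcal{H}^{*}\equivd\{\M^{*}_{Y_i}\}$. This goes by induction on $m$: the case $m=2$ is Theorem \ref{thm:idt0m}; for the inductive step I would use the associativity of $\lrar(\cdot)$ over associative families (established in the Remark) to write $\lrar(\mathcal{H})=\lrar(\mathcal{H}')\lrar\M_{Y_m}$ with $\mathcal{H}'\equivd\{\M_{Y_1},\dots,\M_{Y_{m-1}}\}$, apply Theorem \ref{thm:idt0m} to the outer pairwise linking, and then invoke the induction hypothesis on $\mathcal{H}'$, re-assembling via associativity for the dual family. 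I would also record the De Morgan identity $(\M_{Y_1}\wedge\cdots\wedge\M_{Y_m})^{*}=\M^{*}_{Y_1}\vee\cdots\vee\M^{*}_{Y_m}$, which follows from Theorem \ref{thm:sumintersectionm}(4) by a routine induction on $m$.

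With these in hand the identity falls out by dualizing the right-hand side. Concretely, I would compute
\[
\bigl[(\M_{Y_1}\wedge\cdots\wedge\M_{Y_m})\circ Z\bigr]^{*}
=(\M_{Y_1}\wedge\cdots\wedge\M_{Y_m})^{*}\times Z
=(\M^{*}_{Y_1}\vee\cdots\vee\M^{*}_{Y_m})\times Z
=\lrar(\mathcal{H}^{*}),
\]
where the first equality is Theorem \ref{thm:dotcrossidentitym}(4), the second is the De Morgan identity, and the last is Definition \ref{def:gen_lrar1m}. By the associative duality theorem above this equals $(\lrar(\mathcal{H}))^{*}=\bigl[(\M_{Y_1}\vee\cdots\vee\M_{Y_m})\times Z\bigr]^{*}$. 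Since dualization is an involution, $\M^{**}=\M$ by Theorem \ref{thm:perperpm}, equality of the two duals forces $(\M_{Y_1}\wedge\cdots\wedge\M_{Y_m})\circ Z=(\M_{Y_1}\vee\cdots\vee\M_{Y_m})\times Z$, which is exactly the lemma.

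I expect the only genuine obstacle to be the bookkeeping inside the associative duality theorem: one must check that splitting off $\M_{Y_m}$ respects the associative structure, so that the shared index set $P\equivd Y_m\cap\bigcup_{j<m}Y_j$ is a legitimate ``port'' set and the three relevant sets $Z'-P$, $P$ and $Y_m-P$ are pairwise disjoint, as Theorem \ref{thm:idt0m} requires, while the ground set surviving the contractions and restrictions is exactly $Z$. All of this is immediate from the defining condition that no element lies in more than two of the $Y_i$, so no new rank estimate is needed; the whole argument rests on Theorems \ref{thm:idt0m}, \ref{thm:matchedpropm}, \ref{thm:dotcrossidentitym}, \ref{thm:sumintersectionm} and \ref{thm:perperpm}.
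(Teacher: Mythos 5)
The paper never proves this lemma: it is quoted as a known result from \cite{STHN2014} (the same source that supplies Theorem \ref{thm:matchedpropm} and the associativity facts in the surrounding Remark), so there is no internal proof to compare yours against. Judged on its own, your argument is correct and uses only tools the paper makes available. The chain
\[
\bigl[(\M_{Y_1}\wedge\cdots\wedge\M_{Y_m})\circ Z\bigr]^{*}
=(\M_{Y_1}\wedge\cdots\wedge\M_{Y_m})^{*}\times Z
=(\M^{*}_{Y_1}\vee\cdots\vee\M^{*}_{Y_m})\times Z
=\lrar(\mathcal{H}^{*})
=(\lrar(\mathcal{H}))^{*}
\]
is sound: the first equality is Theorem \ref{thm:dotcrossidentitym} part 4, the second is the $m$-fold De Morgan law obtained by routine induction from Theorem \ref{thm:sumintersectionm} parts 3 and 4 together with associativity of the extended $\vee$, the third is Definition \ref{def:gen_lrar1m} (with the correct observation that $Z$ is unchanged under dualization, since the ground sets $Y_i$ are), and the fourth is the associative implicit duality theorem, which you legitimately re-derive by induction from Theorem \ref{thm:idt0m}; the involution $\M^{**}=\M$ of Theorem \ref{thm:perperpm} then finishes. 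Your bookkeeping remark is also exactly the point that needs checking: in the step $\lrar(\mathcal{H})=\lrar(\mathcal{H}')\lrar\M_{Y_m}$, the associativity hypothesis forces every element of $Y_m\cap\bigcup_{j<m}Y_j$ to lie in exactly one $Y_j$ with $j<m$, hence it survives into the ground set of $\lrar(\mathcal{H}')$, so Theorem \ref{thm:idt0m} applies with port set $P=Y_m\cap\bigcup_{j<m}Y_j$ and the surviving ground set is precisely $Z$.

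One dependence is worth flagging, though it is not a flaw. In this paper Theorem \ref{thm:idt0m} is itself proved from Theorem \ref{thm:matchedpropm}, which is exactly the $m=2$ case of the lemma you are proving. Your proof is nevertheless not circular: the binary identities are independently given (cited) results, and you are deriving the $m$-ary statement from them, which is precisely the logical role this lemma plays here. The mention of $m=2$ as a ``base case'' is actually superfluous in your final argument, since the dualization handles all $m\geq 2$ at once; only the auxiliary inductions (De Morgan and the associative duality theorem) need bases, and those are the cited binary theorems.
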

\item 
If the graph $\G_{\mathcal{H}}$ has edges and is connected, then
it does not have isolated vertices. This means $\mathcal{H}$ has no element
which is equal to $\M_{\emptyset}.$ Further, if it is connected and  has at least one selfloop, no subfamily $\mathcal{H}_1$ of $\mathcal{H}$ would be such that $\lrar(\mathcal{H}_1)=\M_{\emptyset}.$
\item Let $\mathcal{H}=\mathcal{H}_1\uplus \mathcal{H}_2.$ Then $\lrar(\mathcal{H})\ =\ (\lrar(\mathcal{H}_1)) \lrar  (\lrar(\mathcal{H}_2)).$

\item By induction on the size of $\mathcal{H},$ one can prove the following generalization of 
Theorem \ref{thm:idt0}:
\begin{theorem}
\label{thm:idtassociativem}
Let $\mathcal{H}\equivd \{\M_{Y_i},i=1, \cdots , m\}$ be an associative family of matroids
and let\\ $\mathcal{H}^{*}\equivd \{\M^{*}_{Y_i},i=1, \cdots , m\}.$
Then
$(\lrar(\mathcal{H}))^{*}=(\lrar(\mathcal{H}^{*})).$
\end{theorem}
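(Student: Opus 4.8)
The plan is to induct on $m$, the size of the family $\mathcal{H}$, using the two-matroid Implicit Duality Theorem (Theorem \ref{thm:idt0m}) as the engine and the splitting behaviour of $\lrar(\cdot)$ recorded in the preceding remark as the glue. The base case $m=1$ is immediate: every element of $Y_1$ is then internal, so $\lrar(\{\M_{Y_1}\})=\M_{Y_1}$, and $(\M_{Y_1})^{*}=\M_{Y_1}^{*}=\lrar(\{\M_{Y_1}^{*}\})$, using $\M^{**}=\M$ from Theorem \ref{thm:perperpm}. (The case $m=2$ is literally Theorem \ref{thm:idt0m}.)

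For the inductive step I would split the family as $\mathcal{H}=\mathcal{H}_1\uplus\{\M_{Y_m}\}$, where $\mathcal{H}_1\equivd\{\M_{Y_1},\cdots,\M_{Y_{m-1}}\}$ is again associative. Put $P\equivd Y_m\cap\bigcup_{i<m}Y_i$ for the interface and $Q\equivd Y_m-P$ for the part of $Y_m$ internal to $\mathcal{H}$, so that $\M_{Y_m}$ is a matroid on $P\uplus Q$. Because no element lies in more than two of the $Y_i$, each $e\in P$ lies in exactly one $Y_i$ with $i<m$; hence, with $\mathcal{H}_1$ regarded on its own, every element of $P$ is internal to $\mathcal{H}_1$, and $\lrar(\mathcal{H}_1)$ is a matroid on $S\uplus P$, where $S$ collects the elements internal to all of $\mathcal{H}$ that come from $\mathcal{H}_1$. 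The splitting identity from the remark then yields $\lrar(\mathcal{H})=\lrar(\mathcal{H}_1)\lrar\M_{Y_m}$, a genuine binary matched composition over the pairwise disjoint triple $S,P,Q$.

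I would then compute directly, chaining the two-matroid identity, the induction hypothesis, and the splitting identity for the dual family:
\begin{align*}
(\lrar(\mathcal{H}))^{*}
&=(\lrar(\mathcal{H}_1)\lrar\M_{Y_m})^{*}
=(\lrar(\mathcal{H}_1))^{*}\lrar\M_{Y_m}^{*}\\
&=\lrar(\mathcal{H}_1^{*})\lrar\M_{Y_m}^{*}
=\lrar(\mathcal{H}_1^{*}\uplus\{\M_{Y_m}^{*}\})=\lrar(\mathcal{H}^{*}),
\end{align*}
where the second equality is Theorem \ref{thm:idt0m} applied to the pair $\{\lrar(\mathcal{H}_1),\M_{Y_m}\}$ on $S,P,Q$, the third is the induction hypothesis applied to $\mathcal{H}_1$, and the fourth is the splitting identity applied to the dual family $\mathcal{H}_1^{*}\uplus\{\M_{Y_m}^{*}\}=\mathcal{H}^{*}$. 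The latter is legitimate because dualizing each member leaves the column sets $Y_i$, and hence the incidence structure of the family, unchanged, so $\mathcal{H}^{*}$ is associative with the same interface $P$.

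The step I expect to require the most care is the bookkeeping around column sets in the splitting identity: one must verify that when $\mathcal{H}_1$ is treated as a standalone family its set of boundary elements with $\M_{Y_m}$ is exactly $P$, so that $\lrar(\mathcal{H}_1)\lrar\M_{Y_m}$ is a bona fide instance of the binary operation and Theorem \ref{thm:idt0m} applies verbatim. I would also dispatch the degenerate cases where $\lrar(\mathcal{H}_1)=\M_{\emptyset}$ (for instance when every $\mathcal{H}_1$-element is shared with $\M_{Y_m}$), invoking the absorption rules $\M_{\emptyset}\lrar\M_X=\M_X$ from the remark together with $(\M_{\emptyset})^{*}=\M_{\emptyset}$, so that the recursion closes in all cases.
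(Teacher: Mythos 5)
Your proof is correct and follows essentially the same route the paper intends: the paper asserts Theorem \ref{thm:idtassociativem} precisely as an induction on the size of $\mathcal{H}$, bootstrapping the two-matroid Implicit Duality Theorem (Theorem \ref{thm:idt0m}) via the splitting identity $\lrar(\mathcal{H}_1\uplus\mathcal{H}_2)=(\lrar(\mathcal{H}_1))\lrar(\lrar(\mathcal{H}_2))$, which is exactly your argument with $\mathcal{H}_2=\{\M_{Y_m}\}$. Your additional bookkeeping (that the interface of $\lrar(\mathcal{H}_1)$ with $\M_{Y_m}$ is exactly $P$ by the at-most-two-occurrences condition, and the $\M_{\emptyset}$ degeneracy) is the right care to take and fills in what the paper leaves implicit.
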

\end{enumerate}
\end{remark}
\begin{definition}
\label{def:rigidnm}
Let ${\mathcal{H}}\equivd \{\M_{Y_1},\cdots , \M_{Y_n}\}$ be an associative family of matroids.
We say ${\mathcal{H}}$ is \nw{rigid} iff
the following rank conditions are satisfied.
\begin{itemize}
\item (primal) $r(\Sigma_i\M_{Y_i})=
\Sigma_ir(\M_{Y_i});$
\item (dual) $r(\Sigma_i\M^{*}_{Y_i})=
\Sigma_ir(\M^{*}_{Y_i}).$
\end{itemize}
\end{definition}
We note that the definition of rigid families of matroids agrees with that of a pair 
of matroids (by part 3 of Theorem \ref{thm:regularrecursivem}).

We  have the following simple lemma.
\begin{lemma}
\label{lem:rigidsubfamilym}
Let ${\mathcal{H}}\equivd \{\M_{Y_1},\cdots , \M_{Y_n}\}$ be an associative family of matroids with $\G_{\mathcal{H}}$ connected and with non empty edge set.
If $\mathcal{H}$ is rigid then
 every subfamily of ${\mathcal{H}}$ is rigid.
\end{lemma}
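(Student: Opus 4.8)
The plan is to mirror, line for line, the proof of the vector space analogue (Lemma \ref{lem:rigidsubfamily}), using the substitutions indicated by the table just after Theorem \ref{thm:implicitDuality2}: matroid union $\vee$ in place of vector space sum, and the matroid dual $*$ in place of the orthogonal complement. The one structural fact required is the \emph{subadditivity of the rank of a matroid union}: since, by definition, a base of $\M_A\vee\M_B$ is a maximal set of the form $b_1\cup b_2$ with $b_1,b_2$ bases of $\M_A,\M_B$ respectively, we have $r(\M_A\vee\M_B)\leq r(\M_A)+r(\M_B).$ Iterating this gives both $r(\M_{Y_1}\vee\cdots\vee\M_{Y_m})\leq\Sigma_{i=1}^m r(\M_{Y_i})$ and, grouping the family into its first $m$ members and the rest, $r(\bigvee_{i=1}^n\M_{Y_i})\leq r(\bigvee_{i=1}^m\M_{Y_i})+r(\bigvee_{i=m+1}^n\M_{Y_i}).$

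First I would observe that matroid union is commutative and associative and that the rank conditions of Definition \ref{def:rigidnm} are symmetric in the members of the family; hence, after relabelling, it suffices to show that the subfamily $\{\M_{Y_1},\cdots,\M_{Y_m}\},\ m<n,$ is rigid. Assuming the primal rank condition $r(\bigvee_{i=1}^n\M_{Y_i})=\Sigma_{i=1}^n r(\M_{Y_i})$ for $\mathcal{H},$ I would chain the two subadditivity estimates to obtain $\Sigma_{i=1}^n r(\M_{Y_i})=r(\bigvee_{i=1}^n\M_{Y_i})\leq r(\bigvee_{i=1}^m\M_{Y_i})+r(\bigvee_{i=m+1}^n\M_{Y_i})\leq\Sigma_{i=1}^m r(\M_{Y_i})+\Sigma_{i=m+1}^n r(\M_{Y_i})=\Sigma_{i=1}^n r(\M_{Y_i}),$ so that equality is forced throughout. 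In particular $r(\bigvee_{i=1}^m\M_{Y_i})=\Sigma_{i=1}^m r(\M_{Y_i}),$ which is precisely the primal rank condition for the subfamily.

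The dual rank condition is established by the identical argument applied to the dual family $\mathcal{H}^{*}\equivd\{\M^{*}_{Y_i},i=1,\cdots,n\}$, which is again associative, and whose restriction to the first $m$ indices is exactly the dual of the subfamily of $\mathcal{H}$; the primal rank condition for $\mathcal{H}^{*}$ restricts to that subfamily by the same chain of inequalities. As in the vector space proof, the connectedness and nonempty-edge-set hypotheses play no role in this rank computation and are inherited only from the surrounding theorem context. I expect no genuine obstacle: the sole point needing care is the subadditivity inequality $r(\M_A\vee\M_B)\leq r(\M_A)+r(\M_B),$ and this is immediate from the base description of the matroid union.
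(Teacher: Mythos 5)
Your proof is correct and takes essentially the same route as the paper's own proof: reduce without loss of generality to an initial segment $\{\M_{Y_1},\cdots,\M_{Y_m}\}$, apply subadditivity of the rank of a matroid union in both its grouped form $r(\bigvee_{i=1}^n\M_{Y_i})\leq r(\bigvee_{i=1}^m\M_{Y_i})+r(\bigvee_{i=m+1}^n\M_{Y_i})$ and its fully split form, and then repeat the argument for the dual family to get the dual rank condition. The only difference is presentational — you make explicit the base-description justification of subadditivity and the forced-equality chain, which the paper leaves implicit.
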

\begin{proof}
Without loss of generality, it is adequate
to show that $\{\M_{Y_1},\cdots , \M_{Y_m}\}, m<n $ is rigid.
Now $r(\M_{Y_1}\vee\cdots\vee \M_{Y_m})\leq r(\M_{Y_1})+\cdots+ r(\M_{Y_m})$
and $r(\bigvee_{i=1}^n\M_{Y_i})\leq r(\bigvee_{i=1}^m\M_{Y_i})+r(\bigvee_{i=m+1}^n\M_{Y_i}).$
Therefore the primal rank condition of rigidity is true for $\{\M_{Y_1},\cdots , \M_{Y_n}\}$  only if it is true for $\{\M_{Y_1},\cdots , \M_{Y_m}\}.$
The proof for the dual rank condition is essentially the same.
\end{proof}

The following lemma addresses the rigidity of a common class of associative families. 

\begin{lemma}
\label{lem:rigidpairrankm}
Let $\mathcal{H}_1\equivd \{\M_{X_1},\cdots ,\M_{X_n}\},$ and let $\mathcal{H}_2\equivd \{\M_{Y_1},\cdots ,\M_{Y_m}\},$ with the $X_i,$ being pairwise disjoint and the $Y_j$
also. 
Then the associative family $\mathcal{H}_1\uplus \mathcal{H}_{2}$ is rigid iff the
pair $\{\oplus \M_{X_i},\oplus \M_{Y_j}\},$ is rigid.
\end{lemma}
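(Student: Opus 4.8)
The plan is to translate the proof of the vector-space version, Lemma~\ref{lem:rigidpairrank}, into matroid language using the substitution table, and to reduce the two rank conditions defining rigidity of the family $\mathcal{H}_1 \uplus \mathcal{H}_2$ to the two rank equalities that characterize rigidity of the pair $\{\oplus_i \M_{X_i}, \oplus_j \M_{Y_j}\}$ via part~3 of Theorem~\ref{thm:regularrecursivem}. Unwinding Definition~\ref{def:rigidnm}, the family $\mathcal{H}_1\uplus \mathcal{H}_2 = \{\M_{X_1},\dots,\M_{X_n},\M_{Y_1},\dots,\M_{Y_m}\}$ is rigid precisely when $r(\M_{X_1}\vee\cdots\vee\M_{X_n}\vee\M_{Y_1}\vee\cdots\vee\M_{Y_m}) = \sum_i r(\M_{X_i}) + \sum_j r(\M_{Y_j})$ together with the corresponding equality for the duals $\M^*_{X_i},\M^*_{Y_j}$.

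The one ingredient I would isolate first is that matroid union over pairwise disjoint ground sets collapses to the direct sum. Since the $X_i$ are pairwise disjoint, a base of $\M_{X_1}\vee\cdots\vee\M_{X_n}$ is a maximal disjoint union of bases of the summands, which is exactly a base of the direct sum, so $\M_{X_1}\vee\cdots\vee\M_{X_n} = \M_{X_1}\oplus\cdots\oplus\M_{X_n}$, and likewise $\M_{Y_1}\vee\cdots\vee\M_{Y_m}=\oplus_j\M_{Y_j}$. From the definition of direct sum I then have $r(\oplus_i\M_{X_i}) = \sum_i r(\M_{X_i})$ and $r(\oplus_j\M_{Y_j})=\sum_j r(\M_{Y_j})$, and since $(\oplus_i\M_{X_i})^* = \oplus_i\M^*_{X_i}$ (with the $\M^*_{X_i}$ again on pairwise disjoint sets) the identical identities hold after dualizing. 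The pairwise-disjointness within each of $\mathcal{H}_1$ and $\mathcal{H}_2$ is exactly what makes these subfamily unions free of any intersection defect; the only genuinely shared elements of the associative family lie between an $X_i$ and a $Y_j$, and those are absorbed into the union of the two direct sums.

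Assembling these, I would use associativity of matroid union to regroup $\M_{X_1}\vee\cdots\vee\M_{Y_m}$ as $(\oplus_i\M_{X_i})\vee(\oplus_j\M_{Y_j})$. The primal rank condition for $\mathcal{H}_1\uplus\mathcal{H}_2$ then reads $r((\oplus_i\M_{X_i})\vee(\oplus_j\M_{Y_j})) = r(\oplus_i\M_{X_i})+r(\oplus_j\M_{Y_j})$, which is exactly the first rank equality for the pair $\{\oplus_i\M_{X_i},\oplus_j\M_{Y_j}\}$ in part~3 of Theorem~\ref{thm:regularrecursivem}; applying the identical argument to the duals, using $(\oplus_i\M_{X_i})^*=\oplus_i\M^*_{X_i}$ and $\M^*_{X_1}\vee\cdots\vee\M^*_{X_n}=\oplus_i\M^*_{X_i}$, turns the dual rank condition for the family into the second rank equality for the pair. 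Hence $\mathcal{H}_1\uplus\mathcal{H}_2$ is rigid iff $\{\oplus_i\M_{X_i},\oplus_j\M_{Y_j}\}$ is rigid.

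The main obstacle is essentially bookkeeping rather than conceptual: one must confirm that union over disjoint supports really is the direct sum (hence rank-additive) and that matroid union is associative so the regrouping is legitimate. Both are standard, the former being immediate from the base descriptions of $\vee$ and $\oplus$, and the latter being the union counterpart of the associativity recorded for $\lrar$ in the preceding Remark. Since no implicit-inversion-type result is invoked, this translation goes through verbatim for matroids, in contrast to some of the vector-space arguments elsewhere in the paper.
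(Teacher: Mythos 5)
Your proposal is correct and follows essentially the same route as the paper: the paper's proof likewise records that the ranks of $\oplus_i\M_{X_i}$, $\oplus_j\M_{Y_j}$ and their duals are additive, and then invokes part 3 of Theorem \ref{thm:regularrecursivem} to identify the family rank conditions with the pair rank conditions. You merely make explicit two steps the paper leaves implicit (that union over disjoint ground sets is the direct sum, and that associativity of $\vee$ justifies the regrouping), which is a faithful filling-in rather than a different argument.
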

\begin{proof}
We have\\ $ r(\M_{X_1}\oplus \cdots \oplus \M_{X_n})=r(\M_{X_1})+ \cdots + r(\M_{X_n}),\ 
 r(\M_{Y_1}\oplus \cdots \oplus \M_{Y_m})=r(\M_{Y_1})+ \cdots + r(\M_{Y_n}),$\\$
 r(\M^{*}_{X_1}\oplus \cdots \oplus \M^{*}_{X_n})=r(\M^{*}_{X_1})+ \cdots + r(\M^{*}_{X_n}),\ 
 r(\M^{*}_{Y_1}\oplus \cdots \oplus \M^{*}_{Y_m})=r(\M^{*}_{Y_1})+ \cdots + r(\M^{*}_{Y_n}).$
\\ The result now follows from part 3 of Theorem \ref{thm:regularrecursivem}.
\end{proof}
We now have a result which expresses rigidity of associative families 
recursively.
\begin{theorem}
\label{thm:associativerigidrecursivem}
Let ${\mathcal{H}}\equivd \{\M_{Y_1},\cdots , \M_{Y_n}\}$ be an associative family of matroids with $\G_{\mathcal{H}}$ connected and with non empty edge set.
Let ${\mathcal{H}}$ be partitioned as ${\mathcal{H}_1}\uplus {\mathcal{H}_2}.$
Then
${\mathcal{H}}$ is rigid iff ${\mathcal{H}_1},{\mathcal{H}_2}$ and $\{\lrar(\mathcal{H}_1), \lrar(\mathcal{H}_2)\}$
are rigid.
\end{theorem}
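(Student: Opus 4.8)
The plan is to translate the proof of Theorem \ref{thm:associativerigidrecursive} line by line, with one structural simplification: for matroids the operations $\lrar(\cdot)$ and $\rightleftharpoons(\cdot)$ coincide, so the skewed-pair detour of the vector-space argument (the passage through $\hat{\mathcal{H}}$) is unnecessary and the interface condition will be expressible directly in terms of $\lrar(\mathcal{H}_1),\lrar(\mathcal{H}_2)$. Since rigidity of a matroid family is defined directly through the primal and dual rank conditions of Definition \ref{def:rigidnm}, I would work with these rank conditions throughout. Write $\mathcal{H}_1\equivd\{\M_{Y_i}, i=1,\dots,t\}$ and $\mathcal{H}_2\equivd\{\M_{Y_i}, i=t+1,\dots,m\}$, and put $Y^1\equivd\bigcup_{i=1}^tY_i$, $Y^2\equivd\bigcup_{i=t+1}^mY_i$. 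Because $\G_{\mathcal{H}}$ is connected with non-empty edge set, $\lrar(\mathcal{H}_1)$ and $\lrar(\mathcal{H}_2)$ are honest matroids both carrying $Y^1\cap Y^2$, so the interface pair $\{\lrar(\mathcal{H}_1),\lrar(\mathcal{H}_2)\}$ is well defined.

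First I would establish the matroidal analogue of Lemma \ref{lem:rankcondition}. Applying part 2 of Theorem \ref{thm:sumintersectionm} to $\M_A\equivd\bigvee_{i=1}^t\M_{Y_i}$ and $\M_B\equivd\bigvee_{i=t+1}^m\M_{Y_i}$ gives
$$r\Big(\bigvee_{i=1}^m\M_{Y_i}\Big) = r\Big(\bigvee_{i=1}^t\M_{Y_i}\Big) + r\Big(\bigvee_{i=t+1}^m\M_{Y_i}\Big) - r\big[\big(\bigvee_{i=1}^t\M_{Y_i}\big)\times(Y^1\cap Y^2)\ \wedge\ \big(\bigvee_{i=t+1}^m\M_{Y_i}\big)\times(Y^1\cap Y^2)\big],$$
and together with a routine induction paralleling parts 1 and 2 of Lemma \ref{lem:rankcondition1} this shows that the primal rank condition holds for $\mathcal{H}$ iff it holds for $\mathcal{H}_1,\mathcal{H}_2$ and the interface intersection term vanishes.

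The key step is then to identify this interface term with the zero intersection property of $\{\lrar(\mathcal{H}_1),\lrar(\mathcal{H}_2)\}$. Because $\mathcal{H}$ is associative, every element of $Y^1\cap Y^2$ lies in exactly one member of $\mathcal{H}_1$ and one of $\mathcal{H}_2$; hence $Y^1\cap Y^2$ is contained in the set $Z^1$ of elements belonging to exactly one of $Y_1,\dots,Y_t$, which is precisely the set on which $\lrar(\mathcal{H}_1)=(\bigvee_{i=1}^t\M_{Y_i})\times Z^1$ is defined (Definition \ref{def:gen_lrar1m}). By composition of contractions (a basic minor identity, cf.\ part 1 of Theorem \ref{thm:dotcrossidentitym}) I get $\lrar(\mathcal{H}_1)\times(Y^1\cap Y^2)=(\bigvee_{i=1}^t\M_{Y_i})\times(Y^1\cap Y^2)$, and likewise for $\mathcal{H}_2$. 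Thus the interface term equals $r\big(\lrar(\mathcal{H}_1)\times(Y^1\cap Y^2)\wedge\lrar(\mathcal{H}_2)\times(Y^1\cap Y^2)\big)$, so the primal rank condition for $\mathcal{H}$ is equivalent to the primal rank conditions for $\mathcal{H}_1,\mathcal{H}_2$ together with the zero intersection property of $\{\lrar(\mathcal{H}_1),\lrar(\mathcal{H}_2)\}$.

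For the dual rank condition I would run the identical argument on $\mathcal{H}^*\equivd\{\M^*_{Y_i}\}$, which has the same underlying sets and hence is again associative, obtaining that the dual rank condition for $\mathcal{H}$ is equivalent to the dual conditions for $\mathcal{H}_1,\mathcal{H}_2$ and the zero intersection property of $\{\lrar(\mathcal{H}_1^*),\lrar(\mathcal{H}_2^*)\}$. By the associative implicit duality theorem (Theorem \ref{thm:idtassociativem}) this pair equals $\{(\lrar(\mathcal{H}_1))^*,(\lrar(\mathcal{H}_2))^*\}$, and by part 4 of Theorem \ref{thm:regularrecursivem} its zero intersection property is equivalent to the full sum property of $\{\lrar(\mathcal{H}_1),\lrar(\mathcal{H}_2)\}$. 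Combining the two halves, both rank conditions hold for $\mathcal{H}$ iff $\mathcal{H}_1,\mathcal{H}_2$ are rigid and $\{\lrar(\mathcal{H}_1),\lrar(\mathcal{H}_2)\}$ satisfies both the zero intersection and full sum properties, i.e.\ is rigid, which is the claim. The main obstacle I anticipate is the bookkeeping in the contraction identity of the key step --- verifying that $Y^1\cap Y^2\subseteq Z^1$ and that the iterated contractions collapse as stated --- rather than anything structurally deep; once that is in place the dual half follows formally from the implicit duality theorem and Theorem \ref{thm:regularrecursivem}.
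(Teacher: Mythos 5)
Your proposal is correct and follows essentially the same route as the paper's own proof: the same rank-decomposition lemma (the matroid analogue of Lemma \ref{lem:rankcondition}), the same identification of the interface intersection term with the zero intersection property of $\{\lrar(\mathcal{H}_1),\lrar(\mathcal{H}_2)\}$ via collapsing iterated contractions, and the same dualization through Theorem \ref{thm:idtassociativem} followed by part 4 of Theorem \ref{thm:regularrecursivem} to convert the dual zero intersection condition into the primal full sum condition. Your observation that the skewed-pair detour of the vector-space proof is unnecessary here is exactly what the paper's matroid proof does implicitly, since for matroids the union-then-contract operation is $\lrar$ itself.
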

The proof of Theorem \ref{thm:associativerigidrecursivem} is based on the following lemma.
\begin{lemma}
\label{lem:rankconditionm}
Let ${\mathcal{H}}\equivd \{\M_{Y_i},i=1, \cdots , m\}$
and let ${\mathcal{H}}_1\equivd \{\M_{Y_i},i=1, \cdots , t\}, t<m,$\\
${\mathcal{H}}_2\equivd   \{\M_{Y_i},i=t+1, \cdots , m\}$
be subfamilies of ${\mathcal{H}}.$
Let $Y^1=\bigcup^t_{i=1} Y_i$ and let $Y^2=\bigcup^m_{i=t+1} Y_i.$
Then ${\mathcal{H}}$ satisfies the rank conditions of Definition \ref{def:rigidn}
 iff
${\mathcal{H}}_1,{\mathcal{H}}_2$ satisfy them and
further\\
$ r([\bigvee^t_{i=1}\M_{Y_i}]\times (Y^1\cap Y^2) \bigwedge [\bigvee^m_{i=t+1}\M_{Y_i}]\times (Y^1\cap Y^2))=0$ and\\
$ r([\bigvee^t_{i=1}\M^{*}_{Y_i}]\times (Y^1\cap Y^2) \bigwedge [\bigvee^m_{i=t+1}\M^{*}_{Y_i}]\times (Y^1\cap Y^2))=0.$
\end{lemma}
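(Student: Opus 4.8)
The plan is to mirror the proof of the vector-space analogue Lemma~\ref{lem:rankcondition} essentially symbol for symbol, under the correspondence that replaces vector-space sum by matroid union $\vee$, intersection by matroid intersection $\wedge$, and orthogonal complement by matroid dual $*$, and that replaces Theorem~\ref{thm:sumintersection} by its matroid counterpart Theorem~\ref{thm:sumintersectionm}. The whole engine of the argument is part~2 of Theorem~\ref{thm:sumintersectionm}, namely $r(\M^1_A)+r(\M^2_B)=r(\M^1_A\vee\M^2_B)+r[(\M^1_A\times(A\cap B))\wedge(\M^2_B\times(A\cap B))]$.

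First I would dispose of the case $Y^1\cap Y^2=\emptyset$. Here the two wedge-intersections are taken over the empty set and so have rank $0$ vacuously; moreover the ground sets of $\bigvee_{i=1}^t\M_{Y_i}$ and $\bigvee_{i=t+1}^m\M_{Y_i}$ are disjoint, so $\bigvee_{i=1}^m\M_{Y_i}$ is their direct sum and its rank is the sum of the two ranks. Hence both the primal and dual rank conditions for $\mathcal{H}$ factor exactly into those for $\mathcal{H}_1$ and $\mathcal{H}_2$, and the claim is immediate. So assume $Y^1\cap Y^2$ is nonvoid and set $\M^1_{Y^1}\equivd\bigvee_{i=1}^t\M_{Y_i}$ and $\M^2_{Y^2}\equivd\bigvee_{i=t+1}^m\M_{Y_i}$. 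By associativity of matroid union, $\bigvee_{i=1}^m\M_{Y_i}=\M^1_{Y^1}\vee\M^2_{Y^2}$, with ground sets $A=Y^1$, $B=Y^2$ and $A\cap B=Y^1\cap Y^2$. Applying part~2 of Theorem~\ref{thm:sumintersectionm} gives
\[
r\Bigl(\bigvee_{i=1}^m\M_{Y_i}\Bigr)=r(\M^1_{Y^1})+r(\M^2_{Y^2})-r\Bigl(\bigl[\bigvee_{i=1}^t\M_{Y_i}\bigr]\times(Y^1\cap Y^2)\wedge\bigl[\bigvee_{i=t+1}^m\M_{Y_i}\bigr]\times(Y^1\cap Y^2)\Bigr).
\]

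Because rank is subadditive under matroid union---a base of $\M\vee\M'$ is a union of an independent set of $\M$ and one of $\M'$, whence $r(\M\vee\M')\le r(\M)+r(\M')$---we have $r(\M^1_{Y^1})\le\Sigma_{i=1}^t r(\M_{Y_i})$ and $r(\M^2_{Y^2})\le\Sigma_{i=t+1}^m r(\M_{Y_i})$, while the subtracted term is $\ge 0$. Feeding these into the displayed identity shows that the primal condition $r(\bigvee_{i=1}^m\M_{Y_i})=\Sigma_{i=1}^m r(\M_{Y_i})$ forces equality in each of the three bounds simultaneously; thus it is equivalent to the conjunction of the primal conditions for $\mathcal{H}_1$ and $\mathcal{H}_2$ together with the vanishing of the first wedge-intersection term. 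The reverse implication is direct substitution back into the identity.

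Finally I would repeat the identical computation with every $\M_{Y_i}$ replaced by its dual $\M^*_{Y_i}$; the dualized family is again associative (each element still lies in at most two of the $Y_i$), so the same three-way equality analysis yields the dual rank condition and the second vanishing intersection term. I expect no genuine obstacle: the argument is a routine transcription of Lemma~\ref{lem:rankcondition}. The one point that deserves a line of care is the bookkeeping that $\bigvee_{i=1}^m\M_{Y_i}$ really decomposes as $\M^1_{Y^1}\vee\M^2_{Y^2}$ over the correct ground sets, which rests on associativity of matroid union together with the associative-family hypothesis that each element lies in at most two of the $Y_i$, so that the extended union on $\bigcup_iY_i$ behaves as the ordinary union of the two block unions.
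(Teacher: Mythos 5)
Your proposal is correct and follows essentially the same route as the paper's proof: both rest on part 2 of Theorem \ref{thm:sumintersectionm} applied to the decomposition $\bigvee_{i=1}^m\M_{Y_i}=\bigl[\bigvee_{i=1}^t\M_{Y_i}\bigr]\vee\bigl[\bigvee_{i=t+1}^m\M_{Y_i}\bigr]$, followed by dualization for the second half. Your explicit subadditivity argument merely spells out what the paper's ``It follows that \ldots iff \ldots'' step leaves implicit, so there is no substantive difference.
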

\begin{proof}
When $Y_1\cap Y_2=\emptyset,$ it is clear that if the rank condition is satisfied
for ${\mathcal{H}_1},{\mathcal{H}_2}$ the lemma is trivially true.
Let $Y_1\cap Y_2$ be 
nonvoid.
By Theorem \ref{thm:sumintersectionm}, \\$r(\M_A\vee\M_B)= r(\M_A)+r(\M_B)-r([\M_A\times (A\cap B)]\wedge [\M_B\times (A\cap B)].$ Therefore,\\
$r(\bigvee^m_{i=1}\M_{Y_i}) =r(\bigvee^t_{i=1}\M_{Y_i})+r(\bigvee^m_{i=t+1}\M_{Y_i})-  r([\bigvee^t_{i=1}\M_{Y_i}]\times (Y^1\cap Y^2) \wedge [\bigvee^m_{i=t+1}\M_{Y_i}]\times (Y^1\cap Y^2)).$
\\
It follows that $r(\bigvee^m_{i=1}\M_{Y_i}) = \Sigma^m_{i=1}r(\M_{Y_i})$
iff $r(\bigvee^t_{i=1}\M_{Y_i}) = \Sigma^t_{i=1}r(\M_{Y_i}), t<m,$ \\
$r(\bigvee^m_{i=t+1}\M_{Y_i}) = \Sigma^m_{i=t+1}r(\M_{Y_i})$
and $ r([\bigvee^t_{i=1}\M_{Y_i}]\times (Y^1\cap Y^2) \wedge [\bigvee^m_{i=t+1}\M_{Y_i}]\times (Y^1\cap Y^2))=0.$
\\ Working with $\M^{*}_{Y_i}$ and repeating the argument, we get the
second half of the statement of the lemma.
\end{proof}
\begin{proof}
(Proof of Theorem \ref{thm:associativerigidrecursivem})\\
By Lemma \ref{lem:rankconditionm}, the primal  rank condition of Definition \ref{def:rigidn} ($r(\bigvee_{i}\M_{Y_i}) = \Sigma_{i}r(\M_{Y_i})$)
 holds for 
$\mathcal{H}$ iff it holds for ${\mathcal{H}_1},{\mathcal{H}_2},$ and $ r([\bigvee^t_{i=1}\M_{Y_i}]\times (Y^1\cap Y^2) \wedge [\bigvee^m_{i=t+1}\M_{Y_i}]\times (Y^1\cap Y^2))=0.$ 
\\We have $r([\bigvee^t_{i=1}\M_{Y_i}]\times (Y^1\cap Y^2) \wedge [\bigvee^m_{i=t+1}\M_{Y_i}]\times (Y^1\cap Y^2))$\\$=
r([\bigvee^t_{i=1}\M_{Y_i}]\times Y^1\times (Y^1\cap Y^2)\wedge [\bigvee^m_{i=t+1}\M_{Y_i}]\times Y^2\times (Y^1\cap Y^2)
$\\$= (\lrar(\mathcal{H}_1))\times (Y^1\cap Y^2)\wedge (\lrar(\mathcal{H}_2))\times (Y^1\cap Y^2).$
Thus the primal rank condition 
holds for
$\mathcal{H}$ iff it holds for $\mathcal{H}_1,\mathcal{H}_2$ and 
the zero intersection property holds for $\{(\lrar(\mathcal{H}_1)), (\lrar(\mathcal{H}_2))\}.$


The dual rank condition for $\mathcal{H}$ is the same as the primal rank condition 
for $\mathcal{H}^{*}.$ When $\mathcal{H}$ is associative, so is $\mathcal{H}^{*}.$
 Working with $\mathcal{H}^{*}$ 
we can show that the primal rank condition holds for
$\mathcal{H}^{*}$ iff it holds for $\mathcal{H}^{*}_1,\mathcal{H}^*_2$ and
the zero intersection property holds for 
$\{(\lrar(\mathcal{H}^{*}_1)), (\lrar(\mathcal{H}^{*}_2))\}.$
By Theorem \ref{thm:idtassociativem}, \\$\{(\lrar(\mathcal{H}^{*}_1)), (\lrar(\mathcal{H}^{*}_2))\}
=
\{(\lrar(\mathcal{H}_1))^{*})), (\lrar(\mathcal{H}_2))^{*}))\}.$
Now by 
 by part 4 of Theorem \ref{thm:regularrecursive}, the 
zero intersection property holds for $\{(\lrar(\mathcal{H}_1))^{*})), (\lrar(\mathcal{H}_2))^{*}))\}$
iff 
the 
full sum property holds 
for\\ $\{(\lrar(\mathcal{H}_1)), (\lrar(\mathcal{H}_2))\}
.$
We thus see that the primal and dual rank conditions of rigidity hold for $\mathcal{H}$ iff they hold for $\mathcal{H}_1, \mathcal{H}_2$ and $\{(\lrar(\mathcal{H}_1)), (\lrar(\mathcal{H}_2))\}
$ is rigid.
Thus $\mathcal{H}$ is rigid iff $\mathcal{H}_1, \mathcal{H}_2, \{(\lrar(\mathcal{H}_1)), (\lrar(\mathcal{H}_2))\}$ 
are rigid.
\end{proof}

The following is a more convenient version of Theorem \ref{thm:associativerigidrecursivem}.
\begin{theorem}
\label{thm:associativerigidrecursivenm}
Let ${\mathcal{H}}\equivd \{\M_{Y_1},\cdots , \M_{Y_m}\}$ be an associative family of matroids with $\G_{\mathcal{H}}$ connected and with non empty edge set.
Let ${\mathcal{H}}$ be partitioned as ${\mathcal{H}_1}\uplus \cdots \uplus {\mathcal{H}_n}.$
Then
 ${\mathcal{H}}$ is rigid iff ${\mathcal{H}_1},\cdots ,{\mathcal{H}_n}$ and $\{\lrar(\mathcal{H}_1),\cdots , \lrar(\mathcal{H}_n)\}$
are rigid.
\end{theorem}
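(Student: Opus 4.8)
**The plan is to prove Theorem~\ref{thm:associativerigidrecursivenm} by induction on $n$, exactly mirroring the structure of the proof of the vector-space analogue (Theorem~\ref{thm:associativerigidrecursiven}), but invoking the matroid versions of all the supporting results.** The base case $n=2$ is precisely Theorem~\ref{thm:associativerigidrecursivem}, which is already established. For the inductive step I would group the partition as $\mathcal{H}_1 \uplus (\mathcal{H}_2 \uplus \cdots \uplus \mathcal{H}_n)$ and apply the $n=2$ case: this gives that $\mathcal{H}$ is rigid iff $\mathcal{H}_1$, $(\mathcal{H}_2 \uplus \cdots \uplus \mathcal{H}_n)$, and $\{\lrar(\mathcal{H}_1),\ \lrar(\mathcal{H}_2 \uplus \cdots \uplus \mathcal{H}_n)\}$ are all rigid. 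By the inductive hypothesis, $(\mathcal{H}_2 \uplus \cdots \uplus \mathcal{H}_n)$ is rigid iff $\mathcal{H}_2,\dots,\mathcal{H}_n$ and $\{\lrar(\mathcal{H}_2),\dots,\lrar(\mathcal{H}_n)\}$ are rigid. I would also use the associativity of $\lrar(\cdot)$ over associative families (Remark following Definition~\ref{def:gen_lrar1m}, part~1) to write $\lrar(\mathcal{H}_2 \uplus \cdots \uplus \mathcal{H}_n) = \lrar(\mathcal{H}_2) \lrar \cdots \lrar \lrar(\mathcal{H}_n)$, so that the remaining work is purely about the family of matroids $\{\V_{Z_i}\}$ (renamed $\M_{Z_i} \equivd \lrar(\mathcal{H}_i)$).

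\textbf{The crux reduces to the following implication.} It remains to show that, \emph{assuming} $\mathcal{H}_1,\dots,\mathcal{H}_n$ are each rigid, the rigidity of $\{\M_{Z_1},\ \M_{Z_2}\lrar\cdots\lrar\M_{Z_n}\}$ together with that of $\{\M_{Z_2},\dots,\M_{Z_n}\}$ implies the rigidity of $\{\M_{Z_1},\dots,\M_{Z_n}\}$. Here the $\M_{Z_i}\equivd \lrar(\mathcal{H}_i)$ are well-defined matroids (nonvoid and not equal to $\M_{\emptyset}$, since $\G_{\mathcal{H}}$ is connected and the $\mathcal{H}_i$ are rigid, via Lemma~\ref{lem:rankconditionm} and the connectivity remark), and no element belongs to more than two of the $Z_i$. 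I would set $P$ to be the set of elements belonging to exactly one of $Z_2,\dots,Z_n$ and set $T \equivd Z_1 \cap \bigcup_{i=2}^n Z_i$; since no element lies in more than two of the $Z_i$, one has $T \subseteq P$. From the zero-intersection property of $\{\M_{Z_1},\ \M_{Z_2}\lrar\cdots\lrar\M_{Z_n}\}$, and using that $\M_{Z_2}\lrar\cdots\lrar\M_{Z_n} = (\M_{Z_2}\vee\cdots\vee\M_{Z_n})\times P$ (Theorem~\ref{thm:matchedpropm}), I would compute
\[
r\bigl((\M_{Z_1}\times T)\wedge (\M_{Z_2}\lrar\cdots\lrar\M_{Z_n})\times T\bigr)
= r\bigl((\M_{Z_1}\times T)\wedge (\M_{Z_2}\vee\cdots\vee\M_{Z_n})\times T\bigr)=0,
\]
using part~5(b) of Theorem~\ref{thm:sumintersectionm} to commute $\times T$ past the union restricted to $P$. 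By part~2 of Theorem~\ref{thm:sumintersectionm} this zero-intersection condition yields $r(\M_{Z_1}\vee\cdots\vee\M_{Z_n}) = r(\M_{Z_1}) + r(\M_{Z_2}\vee\cdots\vee\M_{Z_n})$. Combining with the rigidity of $\{\M_{Z_2},\dots,\M_{Z_n}\}$, which gives $r(\M_{Z_2}\vee\cdots\vee\M_{Z_n}) = \sum_{i=2}^n r(\M_{Z_i})$, I obtain the primal rank condition $r(\bigvee_{i=1}^n \M_{Z_i}) = \sum_{i=1}^n r(\M_{Z_i})$.

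\textbf{For the dual rank condition} I would replace every $\M_{Z_i}$ by $\M_{Z_i}^{*}$ throughout the argument above, using that rigidity of $\{\M_{Z_2},\dots,\M_{Z_n}\}$ is equivalent to rigidity of $\{\M_{Z_2}^{*},\dots,\M_{Z_n}^{*}\}$ (part~4 of Theorem~\ref{thm:regularrecursivem}), and that the dual rank condition for a family is exactly the primal rank condition for its dual family. This completes the inductive step and hence the proof. \textbf{The main obstacle I anticipate} is purely bookkeeping rather than conceptual: one must verify carefully that the restriction-and-union manipulations involving $P$ and $T$ go through for matroids using Theorem~\ref{thm:sumintersectionm}(5b) in place of the vector-space identity Theorem~\ref{thm:sumintersection}(5b), and that $\M_{Z_2}\lrar\cdots\lrar\M_{Z_n}$ restricted to $T$ genuinely agrees with $(\M_{Z_2}\vee\cdots\vee\M_{Z_n})\times T$—this hinges on $T \subseteq P$ so that the $\times P$ appearing in the definition of $\lrar$ becomes transparent when further restricted to $T$. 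Because every step has an exact matroid counterpart to the vector-space proof of Theorem~\ref{thm:associativerigidrecursiven}, the argument is, as the authors indicate elsewhere, essentially a line-by-line translation, with the only genuine care needed being the correct substitution $+ \rightsquigarrow \vee$, $\cap \rightsquigarrow \wedge$, $\perp \rightsquigarrow {*}$, and $\F/\0$ as self-dual symbols.
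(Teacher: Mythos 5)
Your proposal is correct and follows essentially the same route as the paper's own proof: induction on $n$ with Theorem \ref{thm:associativerigidrecursivem} as the base case, reduction to showing that rigidity of $\{\M_{Z_1},\M_{Z_2}\lrar\cdots\lrar\M_{Z_n}\}$ and $\{\M_{Z_2},\dots,\M_{Z_n}\}$ forces rigidity of $\{\M_{Z_1},\dots,\M_{Z_n}\}$, the identical choice of $P$ and $T$ with $T\subseteq P$, the same zero-intersection-to-rank-additivity step via Theorem \ref{thm:sumintersectionm}, and the same dualization for the dual rank condition. The only quibble is a citation detail: collapsing $(\M_{Z_2}\vee\cdots\vee\M_{Z_n})\times P\times T$ to $(\M_{Z_2}\vee\cdots\vee\M_{Z_n})\times T$ uses composition of contractions (Theorem \ref{thm:dotcrossidentitym}, part 1) rather than Theorem \ref{thm:sumintersectionm}(5b), but this does not affect the validity of the argument.
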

\begin{proof}
1. The proof is by induction over $n.$ We have seen in Theorem \ref{thm:associativerigidrecursivem}, that the result is true when $n=2.$
We then have ${\mathcal{H}}$ is rigid iff ${\mathcal{H}_1}, ({\mathcal{H}_2}\uplus \cdots \uplus {\mathcal{H}_n}), \{\lrar(\mathcal{H}_1), \lrar({\mathcal{H}_2}\uplus \cdots \uplus \mathcal{H}_n)\}$ are rigid. 
\\
By induction, ${\mathcal{H}_2}\uplus \cdots \uplus {\mathcal{H}_n}$ is rigid iff ${\mathcal{H}_2},\cdots ,{\mathcal{H}_n}$ and $\{\lrar(\mathcal{H}_2),\cdots , \lrar(\mathcal{H}_n)\}$
are rigid.
\\ We therefore need only show that,  rigidity of $\{\lrar(\mathcal{H}_1), \lrar({\mathcal{H}_2}\uplus \cdots \uplus \mathcal{H}_n)\}$ and 
$\{\lrar(\mathcal{H}_2),\cdots , \lrar(\mathcal{H}_n)\}$ implies rigidity of 
$\{\lrar(\mathcal{H}_1),\cdots , \lrar(\mathcal{H}_n)\}.$

Let us denote $\lrar(\mathcal{H}_1),\cdots , \lrar(\mathcal{H}_n),$ respectively by $\M_{Z_1},\cdots , \M_{Z_m}.$ Since $\mathcal{H}$ is an associative 
family, by the definition of $\lrar(\mathcal{H}_i),$ it is clear that no element belongs to more than two of the $Z_i.$ 
We thus need to show that rigidity of $\{\M_{Z_1},(\M_{Z_2}\lrar \cdots \lrar  \M_{Z_m})\}$ and $\{\M_{Z_2},\cdots , \M_{Z_m}\}$
 implies the rigidity of $\{\M_{Z_1},\cdots , \M_{Z_m}\}.$

Suppose $\{\M_{Z_1},(\M_{Z_2}\lrar \cdots \lrar  \M_{Z_m})\}$ and $\{\M_{Z_2},\cdots , \M_{Z_m}\}$
are rigid. Let $P$ be the set of all elements which belong 
precisely to one of $Z_2,\cdots , Z_m$ and let $T\equivd Z_1\cap \bigcup_{i=2}^mZ_i.$ Since no element belongs to more than 
two of $Z_1,\cdots , Z_m,$ it is clear that $T\subseteq P.$ 
From the zero intersection property of $\{\M_{Z_1},(\M_{Z_2}\lrar \cdots \lrar  \M_{Z_m})\},$ we have that 
$r((\M_{Z_1}\times T)\wedge 
(\M_{Z_2}\lrar \cdots \lrar  \M_{Z_m})\times T)=
r((\M_{Z_1}\times T)\wedge[(\M_{Z_2}\vee \cdots \vee \M_{Z_m})\times P\times T])$\\$=
 r((\M_{Z_1}\times T)\wedge[(\M_{Z_2} \vee\cdots \vee \M_{Z_m})\times  T])=
0.$
By Theorem \ref{thm:sumintersection}, it follows that 
$r(\M_{Z_1}\vee\cdots \vee \M_{Z_m})= r(\M_{Z_1})+r(\M_{Z_2}\vee\cdots \vee \M_{Z_m}).$
Since $\{\M_{Z_2},\cdots , \M_{Z_m}\}$
 is rigid, we have $r(\M_{Z_2}\vee\cdots \vee \M_{Z_m})= r(\M_{Z_1})+r(\M_{Z_2})+\cdots + r(\M_{Z_m}).$
Therefore $r(\M_{Z_1}\vee\cdots \vee \M_{Z_m})= r(\M_{Z_1})+\cdots + r(\M_{Z_m}).$

The proof of the dual rank condition for $\{\M_{Z_1},\cdots , \M_{Z_m}\}$
is by replacing $\M_{Z_i}$ by $\M_{Z_i}^{*}$ everywhere in the above proof and by using the fact that rigidity of $\{\M_{Z_2},\cdots , \M_{Z_m}\}$ is equivalent to rigidity of $\{\M^{*}_{Z_2},\cdots , \M^{*}_{Z_m}\}.$
\\
We thus see that rigidity of $\{\M_{Z_1},(\M_{Z_2}\lrar \cdots \lrar  \M_{Z_m})\}$ and $\{\M_{Z_2},\cdots , \M_{Z_m}\}$
 implies the rigidity of $\{\M_{Z_1},\cdots , \M_{Z_m}\}.$\\
This proves that 
${\mathcal{H}}$ is rigid iff ${\mathcal{H}_1},\cdots ,{\mathcal{H}_n}$ and $\{\lrar(\mathcal{H}_1),\cdots , \lrar(\mathcal{H}_n)\}$
are rigid.

\end{proof}
\section{Connecting rigid multiports to obtain a rigid multiport}
\label{subsec:connect}
In this section we combine the results of Subsections \ref{subsec:rigidpairs} and \ref{subsec:rigidpairsm} to give simple rules for connecting rigid multiports to obtain a rigid multiport, when the device characteristic has a 
special form. This form  is sufficiently general to capture  
commonly obtained device characteristics.

Suppose we have rigid multiports $\N_{WQ},\N_{TW}$ and we wish to connect 
them at ports $W.$ How do we test for rigidity of the resulting multiport 
$\N_{TQ}?$ How do we connect them so that the resulting multiport is rigid?

The general answer is given in Theorems \ref{lem:derivedregularity}, 
 \ref{thm:derivedregularity} and \ref{thm:regularmultiportrecursive}.
\\ As in Theorem \ref{thm:regularmultiportrecursive},
we could have rigid multiports $\N_{TW},{\N}_{\tilde{W}Q}$  on graphs
$\G_{RSW},\G_{\tilde{W}MQ}$ respectively and device characteristics
$\A_{S'S"},{\A}_{M'M"}$ respectively.
The multiport $\N_{TQ}\equivd {[\N_{TW}\oplus {\N}_{\tilde{W}Q}]\cap \A^{W\tilde{W}}},$ obtained by connecting 
$\N_{TW},{\N}_{\tilde{W}Q}$
through $\A^{W\tilde{W}}$ is rigid, provided the pair
$\{\V_{R'W'R"W"}\oplus \V_{\tilde{W}'Q'\tilde{W}"Q"},\A^{W\tilde{W}}_{W'W"\tilde{W}'\tilde{W}"}\},$
is rigid, where $\V_{R'W'R"W"}\oplus \V_{\tilde{W}'Q'\tilde{W}"Q"}$ are the vector  space translates of port behaviours of $\N_{TW},{\N}_{\tilde{W}Q}$
respectively.
This test requires solution of linear equations whose nature is very general.
Fortunately, in some  important special cases, the test only involves working with 
simple matroids and is therefore very efficient.
\subsection{Sufficiency of matroidal tests for rigidity of multiports}
\label{subsec:suffmatroidtest}
We simplify the problem of testing for rigidity of multiports by making an assumption that is justified 
in practice.
We assume that the parameters associated with the various devices,
such as {\it resistance} in the case of resistors, {\it gain}, {\it transresistance}, {\it transconductance} in the case 
of controlled sources,  are not known exactly but may be treated 
as being picked at random from an interval of positive width $\epsilon,$
 about some real value. This width may be as small as we please, 
 as long as it is nonzero. This is a way of saying that parameters 
are not known exactly. Under this condition, we may treat 
the parameters to be algebraically independent over $\Q.$
We remind the reader that elements $x_i\in \Re, i=1, \cdots , k$ are
\nw{algebraically independent} over $\mathbb{Q}$ iff
every nontrivial multivariable polynomial in $x_i$ with coefficients
from $\mathbb{Q}$ takes nonzero value.
Essentially, once this assumption is made, we have to 
test for rigidity of a pair $\{\V_{AB},\V_B\},$
which has the property that $\M(\V_{AB}+\V_B)=\M(\V_{AB})\vee \M(\V_B)$
 and $\M(\V^{\perp}_{AB}+\V^{\perp}_B)=\M(\V^{\perp}_{AB})\vee \M(\V^{\perp}_B).$
So, by Theorem \ref{thm:rigidmatroidvector}, the problem reduces to testing the rigidity of the pair
$\{\M(\V_{AB}), \M(\V_B)\}$  which turns out to be simple 
because both the matroids
are simple,  the matroid $\M(\V_{B}),$ being particularly so.


In order to deal with connection of multiports in a simple way, we use ideas from topological multiport decomposition \cite{HNarayanan1986a,HNarayanan1997}, which we describe below.

Let $\G_{S_1S_2}$ be a graph. Then it is always possible to construct 
graphs $\G_{S_1W}, \G_{S_2\tilde{W}}, \G_{W\tilde{W}},$ where $W,\tilde{W}$ are additional sets of edges which 
are disjoint from each other as well as disjoint from $S_1,S_2,$ such that \\
$\V^v(\G_{S_1S_2})=(\V^v(\G_{S_1W})\oplus \V^v(\G_{S_2\tilde{W}}))\lrar \V^v(\G_{W\tilde{W}})$ and 
$\V^i(\G_{S_1S_2})=(\V^i(\G_{S_1W})\oplus \V^i(\G_{S_2\tilde{W}}))\lrar \V^i(\G_{W\tilde{W}}).$
\\We can always choose $W,\tilde{W}$ to have size 
$|W|=|\tilde{W}|= \xi(S_1,S_2)\equivd r(\G_{S_1S_2}\circ S_1)-r(\G_{S_1S_2}\times S_1)$\\$
=r(\G_{S_1S_2}\circ S_2)-r(\G_{S_1S_2}\times S_2).$
If $\G_{S_1S_2}\circ S_1, \G_{S_1S_2}\circ S_2$
are connected, these graphs would have common vertices whose number would 
be equal to $\xi(S_1,S_2)+1.$ In this case $\G_{S_1W}, \G_{S_2W}$ can be built as given below.

Let $v_1, \cdots, v_k,$ be the common vertices between the above mentioned 
subgraphs of $\G_{S_1S_2}.$
Let $e_1, \cdots , e_{k-1},$ be edges distinct from those in 
$S_1\uplus S_2$ with $e_i$ having end points $v_i,v_k$ for $i=1, \cdots, k-1.$
We will call $v_k$ as the datum node and
 we will say $\G_{S_1W}, \G_{S_2W}$ have \nw{node to datum type 
of ports}.
In this case, $\G_{S_1S_2}$ is obtained from $\G_{S_1S_2}\circ S_1$
and $\G_{S_1S_2}\circ S_2$ by identifying the nodes $v_1, \cdots, v_k,$  in the two graphs and, further, it turns out that 
$\V^v(\G_{S_1S_2})=(\V^v(\G_{S_1W})\lrar \V^v(\G_{S_2{W}}))$ and    
$\V^i(\G_{S_1S_2})=(\V^i(\G_{S_1W})\rightleftharpoons \V^i(\G_{S_2{W}})).$
\\ It may be noted that, in practice, this is the way larger networks 
are built from smaller modules.

Let us suppose we have built rigid multiports $\N_{TW},\N_{WQ}$ on graphs 
$\G_{S_1W},\G_{S_2W}$ respectively, where $S_1\supseteq T, S_2\supseteq Q.$
Further  
let $\G_{S_1W}\circ S_1,\G_{S_2W}\circ S_2$ be connected and let
 $\G_{S_1W},\G_{S_2W}$ have node to datum type of ports.
Let the device characteristics for both multiports be composed in terms 
of  resistors, voltage sources, current sources and controlled sources 
(CCVS,VCCS,CCCS,VCVS). Note that the first two letters in the acronym 
refer to `current controlled' or `voltage controlled' and the last two, to  
`current source' or `voltage source'.
The combined device characteristic is given below.

\begin{align}
\label{eqn:ccvsvvcs1new}
\ppmatrix{I_{\tilde{Y}_1'}&0_{\tilde{Z}_1'}&0_{\tilde{Y}_1"}&0_{\tilde{Z}_1"}\\
0_{\tilde{Y}_1'}&I_{\tilde{Z}_1'}&-r_{\tilde{Z}_1'\tilde{Y}_1"}&0_{\tilde{Z}_1"}}
\ppmatrix{v_{\tilde{Y}_1'}\\v_{\tilde{Z}_1'}\\i_{\tilde{Y}_1"}\\i_{\tilde{Z}_1"}}
=\ppmatrix{0\\0};
\ppmatrix{0_{\tilde{Y}_2'}&0_{\tilde{Z}_2'}&I_{\tilde{Y}_2"}&0_{\tilde{Z}_2"}\\
-g_{\tilde{Z}_2"\tilde{Y}_2'}&0_{\tilde{Z}_2'}&0_{\tilde{Y}_2"}&I_{\tilde{Z}_2"}}\ppmatrix{v_{\tilde{Y}_2'}\\v_{\tilde{Z}_2'}\\i_{\tilde{Y}_2"}\\i_{\tilde{Z}_2"}}
=\ppmatrix{0\\0}
\end{align}

\begin{align}
\label{eqn:ccvsvvcsanew}
\ppmatrix{I_{\hat{Y}_1'}&0_{\hat{Z}_2'}&0_{\hat{Y}_1"}&0_{\hat{Z}_2"}\\
0_{\hat{Y}_1'}&0_{\hat{Z}_2'}&-\alpha_{\hat{Z}_2"\hat{Y}_1"}&I_{\hat{Z}_2"}}
\ppmatrix{v_{\hat{Y}_1'}\\v_{\hat{Z}_2'}\\i_{\hat{Y}_1"}\\i_{\hat{Z}_2"}}
=\ppmatrix{0\\0};
\ppmatrix{0_{\hat{Y}_2'}&0_{\hat{Z}_1'}&I_{\hat{Y}_2"}&0_{\hat{Z}_1"}\\
-\beta_{\hat{Z}_1'\hat{Y}_2'}&I_{\hat{Z}_1'}&0_{\hat{Y}_2"}&0_{\hat{Z}_1"}}\ppmatrix{v_{\hat{Y}_2'}\\v_{\hat{Z}_1'}\\i_{\hat{Y}_2"}\\i_{\hat{Z}_1"}}
=\ppmatrix{0\\0}
\end{align}
\begin{align}
\label{eqn:ccvsvvcs3new}
\ppmatrix{I& -\hat{R}}\ppmatrix{v_{R'}\\i_{R"}}
=0;
\ppmatrix{I_{E'}& 0_{E"}}\ppmatrix{v_{E'}\\i_{E"}}
=s_E;
\ppmatrix{0_{J'}& I_{J"}}\ppmatrix{v_{J'}\\i_{J"}}
=s_J.
\end{align}
The matrices $r_{\tilde{Z}_1'\tilde{Y}_1"}, g_{\tilde{Z}_2"\tilde{Y}_2'}, \alpha_{\hat{Z}_2"\hat{Y}_1"},
\beta_{\hat{Z}_1'\hat{Y}_2'}, \hat{R}$ are diagonal matrices and their
diagonal entries are together assumed to be algebraically independent over $\Q.$
It is clear that a device characteristic composed entirely of the above mentioned devices is a proper affine space. A multiport composed only of these devices
is proper, by definition, iff it is rigid.

If we connect $\N_{TW},\N_{WQ}$ at ports $W$
 will we  obtain a rigid multiport $\N_{TQ}?$
\\To answer this question, we need to examine whether the pair 
$\{\V_{AX},\V_X\}$ is rigid, where $\V_X$ is the vector space translate of the device characteristic composed of the devices described in 
Equations \ref{eqn:ccvsvvcs1new}, \ref{eqn:ccvsvvcsanew}, \ref{eqn:ccvsvvcs3new} and $\V_{AX}\equivd (\V^v(\G_{S_1S_2}))_{S_1'S_2'}\oplus (\V^i(\G_{S_1S_2}))_{S_1"S_2"}.$
Because of the algebraic independence of the above mentioned parameters, and 
the structure of the device characteristic, under certain additional 
topological conditions, this problem reduces to checking whether the pair $\{\M_{AX},\M_X\},$ is rigid, where $\M_{AX}\equivd \M(\V_{AX}),$\\$ \M_X\equivd \M(\V_{X})$ 
(Theorem \ref{thm:maxdistancenew2} in the Appendix).
This checking is done by applying the matroid union algorithm \cite{edm65a, recski89, HNarayanan1997}. 

In this section we give a simpler result,  which gives sufficient conditions
for the multiport $\N_P\equivd \N_{TQ}$
 to be rigid. This result is easy to state, and additionally 
the sufficient conditions given are easy to test and practically 
speaking, invariably satisfied.
Essentially, we only need to  check if the voltage source like branches (controlled voltage source branches $Z_{1}$ 
together with the (voltage zero) controlling current branches $Y_{1}$ and the voltage source
 branches $E$) do not form a loop and the current source like branches (controlled current source branches $Z_{2}$ 
together with the (current zero) controlling voltage branches $Y_{2}$ and the current source
 branches $J$) do not form a cutset in the graph $\G_{S_1S_2}.$
Additionally, when these conditions are satisfied, the result states that the multiport behaviour has a \nw{hybrid representation}, i.e., a representation of the form
\begin{align*} 
\ppmatrix{i_{P_1"}\\
v_{(P-P_1)'}}=\ppmatrix{g_{11}&h_{12}\\h_{21}&r_{22}}\ppmatrix{v_{P_1'}\\
i_{(P-P_1)"}} +\ppmatrix{s_{P_1"}\\
s_{(P-P_1)'}},
\end{align*} 
and the corresponding column base is easy to compute directly from the graph $\G_{S_1S_2}.$ This makes the computation of the multiport behaviour easier 
than when the column base is not available.

The proof is given in  \ref{sec:matroidalrigidity}.

\begin{theorem}
\label{thm:purslowgraphnew}
Let $\N_P$ be a multiport on graph $\G_{SP}$ whose device characteristic
 $\A_{S'S"}$ is composed
entirely of voltage sources, current sources,   CCVS,VCCS,CCCS,VCVS and resistors as in Equations \ref{eqn:ccvsvvcs1new}, \ref{eqn:ccvsvvcsanew}, \ref{eqn:ccvsvvcs3new}.
Let $Y_1\equivd \tilde{Y}_1\uplus \hat{Y}_1,Y_2\equivd \tilde{Y}_2\uplus \hat{Y}_2,Z_1\equivd \tilde{Z}_1\uplus \hat{Z}_1,Z_2\equivd \tilde{Z}_2\uplus \hat{Z}_2.$

Let the values of resistors
and the controlled source parameters be algebraically independent over $\Q.$
We then have the following.
\begin{enumerate}
\item $\N_P$ is a  proper multiport only if
\begin{enumerate}
\item the controlling current branches $Y_{1}$ together with the voltage source
 branches $E$
 do not contain  a loop of $\G_{SP}$
and
\item  the controlling voltage branches $Y_{2}$ together with the current source
 branches $J$
 do not contain  a cutset of  $\G_{SP}.$
\end{enumerate}
\item  $\N_P$ is a  proper multiport if
\begin{enumerate}
\item the set of controlled voltage source branches $Z_{1}$
together with the controlling current branches $Y_{1}$ and the voltage source
 branches $E$
 do not contain a loop
of $\G_{SP}$
\item the set of controlled current source branches $Z_{2}$ 
together with the controlling voltage  branches $Y_{2}$ and the current  source
 branches $J$
 do not contain  a cutset of $\G_{SP}.$
\end{enumerate}
\item Let the above conditions of 2 be satisfied.
Then   there exists a
 tree $t$ of $\G_{SP}$ that contains $Z_{1}\uplus Y_{1}\uplus E$
and does not intersect $Z_{2}\uplus Y_{2}\uplus J.$
For such a tree, the port behaviour of multiport $\N_P$
has a hybrid representation
with $(t\cap P)'\uplus (P-t)"$ as a column base for the vector space
translate of the behaviour.
\end{enumerate}
\end{theorem}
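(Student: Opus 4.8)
The plan is to reduce properness to the rigidity of a matroid pair, read off the loop/coloop/parallel-class structure of the device matroid from the block form of Equations \ref{eqn:ccvsvvcs1new}--\ref{eqn:ccvsvvcs3new}, and then convert the base-covering criterion of part 5 of Theorem \ref{thm:regularrecursivem} into the stated topological conditions. First I would record that, since the listed devices yield a proper affine characteristic $\A_{S'S"}=\alpha_{S'S"}+\V_{S'S"}$, the multiport $\N_P$ is proper iff it is rigid, and by Corollary \ref{cor:adjointrigidity} this holds iff $\{\V_{AX},\V_X\}$ is rigid, where $A\equivd P'\uplus P"$, $X\equivd S'\uplus S"$, $\V_{AX}\equivd (\V^v(\G_{SP}))_{S'P'}\oplus (\V^i(\G_{SP}))_{S"P"}$ and $\V_X\equivd \V_{S'S"}$. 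Because the diagonal entries of $r,g,\alpha,\beta,\hat R$ are algebraically independent over $\Q$ while $\V_{AX}$ is over $\Q$, the hypotheses of part 2 of Theorem \ref{thm:rigidmatroidvector} hold (this is what Theorem \ref{thm:maxdistancenew2} supplies), so rigidity of $\{\V_{AX},\V_X\}$ is equivalent to rigidity of $\{\M_{AX},\M_X\}$ with $\M_{AX}\equivd \M(\V_{AX})$, $\M_X\equivd \M(\V_X)$, which by part 5 of Theorem \ref{thm:regularrecursivem} means $\M_{AX},\M_X$ admit disjoint bases whose union covers $X$.

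Next I would describe the two matroids. By Lemma \ref{lem:minorgraphvectorspacem}, a subset of $X$ is independent in $\M_{AX}$ iff its primed part is a forest and its doubly primed part a coforest of $\G_{SP}$, and a base of $\M_{AX}$ has the form $t_1'\uplus\overline{t_2}"$ for trees $t_1,t_2$. Reading Equations \ref{eqn:ccvsvvcs1new}--\ref{eqn:ccvsvvcs3new} column by column, $\M_X$ is a direct sum over the devices, and algebraic independence leaves exactly these dependencies: the loops (zero columns) are $(E\uplus Y_1)'$ and $(J\uplus Y_2)"$; the coloops are $(E\uplus Z_1)"$ and $(J\uplus Z_2)'$; and the nontrivial parallel classes are $\{e',e"\}$ for each resistor $e$, together with $\{\tilde Z_1',\tilde Y_1"\}$, $\{\tilde Z_2",\tilde Y_2'\}$, $\{\hat Z_2",\hat Y_1"\}$, $\{\hat Z_1',\hat Y_2'\}$, from each of which a base selects exactly one element. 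Part 1 (necessity) is then immediate: in any disjoint bases $b_{AX},b_X$ covering $X$, the loops $(E\uplus Y_1)'$ and $(J\uplus Y_2)"$ of $\M_X$ lie in no base of $\M_X$, hence in $b_{AX}$; since the primed part of $b_{AX}$ is a forest and its doubly primed part a coforest, $E\uplus Y_1$ contains no loop and $J\uplus Y_2$ no cutset.

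For sufficiency (Part 2) and the tree (Part 3), the combinatorial core is to produce a single tree $t$ with $t\supseteq Z_1\uplus Y_1\uplus E$ and $t\cap(Z_2\uplus Y_2\uplus J)=\emptyset$. Condition 2(a) makes $Z_1\uplus Y_1\uplus E$ a forest, hence contained in a base (tree) of $\M(\G_{SP})$; condition 2(b) makes $Z_2\uplus Y_2\uplus J$ a coforest, hence contained in a cobase (cotree); the two sets are disjoint, so part 7 of Theorem \ref{thm:dotcrossidentitym} applied to $\M(\G_{SP})$ yields the desired $t$. I would then set $b_{AX}\equivd t'\uplus\overline{t}"$ and $b_X\equivd \{e":e\in t\cap S\}\uplus\{e':e\in\overline{t}\cap S\}$ and verify against the dictionary of the previous paragraph that $t\supseteq Z_1\uplus Y_1\uplus E$ and $t\cap(Z_2\uplus Y_2\uplus J)=\emptyset$ force $b_X$ to contain every coloop, avoid every loop, and pick exactly one element of each parallel class, so $b_X$ is a base of $\M_X$; as $b_{AX}$ is a base of $\M_{AX}$ and the two are disjoint with union covering $X$, rigidity follows. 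Finally $b_{AX}\uplus b_X$ is a base of $\M_{AX}\vee\M_X$ containing $X$, so its trace $b_{AX}\cap A=(t\cap P)'\uplus(P-t)"$ is a base of $\M_{AX}\lrar\M_X=\M(\V_{AX}\lrar\V_X)$ by part 2(b) of Theorem \ref{thm:rigidmatroidvector}; since column bases of $\V_{AX}\lrar\V_X$ are the bases of this matroid, $(t\cap P)'\uplus(P-t)"$ is a column base of the port behaviour, exhibiting tree-port voltages and cotree-port currents as free variables, i.e. the hybrid representation with $P_1\equivd t\cap P$.

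The step needing the most care is the column-by-column determination of the loops, coloops and parallel classes of $\M_X$, and in particular confirming that algebraic independence introduces no accidental further dependencies across devices; once this dictionary is fixed, the tree existence is a one-line application of the exchange property in Theorem \ref{thm:dotcrossidentitym}, and Parts 1--3 follow mechanically. The matroid reduction itself leans on Theorem \ref{thm:maxdistancenew2}, whose proof in the appendix is where the algebraic-independence hypothesis carries the real weight.
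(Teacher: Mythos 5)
Your overall route is the same as the paper's: reduce properness to rigidity of the pair $\{\V_{AX},\V_X\}$ via Corollary \ref{cor:adjointrigidity}, translate to the associated matroid pair using algebraic independence, obtain necessity from the fact that the zero columns of $\M_X$ must be picked up by a base of $\M_{AX}$, obtain sufficiency by building a tree $t\supseteq Z_1\uplus Y_1\uplus E$ with $t\cap(Z_2\uplus Y_2\uplus J)=\emptyset$ (part 7 of Theorem \ref{thm:dotcrossidentity}, or its matroid twin), exhibiting disjoint bases covering $X$, and reading the hybrid representation off $b_{AX}\cap A=(t\cap P)'\uplus(P-t)''$. Your dictionary for $\M_X$ (loops $(E\uplus Y_1)'\uplus(J\uplus Y_2)''$, coloops $(E\uplus Z_1)''\uplus(J\uplus Z_2)'$, one parallel pair per resistor and per controlled source) is correct, and your $b_X$ is indeed a base of $\M_X$ disjoint from $b_{AX}=t'\uplus\bar{t}''$ with union covering $X$; this is in substance the paper's ``Claim'', carried out on the full index set $X$ rather than on $B$.

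The genuine gap is the sentence asserting, up front and unconditionally, that rigidity of $\{\V_{AX},\V_X\}$ is equivalent to rigidity of $\{\M_{AX},\M_X\}$ ``because the hypotheses of part 2 of Theorem \ref{thm:rigidmatroidvector} hold (this is what Theorem \ref{thm:maxdistancenew2} supplies).'' Lemma \ref{lem:generalitytomaxdistancenew} 3(a) establishes the matroid-union hypotheses only for the reduced pair $\{\V^1_{AB},\V^2_B\}$, where $\V^2_B$ has representative matrix $(I|D)$; the full device space is $\V_X=\F\oplus\0\oplus\V^2_B$ with extra free and zero summands, and Theorem \ref{thm:maxdistancenew2} lifts anything to $\{\V_{AX},\V_X\}$ only under its structural preconditions $\V_{AX}\times\bigl((E\uplus Z_1)''\uplus(J\uplus Z_2)'\bigr)=\0$ and $\V_{AX}\circ\bigl((E\uplus Y_1)'\uplus(J\uplus Y_2)''\bigr)=\F.$ These preconditions are themselves topological statements ($Z_1\uplus E$ and $Y_1\uplus E$ loop free, $Z_2\uplus J$ and $Y_2\uplus J$ cutset free), and in the paper they are derived from hypotheses 2(a),(b) via Lemma \ref{lem:minorgraphvectorspace} \emph{before} Theorem \ref{thm:maxdistancenew2} is ever invoked. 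As written, your sufficiency chain (matroid rigid $\Rightarrow$ vector rigid $\Rightarrow$ proper) and your part 3 identity $\M_{AX}\lrar\M_X=\M(\V_{AX}\lrar\V_X)$ both rest on an equivalence the cited results do not provide at that stage; even if the biconditional happens to hold unconditionally (both sides fail when the preconditions fail), that is an argument you have not given. The repair is a reordering, not a new idea: in parts 2 and 3, first use the tree and cotree guaranteed by 2(a),(b) to verify the two preconditions, then apply Theorem \ref{thm:maxdistancenew2} 2(b),(c) together with Lemma \ref{lem:generalitytomaxdistancenew} 3(a); in part 1, drop the equivalence and use only the unconditional direction, part 1 of Theorem \ref{thm:rigidmatroidvector} (vector-rigid implies matroid-rigid), which is all that necessity needs.
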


\begin{remark}
The sufficient conditions of Theorem \ref{thm:purslowgraphnew} are not necessary.
Let $\G$ be a graph on two edges $e_1,e_2,$ where the edges are in parallel.
Let these be the device ports of a current controlled voltage source.
Then $Y_1\equivd \{e_1\}, Z_1\equivd \{e_2\}$ and $Y_1\uplus Z_1$ contains
a loop. Let $r_{e_1"e_2'}\ne 0.$ This network has the unique solution
 $0_{e_1'e_2'e_1"e_2"}.$ 

From a practical point of view the sufficient 
conditions are reasonable since, while designing a circuit, one will never force a dependency on controlled voltages
 by making them form a loop with predetermined voltages
 or on controlled current sources by making them form a cutset with predetermined currents.

\end{remark}

\subsection{Algorithm for constructing hybrid representation of 
the port behaviour}
We present below an algorithm for construction of the hybrid representation of the port behaviour of a multiport satisfying the sufficiency conditions in Theorem \ref{thm:purslowgraphnew}.

We remind the reader that hybrid representation for a port behaviour ${\V}_{P'P"}$ is a representative matrix of the form
\begin{align}
(F_{P_1'}|F_{P_2'}|F_{P_1"}|F_{P_2"})=\ppmatrix{I&h_{21}^T&g_{11}^T&0\\
0&r_{22}^T&h_{12}^T&I} 
\end{align}
Suppose ${\A}_{P'P"}= (\alpha_{P'},\beta_{P"})+{\V}_{P'P"}.$ 
Then we can write ${\A}_{P'P"}$ as the solution space of the equations
\begin{align}
\label{eqn:hybridconstraints1}
\ppmatrix{i_{P_1"}\\v_{P_2'}}=\ppmatrix{g_{11}&h_{12}\\h_{21}&r_{22}}\ppmatrix{v_{P_1'}\\i_{P_2"}}+\ppmatrix{s_{P_1"}\\s_{P_2'}},
\end{align}
where 
\begin{align}
\ppmatrix{s_{P_1"}\\s_{P_2'}}=-\ppmatrix{g_{11}&h_{12}\\h_{21}&r_{22}}\ppmatrix{\alpha_{P_1'}\\\beta_{P_2"}}+\ppmatrix{\beta_{P_1"}\\\alpha_{P_2'}},
\end{align}

\begin{algorithm}
\label{alg:hybridgraphnew}
Input:  A multiport $\N_P\equivd (\G_{SP}, \A_{S'S"}),$ 
where 
$\A_{S'S"}$ is
composed
 entirely of voltage sources, current sources, CCVS,VCCS,CCCS,VCVS
  and resistors
and satisfies sufficiency conditions of\\ Theorem  \ref{thm:purslowgraphnew}.\\
Output: A hybrid representation for the port behaviour ${\A}_{P'P"}$ of $\N_P.$
\\Step 1. Construct a 
forest $t$ of $\G_{SP}$ that contains $Z_1\uplus Y_1\uplus E$ and does not 
intersect\\  $Z_2\uplus Y_2\uplus J.$ 
\\ Step 2. Let 
$P_1\equivd t\cap P, P_2\equivd P-t.$
The set of columns  $P_1'\uplus P_2"$ is a column base of ${\V}_{P'P"}.$
\\Step 3. Set all internal sources to zero (i.e., the device characteristic 
is the\\ vector space associate of the original device characteristic). \\For each element $e_m\in P_1, $ in turn, set $v_{e_m'}=1,
v_{e_j'}=0, e_j\in P_1, j\ne m, i_{P_2"}=0_{P_2"}$ and \\for each $e_r\in P_2 $ in turn, set $i_{e_r"}=1,
i_{e_j"}=0, e_j\in P_2, j\ne r,  v_{P_1'}=0_{P_1'}$ and compute \\the corresponding vector
$\ppmatrix{i_{P_1"}\\v_{P_2'}}.$
These yield the $m^{th}$ and $(|P_1|+r)^{th}$ columns respectively \\of the matrix 
$\ppmatrix{g_{11}&h_{12}\\h_{21}&r_{22}}.$\\
Step 4. Keep the internal sources active (i.e., the device characteristic
\\remains the  original device characteristic of the multiport),
 set $v_{P_1'}=0_{P_1'}, i_{P_2"}=0_{P_2"}$ and \\solve the multiport 
to compute the corresponding vector
$\ppmatrix{i_{P_1"}\\v_{P_2'}}.$
This yields $\ppmatrix{s_{P_1"}\\s_{P_2'}}.$
\\STOP
\end{algorithm}
Algorithm \ref{alg:hybridgraphnew}
is justified by part 3 of Theorem \ref{thm:purslowgraphnew}:
if $t$ is a forest  of $\G_{SP}$ that contains $Z_1\uplus Y_1\uplus E$ and does not
intersect  $Z_2\uplus Y_2\uplus J,$ and $\bar{t} \equivd S\uplus P -t,$
 then $t' \uplus \bar{t}"$ is a column base of $\hat{\V}^1_{S'P'S"P"}$\\$\equivd (\V^v(\G_{SP}))_{S'P'} \oplus (\V^i(\G_{SP}))_{S"P"},$ such that $(t' \uplus \bar{t}") \cap (P'\uplus P")=P_1'\uplus P_2"$
is a column base of \\$\hat{\V}^1_{S'P'S"P"}\lrar \V_{S'S"}.$ 
\subsection{Generalized multiports based on Dirac structures}
\label{subsec:dirac}
In this subsection we examine the case where the device characteristic 
contains gyrators and ideal transformers. There are three ways in which we can 
approach this problem. First, we could model these devices in terms of controlled sources. If we take the parameters of these controlled sources 
to be algebraically indepedent over $\Q,$ Theorem \ref{thm:purslowgraphnew} would be applicable. But this would only be an approximate model 
of gyrators and ideal transformers.  Since the approximation can be made arbitrarily 
accurate, this may be adequate for practical purposes. A second approach would be to  take the parameters of the gyrators 
and ideal transformers to be algebraically independent over $\Q.$
This problem has been tackled with some success in the case of electrical 
networks with no ports (see \cite{recski89} for a detailed 
discussion). However, the algorithm for testing whether the network 
has a unique solution in this case is very cumbersome in comparison 
with the case where we have only resistors and controlled and independent 
sources.
 Further, there are important situations
where the gyrators and transformers are used to impose constraints analogous to
topological ones. Here the assumption of algebraic independence is no 
longer valid. We use below an approach with uses conventional circuit simulators
for computing the port behaviour.
In brief, we proceed as follows.
Let 
$\hat{\N}_P\equivd(\G_{\hat{S}P}, \D_{S_1'S_1"}\oplus \A_{S'S"}),$
 where $\hat{S}= S_1\uplus S, $ $\D_{S_1'S_1"}$ is a Dirac structure (i.e., satisfies 
$(\D_{S_1'S_1"})_{S_1"S_1'}= \D_{S_1'S_1"}^{\perp}$) and $\A_{S'S"}$ is composed 
of resistors, independent and controlled sources as in  Theorem \ref{thm:purslowgraphnew}.
By Theorem \ref{thm:regularmultiportrecursive}, we know that $\hat{\N}_P$ is rigid iff the multiport 
${\N}_P\equivd(\G_{\hat{S}P}, \D_{S_1'S_1"})$  
and the generalized multiport $\N^g_P\equivd ({\V}^1_{S'P'S"P"},\A_{S'S"})$ 
are rigid, where ${\V}^1_{S'P'S"P"}\equivd [(\V^v(\G_{\hat{S}P}))_{\hat{S}'P'}, (\V^i(\G_{\hat{S}P}))_{\hat{S}"P"}]\lrar \D_{S_1'S_1"}$
 is a Dirac structure (Corollary \ref{cor:reciprocalDirac}).
We test for rigidity of ${\N}_P$ using a 
conventional simulator to compute its port behaviour ${\V}^1_{S_1'P'S_1"P"}.$
The following theorem states, analogous to Theorem \ref{thm:purslowgraphnew}, 
that if certain columns of ${\V}^1_{S'P'S"P"}$ can be included in column 
bases and cobases of appropriate minors of ${\V}^1_{S'P'S"P"},$ then the generalized  multiport $\N^g_P\equivd ({\V}^1_{S'P'S"P"},\A_{S'S"})$ is rigid.
\begin{theorem}
\label{thm:purslow2}
Let $\N^g_P\equivd ({\V}^1_{S'P'S"P"}, \A_{S'S"}),$ be a generalized multiport  whose device characteristic
 $\A_{S'S"}$ is composed
entirely of voltage sources, current sources,   CCVS,VCCS,CCCS,VCVS and resistors as in Equations \ref{eqn:ccvsvvcs1new}, \ref{eqn:ccvsvvcsanew}, \ref{eqn:ccvsvvcs3new}.
Let $Y_1\equivd \tilde{Y}_1\uplus \hat{Y}_1,Y_2\equivd \tilde{Y}_2\uplus \hat{Y}_2,Z_1\equivd \tilde{Z}_1\uplus \hat{Z}_1,Z_2\equivd \tilde{Z}_2\uplus \hat{Z}_2.$

Let the values of resistors
and controlled source parameters be algebraically independent over $\Q.$ 
Further, let ${\V}^1_{S'P'S"P"}$ be  Dirac.

1.  {\bf Necessity for multiport being rigid} $\ \ $ Let $\V_{S'S"}$ be the vector space translate of $\A_{S'S"}.$
Then $\V_{S'S"}$
has full sum property with
${\V}^1_{S'P'S"P"}\circ S'S"$ and zero intersection property with
${\V}^1_{S'P'S"P"}\times S'S",$
only if
the sets of columns $ Y'_1\uplus E', Y"_2\uplus J",$ of ${\V}^1_{S'P'S"P"},$ are independent.

2. {\bf Sufficiency for multiport being rigid} $\ \ $ The generalized multiport  $\N^g_P$ is rigid \\a) if
the set of columns $Z'_1\uplus Y'_1\uplus E',$ can be included in a column base of ${\V}^1_{S'P'S"P"}\circ S'P'$ and the set of columns
$Z'_2\uplus Y'_2\uplus J"$ can be included in a  column cobase of ${\V}^1_{S'P'S"P"}\circ S'P',$ i.e., $Z"_2\uplus Y"_2\uplus J"$ is an independent set of columns
of ${\V}^1_{S'P'S     "P"}\times S"P".$
$$or $$
b) if the set of columns $Z"_2\uplus Y"_2\uplus J",$ can be included in a column base of ${\V}^1_{S'P'S"P"}\circ S"P"$ and the set of columns
$Z"_1\uplus Y"_1\uplus E"$ can be included in a  column cobase of ${\V}^1_{S'P'S"P"}\circ S"P", $ i.e., $Z'_1\uplus Y'_1\uplus E'$ is an independent set of columns
 of ${\V}^1_{S'P'S"P"}\times S'P'.$

3. {\bf Existence of hybrid representation}  $\ \ $ 
Let the above conditions of 2(a) or 2(b) be satisfied.
Then   there exists a
 column base $b_1$ of ${\V}^1_{S'P'S"P"},$ that contains $Z'_{1}\uplus Y'_{1}\uplus E'\uplus Z"_{2}\uplus Y"_{2}\uplus J".$
For such a column  base, the port behaviour of  $\N^g_P$
has a hybrid representation
with $(b_1\cap P')\uplus (b_1\cap P")$ as a column base for the vector space
translate of the behaviour.

\end{theorem}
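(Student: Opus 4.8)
By the definition of rigidity for generalized multiports, $\N^g_P$ is rigid iff the pair $\{\V^1_{S'P'S"P"},\V_{S'S"}\}$ is rigid, where $\V_{S'S"}$ is the vector space translate of $\A_{S'S"}$. The first move is to pass to matroids: because the resistor values and controlled source parameters are algebraically independent over $\Q$, the identities $\M(\V^1_{S'P'S"P"}+\V_{S'S"})=\M(\V^1_{S'P'S"P"})\vee\M(\V_{S'S"})$ and its orthogonal companion hold (this is the content relegated to \ref{sec:matroidalrigidity} and Theorem \ref{thm:maxdistancenew2}), so Theorem \ref{thm:rigidmatroidvector}(2c) reduces the problem to the rigidity of the matroid pair $\{\M(\V^1_{S'P'S"P"}),\M(\V_{S'S"})\}$. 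By part 5 of Theorem \ref{thm:regularrecursivem} this is equivalent to the existence of disjoint bases of $\M(\V^1_{S'P'S"P"})$ and $\M(\V_{S'S"})$ whose union contains $S'\uplus S"$. Thus the whole theorem becomes a statement about covering $S'\uplus S"$ by two disjoint independent sets, one of which is a base of the simple matroid $\M(\V_{S'S"})$.

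The combinatorial input is the structure of $\M(\V_{S'S"})$. Since distinct devices share no columns and no equation couples two devices, $\V_{S'S"}$ splits as a direct sum of the individual device characteristics, whence $\M(\V_{S'S"})$ is the direct sum of the associated rank-one and rank-two matroids. Reading Equations \ref{eqn:ccvsvvcs1new}--\ref{eqn:ccvsvvcs3new} in the translate (all sources set to zero), its loops are exactly the columns forced to zero, namely $E'\uplus J"\uplus Y_1'\uplus Y_2"$; its coloops are the free source variables $E"\uplus J'\uplus Z_1"\uplus Z_2'$; and its nontrivial parallel classes are $\{r',r"\}$ for each resistor and $\{\tilde Y_1",\tilde Z_1'\},\{\tilde Y_2',\tilde Z_2"\},\{\hat Y_1",\hat Z_2"\},\{\hat Y_2',\hat Z_1'\}$ for the controlled sources. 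Part 1 is now immediate: under the full sum property $\V^1_{S'P'S"P"}\circ(S'S")+\V_{S'S"}=\F_{S'S"}$, restricting to the loop set $L:=E'\uplus J"\uplus Y_1'\uplus Y_2"$ and using that $\V_{S'S"}$ vanishes on $L$ (Theorem \ref{thm:sumintersection}(5a)) yields $\V^1_{S'P'S"P"}\circ L=\F_L$; hence $L$, and in particular both $Y_1'\uplus E'$ and $Y_2"\uplus J"$, are independent columns of $\V^1_{S'P'S"P"}$.

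For part 2(a) I would build the two bases explicitly. Take $b_2$ to be the base of $\M(\V_{S'S"})$ consisting of all coloops together with the controlling columns $Y_1"\uplus Y_2'$ (one element of each controlled-source parallel class) and one endpoint of each resistor; then its complement in $S'\uplus S"$ is $A:=Z_1'\uplus Y_1'\uplus E'\uplus Z_2"\uplus Y_2"\uplus J"\uplus R_{\mathrm{ch}}$, where $R_{\mathrm{ch}}$ is the opposite resistor endpoint. Let $\sigma$ denote the interchange of primed and double-primed columns. Then $A=\sigma(b_2)$, and since $\V^1_{S'P'S"P"}$ is Dirac a set lies in a column cobase iff its $\sigma$-image lies in a column base; consequently the two hypotheses of Theorem \ref{thm:dotcrossidentity}(7) — that $A$ sit in a column base and $b_2$ in a column cobase of $\V^1_{S'P'S"P"}$ — collapse to the single requirement that $A$ be contained in a column base, from which Theorem \ref{thm:dotcrossidentity}(7) produces a base $b_1$ of $\M(\V^1_{S'P'S"P"})$ containing $A$ and disjoint from $b_2$, giving the disjoint covering bases. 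The crux is to realize $A$ inside a column base from the two conditions of 2(a): extend $Z_2"\uplus Y_2"\uplus J"$ (independent in $\V^1_{S'P'S"P"}\times S"P"$ by hypothesis) to a column base $P_1$ of $\V^1_{S'P'S"P"}\times S"P"$, invoke Theorem \ref{thm:dotcrossidentity}(5) to get a column base $S_1$ of $\V^1_{S'P'S"P"}\circ S'P'$ with $S_1\cup P_1$ a column base of $\V^1_{S'P'S"P"}$, and then use the second hypothesis (that $Z_1'\uplus Y_1'\uplus E'$ lies in a column base of $\V^1_{S'P'S"P"}\circ S'P'$) to exchange $S_1$ into a base actually containing $Z_1'\uplus Y_1'\uplus E'$ while keeping $S_1\cup P_1$ a base and distributing the resistor endpoints consistently. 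This base exchange inside the union, reconciling the prime-side and double-prime-side requirements together with the resistor parallel classes, is the genuinely delicate step; everything else is bookkeeping built on the direct-sum description of $\M(\V_{S'S"})$. Part 2(b) is the $\sigma$-dual of this, obtained from part 4 of Theorem \ref{thm:regularrecursivem} and the Dirac symmetry.

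Finally, part 3: the union $b_1\uplus b_2$ is a base of $\M(\V^1_{S'P'S"P"})\vee\M(\V_{S'S"})$ covering $S'\uplus S"$, so its intersection with $P'\uplus P"$, which equals $b_1\cap(P'\uplus P")$ since $b_2\subseteq S'\uplus S"$, is a base of $(\M(\V^1_{S'P'S"P"})\vee\M(\V_{S'S"}))\times(P'P")=\M(\V^1_{S'P'S"P"})\lrar\M(\V_{S'S"})=\M(\V^1_{S'P'S"P"}\lrar\V_{S'S"})$ (Theorem \ref{thm:matchedpropm} and Theorem \ref{thm:rigidmatroidvector}(2b)), hence a column base of the port behaviour; and because $b_1$ was chosen through $Z_1'\uplus Y_1'\uplus E'\uplus Z_2"\uplus Y_2"\uplus J"$ it contains $(b_1\cap P')\uplus(b_1\cap P")$ as claimed. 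That this column base meets each pair $\{p',p"\}$ in exactly one element, giving the hybrid form, should follow from the port behaviour being a proper Dirac space of rank $|P|$ (Corollary \ref{cor:reciprocalDirac} with part 3 of Corollary \ref{cor:adjointrigidity}); pinning down this last one-per-port property is the remaining point requiring care.
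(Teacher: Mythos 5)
Your overall strategy coincides with the paper's (reduce rigidity of $\{\V^1_{S'P'S"P"},\V_{S'S"}\}$ to the existence of disjoint bases of the two associated matroids covering $S'\uplus S"$, build those bases from the loop/coloop/parallel-class structure of $\M(\V_{S'S"})$, and exploit the prime/double-prime symmetry $\sigma$ of the Dirac space), and your combinatorial setup is sound: the description of $\M(\V_{S'S"})$, the choice of $b_2$, and the identity $S'\uplus S"-b_2=\sigma(b_2)$ are all correct. But the step you yourself flag as ``genuinely delicate'' is a genuine gap, and it is precisely the point where no generic matroid exchange argument works. From a base $S_1\cup P_1$ of $\V^1_{S'P'S"P"}$ assembled via Theorem \ref{thm:dotcrossidentity}(5) from a column base $P_1$ of $\V^1_{S'P'S"P"}\times S"P"$ through $Z_2"\uplus Y_2"\uplus J"$ and a column base $S_1$ of $\V^1_{S'P'S"P"}\circ S'P'$, there is no reason that at least one of $r',r"$ lies in $S_1\cup P_1$ for every resistor $r$ (nor one of $p',p"$ for every port $p$), and exchanging inside the union cannot create columns that lie in neither piece; for a non-Dirac space the required base need not exist at all. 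What closes the gap is Lemma \ref{lem:diracrestriction}, part 2: for a Dirac space, \emph{any} column base $T'$ of $\V^1_{S'P'S"P"}\circ S'P'$ extends to the column base $T'\uplus((S\uplus P)-T)"$ of $\V^1_{S'P'S"P"}$. Applying Theorem \ref{thm:dotcrossidentity}(7) to the restriction $\V^1_{S'P'S"P"}\circ S'P'$ (hypothesis 2(a) puts $Z_1'\uplus Y_1'\uplus E'$ inside a column base of it, and via the $\sigma$-symmetry puts $Z_2'\uplus Y_2'\uplus J'$ inside a column cobase of it) yields $T$ containing $Z_1\uplus Y_1\uplus E$ and avoiding $Z_2\uplus Y_2\uplus J$; then $b_1\equivd T'\uplus((S\uplus P)-T)"$ contains all the required source and controlling columns, meets each edge of $S\uplus P$ in exactly one copy, and is disjoint from your $b_2$. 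The same lemma settles the one-per-port property you leave open in part 3: your rank count only gives $|b_1\cap(P'\uplus P")|=|P|$, which does not exclude one port contributing both copies and another contributing none, whereas the $T$-construction gives $b_1\cap(P'\uplus P")=(T\cap P)'\uplus(P-T)"$ directly.

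A secondary problem is the justification of your opening reduction to matroids. Theorem \ref{thm:maxdistancenew2} does not assert $\M(\V^1_{S'P'S"P"}+\V_{S'S"})=\M(\V^1_{S'P'S"P"})\vee\M(\V_{S'S"})$; its conclusions at the level of the full device space are derived from the identities of Lemma \ref{lem:generalitytomaxdistancenew}, which apply only to the purely generic block $\V^2_B$, \emph{together with} the standing structural hypotheses $\V^1_{S'P'S"P"}\times(E"\uplus J'\uplus Z_1"\uplus Z_2')=\0$ and $\V^1_{S'P'S"P"}\circ(E'\uplus J"\uplus Y_1'\uplus Y_2")=\F$ --- and in the paper these structural conditions are themselves consequences of the 2(a)/2(b) hypotheses (obtained through the Dirac lemma above), not of algebraic independence alone. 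Your reduction can be repaired without first establishing rigidity-type conditions, by peeling off the free and zero columns (the sum splits as $\F$ on the free columns direct-summed with the sum of $\V^2_B$ and a restriction of $\V^1_{S'P'S"P"}$, and the matroid union splits identically, after which the Laplace-expansion argument of Lemma \ref{lem:generalitytomaxdistancenew}(3a) applies), but as written the identities you need are not supported by the results you cite, and the paper's own route runs in the opposite order: hypotheses of 2(a) first, structural conditions second, matroid reduction last.
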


\begin{remark}
We note that Theorem \ref{thm:purslow2} requires testing of independence of certain columns of the port behaviour of $\N_{SP}\equivd (\G_{S_1SP}, D_{S_1'S_1"})$ 
Usually the size of $S\uplus P$ would be much greater than that of $S_1.$
Computing the port behaviour of $\N_{SP}$ directly would be expensive.
However, the ports $S\uplus P$ would have redundancies in $\N_{SP}.$
We show in \ref{subsec:porttransformation}, that in such cases we can compute the port behaviour of a multiport in which the ports are minimized and the independence can be reduced to testing that of a reduced set of columns in the minimized port behaviour. Further, it is shown there that the minimized number of ports
 is not greater than $|S_1|.$
\end{remark}

\section{Conclusions and future work}
\label{sec:conclusions}
We have introduced the notion of rigidity for linear multiports as 
 the counterpart of uniqueness of solution for linear networks 
and shown its utility in understanding the behaviour of linear networks 
composed of multiports.

We have shown that there is an analogous notion of rigidity for pairs of matroids and that its properties mimic those for multiports.
We have exploited this analogy for giving simple and very efficient topological tests 
for rigidity of electrical multiports composed of nonzero resistors, independent and 
controlled  voltage and independent and
controlled  current sources assuming that the parameters of the devices
are algebraically independent over $\Q.$

We mention below a few lines of future research.

Gyrators and ideal transformers are important electrical components
 and also means of imposing conditions similar to topological 
ones. 
In  this paper, we have handled 
 the case where the algebraic independence condition
 is not valid. 
If one assumes algebraic independence over $\Q$ 
for their parameters, one can develop purely matroidal tests for 
rigidity of multiports containing them, analogous to the theory for 
uniqueness of solution of networks containing them \cite{recski89, lp86}.

Orthogonality is an important property in computational linear algebra.
 Rigidity is a weaker form of complementary orthogonality (Subsection \ref{sec:rigiddual}). It  is the best that can be asked 
for, when the two spaces model different physical entities  
(such as topological and device characteristic).
Similarly for matroids, rigidity can be regarded as a weaker form of duality (Subsection \ref{sec:rigiddualm}).
It may be interesting to explore this aspect of rigidity.

Both matched composition and the implicit duality theorem (Theorems \ref{thm:idt0}, \ref{thm:idt0m}) hold for polymatroid rank functions \cite{STHN2014}. It appears worthwhile
to explore the relevance of the notion of rigidity for such functions.
\appendix

\section{Rigidity of matroid pairs: proofs}
\label{subsec:rigidpairsmp}
\begin{proof}
({\it Proof of Lemma \ref{lem:rankfactsmatroids}})

1. We have $r(\M^1_B)+r(\M^2_B)= r(\M^1_B\vee\M^2_B) + r(\M^1_B\wedge \M^2_B),$ so that $|B|=r(\M^1_B)+r(\M^2_B)= r(\M^1_B\vee\M^2_B) $ iff
$r(\M^1_B\wedge \M^2_B)=0.$

2. By Theorem \ref{cor:ranklrarm}, we have\\
$r(\M_{AB}\lrar \M_{BC})=r(\M_{AB}\times A)+ r(\M_{BC}\times C) +r(\M_{AB}\circ B\wedge \M_{BC}\circ B)-r(\M_{AB}\times B\wedge \M_{BC}\times B).$
\\
Now $r(\M_{AB}\circ B\wedge \M_{BC}\circ B)=r(\M_{AB}\circ B)+r(\M_{BC}\circ B)-
r(\M_{AB}\circ B\vee \M_{BC}\circ B).$\\
Therefore, using Theorems \ref{thm:sumintersectionm},\ref{thm:dotcrossidentitym}, we have
 $r(\M_{AB}\lrar \M_{BC})$\\$=r(\M_{AB}\times A)+ r(\M_{BC}\times C) +
r(\M_{AB}\circ B)+r(\M_{BC}\circ B)-
r(\M_{AB}\circ B\vee \M_{BC}\circ B)-r(\M_{AB}\times B\wedge \M_{BC}\times B)$
\\
$=r(\M_{AB})+r(\M_{BC})-r(\M_{AB}\circ B\vee\M_{BC}\circ B)-r(\M_{AB}\times B\wedge \M_{BC}\times B).$ Therefore,
\\
we have $r(\M_{AB}\lrar \M_{BC})=r(\M_{AB})+r(\M_{BC})-|B|,$ if
$\M_{AB}\circ B\vee\M_{BC}\circ B=\F_B$ and\\ $\M_{AB}\times B\wedge \M_{BC}\times B=\0_B,$
i.e., if $(\M_{AB}, \M_{BC})$ is rigid.

\end{proof}

We need the following lemma for the proof of the  result on rigidity of 
pairs of matroids which are derived through the `$\lrar$' operation 
from a rigid pair of matroids.
\begin{lemma}
\label{lem:factsmatroids}
\begin{enumerate} 
\item  $[\M_{WTV}\circ TV\vee(\M_T\oplus\M_V)]\circ V=\M_{WTV}\circ V\vee\M_V;$
\item  $[\M_{WTV}\circ TV\vee(\M_T\oplus\M_V)]\times T=[(\M_{WTV}\circ TV\vee\M_V)\times T]\vee\M_T;$
\item  $\M_{WTV}\circ TV\lrar \M_V=(\M_{WTV}\lrar \M_V)\circ T.$
\end{enumerate} 
\end{lemma}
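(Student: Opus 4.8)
The plan is to prove Lemma \ref{lem:factsmatroids} by treating each of its three parts as the matroidal mirror-image of the corresponding vector-space facts in Lemma \ref{lem:factsvectorspaces}, replacing $+$ by $\vee$, $\cap$ by $\wedge$, $\F$/$\0$ and $\perp$ by their matroid counterparts, and appealing throughout to Theorems \ref{thm:dotcrossidentitym}, \ref{thm:sumintersectionm} and \ref{thm:matchedpropm}. Since all three statements involve only minors of matroid unions, the cleanest route is to push every expression down to its defining set using the distributivity identities rather than arguing base-by-base, although a base-cover argument is available as a fallback.

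For part 1, I would first unfold $\M_{WTV}\circ TV\vee(\M_T\oplus\M_V)$ as the union of matroids on the ground set $T\uplus V$ (here $\M_T\oplus\M_V$ lives on $T\uplus V$ with $\M_T$ on $T$ and $\M_V$ on $V$), and then apply part 5(a) of Theorem \ref{thm:sumintersectionm}, namely $(\M^1_A\vee\M^2_B)\circ X=\M^1_A\circ X\vee\M^2_B\circ X$ for $X\subseteq A\cap B$. Restricting to $V$ distributes the restriction across the union, giving $(\M_{WTV}\circ TV\circ V)\vee((\M_T\oplus\M_V)\circ V)$. The first term collapses to $\M_{WTV}\circ V$ because $\circ V$ after $\circ TV$ is just $\circ V$, and the second collapses to $\M_V$ since restricting a direct sum $\M_T\oplus\M_V$ to $V$ returns $\M_V$; this yields $\M_{WTV}\circ V\vee\M_V$ as required.

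For part 2, the analogous vector-space identity (b) combined restriction-to-$V$ data with a contraction-to-$T$; here I would contract the union to $T$. The first move is to write $[\M_{WTV}\circ TV\vee(\M_T\oplus\M_V)]\times T$ and observe that contracting the $V$-coordinates out of the union is exactly the operation $\M_{WTV}\circ TV\lrar \M_V$ augmented by $\M_T$. Concretely, I expect to use part 2 of Theorem \ref{thm:dotcrossidentitym} (the $r(\Msp)=r(\Msp\circ S)+r(\Msp\times P)$ decomposition) together with the definition of linking $\M_{SP}\lrar\M_{PQ}=(\M_{SP}\vee\M_{PQ})\times(S\uplus Q)$ to identify $(\M_{WTV}\circ TV\vee\M_V)\times T$ as the contraction that survives, and then peel off the free direct summand $\M_T$, which contributes $\vee\M_T$ on the $T$-block. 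This is the step I anticipate being the main obstacle, because the interaction between the direct-summand $\M_T$ (which must be kept intact) and the contraction $\times T$ (which must kill $V$) requires care: one must verify that contracting away $V$ commutes appropriately with the $\M_T$ summand, so that the answer splits as $[(\M_{WTV}\circ TV\vee\M_V)\times T]\vee\M_T$ rather than mixing the $T$-blocks prematurely. I would handle this by checking the identity on bases, using that $\M_T$ occupies only columns $T$ and that bases of the union restrict cleanly.

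For part 3, I would invoke the definition of matched composition for matroids directly: by Theorem \ref{thm:matchedpropm}, $\M_{WTV}\circ TV\lrar \M_V=(\M_{WTV}\circ TV\vee\M_V)\times(T)$, while $(\M_{WTV}\lrar \M_V)\circ T=((\M_{WTV}\vee\M_V)\times(WT))\circ T$. The equality of these two then reduces to the commuting of the minor operations $\circ TV$, $\times WT$ and $\circ T$, which follows from part 1 of Theorem \ref{thm:dotcrossidentitym}, $(\M_S\times T_1)\circ T_2=(\M_S\circ(S-(T_1-T_2)))\times T_2$, applied to the relevant ground sets, exactly paralleling Lemma \ref{lem:factsvectorspaces}(c). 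Since these manipulations are purely formal minor identities with no rank-theoretic subtlety, part 3 should follow mechanically once parts 1 and 2 have fixed the conventions; the whole lemma is then complete.
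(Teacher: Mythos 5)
Your proposal is correct and follows essentially the same route as the paper: part 1 by distributing restriction over matroid union (Theorem \ref{thm:sumintersectionm}, part 5(a)), part 2 by a direct base-level verification (you rightly flag that contraction does not distribute over union, which is exactly why the paper argues base-by-base there), and part 3 by unfolding the definition of $\lrar$ and commuting minors via part 1 of Theorem \ref{thm:dotcrossidentitym}, with the restriction-over-union distributivity (which you cite in your toolkit) supplying the step that pulls $\circ TV$ outside the union. No substantive divergence from the paper's argument.
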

\begin{proof}
Let us denote $(\M_{WTV}\circ TV\vee\M_V)$ by $\hat{\M}_{TV}.$

1. 
We need to show $(\hat{\M}_{TV}\vee \M_T)\circ V= (\M_{WTV}\circ V\vee\M_V).$
\\We have $(\hat{\M}_{TV}\vee \M_T)\circ V=\hat{\M}_{TV}\circ V\vee
((\0_V\oplus \M_T)\circ V)= \hat{\M}_{TV}\circ V = (\M_{WTV}\circ TV \circ V\vee\M_V)= (\M_{WTV}\circ V\vee\M_V).$

2. 
We need to show that $(\hat{\M}_{TV}\vee \M_T)\times T=
(\hat{\M}_{TV}\times T) \vee \M_T.$

%
%
%
%

Let $\hat{b}_T$ be a base of $(\hat{\M}_{TV}\times T) \vee \M_T.$
Then there exist bases $b^1_{TV}, b^2_T$ of $\hat{\M}_{TV}, \M_T,$
respectively such that $\hat{b}_T=(b^1_{TV}\cap T)\cup b^2_T$ 
and 
$b^1_{TV}\cap V$ is a base of $\hat{\M}_{TV}\circ V,$
 i.e., $(b^1_{TV}\cup b^2_T)\cap V$ is a base of $(\hat{\M}_{TV}\vee \M_T)\circ V.$
It follows that $(b^1_{TV}\cup b^2_T)\cap T= \hat{b}_T$ is independent in $(\hat{\M}_{TV}\vee \M_T)\times T.$

Let $\hat{b}_T$ be a base of $(\hat{\M}_{TV}\vee \M_T)\times T.$
Then there exist  bases 
$b^1_{TV}, b^2_T$ of $ \hat{\M}_{TV}, \M_T, $ respectively such 
that $(b^1_{TV}\cap T) \cup b^2_T = \hat{b}_T,$ 
 and $(b^1_{TV}\cup b^2_T)\cap V = b^1_{TV}\cap V$ is a base of 
$(\hat{\M}_{TV}\vee \M_T)\circ V= \hat{\M}_{TV}\circ V.$
It follows that $(b^1_{TV}\cap T) $ is independent in 
$\hat{\M}_{TV}\times T$ and $(b^1_{TV}\cap T) \cup b^2_T = \hat{b}_T,$ is independent in $(\hat{\M}_{TV}\times T) \vee \M_T.$

3. $\M_{WTV}\circ TV\lrar \M_V=
(\M_{WTV}\circ TV \vee \M_V)\times T= 
(\M_{WTV}\vee \M_V)\circ TV\times T
$\\$= (\M_{WTV}\vee \M_V)\times WT\circ T
$(using part 1 of Theorem \ref{thm:dotcrossidentitym})
$
=(\M_{WTV}\lrar \M_V)\circ T.$
\end{proof}

\begin{proof}
({\it Proof of Theorem \ref{thm:derivedregularitym}})\\
%
We have $(\M_{WTV}\circ TV)\vee(\M_T\oplus\M_V)=\F_{TV}$ iff 
$r[(\M_{WTV}\circ TV)\vee(\M_T\oplus\M_V)]$\\$=r[((\M_{WTV}\circ TV)\vee(\M_T\oplus\M_V))\circ V]+r[((\M_{WTV}\circ TV)\vee(\M_T\oplus\M_V))\times T]=|{T\uplus V}|,$\\
using Theorem \ref{thm:dotcrossidentitym}.
Since $r(\M_X)\leq |X|,$
we have $(\M_{WTV}\circ TV)\vee(\M_T\oplus\M_V)=\F_{TV}$ iff \\
$((\M_{WTV}\circ TV)\vee(\M_T\oplus\M_V))\circ V=\F_{V},$
and
$((\M_{WTV}\circ TV)\vee(\M_T\oplus\M_V))\times T=\F_{T},$
\\i.e., iff
$\M_{WTV}\circ V\vee\M_V=\F_{V},$
and
$((\M_{WTV}\circ TV\vee\M_V)\times T)\vee\M_T=\F_{T},$
\\using part 1 and 2 of Lemma \ref{lem:factsmatroids},
i.e., iff
$\M_{WTV}\circ V\vee\M_V=\F_{V},$\\
and
$(\M_{WTV}\circ TV\lrar \M_V)\vee\M_T= ((\M_{WTV}\lrar \M_V)\circ T)\vee\M_T=\F_{T},$\\
using Theorem \ref{thm:matchedpropm}, using part 3 of Lemma \ref{lem:factsmatroids}.
\\
Thus the full sum property holds for $\{\M_{WTV},(\M_T\oplus\M_V)\}$ iff it holds for $\{\M_{WTV},\M_V\}$ and for 
$\{(\M_{WTV}\lrar \M_V),\M_T\}.$

We will now use Theorems \ref{thm:perperpm}, \ref{thm:sumintersectionm}, \ref{thm:dotcrossidentitym} and the Implicit
Duality Theorem (Theorem \ref{thm:idt0m}) and get the second half
of the property of rigidity that is required to complete the proof.

From the above argument, we have
$(\M_{WTV}^{*}\circ TV)\vee(\M_T^{*}\oplus\M_V^{*})=\F_{TV},$\\ iff
$(\M_{WTV}^{*}\circ V)\vee\M_V^{*}=\F_{V}$ and
$ ((\M_{WTV}^{*}\lrar \M_V^{*})\circ T)\vee\M_T^{*}=\F_{T},$
\\i.e., $((\M_{WTV}^{*}\circ TV)\vee(\M_T^{*}\oplus\M_V^{*}))^{*}=\F^{*}_{TV}$\\ iff
$(\M_{WTV}^{*}\circ V\vee\M_V^{*})^{*}=\F^{*}_{V}$ and
$ (((\M_{WTV}^{*}\lrar \M_V^{*})\circ T)\vee\M_T^{*})^{*}=\F^{*}_{T},$
\\i.e., (using Theorems \ref{thm:perperpm}, \ref{thm:sumintersectionm})
$(\M_{WTV}\times TV)\ \wedge\ (\M_T\oplus\M_V)=\0_{TV},$
\\iff
$(\M_{WTV}^{*}\circ V)^{*}\ \wedge \ \M_V=\0_{V}$ and
$ (\M_{WTV}^{*}\lrar \M_V^{*})^{*}\times T\ \wedge \ \M_T=\0_{T},$
\\i.e., (using Theorems \ref{thm:perperpm}, \ref{thm:dotcrossidentitym}, \ref{thm:idt0m}) $(\M_{WTV}\times TV)\ \wedge\ (\M_T\oplus\M_V)=\0_{TV},$
\\iff
$(\M_{WTV}\times V)\ \wedge \ \M_V=\0_{V}$ and
$ (\M_{WTV}\lrar \M_V)\times T\ \wedge \ \M_T=\0_{T}.$
\\
Thus the zero intersection property holds for $\{\M_{WTV},(\M_T\oplus\M_V)\}$ iff it holds for $\{\M_{WTV},\M_V\}$ and for
$\{(\M_{WTV}\lrar \M_V),\M_T\}.$

2. We have $\M_{WTV}\lrar \M_V\equivd (\M_{WTV}\wedge \M_V)\circ WT=
(\M_{WTV}\vee\ \M_V)\times WT$ (using Theorem \ref{thm:sumintersectionm}).\\
Rigidity of $\{\M_{WTV}\lrar \M_V, \M_T\}$ 
is equivalent to the full sum and zero intersection conditions\\
$((\M_{WTV}\lrar \M_V)\circ T)\vee \M_T=\F_T,$
and $((\M_{WTV}\lrar \M_V)\times T)\wedge \M_T=\0_T.$\\
The full sum condition is equivalent to $((\M_{WTV}\wedge \M_V)\circ WT\circ T)\vee\M_T=((\M_{WTV}\wedge \M_V)\circ T)\vee\M_T=\F_T$
and the zero intersection condition is equivalent to $((\M_{WTV}\vee \M_V)\times WT\times T)\wedge \M_T$\\$=((\M_{WTV}\vee \M_V)\times T)\wedge \M_T=\0_T.$

3. Let us say matroid $\M^1_X\geq \M^2_X,$ 
iff every base of $\M^1_X$ contains a base of $\M^2_X,$ 
and every base of $\M^2_X,$  is contained in a base of $\M^1_X.$
We observe that 
$(\M_{WTV}\vee \M_V)\geq (\M_{WTV}\wedge \M_V),$ so that 
$(\M_{WTV}\vee \M_V)\circ T\geq (\M_{WTV}\wedge \M_V)\circ T$ and $(\M_{WTV}\vee \M_V)\times T\geq (\M_{WTV}\wedge \M_V)\times T.$
\\ Therefore, if $\{\M_{WTV}\wedge \M_V,\M_T\}$ satisfies the full
sum condition, so does  $\{\M_{WTV}\vee \M_V,\M_T\}$
and if\\ $\{\M_{WTV}\vee \M_V,\M_T\}$ satisfies the zero
intersection condition, so does  $\{\M_{WTV}\wedge \M_V,\M_T\}.$
\\ 
The result now follows from the previous part.
\end{proof}

%
%

\section{Matroidal conditions for rigidity of multiports}
\label{sec:matroidalrigidity}
The matroidal approach to testing rigidity of multiports
is based on the following lemma 
which speaks of the rigidity of $\{\V_{AB},\A_B\}$
where $\V_{AB}$ is over $\Q$ and $\V_B$ is in terms of parameters which are algebraically 
independent over $\Q.$
\begin{lemma}
\label{lem:generalitytomaxdistancenew}
Let $\A^2_{B}\equivd \A^2_{B_1B_2}$ be the solution space of the equation
\begin{align}
\label{eqn:devcharnew}
{y_{B_2}}={D}{x_{B_1}}+ {s}.
\end{align}
where $D$ is a diagonal matrix.
Let $\A^2_{B_1B_2}$ have $\V^2_{B_1B_2}$ as its vector space translate.
Let the diagonal entries of $D$ be algebraically independent over
$\Q.$ 

Let $(C_{B_1}|C_{B_2})$ be the representative matrix of a vector space $\V^1_{B_1B_2}$
with entries from $\Q.$
Let
there be a  set of columns $j_1,j_2,\cdots ,j_{n}$
of $(C_{B_1}|C_{B_2}),$ 
which are linearly independent,
and whose complement with respect to $B_1\uplus B_2,$
say $i_1,i_2, \cdots ,i_{m}$ is a
column base of the matrix
\begin{align}
\label{eqn:devchar2}
F= \ppmatrix{F_{B_1}|F_{B_2}}\equivd
\ppmatrix{
I & D}.
\end{align}
We then have the following.
\begin{enumerate}
\item Let $r(\V^{1}_{B_1B_2})+r(\V^2_{B_1B_2})\geq |B_1\uplus B_2|=|B|.$
Then $\V^1_{B_1B_2}, \V^2_{B_1B_2}$ have the full sum property.
\item Let $r(\V^{1}_{B_1B_2})+r(\V^2_{B_1B_2})\leq |B_1\uplus B_2|=|B|.$
Then $\V^1_{B_1B_2}, \V^2_{B_1B_2}$ have the zero intersection property.
\item Let $\V^1_{AB}$ be a vector space over $\Q$ and let $\V^2_B$ 
be the vector space translate of $\A^2_B$ defined by Equation \ref{eqn:devcharnew}.
\begin{enumerate}
\item $\M(\V^1_{AB}+\V^2_{B})=\M(\V^1_{AB})\vee \M(\V^2_{B})$ and
$\M((\V^1_{AB})^{\perp}+(\V^{2}_{B})^{\perp})=
\M((\V^1_{AB})^{\perp})\vee \M((\V^{2}_{B})^{\perp}).$
\item
The following are equivalent.
\begin{enumerate}
\item There exists a pair of disjoint bases $b^1,b^2$ of $\V^1_{AB},\V^2_B,$
respectively such that $b^1\cup b^2\supseteq B.$
\item $r(\V^{1}_{AB}\circ B )+r(\V^2_{B})\geq |B|$
and $r((\V^{1}_{AB})^{\perp}\circ B )+r((\V^2_{B})^{\perp})\geq |B|.$
\item $\{\V^1_{AB},\V^2_B\}$ is rigid.
\item $\{\M(\V^1_{AB}),\M(\V^2_B)\}$ is rigid.
\end{enumerate}
\end{enumerate}

\end{enumerate}
\end{lemma}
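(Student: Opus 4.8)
The plan is to reduce the whole lemma to one genericity computation: exhibiting a surviving monomial in the algebraically independent diagonal entries of $D$ inside a suitable maximal minor. Write $C\equivd(C_{B_1}|C_{B_2})$ for the representative matrix of $\V^1_{B_1B_2}$ over $\Q$ and $F=(I|D)$ for that of $\V^2_{B_1B_2}$. Since $\{i_1,\dots,i_m\}$ is a column base of $F$ we have $m=r(F)=|B_1|$ and $n=|B|-|B_1|=|B_2|$, and since $\{j_1,\dots,j_n\}$ is independent in $\V^1_{B_1B_2}$ we always have $r(\V^1_{B_1B_2})\ge n$. The stacked matrix $M\equivd\binom{C}{F}$ has row space $\V^1_{B_1B_2}+\V^2_{B_1B_2}$, so the full sum property is $r(M)=|B|$ and the zero intersection property is $r(M)=r(\V^1_{B_1B_2})+r(\V^2_{B_1B_2})$.

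For parts 1 and 2 I would first choose $n$ rows $C'$ of $C$ for which the columns $\{j\}$ of $C'$ form an invertible $n\times n$ rational block (possible since those columns have rank $n$), and then study the $|B|\times|B|$ determinant $\det\binom{C'}{F}$. Expanding by the generalized Laplace rule along the $|B_1|$ rows of $F$ gives $\sum_{J}\pm\det(F_J)\,\det(C'_{B\setminus J})$ over $|B_1|$-subsets $J$ of columns, where $F_J$ denotes the submatrix of $F$ on columns $J$. Because every column of $F$ is a scalar multiple of a standard basis vector ($1$ on a $B_1$-column, $d_k$ on the paired $B_2$-column), $\det(F_J)$ is nonzero exactly when $J$ is a column base of $F$, and then equals $\pm\prod_{k\in K}d_k$ with $K$ recording the chosen $B_2$-columns; distinct column bases give distinct squarefree monomials. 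Hence there is no cancellation among the terms, and the coefficient of the monomial attached to $J=\{i_1,\dots,i_m\}$ is exactly $\pm\det(C'_{\{j\}})\ne0$. Thus $\det\binom{C'}{F}$ is a nontrivial polynomial in the $d_k$, so algebraic independence forces its value to be nonzero. When $r(\V^1_{B_1B_2})+r(\V^2_{B_1B_2})\ge|B|$ this yields $r(M)=|B|$, the full sum property (part 1); when the sum of ranks is $\le|B|$ the independence of $\{j\}$ forces $r(\V^1_{B_1B_2})=n$, so $C'=C$ and the same nonvanishing determinant gives $r(M)=r(\V^1_{B_1B_2})+r(\V^2_{B_1B_2})$, the zero intersection property (part 2).

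For part 3(a) I would run the identical expansion on minors. The inequality $\M(\V^1_{AB}+\V^2_B)\le\M(\V^1_{AB})\vee\M(\V^2_B)$ is the general stacked-matrix fact (fact (d) in the proof of Theorem \ref{thm:rigidmatroidvector}). For the reverse, take a base $b=b^1\uplus b^2$ of the union (disjoint, with $b^i$ independent in $\M(\V^i)$) and show its columns are independent in $\V^1_{AB}+\V^2_B$: choosing rows of the rational block $C^1=(C^1_A|C^1_B)$ realizing $b^1$ and rows of $F$ realizing $b^2$, the resulting square minor again carries a unique surviving monomial (the $B_2$-columns inside $b^2$ pin the monomial, and the transversal structure of $F$ makes the column-base-to-monomial map injective), hence is a nonzero polynomial and so nonzero. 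Therefore every union base is a column base of the sum, giving $\M(\V^1_{AB})\vee\M(\V^2_B)\le\M(\V^1_{AB}+\V^2_B)$ and equality; the orthogonal identity is the same argument applied to $(\V^2_B)^{\perp}$, which is again generic diagonal. Part 3(b) is then a chain of equivalences: Theorem \ref{thm:regularrecursivem}(5) gives (i)$\iff$(iv); part 3(a) supplies exactly the hypotheses of Theorem \ref{thm:rigidmatroidvector}(2c), giving (iii)$\iff$(iv); and for (ii)$\iff$(iii) I would apply parts 1 and 2 to $\{\V^1_{AB}\circ B,\V^2_B\}$, using Lemma \ref{lem:minorvectorspacem} and Theorem \ref{thm:dotcrossidentity} to identify the first inequality of (ii) with the full sum property of $\{\V^1_{AB},\V^2_B\}$, and using Theorem \ref{thm:regularrecursive}(4) to read the second inequality of (ii), on the complementary-orthogonal pair $\{(\V^1_{AB})^{\perp}\circ B,(\V^2_B)^{\perp}\}$, as the zero intersection property.

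I expect the main obstacle to be the no-cancellation step: one must verify that the map sending a column base of the generic-diagonal $F$ to its squarefree monomial is injective even when $b^2$ (respectively $\{j\}$) mixes $B_1$- and $B_2$-columns, so that exactly one term of the Laplace expansion carries each monomial and the chosen term cannot be annihilated. A secondary point requiring care in 3(b) is that the column-structure hypothesis (an independent transversal with complementary column base of the generic-diagonal matrix) transfers to the complementary-orthogonal pair; this follows from Theorem \ref{thm:perperp}(2) together with the matroid-duality relation $\M(\V^{\perp})=\M(\V)^{*}$, which interchanges column bases and column cobases.
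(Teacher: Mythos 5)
Your parts 1, 2 and 3(a) follow essentially the same route as the paper: stack a square matrix from rational rows of $C$ and the rows of $(I\,|\,D)$, expand by generalized Laplace along the $(I\,|\,D)$ block, observe that the nonzero minors of $(I\,|\,D)$ are exactly the transversals of the pairs coupled by $D$ and carry distinct squarefree monomials in the $d_k$ (so no cancellation), and invoke algebraic independence. Your part 2 is in fact a small improvement on the paper: you note that under $r(\V^{1}_{B_1B_2})+r(\V^2_{B_1B_2})\leq |B|$ the transversal hypothesis forces $r(\V^{1}_{B_1B_2})=|B_2|$, so $C'=C$ and rank additivity of the sum follows directly, whereas the paper dualizes to the pair with representative matrices $(-D\,|\,I)$ and then uses part 4 of Theorem \ref{thm:regularrecursive}. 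Your treatment of (i)$\Leftrightarrow$(iv) via part 5 of Theorem \ref{thm:regularrecursivem}, and of (iii)$\Leftrightarrow$(iv) via part 3(a) combined with part 2(c) of Theorem \ref{thm:rigidmatroidvector}, is also sound.

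The step that fails as written is (ii)$\Rightarrow$(iii). You propose to ``apply parts 1 and 2 to $\{\V^1_{AB}\circ B,\V^2_B\}$'', but parts 1 and 2 are not pure rank statements: they need the standing column-structure hypothesis (an independent set of columns of the rational space whose complement is a column base of $(I\,|\,D)$), and that hypothesis does not follow from the inequalities in (ii). Concretely, take $A=\emptyset$, $B_1=\{p,q\}$, $B_2=\{p',q'\}$ with $p,p'$ and $q,q'$ the pairs coupled by $D$, and let $\V^1_{AB}$ be spanned by the unit vectors on $p$ and on $p'$. Both inequalities of (ii) hold ($2+2\geq 4$ twice), yet $\V^1_{AB}+\V^2_B$ is spanned by $e_p$, $e_{p'}$, $e_q+d_qe_{q'}$ and has rank $3<4$, so the full sum property fails and (i), (iii), (iv) all fail; note the transversal hypothesis fails here (the only independent pair of columns of $\V^1_{AB}$ is $\{p,p'\}$, whose complement $\{q,q'\}$ is not a column base of $(I\,|\,D)$). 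To be fair, the paper's own proof of 3(bii)$\rightarrow$3(biii) invokes parts 1 and 2 with exactly the same omission, so you have reconstructed the argument faithfully, gap included; but a repaired proof must import the transversal hypothesis from (i) rather than from (ii): given disjoint column bases $b^1,b^2$ of $\V^1_{AB},\V^2_B$ with $b^1\cup b^2\supseteq B$, take the column base $\{i\}\equivd b^2$ of $(I\,|\,D)$ and the independent set $\{j\}\equivd B-b^2\subseteq b^1$, after which parts 1 and 2 apply and yield (iii); equivalently, route (i)$\Leftrightarrow$(iv)$\Leftrightarrow$(iii) through the matroid results and record (ii) only as a consequence of (i). Your flagged ``secondary point'' (transferring the hypothesis to the complementary orthogonal pair) is, by contrast, unproblematic and handled exactly as you say by part 2 of Theorem \ref{thm:perperp}, since the complement of a transversal of $(I\,|\,D)$ is a transversal of $(-D\,|\,I)$; the real difficulty sits one step earlier.
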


\begin{proof}
1. It is clear that $F$ is a representative matrix
for $\V^2_{B}.$
Let $(C_{2B_1}|C_{2B_2})$
be a suitable submatrix of 
$(C_{B_1}|C_{B_2})$
such that
\begin{align}
\label{eqn:repmatrixsumnew}
\ppmatrix{C_{2B_1}&C_{2B_2}
\\
I&D}
\end{align}
is square. This is possible since
 $r(\V^{1}_{B_1B_2})+r(\V^2_{B_1B_2})\geq |B_1\uplus B_2|=|B|.$
Perform a Laplace expansion of the determinant
of this matrix using the first set of rows 
as one block in the partition of rows.
One term in this expansion is $\pm det(C_{2}^j)\times det(F^i),$
where $C_{2}^j$ denotes the submatrix of 
$(C_{2B_1}|C_{2B_2})$
corresponding to set of columns $j_1,j_2,\cdots ,j_{n}$
and all rows, $F^i$ denotes the submatrix of
$F$
corresponding to set of columns $i_1,i_2,\cdots ,i_{m}$
and all rows.

We know that the determinants of $C_{2}^j$ and $F^i$ are nonzero
since their columns are linearly independent.
Therefore this term cannot be zero.
The determinant of $F^i$ has only one nonzero term
which will have the form $\pm D _{k1}\times \cdots \times D_{kt}.$
Suppose another pair $C_2^{j'}$ and $F^{i'}$ in the Laplace
expansion also has nonzero determinants. The determinant of $F^{i'}$ has only one nonzero term
which will have the form \\$\pm D_{k'1}\times \cdots \times D_{k't'}.$
 Since the columns of $F^{i}$ as well as $F^{i'}$ form column bases of 
the matrix $F,$ it is clear that the sets $\{D_{k1}, \cdots, D_{kt} \},
\{D_{k'1}, \cdots, D_{k't'} \},$ are distinct. Therefore the Laplace expansion
will be a polynomial involving distinct subsets of $\{D_{1}, \cdots D_{|B_2|},
\}$ in different terms and by algebraic independence
of these  variables over $\Q,$ the polynomial cannot
take the value zero.
Thus the matrix in Equation \ref{eqn:repmatrixsumnew}
is nonsingular. We conclude that
$r(\V^{1}_{B_1B_2}+\V^2_{B_1B_2})= |B_1\uplus B_2|$
and therefore that $\V^{1}_{B_1B_2},\V^2_{B_1B_2}$ have full sum property.

2. For handling the zero intersection case, when $r(\V^{1}_{B_1B_2})+r(\V^2_{B_1B_2})\leq |B_1\uplus B_2|,$
 we can make use of the fact that if 
$r(\V^{1}_{B_1B_2})+r(\V^2_{B_1B_2})\leq |B_1\uplus B_2|,$
then $r((\V^{1}_{B_1B_2})^{\perp})+r((\V^2_{B_1B_2})^{\perp})\geq |B_1\uplus B_2|.$
Noting that
\begin{align}
\label{eqn:repmatrixsum2new}
\ppmatrix{
-D& I}
\end{align}
is a representative matrix for $(\V^2_{B_1B_2})^{\perp},$
we can use the above proof to show that $(\V^{1}_{B_1B_2})^{\perp},(\V^2_{B_1B_2})^{\perp}$
have full sum property.
Using  part 4 of Theorem \ref{thm:regularrecursive},
we can then infer that
$\V^{1}_{B_1B_2},\V^2_{B_1B_2}$
 have zero intersection property.

3(a). The space $\V^1_{AB}+\V^2_{B}$ has a representative matrix of the form 
\begin{align}
\label{eqn:repmatrixsumnew2}
\ppmatrix{C_A&C_{B_1}&C_{B_2}
\\
0&I&D},
\end{align}
where $(I|D)$ is a representative matrix for $\V^2_B$
 and $(C_A|C_{B_1}|C_{B_2})$ is a matrix over $\Q.$
By using Laplace expansion as in part 1 above, we can see firstly that 
every column base of $\V^1_{AB}+\V^2_{B}$ is independent in 
$\M(\V^1_{AB})\vee \M(\V^2_{B}).$
Secondly, by the argument of part 1 
that each term in the expansion
which involves diagonal entries of $D$ has a distinct set of them,
using algebraic independence over $\Q$ of diagonal entries of $D,$ 
we can see that no cancellation of such terms can occur. Therefore every base of 
$\M(\V^1_{AB})\vee \M(\V^2_{B})$
 is column independent in 
$\V^1_{AB}+\V^2_{B}.$ We therefore have 
$\M(\V^1_{AB}+\V^2_{B})=\M(\V^1_{AB})\vee \M(\V^2_{B}).$
Since $(-D|I)$ is a representative matrix of $(\V^2_B)^{\perp}$
we can similarly prove that 
$\M((\V^1_{AB})^{\perp}+(\V^{2}_{B})^{\perp})=
\M((\V^1_{AB})^{\perp})\vee \M((\V^{2}_{B})^{\perp}).$

3(bi) $\rightarrow $ 3(bii). 
If $b^1,b^2$ are disjoint bases of $\V^1_{AB},\V^2_B,$
such that $b^1\cup b^2\supseteq B,$
it is clear that\\ $r(\V^1_{AB}\circ B)+r(\V^2_B)\geq |B|.$
Further,
if $b^1,b^2$ are disjoint bases of $\V^1_{AB},\V^2_B,$ such that 
$b^1\cup b^2\supseteq B,$\\
   $A\cup B- b^1,B-b^2$ are disjoint bases of $(\V^1_{AB})^{\perp},(\V^2_B)^{\perp},$ respectively and  $(A\cup B- b^1)\cup (B-b^2)\supseteq B.$
It is therefore clear that 3(bi) implies 3(bii).

3(bii)  $\rightarrow $ 3(biii).   Since $r(\V^1_{AB}\circ B)+r(\V^2_B)\geq |B|,$ by part 1 above, full sum property of $\{\V^1_{AB},\V^2_B\}$ follows.
Since $r((\V^{1}_{AB})^{\perp}\circ B )+r((\V^2_{B})^{\perp})\geq |B|,$
 we must have $|B|- r(\V^1_{AB}\times B)+|B|- r(\V^2_B)\geq |B|,$
\\ i.e., $r(\V^1_{AB}\times B)+r(\V^2_B)\leq |B|$
 (using Theorems \ref{thm:perperp}, \ref{thm:dotcrossidentity}).
\\Therefore, by part 2 above zero intersection property of $\{\V^1_{AB},\V^2_B\}$ follows.

3(biii)  $\rightarrow $ 3(biv).  
This is part 1 of Theorem \ref{thm:rigidmatroidvector}.
%
%

3(biv) $\rightarrow $ 3(bi).
This is part 5 of Theorem \ref{thm:regularrecursivem}.
%
\end{proof}
The usual situation  encountered in electrical circuit theory is one where 
the device characteristic 
 is composed of independent voltage and current sources, resistors
and controlled sources, as in Equations \ref{eqn:ccvsvvcs1new}, \ref{eqn:ccvsvvcsanew}, \ref{eqn:ccvsvvcs3new}. In this case the vector space translate of the device 
characteristic has  additional variables which have no constraints on them
 (eg. currents of independent and controlled voltage sources) and others which are zero
(eg. voltages of independent voltage sources and of controlling current branches).
%
Lemma \ref{lem:generalitytomaxdistancenew} needs some further processing before it can be applied to determine the rigidity 
of $\{\V^1_{AX},\V^2_X\},$
where $\V^2_X\equivd \F_Z\oplus \0_Y\oplus \V^2_B,$ and $
\V^1_{AB}=\V^1_{AX}\circ A (X-Z)\times AB.$ 
The most that can be said about this problem is given in Theorem \ref{thm:maxdistancenew2}.
This theorem essentially reduces the problem, once certain  necessary topological conditions 
 are satisfied, to checking the rigidity of  the matroids associated with
$\{\V^1_{AX},\V^2_X\}.$
This checking can be done by applying the matroid union algorithm \cite{edm65a}.
\begin{theorem}
\label{thm:maxdistancenew2}
Let 
$\V^1_{AX},\V^2_X$ be vector spaces 
on $A\uplus X,X,$ respectively. Let $X=Z\uplus Y \uplus B.$ 
\\Let $\V^1_{AB}\equivd \V^1_{AX}\circ A (X-Z)\times AB.$ 
Let $\V^2_X= \F_Z\oplus \0_Y\oplus \V^2_{B}$
 and let  $\V^1_{AX}\times Z=\0_Z, \ \V^1_{AX}\circ Y=\F_Y.$
We then have 
the following.
\begin{enumerate}
\item \begin{enumerate}
\item $\{\V^1_{AX}, \V^2_X\}$ is rigid iff $\{\V^1_{AB}, \V^2_B\}$
 is rigid.
\item $\{\M(\V^1_{AX}), \M(\V^2_X)\}$ is rigid iff 
$\{\M(\V^1_{AB}), \M(\V^2_B)\}$ is rigid.
\item $\V^1_{AX}\lrar \V^2_X=\V^1_{AB}\lrar \V^2_B.$
\item $\M(\V^1_{AX})\lrar \M(\V^2_X)= \M(\V^1_{AB})\lrar \M(\V^2_B).$
\end{enumerate}
\item Let $\M(\V^1_{AB}+\V^2_B)= \M(\V^1_{AB})\vee \M(\V^2_B)$ and let
$\M((\V^1_{AB})^{\perp}+(\V^2_B)^{\perp})= \M((\V^1_{AB})^{\perp})\vee \M((\V^2_B)^{\perp}).$ 
We then have the following.
\begin{enumerate}
\item  $\M(\V^1_{AX}\lrar \V^2_X)= \M(\V^1_{AX})\lrar \M(\V^2_{X})= \M(\V^1_{AB})\lrar \M(\V^2_{B})=\M(\V^1_{AB}\lrar \V^2_B).$
\item $\{\V^1_{AX}, \V^2_X\}$ is rigid iff $\{\M(\V^1_{AX}), \M(\V^2_X)\}$ is rigid.
\item $\{\V^1_{AB}, \V^2_B\} \ (\{\V^1_{AX}, \V^2_X\})$ is rigid, iff  there exists 
a pair of disjoint bases $b^1,b^2$ of 
$\M(\V^1_{AB}), \M(\V^2_B)\ $\\$(\M(\V^1_{AX}), \M(\V^2_X)) ,$ respectively, such that $(b^1\cup b^2)\supseteq B \ ((b^1\cup b^2)\supseteq X).$ 
For such a pair $b^1,b^2,$
  $b^1\cap A$ is a column base of $\V^1_{AX}\lrar \V^2_X.$
\end{enumerate}
\end{enumerate}
\end{theorem}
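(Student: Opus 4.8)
The plan is to push everything on the large set $A\uplus X$ down to the small set $A\uplus B$, treating the removal of the free block $Z$ and the grounded block $Y$ as matched compositions, and then to invoke the already-established correspondence between vector space and matroid rigidity. First I would record two minor identities obtained by unwinding $\V^1_{AB}\equivd\V^1_{AX}\circ A(X-Z)\times AB$, namely $\V^1_{AB}\circ B=\V^1_{AX}\circ(Y\uplus B)\times B$ and $\V^1_{AB}\times B=\{f_B:\exists f_Z,(0_A,f_Z,0_Y,f_B)\in\V^1_{AX}\}$; both are immediate from reading $\circ$ as projection and $\times$ as zero-restriction. Using $\V^2_X=\F_Z\oplus\0_Y\oplus\V^2_B$, statement (1c) is pure definition-chasing: both $\V^1_{AX}\lrar\V^2_X$ and $\V^1_{AB}\lrar\V^2_B$ equal $\{f_A:\exists f_Z,\exists f_B\in\V^2_B,(f_A,f_Z,0_Y,f_B)\in\V^1_{AX}\}$, so they coincide with no hypotheses used. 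For (1d) I would use $\V_{SP}\lrar\F_P=\V_{SP}\circ S$ and $\V_{SP}\lrar\0_P=\V_{SP}\times S$ together with their matroid analogues under the substitution table, plus associativity of $\lrar$ over the associative family $\{\M(\V^1_{AX}),\F_Z,\0_Y,\M(\V^2_B)\}$, to peel off $Z$ by a restriction and $Y$ by a contraction and land on $\M(\V^1_{AB})=\M(\V^1_{AX})\circ(A\uplus Y\uplus B)\times(A\uplus B)$ (Lemma \ref{lem:minorvectorspacem}); again no rigidity hypotheses enter.

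The crux is (1a), which I would verify one defining condition of Definition \ref{def:affinerigid} at a time. For the full sum condition, adding the free block $\F_Z$ reduces $\V^1_{AX}\circ X+\V^2_X=\F_X$ to $\V^1_{AX}\circ(Y\uplus B)+(\0_Y\oplus\V^2_B)=\F_{Y\uplus B}$; then the hypothesis $\V^1_{AX}\circ Y=\F_Y$ lets me strip the $Y$ coordinates and shows this is equivalent to $\V^1_{AX}\circ(Y\uplus B)\times B+\V^2_B=\F_B$, which by the first minor identity is exactly the full sum condition for $\{\V^1_{AB},\V^2_B\}$. For the zero intersection condition, the block $\0_Y$ forces the $Y$ part to vanish, so $\V^1_{AX}\times X\cap\V^2_X$ consists of vectors $(f_Z,0_Y,f_B)$ with $(0_A,f_Z,0_Y,f_B)\in\V^1_{AX}$ and $f_B\in\V^2_B$, and the hypothesis $\V^1_{AX}\times Z=\0_Z$ is precisely what forces $f_Z=0$ once $f_B=0$, giving equivalence with $\V^1_{AB}\times B\cap\V^2_B=\0_B$ via the second minor identity. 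The delicate bookkeeping here, that exactly one hypothesis governs each of the two conditions, is where the argument must be done with care, and is the main obstacle.

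Statement (1b) is then the matroid counterpart of (1a): the same two reductions transport, using $\F_Z\oplus\0_Y\oplus\M(\V^2_B)$, the minor identities carried over by Lemma \ref{lem:minorvectorspacem}, and Theorems \ref{thm:sumintersectionm} and \ref{thm:dotcrossidentitym} in place of their vector space analogues. Here the two reduction identities, $\M_W\vee(\F_Z\oplus\0_{Y\uplus B})=\F_Z\oplus(\M_W\circ(Y\uplus B))$ and $\M_U\vee(\0_Y\oplus\M^2_B)=\F_{Y\uplus B}\Leftrightarrow\M_U\times B\vee\M^2_B=\F_B$ under $\M_U\circ Y=\F_Y$, are reproved by routine base-exchange, after which the full sum and zero intersection characterisations match those for $\{\M(\V^1_{AB}),\M(\V^2_B)\}$.

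Finally, Part 2 is assembly. For (2a) I would chain $\M(\V^1_{AX}\lrar\V^2_X)=\M(\V^1_{AB}\lrar\V^2_B)$ (from (1c)), $\M(\V^1_{AB}\lrar\V^2_B)=\M(\V^1_{AB})\lrar\M(\V^2_B)$ (Theorem \ref{thm:rigidmatroidvector}(2b), whose hypotheses are exactly those assumed in Part 2), and $\M(\V^1_{AB})\lrar\M(\V^2_B)=\M(\V^1_{AX})\lrar\M(\V^2_X)$ (from (1d)). For (2b) I would combine (1a), Theorem \ref{thm:rigidmatroidvector}(2c), and (1b) to run $\{\V^1_{AX},\V^2_X\}$ rigid $\Leftrightarrow\{\V^1_{AB},\V^2_B\}$ rigid $\Leftrightarrow\{\M(\V^1_{AB}),\M(\V^2_B)\}$ rigid $\Leftrightarrow\{\M(\V^1_{AX}),\M(\V^2_X)\}$ rigid. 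For (2c), the equivalence with the existence of disjoint bases $b^1,b^2$ covering $B$ (resp. $X$) is part 5 of Theorem \ref{thm:regularrecursivem} applied to each matroid pair, linked to vector space rigidity through (2b) and (1b); and for such a pair, since $b^1\cup b^2$ is a base of $\M(\V^1_{AX})\vee\M(\V^2_X)$ containing $X$, which is independent in the union as it lies inside this base and hence is a base of its restriction to $X$, the set $(b^1\cup b^2)\cap A=b^1\cap A$ is a base of $(\M(\V^1_{AX})\vee\M(\V^2_X))\times A=\M(\V^1_{AX})\lrar\M(\V^2_X)$, which by (2a) equals $\M(\V^1_{AX}\lrar\V^2_X)$; thus $b^1\cap A$ is a column base of $\V^1_{AX}\lrar\V^2_X$ by Lemma \ref{lem:minorvectorspacem}.
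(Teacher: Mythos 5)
Your proposal is correct, and its Part 2 follows the paper's own assembly almost verbatim: chain (1c)/(1d) with parts 2(b),(c) of Theorem \ref{thm:rigidmatroidvector}, and finish (2c) with part 5 of Theorem \ref{thm:regularrecursivem} plus the base-extension argument through $(\M(\V^1_{AX})\vee\M(\V^2_X))\times A.$ Where you genuinely diverge is Part 1. The paper never verifies the full sum and zero intersection conditions by hand: it observes that the hypotheses $\V^1_{AX}\times Z=\0_Z$ and $\V^1_{AX}\circ Y=\F_Y$ say precisely that the pair $\{\V^1_{AX},\F_Z\oplus\0_Y\}$ is rigid, and that $\V^1_{AX}\lrar(\F_Z\oplus\0_Y)=\V^1_{AB},$ so that 1(a) drops out of Theorem \ref{lem:derivedregularity} with $W=A,$ $T=B,$ $V=Y\uplus Z,$ $\V_V=\F_Z\oplus\0_Y,$ and 1(b) is the word-for-word matroid translation via Theorem \ref{thm:derivedregularitym}. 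Your route instead checks each defining condition of Definition \ref{def:affinerigid} directly, showing that exactly one hypothesis governs the full sum reduction and the other the zero intersection reduction, and on the matroid side reproves the two needed union identities by base exchange. Both are sound: the paper's route buys brevity and a uniform vector-space/matroid treatment (no fresh matroid lemmas are needed, since Theorem \ref{thm:derivedregularitym} already exists), while yours is more self-contained and makes explicit where each hypothesis enters, which the packaged decomposition theorem hides. One small completeness remark: in (2c) you only carry out the column-base argument for the pair on $A\uplus X$; the parenthetical pair on $A\uplus B$ needs the same argument with $X$ replaced by $B,$ concluded via $\M(\V^1_{AB})\lrar\M(\V^2_B)=\M(\V^1_{AX}\lrar\V^2_X)$ from (2a), exactly as the paper does.
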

\begin{proof}
%
%
%
%
%

1(a). We apply Theorem \ref{lem:derivedregularity},
taking $T=B, V=X-B=Y\uplus Z, W= A,\V_T=\V_B, \V_V=\F_Z\oplus \0_Y.$\\
We then have $\{\V^1_{AX}, \V^2_X\}$ is rigid iff
 $\{\V^1_{AX},\F_Z\oplus \0_Y\}$ and $\{\V^1_{AX}\lrar (\F_Z\oplus \0_Y),\V_B\}$ are rigid.
Let us examine the implication of $\{\V^1_{AX},\F_Z\oplus \0_Y\}$ being rigid.
We need $\V^1_{AX}\circ ZY+(\F_Z\oplus \0_Y)= \F_{ZY} $ (full sum property).
This is equivalent to  $\V^1_{AX}\circ Y=\F_Y.$
Next we need  $(\V^1_{AX}\times ZY)\cap (\F_Z\oplus \0_Y)= \0_{ZY} $ (zero intersection property).
This is equivalent to  $\V^1_{AX}\times  Z=\0_Z.$
\\
Next  $\V^1_{AX}\lrar (\F_Z\oplus \0_Y)= \V^1_{AX}\circ A (X-Z)\times AB= \V^1_{AB}.$
The result follows.

1(b). 
Since $\V^2_X= \F_Z\oplus \0_Y\oplus  \V^2_B,$
we have $\M(\V^2_X)= \F_Z\oplus \0_Y\oplus  \M(\V^2_B).$
Since $\M(\V^1_{AX}\times Z)$\\$=\M(\V^1_{AX})\times Z, $
and $\V^1_{AX}\times Z=\0_Z,$ we have
$\M(\V^1_{AX})\times Z=\0_Z.$
 Since $\M(\V^1_{AX}\circ Y)=\M(\V^1_{AX})\circ Y, $
and $\V^1_{AX}\circ Y=\F_Y,$ we have
$\M(\V^1_{AX})\circ Y=\F_Y.$
Finally $\M(\V^1_{AB})= \M(\V^1_{AX}\circ A(X-Z)\times AB)$\\$= \M(\V^1_{AX})\circ A(X-Z)\times AB.$

The next part of the proof  is a line by line translation of the proof of 1(a) above, taking\\ $\M^1_{AX}\equivd \M(\V^1_{AX}), \M^2_X\equivd \M(\V^2_X),
\M^1_{AB}\equivd \M(\V^1_{AB}), \M^2_B\equivd \M(\V^2_B).$
\\
We apply Theorem \ref{thm:derivedregularitym},
taking $T=B, V=X-B=Y\uplus Z, W= A,\M_T=\M_B, \M_V=\F_Z\oplus \0_Y.$\\
We then have $\{\M^1_{AX}, \M^2_X\}$ is rigid iff
 $\{\M^1_{AX},\F_Z\oplus \0_Y\}$ and $\{\M^1_{AX}\lrar (\F_Z\oplus \0_Y),\M_B\}$ are rigid.
Let us examine the implication of $\{\M^1_{AX},\F_Z\oplus \0_Y\}$ being rigid.
We need $\M^1_{AX}\circ ZY\vee (\F_Z+\0_Y)= \F_{ZY} $ (full sum property).
This is equivalent to  $\M^1_{AX}\circ Y=\F_Y.$
Next we need  $(\M^1_{AX}\times ZY)\wedge (\F_Z+\0_Y)= \0_{ZY} $ (zero intersection property).
This is equivalent to  $\M^1_{AX}\times  Z=\0_Z.$
\\
Next  $\M^1_{AX}\lrar (\F_Z\oplus \0_Y)= \M^1_{AX}\circ A (X-Z)\times AB= \M^1_{AB}.$
The result follows.

%
%
%
%
%
%
%

1(c). We observe that, by the definition of the matched composition operation,  $\V_{WTV}\lrar (\V_V\oplus \V_T)$\\$=
(\V_{WTV}\lrar \V_V) \lrar \V_T,$ where $W,T,V $ are pairwise disjoint.
It follows, using the substitutions in the proof of 1(a) above that 
$\V^1_{AX}\lrar \V^2_X=\V^1_{AB}\lrar \V^2_B.$

1(d). As in proof of 1(b), we take $\M^1_{AX}\equivd \M(\V^1_{AX}), \M^2_X\equivd \M(\V^2_X),
\M^1_{AB}\equivd \M(\V^1_{AB}),$\\$ \M^2_B\equivd \M(\V^2_B).$
We observe that, by the definition of the matched composition operation,\\  $\M_{WTV}\lrar (\M_V\oplus \M_T)=
(\M_{WTV}\lrar \M_V) \lrar \M_T,$ where $W,T,V $ are pairwise disjoint.
It follows, using the substitutions in the proof of 1(b) above that
$\M^1_{AX}\lrar \M^2_X=\M^1_{AB}\lrar \M^2_B.$

%
%

2(a). By part 2(b) of Theorem \ref{thm:rigidmatroidvector}, we have that
$\M(\V^1_{AB}\lrar \V^2_B)= \M(\V^1_{AB})\lrar \M(\V^2_{B}).$
By 1(c) above, 
 we have $\V^1_{AB}\lrar \V^2_{B}= \V^1_{AX}\lrar \V^2_{X}$
and therefore $\M(\V^1_{AB}\lrar \V^2_{B})= \M(\V^1_{AX}\lrar \V^2_{X}).$
By 1(d) above $\M(\V^1_{AX})\lrar \M(\V^2_X)=\M(\V^1_{AB})\lrar \M(\V^2_B).$
The result follows.


2(b). 
By part 2(c) of Theorem \ref{thm:rigidmatroidvector}, under the conditions of 
this part, we must have that $\{\V^1_{AB}, \V^2_B\}$ is rigid iff $\{\M(\V^1_{AB}), \M(\V^2_B)\}$  is rigid.
By part 1(a), 1(b) above, we have  that $\{\V^1_{AB}, \V^2_B\}$ is rigid iff $\{\V^1_{AX}, \V^2_X\}$ is rigid 
and  $\{\M(\V^1_{AB}), \M(\V^2_B)\}$  is rigid iff $\{\M(\V^1_{AX}), \M(\V^2_X)\}$  is rigid.
The result follows.
%
%
%

2(c).   
 We have $\{\V^1_{AB}, \V^2_B\}$ is rigid iff $\{\M(\V^1_{AB}), \M(\V^2_B)\}$  is rigid.
Therefore, by 
part 5 of Theorem \ref{thm:regularrecursivem}, 
$\{\V^1_{AB}, \V^2_B\}$ is rigid iff  
there exists a pair of disjoint bases $b^1,b^2$ of 
$\M(\V^1_{AB}), \M(\V^2_B),$ respectively, such that $(b^1\cup b^2)\supseteq B.$ 
Now $(b^1\cup b^2)$ is a base of $\M(\V^1_{AB})\vee \M(\V^2_B),$ that contains 
a maximal intersection with $B$ and therefore $b\equivd  (b^1\cup b^2)\cap A= b^1\cap A$
is a base of $\M(\V^1_{AB})\lrar \M(\V^2_{B})\equivd (\M(\V^1_{AB})\vee \M(\V^2_B))\times A.$
By part 2(b) of Theorem \ref{thm:rigidmatroidvector}, we have 
$\M(\V^1_{AB}\lrar \V^2_{B})=\M(\V^1_{AB})\lrar \M(\V^2_{B}).$
Therefore, $b$ is a column base of $\V^1_{AB}\lrar \V^2_{B}=\V^1_{AX}\lrar \V^2_{X}$
(by part 1(c) above).

We have $\{\V^1_{AX}, \V^2_X\}$ is rigid iff $\{\M(\V^1_{AX}), \M(\V^2_X)\}$  is rigid
 and $\M(\V^1_{AX}\lrar \V^2_{X})=\M(\V^1_{AX})\lrar \M(\V^2_{X}).$ 
Arguing as above, $\{\V^1_{AX}, \V^2_X\}$ is rigid iff  
there exists a pair of disjoint bases $b^1,b^2$ of 
$\M(\V^1_{AX}), \M(\V^2_X),$ respectively, such that $(b^1\cup b^2)\supseteq X$ 
 and for such a pair $b^1,b^2,$ we have $b^1\cap A$ as a column base of $\V^1_{AX}\lrar \V^2_{X}.$
\end{proof}

We use the following simple lemma in the proof of Theorem \ref{thm:purslowgraphnew}.
\begin{lemma}
\label{lem:restictcontractbase}
Let $\V_X$ be a vector space on $X$ and let $Y\subseteq X.$
Let $T$ be a column base of $\V_X.$
\begin{enumerate}
\item Let  $T\subseteq Y.$
Then $T$ is a column base of  $\V_X\circ Y.$
\item  Let $T\supseteq (X-Y).$
Then $T-(X-Y)$ is a column base of  $\V_X\times Y.$
\end{enumerate}
\end{lemma}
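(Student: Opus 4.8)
The plan is to prove the two parts separately, in each case splitting the assertion ``this set is a column base of the minor'' into its two constituent requirements: that the relevant columns are \emph{independent} in the minor, and that their number equals the \emph{rank} of the minor. The tools I would lean on are Theorem~\ref{thm:dotcrossidentity}, specifically part~2 (namely $r(\V_X)=r(\V_X\circ Y)+r(\V_X\times(X-Y))$) and part~5 (the characterization of column bases of a contraction), together with one preliminary observation: for $W\subseteq Z\subseteq X$, a set $W$ is independent as columns of $\V_X\circ Z$ iff it is independent as columns of $\V_X$. This observation is immediate from Lemma~\ref{lem:minorvectorspacem}, since $\M(\V_X\circ Z)=\M(\V_X)\circ Z$, column bases are exactly the bases of the associated matroid, and the independent sets of a matroid restriction to $Z$ are precisely the independent sets of the matroid that are contained in $Z$.

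For part~1, where $T\subseteq Y$ and $T$ is a column base of $\V_X$, I would first note that $T$ is independent in $\V_X$, hence, by the observation above with $Z\equivd Y$, independent as columns of $\V_X\circ Y$. Next, Theorem~\ref{thm:dotcrossidentity} part~2, applied with the column set split as $Y\uplus(X-Y)$, gives $r(\V_X\circ Y)\le r(\V_X)=|T|$. Thus $T$ is an independent column set of $\V_X\circ Y$ whose cardinality already attains the upper bound on $r(\V_X\circ Y)$, so $T$ is a maximal independent set of columns, i.e.\ a column base of $\V_X\circ Y$.

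For part~2, where $(X-Y)\subseteq T$ and $T$ is a column base of $\V_X$, I would invoke Theorem~\ref{thm:dotcrossidentity} part~5 with $S\equivd X-Y$ and $P\equivd Y$, choosing $S_1\equivd X-Y$ and $P_1\equivd T-(X-Y)$. Here $P_1\subseteq Y$, and since $X-Y\subseteq T$ we have $S_1\cup P_1=T$, which is a column base of $\V_X$ by hypothesis. Moreover $X-Y\subseteq T$ is independent in $\V_X$, so by the preliminary observation its columns are independent in $\V_X\circ(X-Y)$; as $X-Y$ is the entire column set of $\V_X\circ(X-Y)$, it is therefore a column base of that space. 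These are exactly the two hypotheses needed in the ``if'' direction of part~5, which then delivers that $P_1=T-(X-Y)$ is a column base of $\V_X\times Y$, as required.

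The only genuinely delicate point is the preliminary independence-preservation statement, which must be established once, directly from Lemma~\ref{lem:minorvectorspacem}, so that it is not circular with the lemma under proof. Beyond that, the work is elementary set bookkeeping—checking $T\cap Y=T-(X-Y)$ and the decomposition $T=(X-Y)\uplus(T-(X-Y))$ when $X-Y\subseteq T$—and the correct alignment of indices in Theorem~\ref{thm:dotcrossidentity} part~5. I do not expect a substantive obstacle once these identities are lined up, since part~5 supplies the conclusion of part~2 immediately and part~1 reduces to a rank inequality.
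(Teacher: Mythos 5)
Your proof is correct; part~1 is essentially the paper's own argument, but part~2 takes a genuinely different route, so a comparison is worth recording. In part~1 both you and the paper observe that $T$ stays independent as a column set of $\V_X\circ Y$ and then cap the rank of $\V_X\circ Y$ by $r(\V_X)=|T|$ --- the paper by noting that any column base of $\V_X\circ Y$ extends to a column base of $\V_X$, you via the identity $r(\V_X)=r(\V_X\circ Y)+r(\V_X\times (X-Y))$ from part~2 of Theorem~\ref{thm:dotcrossidentity}; the two bounds are interchangeable. For part~2 the paper dualizes: $X-T$ is a column cobase of $\V_X$, hence a column base of $\V_X^{\perp}$ contained in $Y$, so part~1 applied to $\V_X^{\perp}$ makes it a column base of $\V_X^{\perp}\circ Y=(\V_X\times Y)^{\perp}$ (part~3 of Theorem~\ref{thm:dotcrossidentity}), i.e.\ a column cobase of $\V_X\times Y$ by Theorem~\ref{thm:perperp}, whence $Y-(X-T)=T-(X-Y)$ is a column base of $\V_X\times Y$. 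You instead stay in the primal and feed the hypotheses of the base-extension characterization (part~5 of Theorem~\ref{thm:dotcrossidentity}, with $S\equivd X-Y$, $P\equivd Y$, $S_1\equivd X-Y$, $P_1\equivd T-(X-Y)$); your verification that $X-Y$ is a column base of $\V_X\circ(X-Y)$ and that $S_1\cup P_1=T$ is sound. What each buys: the paper's argument is shorter and exhibits contraction as the orthogonal dual of restriction (its part~2 is literally part~1 read in $\V_X^{\perp}$), while yours avoids orthogonal complements altogether and, because part~5 has a verbatim matroid analogue (part~5 of Theorem~\ref{thm:dotcrossidentitym}), transfers word for word to the matroid setting, in keeping with the paper's vector-space/matroid dictionary. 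Finally, your preliminary observation that column independence is preserved under restriction is exactly what the paper asserts without proof in its part~1; justifying it through $\M(\V_X\circ Z)=\M(\V_X)\circ Z$ (Lemma~\ref{lem:minorvectorspacem}) is legitimate and not circular, since that lemma is stated independently of the present one.
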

\begin{proof}
1. The columns $T$ continue to remain independent in $\V_X\circ Y.$
Since any column base $\V_X\circ Y$ is contained in a column base of $\V_X,$
the result follows.

2. We have $X-T$ as a column cobase of $\V_X$ and therefore as a column base
of $\V_X^{\perp}.$ Now since $T\supseteq (X-Y),$ we must have $X-T\subseteq Y.$
By part 1 above $X-T$ is a column cobase of $\V_X^{\perp}\circ Y= (\V_X\times Y)^{\perp}.$ Therefore  $Y-(X-T)= (T-(X-Y))$ is a column base of $\V_X\times Y.$
\end{proof}

\begin{proof}
 ({\it Proof of Theorem \ref{thm:purslowgraphnew}})\\
For better readability, we introduce simplifying notation.
Let $A\equivd P'\uplus P",X\equivd S'\uplus S",$ \\where
$S= Y_{1}\uplus Z_{1}\uplus Y_{2}\uplus Z_{2}\uplus R\uplus E\uplus J,$
$B=B_1\uplus B_2,$ where $B_1\equivd Y_{1}"\uplus Y_{2}'\uplus R"$
and
 $B_2\equivd Z_{1}'\uplus Z_{2}"\uplus R'.$
\\
Let $X= X_1\uplus X_2,$ where $X_1\equivd E"\uplus J'\uplus Z_{1}"\uplus Z_{2}'\uplus B_1,$
 and let $X_2\equivd E'\uplus J"\uplus Y_{1}'\uplus Y_{2}"\uplus B_2.$
Note that, in the device characteristic $\A_{S'S"},
$ $X_1-B_1$ are the variables without constraint,
$X_2-B_2$ are the `predetermined' variables,
$B_1$ are the controlling variables and $B_2$ are the controlled variables.
Observe that $|X_1|=|X_2|=|S|$ and $|B_1|=|B_2|=|S|-|X_1-B_1|=|S|-|X_2-B_2|.$
\\
We define $\V^2_X\equivd \V_{S'S"}$ and $\V^1_{AX}\equivd  
\V^1_{P'P"S'S"}=(\V^v(\Gsp))_{S'P'}\oplus (\V^i(\Gsp))_{S"P"}.$

The solution space of combined Equations \ref{eqn:ccvsvvcs1new}, \ref{eqn:ccvsvvcsanew}, \ref{eqn:ccvsvvcs3new} is the affine space $\A^2_X\equivd \A_{S'S"}.
$ It can be seen that the vector space translate $\V^2_{X}$
 of $\A^2_{X}$ 
has the form $\V^2_{X}=\V_{S'S"}=\F_{E"Z_{1}" Z_{2}'J'}\oplus \0_{E'Y_{1}' Y_{2}"J"}\oplus \V^2_B,$ 
where $\V^2_B$ has the representative matrix $(I|D)$ with the partition corresponding to column division
$B_1,B_2$ and with the diagonal entries of the diagonal matrix $D$ being algebraically independent 
over $\Q.$
Note that $\V^2_{X}$ is the source free device 
characteristic, i.e., the voltage sources and current sources have zero values.
From the form of the device characteristic in Equations \ref{eqn:ccvsvvcs1new}, \ref{eqn:ccvsvvcsanew}, \ref{eqn:ccvsvvcs3new}, it is clear 
that $r(\V^2_{X})=r(\V_{S'S"})= |S|.$
Therefore, by definition of proper multiports, $\N_P$ is proper iff it is rigid.
By the definition of rigidity of multiports, $\N_P\equivd (\V^1_{AX}, \A^2_{X})$ is rigid iff 
 $\{\V^1_{AX}, \V^2_{X}\}=\{\V^1_{P'P"S'S"},\V_{S'S"}\}$ is rigid.
Let $\V^1_{AB}\equivd \V^1_{AX}\circ A B_1 X_2 \times AB.$

The solution space of the topological constraints of the multiport $\N_{P}$ 
has the form \\$\V^1_{AX}=\V^1_{P'P"S'S"}=(\V^v(\Gsp))_{S'P'}\oplus (\V^i(\Gsp))_{S"P"}.$
We now examine the nature of $\V^1_{AX}$ and construct a column base for it.

A set of columns of $\V^1_{AX}$
is a column base iff it has the form $t_1'\uplus ((S\uplus P)-t_2)",$ where
$t_1,t_2$ are trees of $\Gsp$ (Lemma \ref{lem:minorgraphvectorspace}). Pick a tree $t$ that contains $Y_{1}\uplus 
Z_{1}\uplus E$ and does not intersect $Y_{2}\uplus 
Z_{2}\uplus J.$ This is possible by Theorem \ref{thm:dotcrossidentity}.
Choose as a column base of $\V^1_{AX}\equivd (\V^v(\Gsp))_{S'P'}\oplus (\V^i(\Gsp))_{S"P"},$ the set
$t'\uplus ((S\uplus P)-t)".$
Let $t\equivd Y_{1}\uplus 
Z_{1}\uplus E\uplus R_1\uplus P_1,$ where $P_1\subseteq P.$
We have $S= Y_{1}\uplus 
Z_{1}\uplus E \uplus R\uplus  
Y_{2}\uplus Z_{2}\uplus J.$
Therefore $\bar{t}\equivd S\uplus P-t\equivd Y_{2}\uplus 
Z_{2}\uplus J\uplus R_2\uplus P_2,$ where $P_2\equivd P-P_1,$ $R_2\equivd R-R_1.$

Before we begin the main part of the proof, we prove the following claim.
\\{\bf Claim.}
The pair $\{\V^1_{AB},\V^2_{B}\}$ is rigid.\\
 {\it Proof of Claim.}

We will show that $(t-(E\uplus Y_{1}))'\uplus (\bar{t}-(J\uplus Y_{2}))"$ is a column base of
$\V^1_{AB}\equivd
\V^1_{AX}\circ A B_1 X_2 \times AB_1 B_2.$
\\
Since $X_1\equivd E"\uplus J'\uplus Z_{1}"\uplus Z_{2}'\uplus B_1,$
$X_2\equivd E'\uplus J"\uplus Y_{1}'\uplus Y_{2}"\uplus B_2,$
$t'= Y'_{1}\uplus  
Z'_{1}\uplus E'\uplus R'_1\uplus P'_1,$
$\bar{t}"= Y"_{2}\uplus 
Z"_{2}\uplus J"\uplus R"_2\uplus P"_2,$
we see that
$t'\uplus \bar{t}"$ does not intersect $X_1-B_1= E"\uplus J'\uplus Z_{1}"\uplus Z_{2}'.$
Therefore, $t'\uplus \bar{t}"$ continues to remain a column base of
$\V^1_{AX}\circ A B_1 X_2 $ (by part 1 of Lemma \ref{lem:restictcontractbase}).

Next $X_2-B_2= E'\uplus J"\uplus Y_{1}'\uplus Y_{2}"\subseteq t'\uplus \bar{t}",$ Therefore $(t-(E\uplus Y_{1}))'\uplus (\bar{t}-(J\uplus Y_{2}))"$ is a column base of
$\V^1_{AB}= \V^1_{AX}\circ A B_1 X_2\times (A B_1 X_2-( X_2-B_2))= \V^1_{AX}\circ A B_1 X_2\times A B_1 B_2$
(by part 2 of Lemma \ref{lem:restictcontractbase}).

We now examine the column bases of $\V^2_B.$
Let $R=R_1\uplus R_2,$ where $R_1=t\cap R.$
It can be verified that $\V^2_B$ after column permutation within $B_1$
and $B_2$ has a representative matrix of the form $(I|D)\equiv \ppmatrix{I&0&\vline &D_1&0\\
0&I&\vline&0&\hat{R}},$
where the identity matrix corresponds to columns $B_1\equivd Y_{1}"\uplus Y_{2}'\uplus R",$
the diagonal matrix $D$ corresponds to columns $B_2\equivd Z_{1}'\uplus Z_{2}"\uplus R',$
the top left identity submatrix corresponds to $Y_{1}"\uplus Y_{2}',$
the bottom right  identity submatrix corresponds to $R".$
We are given that the diagonal entries of the diagonal matrix $D$ are algebraically
independent over $\Q.$  In particular, they are nonzero.
Now since $B_1$ is a column base for $\V^2_B$
 so would $(B_1-R_2")\uplus R_2'= (Y_{1}"\uplus Y_{2}'\uplus R_2')\uplus R_1"$ be.
Since $t\equivd Y_{1}\uplus 
Z_{1}\uplus E\uplus R_1\uplus P_1,  \bar{t}\equivd S\uplus P-t\equivd Y_{2}\uplus 
Z_{2}\uplus J\uplus R_2\uplus P_2,$ it is clear that $(t-(Y_{1}\uplus E))'\uplus (\bar{t}-(Y_{2}\uplus J))"$ does not intersect this column base of $\V^2_B.$
\\Observe that $(t-(Y_{1}\uplus E))'\uplus (\bar{t}-(Y_{2}\uplus J))"\uplus Y_{1}"\uplus Y_{2}'\uplus R_2'\uplus R_1"
\supseteq Z_{1}'\uplus R_1'\uplus R_2'\uplus Z_{2}"\uplus R_1"\uplus R_2"\uplus Y_{1}"\uplus Y_{2}'=B.$
\\
Therefore  we have disjoint column bases
$(t-(Y_{1}\uplus E))'\uplus (\bar{t}-(Y_{2}\uplus J))"$
 and
$Y_{1}"\uplus Y_{2}'\uplus R_2'\uplus R_1"$ of $\V^1_{AB}, \V^2_{B}$
 respectively, which together cover $B.$
 By part 3(b) of Lemma \ref{lem:generalitytomaxdistancenew}, it follows that $\{\V^1_{AB}, \V^2_{B}\}$ is rigid. 
\\{\it Proof of Claim ends}
%
\\({\it Proof of Theorem \ref{thm:purslowgraphnew} continued.})\\
1. 
Since $\N_P$ is given to be rigid, $\{\V^1_{AX}, \V^2_X\}$ must be rigid.
Therefore, we must have the full sum property satisfied,
 i.e., that $r(\V^1_{AX}+ \V^2_X)\circ X= r((\V^1_{P'P"S'S"}+\V_{S'S"})\circ S'S")= 
2|S|.$ \\Now $\V_{S'S"}\circ E'Y_{1}' Y_{2}"J"=\0_{E'Y_{1}' Y_{2}"J"}.
$ It follows that  $\V^1_{AX}\circ E'Y_{1}' Y_{2}"J"$\\$=
(\V^v(\Gsp))_{S'P'}\circ E'Y_{1}'\oplus (\V^i(\Gsp))_{S"P"}\circ
 Y_{2}"J" = \F_{E'Y_{1}'}\oplus \F_{Y_{2}"J"},$ \\i.e. (using Lemma \ref{lem:minorgraphvectorspace}), that 
$E\uplus Y_{1}$ can be included in a tree of $\Gsp$ and therefore is loop free  and $J\uplus Y_{2}$ 
can be included in a cotree of $\Gsp$ and therefore is cutset free in $\Gsp .$

2. We will first show that $\{\V^1_{AB},\V^2_{B}\}$ is rigid iff $\{\V^1_{AX},\V^2_{X}\}$ is rigid.
\\
We are given that $Y_{1}\uplus Z_{1}\uplus E$ is loop free and $Y_{2}\uplus Z_{2}\uplus J$
is cutset free. Therefore there is a tree containing $Y_{1}\uplus Z_{1}\uplus E$ and a cotree containing $Y_{2}\uplus Z_{2}\uplus J.$ We therefore have,\\ 
$\V^1_{AX}\times (X_1-B_1)=\V^1_{P'P"S'S"}\times E"Z_{1}"Z_{2}'J'= (\V^v(\Gsp))_{S'P'}\times Z_{2}'J'\oplus 
(\V^i(\Gsp))_{S"P"}\times Z_{1}"E"=$\\$ (\V^v(\Gsp\times Z_{2}J))_{Z_{2}'J'}\oplus (\V^i(\Gsp\circ Z_{1}E))_{Z_{1}"E"}= \0_{Z_{2}'J'}\oplus \0_{E"Z_{1}"}= \0_{E"Z_{1}"Z_{2}'J'}=\0_{X_1-B_1}$ 
and $\V^1_{AX}\circ (X_2-B_2)$\\$=\V^1_{P'P"S'S"}\circ E'Y_{1}'Y_{2}"J"= (\V^v(\Gsp\circ EY_{1}))_{E'Y_{1}'}\oplus (\V^i(\Gsp\times Y_{2}J))_{Y_{2}"J"}=  \F_{E'Y_{1}'Y_{2}"J"}= \F_{X_2-B_2}$ (using Lemma \ref{lem:minorgraphvectorspace}).

Now we have, 
$\V^2_X\equivd \F_{X_1-B_1}\oplus \0_{X_2-B_2}\oplus \V^2_B,$
$\V^1_{AX}\times (X_1-B_1)=\0_{X_1-B_1},$ 
$\V^1_{AX}\circ (X_2-B_2)=\F_{X_2-B_2}$
and  $\V^1_{AB}\equivd \V^1_{AX}\circ A B_1 X_2 \times AB.$
We can therefore apply Theorem \ref{thm:maxdistancenew2} (taking $Z\equivd X_1-B_1, Y\equivd X_2-B_2$)
and conclude that $\{\V^1_{AB},\V^2_{B}\}$ is rigid iff $\{\V^1_{AX},\V^2_{X}\}$ is rigid.
\\We have proved above the  claim that $\{\V^1_{AB},\V^2_{B}\}$ is rigid.
Therefore, $\{\V^1_{AX},\V^2_{X}\}$ is rigid.
 
3. We have seen that when the conditions of part 2 are satisfied, there exists a tree $t$ that
contains $E\uplus Y_{1}\uplus Z_{1}$ and that does not intersect $J\uplus Y_{2}\uplus Z_{2}.$
We have $t\equivd Y_{1}\uplus 
Z_{1}\uplus E\uplus R_1\uplus P_1,$ where $P_1\subseteq P$
and we have $S= Y_{1}\uplus 
Z_{1}\uplus E \uplus R\uplus  
Y_{2}\uplus Z_{2}\uplus J,$
$\bar{t}\equivd S\uplus P-t= Y_{2}\uplus 
Z_{2}\uplus J\uplus R_2\uplus P_2,$ where $P_2\equivd P-P_1,$ $R_2\equivd R-R_1.$
We have seen that $b_1\equivd t'\uplus \bar{t}"$ is a column base for 
$\V^1_{AX}\equivd  \V^1_{P'P"S'S"}=(\V^v(\Gsp))_{S'P'}\oplus (\V^i(\Gsp))_{S"P"}.$
\\
We note that 
$\V^2_{X}=\V_{S'S"}=\F_{E"Z_{1}" Z_{2}'J'}\oplus \0_{E'Y_{1}' Y_{2}"J"}\oplus \V^2_B,$
where $\V^2_B$ has 
column base $ Y_{1}"\uplus Y_{2}'\uplus R_2'\uplus R_1".$
\\
Therefore $b_2\equivd E"\uplus Z_{1}" \uplus Z_{2}'\uplus J' \uplus Y_{1}"\uplus Y_{2}'\uplus R_2'\uplus R_1"$ is a column base for $\V^2_{X}.$
\\
It can be seen that $b_1,b_2$ are disjoint and together cover $X\equivd S'\uplus S".$
\\
By part 2 of Theorem \ref{thm:maxdistancenew2},
$b^1\cap A\equivd 
(t'\uplus \bar{t}")\cap A=(t\cap P)'\uplus (P-t)",$
is a column base of $\V^1_{AX}\lrar \V^2_{X}.$
Taking $v_{P_1'}\equivd (t\cap P)',i_{P_2"}\equivd (P-t)",$
 we see that $\V^1_{AX}\lrar \A^2_{X}$ has a hybrid representation of the form 
\begin{align}
\label{eqn:hybridconstraints2}
\ppmatrix{i_{P_1"}\\v_{P_2'}}=\ppmatrix{g_{11}&h_{12}\\h_{21}&r_{22}}\ppmatrix{v_{P_1'}\\i_{P_2"}}+\ppmatrix{s_{P_1"}\\s_{P_2'}}.
\end{align}

\end{proof}

\section{Proof of Theorem \ref{thm:purslow2}}
\label{sec:purslow2}
\begin{lemma}
\label{lem:diracrestriction}
Let $\V^1_{S'P'S"P"}$ be Dirac. 
We then have the following.
\begin{enumerate}
\item $r(\V^1_{S'P'S"P"})=|S|+|P|= |S'|+|P'|=|S"|+|P"|.$
\item Let
$T\subseteq S\uplus P$ be such that
$T'$ is a column base of $\V^1_{S'P'S"P"}\circ S'P'.$
Then
$(S\uplus P -T)"$ is a column base of $\V^1_{S'P'S"P"}\times S"P"$ and
$T'\uplus (S\uplus P -T)"$ is a column base of $\V^1_{S'P'S"P"}.$
\end{enumerate}
\end{lemma}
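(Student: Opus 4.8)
The statement to prove is Lemma~\ref{lem:diracrestriction}, which records two basic consequences of a vector space $\V^1_{S'P'S"P"}$ being Dirac, i.e.\ satisfying $(\V^1_{S'P'S"P"})_{S"P"S'P'}=(\V^1_{S'P'S"P"})^{\perp}$. Recall $S',S",P',P"$ are disjoint copies of $S,S,P,P$, so $|S'|=|S"|=|S|$ and $|P'|=|P"|=|P|$, and the total column set has size $2|S|+2|P|$.

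For part~1, the plan is to compute the rank directly from the Dirac condition together with Theorem~\ref{thm:perperp}. Since $(\V^1_{S'P'S"P"})^{\perp}$ is obtained from $\V^1_{S'P'S"P"}$ merely by interchanging the roles of the primed and double-primed copies (a bijective relabelling of columns), ranks are preserved, so $r((\V^1_{S'P'S"P"})^{\perp})=r(\V^1_{S'P'S"P"})$. By Theorem~\ref{thm:perperp} part~1, $r(\V^1_{S'P'S"P"})+r((\V^1_{S'P'S"P"})^{\perp})=|S'\uplus P'\uplus S"\uplus P"|=2|S|+2|P|$. Combining the two gives $2\,r(\V^1_{S'P'S"P"})=2|S|+2|P|$, whence $r(\V^1_{S'P'S"P"})=|S|+|P|$. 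The remaining equalities $|S|+|P|=|S'|+|P'|=|S"|+|P"|$ are immediate from the copy cardinalities.

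For part~2, I would argue through complementary orthogonality and Theorem~\ref{thm:dotcrossidentity}. The hypothesis is that $T'$ is a column base of $\V^1_{S'P'S"P"}\circ S'P'$. By Theorem~\ref{thm:dotcrossidentity} part~4, $(\V^1_{S'P'S"P"}\circ S'P')^{\perp}=(\V^1_{S'P'S"P"})^{\perp}\times S'P'$, and applying Theorem~\ref{thm:perperp} part~2, a column base of $\V_X$ is a column cobase of $\V_X^{\perp}$; so $(S\uplus P-T)'$ is a column base of $(\V^1_{S'P'S"P"})^{\perp}\times S'P'$ restricted appropriately. The cleanest route is to translate everything onto the double-primed side using the Dirac relabelling: because $(\V^1_{S'P'S"P"})^{\perp}$ equals $\V^1_{S'P'S"P"}$ with primes and double-primes swapped, the statement ``$T'$ is a column base of $\V^1_{S'P'S"P"}\circ S'P'$'' transforms into a statement about the double-primed minor $\V^1_{S'P'S"P"}\times S"P"$, yielding that $(S\uplus P-T)"$ is a column base of $\V^1_{S'P'S"P"}\times S"P"$. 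Finally, that $T'\uplus(S\uplus P-T)"$ is a column base of the whole space follows from Theorem~\ref{thm:dotcrossidentity} part~5, which says a column base of $\Vsp\times P$ together with a column base of $\Vsp\circ S$ that it extends forms a column base of $\Vsp$; here taking $S'P'$ as the ``$\circ$'' part and $S"P"$ as the ``$\times$'' part, and using that $|T'|+|(S\uplus P-T)"|=(|S|+|P|)$ matches $r(\V^1_{S'P'S"P"})$ from part~1.

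The main obstacle I anticipate is bookkeeping the Dirac relabelling carefully: one must verify that swapping primed and double-primed copies genuinely sends $\circ S'P'$ to $\times S"P"$ (via Theorem~\ref{thm:dotcrossidentity} parts~3 and~4, since $(\V\circ S)^{\perp}=\V^{\perp}\times S$ and dually), and that column bases map to the correct complementary objects under this swap composed with the $\perp$ operation. The cardinality check supplied by part~1 is what guarantees the union $T'\uplus(S\uplus P-T)"$ is not just independent but a full column base, so part~1 must be invoked explicitly in part~2. Everything else is a routine application of the minor/orthogonality identities already established.
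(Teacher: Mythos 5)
Your proposal is correct and follows essentially the same route as the paper's own proof: part~1 via rank invariance under the prime/double-prime relabelling combined with Theorem~\ref{thm:perperp}, and part~2 by passing from the cobase of $\V^1_{S'P'S"P"}\circ S'P'$ to a base of $(\V^1_{S'P'S"P"})^{\perp}\times S'P'$, applying the Dirac swap to land in $\V^1_{S'P'S"P"}\times S"P"$, and then combining the two bases via Theorem~\ref{thm:dotcrossidentity}. Indeed, your citation of part~5 of that theorem for the final combination step is the apt one (the paper cites part~4, apparently a slip), so no gap remains.
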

\begin{proof}
1. We have $r(\V^1_{S'P'S"P"})+r((\V^1_{S'P'S"P"})^{\perp})= |S'|+|P"|+|S"|+|P"|$ 
(Theorem \ref{thm:perperp}). Since $\V^1_{S'P'S"P"}$ is Dirac,
 we must have $(\V^1_{S'P'S"P"})^{\perp}=(\V^1_{S'P'S"P"})_{S"P"S'P'}.$ 
Since $S',S"$ are copies of $S$ and $P',P"$ are copies of $P,$ 
the result follows.

2. We denote $(\V^1_{S'P'S"P"})_{S"P"S'P'}$ by $\V^1_{S"P"S'P'}.$
Since $\V^1_{S'P'S"P"}$ is Dirac, we have\\ $(\V^1_{S'P'S"P"})^{\perp}
= \V^1_{S"P"S'P'}.$
We have that $(S\uplus P -T)'$ is a column cobase of $\V^1_{S'P'S"P"}\circ S'P',$
 i.e., a column base of $(\V^1_{S'P'S"P"}\circ S'P')^{\perp}=(\V^1_{S'P'S"P"})^{\perp}\times S'P'= \V^1_{S"P"S'P'}\times S'P'.$
Therefore, $(S\uplus P -T)"$ is a column base of
$(\V^1_{S"P"S'P'}\times S'P')_{S"P"}= (\V^1_{S"P"S'P'})_{S'P'S"P"}\times S"P"=\V^1_{S'P'S"P"}\times  S"P".$ By Theorem \ref{thm:dotcrossidentity}, part 4,
 $T'\uplus (S\uplus P -T)"$ is a column base of  $\V^1_{S'P'S"P"}.$
\end{proof}

\begin{proof}
({\it Proof of Theorem \ref{thm:purslow2}})\\
We will prove sufficiency assuming conditions in part (a). 
The proof assuming part (b) is by interchanging prime and double prime for all
the variables in the proof assuming conditions in part (a). 

For better readability, we introduce simplifying notation.
Let $A\equivd P'\uplus P",X\equivd S'\uplus S",$ \\where
$S= Y_{1}\uplus Z_{1}\uplus Y_{2}\uplus Z_{2}\uplus R\uplus E\uplus J,$
$B=B_1\uplus B_2,$ where $B_1\equivd Y_{1}"\uplus Y_{2}'\uplus R"$
and
 $B_2\equivd Z_{1}'\uplus Z_{2}"\uplus R'.$
\\
Let $X= X_1\uplus X_2,$ where $X_1\equivd E"\uplus J'\uplus Z_{1}"\uplus Z_{2}'\uplus B_1,$
 and let $X_2\equivd E'\uplus J"\uplus Y_{1}'\uplus Y_{2}"\uplus B_2.$
Note that, in the device characteristic $\A_{S'S"},
$ $X_1-B_1$ are the variables without constraint,
$X_2-B_2$ are the `predetermined' variables,
$B_1$ are the controlling variables and $B_2$ are the controlled variables.
Observe that $|X_1|=|X_2|=|S|$ and $|B_1|=|B_2|=|S|-|X_1-B_1|=|S|-|X_2-B_2|.$
\\
We define $\V^2_X\equivd \V_{S'S"}$ and $\V^1_{AX}\equivd  
\V^1_{P'P"S'S"}.$

The solution space of combined Equations \ref{eqn:ccvsvvcs1new}, \ref{eqn:ccvsvvcsanew}, \ref{eqn:ccvsvvcs3new} is the affine space $\A^2_X\equivd \A_{S'S"}.
$ It can be seen that the vector space translate $\V^2_{X}$
 of $\A^2_{X}$ 
has the form $\V^2_{X}=\V_{S'S"}=\F_{E"Z_{1}" Z_{2}'J'}\oplus \0_{E'Y_{1}' Y_{2}"J"}\oplus \V^2_B,$ 
where $\V^2_B$ has the representative matrix $(I|D)$ with the partition corresponding to column division
$B_1,B_2$ and with the diagonal entries of the diagonal matrix $D$ being algebraically independent 
over $\Q.$
Note that $\V^2_{X}$ is the source free device 
characteristic, i.e., the voltage sources and current sources have zero values.
By the definition of rigidity of pairs of affine spaces, $\N^g_P\equivd (\V^1_{AX}, \A^2_{X})$ is rigid iff 
 $\{\V^1_{AX}, \V^2_{X}\}=\{\V^1_{P'P"S'S"},\V_{S'S"}\}$ is rigid.
Let $\V^1_{AB}\equivd \V^1_{AX}\circ A B_1 X_2 \times AB.$

We first construct a column base for the Dirac space $\V^1_{AX}= \V^1_{P'P"S'S"}.$
\\
By Lemma \ref{lem:diracrestriction}, we have that a set of columns of $\V^1_{AX}$
is a column base if it has the form $T'\uplus (S\uplus P -T)",$
where $T'$ is a column base of $\V^1_{S'P'S"P"}\circ S'P'.$
Pick a column base $T'$ of $\V^1_{S'P'S"P"}\circ S'P'$ that contains $Y'_{1}\uplus 
 Z'_{1}\uplus E'$ and does not intersect $Y'_{2}\uplus 
 Z'_{2}\uplus J'.$
This is possible by part 7 of Theorem \ref{thm:dotcrossidentity}.
Choose the set $T'\uplus (S\uplus P -T)"$ as a column base of $\V^1_{AX}\equivd \V^1_{S'P'S"P"}.$ 

Before we begin the main part of the proof, we prove the following claim.
\\{\bf Claim.} 
The pair $\{\V^1_{AB},\V^2_{B}\}$ is rigid.\\
 {\it Proof of Claim.}

Let $T\equivd Y_{1}\uplus  
Z_{1}\uplus E\uplus R_1\uplus P_1,$ where $P_1\subseteq P.$
We have $S= Y_{1}\uplus 
Z_{1}\uplus E \uplus R\uplus  
Y_{2}\uplus Z_{2}\uplus J,$
Therefore $\bar{T}\equivd S\uplus P-T=Y_{2}\uplus 
Z_{2}\uplus J\uplus R_2\uplus P_2,$ where $P_2\equivd P-P_1,$ $R_2\equivd R-R_1.$\\
We will show that $(T-(E\uplus Y_{1}))'\uplus (\bar{T}-(J\uplus Y_{2}))"$ is a column base of
$\V^1_{AB}.$

We have $\V^1_{AB}\equivd
\V^1_{AX}\circ A B_1 X_2 \times AB_1 B_2.$
Since $X_1\equivd E"\uplus J'\uplus Z_{1}"\uplus Z_{2}'\uplus B_1,$
$X_2\equivd E'\uplus J"\uplus Y_{1}'\uplus Y_{2}"\uplus B_2,$
$T'= Y'_{1}\uplus  
Z'_{1}\uplus E'\uplus R'_1\uplus P'_1,$
$\bar{T}"= Y"_{2}\uplus 
Z"_{2}\uplus J"\uplus R"_2\uplus P"_2,$
we see that 
$T'\uplus \bar{T}"$ does not intersect $X_1-B_1= E"\uplus J'\uplus Z_{1}"\uplus Z_{2}'.$
Therefore, $T'\uplus \bar{T}"$ continues to remain a column base of
$\V^1_{AX}\circ A B_1 X_2 $ (by part 1 of Lemma \ref{lem:restictcontractbase}).


Next $X_2-B_2= E'\uplus J"\uplus Y_{1}'\uplus Y_{2}"\subseteq T'\uplus \bar{T}",$ Therefore $(T-(E\uplus Y_{1}))'\uplus (\bar{T}-(J\uplus Y_{2}))"$ is a column base of
$\V^1_{AB}= \V^1_{AX}\circ A B_1 X_2\times (A B_1 X_2-( X_2-B_2))= \V^1_{AX}\circ A B_1 X_2\times A B_1 B_2$
(by part 2 of Lemma \ref{lem:restictcontractbase}).

We now examine the column bases of $\V^2_B.$
We have $R=R_1\uplus R_2,$ where $R_1=T\cap R.$
It can be verified that $\V^2_B$ after column permutation within $B_1$
and $B_2$ has a representative matrix of the form $(I|D)\equiv \ppmatrix{I&0&\vline &D_1&0\\
0&I&\vline&0&\hat{R}},$
where the identity matrix corresponds to columns $B_1\equivd Y_{1}"\uplus Y_{2}'\uplus R",$
the diagonal matrix $D$ corresponds to columns $B_2\equivd Z_{1}'\uplus Z_{2}"\uplus R',$
the top left identity submatrix corresponds to $Y_{1}"\uplus Y_{2}',$
the bottom right  identity submatrix corresponds to $R".$
We are given that the diagonal entries of the diagonal matrix $D$ are algebraically
independent over $\Q.$  In particular, they are nonzero.
Now since $B_1$ is a column base for $\V^2_B$
 so would $(B_1-R_2")\uplus R_2'= (Y_{1}"\uplus Y_{2}'\uplus R_2')\uplus R_1"$ be.
Since $T\equivd Y_{1}\uplus 
Z_{1}\uplus E\uplus R_1\uplus P_1,$\\$  \bar{T}\equivd S\uplus P-T\equivd Y_{2}\uplus 
Z_{2}\uplus J\uplus R_2\uplus P_2,$ it is clear that $(T-(Y_{1}\uplus E))'\uplus (\bar{T}-(Y_{2}\uplus J))"$ does not intersect this column base.
\\Observe that $(T-(Y_{1}\uplus E))'\uplus (\bar{T}-(Y_{2}\uplus J))"\uplus Y_{1}"\uplus Y_{2}'\uplus R_2'\uplus R_1"
\supseteq Z_{1}'\uplus R_1'\uplus R_2'\uplus Z_{2}"\uplus R_1"\uplus R_2"\uplus Y_{1}"\uplus Y_{2}'=B.$
\\
Therefore  we have disjoint column bases
$(T-(Y_{1}\uplus E))'\uplus (\bar{T}-(Y_{2}\uplus J))"$
 and
$Y_{1}"\uplus Y_{2}'\uplus R_2'\uplus R_1"$ of $\V^1_{AB}, \V^2_{B}$
 respectively, which together cover $B.$
 By part 3(b) of Lemma \ref{lem:generalitytomaxdistancenew}, it follows that $\{\V^1_{AB}, \V^2_{B}\}$ is rigid. 
\\{\it Proof of Claim ends}
%
\\({\it Proof of Theorem \ref{thm:purslow2} continued.})\\
1. 
Since $\N_P$ is given to be rigid, $\{\V^1_{AX}, \V^2_X\}$ must be rigid.
Therefore, we must have the full sum property satisfied,
 i.e., that $r(\V^1_{AX}+ \V^2_X)\circ X= r((\V^1_{P'P"S'S"}+\V_{S'S"})\circ S'S")= 
2|S|.$ Now $\V_{S'S"}\circ E'Y_{1}' Y_{2}"J"=\0_{E'Y_{1}' Y_{2}"J"}.
$ It follows that $\V^1_{AX}\circ E'Y_{1}' Y_{2}"J"=
  \F_{E'Y_{1}'}\oplus \F_{Y_{2}"J"},$ \\i.e., that 
the sets  of columns $E'\uplus Y'_{1}$  and $J"\uplus Y"_{2}$ 
of $\V^1_{P'P"S'S"}$ 
are independent.

2(a) We will first show that $\{\V^1_{AB},\V^2_{B}\}$ is rigid iff $\{\V^1_{AX},\V^2_{X}\}$ is rigid.
\\
We are given that $Y'_{1}\uplus Z'_{1}\uplus E'$ is part of a column base of  $\V^1_{P'P"S'S"}\circ S'P'$  and $Y'_{2}\uplus Z'_{2}\uplus J'$ can be included in a column cobase of
$\V^1_{P'P"S'S"}\circ S'P'.$ We saw above that this means that there exists a
 column base $T'$ of $\V^1_{P'P"S'S"}\circ S'P'$  that contains $Y'_{1}\uplus Z'_{1}\uplus E'$ but does not intersect $Y'_{2}\uplus Z'_{2}\uplus J'$
and $T'\uplus (S\uplus P -T)"$ is a column base of $\V^1_{P'P"S'S"}.$ 
This means $Y'_{1}\uplus Z'_{1}\uplus E'\uplus Y_{2}"\uplus Z_{2}"\uplus J"$ 
is independent in $\V^1_{P'P"S'S"}.$ Therefore $\V^1_{AX}\circ (X_2-B_2)= \V^1_{P'P"S'S"}\circ E'Y_{1}'Y_{2}"J"=\F_{E'Y_{1}'Y_{2}"J"}= \F_{X_2-B_2}.$

On the other hand, the column base $T'\uplus (S\uplus P -T)"$ of 
$\V^1_{P'P"S'S"}$ does not contain
$ X_1-B_1$\\$=Z"_{1}\uplus E"\uplus  Z_{2}'\uplus J'.$
We therefore have,
$\V^1_{AX}\times (X_1-B_1)=\V^1_{P'P"S'S"}\times E"Z_{1}"Z_{2}'J'=\0_{E"Z_{1}"Z_{2}'J'}=\0_{X_1-B_1}.$

Now we have, 
$\V^2_X\equivd \F_{X_1-B_1}\oplus \0_{X_2-B_2}\oplus \V^2_B,$
$\V^1_{AX}\times (X_1-B_1)=\0_{X_1-B_1},$ 
$\V^1_{AX}\circ (X_2-B_2)=\F_{X_2-B_2}$
and  $\V^1_{AB}\equivd \V^1_{AX}\circ A B_1 X_2 \times AB.$
We can therefore apply Theorem \ref{thm:maxdistancenew2} (taking $Z\equivd X_1-B_1, Y\equivd X_2-B_2$)
and conclude that $\{\V^1_{AB},\V^2_{B}\}$ is rigid iff $\{\V^1_{AX},\V^2_{X}\}$ is rigid.
\\We have proved above the  claim that $\{\V^1_{AB},\V^2_{B}\}$ is rigid .
Therefore, $\{\V^1_{AX},\V^2_{X}\}$ is rigid.
 
3. We have seen that when the conditions of part 2(a) are satisfied, there exists a column  base $T'$ of $\V^1_{P'P"S'S"}\circ S'P'$ that
contains $E'\uplus Y'_{1}\uplus Z'_{1}$ and that does not intersect $J'\uplus Y'_{2}\uplus Z'_{2}.$ 
\\
We have $T\equivd Y_{1}\uplus 
Z_{1}\uplus E\uplus R_1\uplus P_1,$ where $P_1\subseteq P$
and we have $S= Y_{1}\uplus 
Z_{1}\uplus E \uplus R\uplus  
Y_{2}\uplus Z_{2}\uplus J,$
$\bar{T}\equivd S\uplus P-T= Y_{2}\uplus 
Z_{2}\uplus J\uplus R_2\uplus P_2,$ where $P_2\equivd P-P_1,$ $R_2\equivd R-R_1.$
We have seen that $b_1\equivd T'\uplus \bar{T}"$ is a column base for 
$\V^1_{AX}\equivd  \V^1_{P'P"S'S"}.$
\\
We note that 
$\V^2_{X}=\V_{S'S"}=\F_{E"Z_{1}" Z_{2}'J'}\oplus \0_{E'Y_{1}' Y_{2}"J"}\oplus \V^2_B,$
where $\V^2_B$ has 
column base $ Y_{1}"\uplus Y_{2}'\uplus R_2'\uplus R_1".$
\\
Therefore $b_2\equivd E"\uplus Z_{1}" \uplus Z_{2}'\uplus J' \uplus Y_{1}"\uplus Y_{2}'\uplus R_2'\uplus R_1"$ is a column base for $\V^2_{X}.$
\\
It can be seen that $b_1,b_2$ are disjoint and together cover $X\equivd S'\uplus S".$
\\
By part 2(c) of Theorem \ref{thm:maxdistancenew2},
$b_1\cap A\equivd 
(T'\uplus \bar{T}")\cap A=(T\cap P)'\uplus (P-T)",$
is a column base of $\V^1_{AX}\lrar \V^2_{X}.$
Taking ${P_1'}\equivd (T\cap P)'= b_1\cap P',{P_2"}\equivd (P-T)"=b_1\cap P",$
 we see that $\V^1_{AX}\lrar \A^2_{X}$ has a hybrid representation of the form 
\begin{align}
\label{eqn:hybridconstraints3}
\ppmatrix{i_{P_1"}\\v_{P_2'}}=\ppmatrix{g_{11}&h_{12}\\h_{21}&r_{22}}\ppmatrix{v_{P_1'}\\i_{P_2"}}+\ppmatrix{s_{P_1"}\\s_{P_2'}}.
\end{align}

\end{proof}

\section{Port transformation for general multiports}
\label{subsec:porttransformation}
The Implicit Inversion Theorem (Theorem \ref{thm:IIT}) has a special feature which is suited to electrical
network applications, viz., the fact that in the equation
$\Vsp\lrar \K_S= \K_P, $ because $\Vsp$ is a vector space, we can
relate $\K_S, \K_P, $ even if they are very general. In electrical networks,
this permits us to speak port characteristics for multiports with very general
device characteristics because the topological constraints are linear.
We present below,  a discussion on  `port transformation' for general
 multiports  with arbitrary device characteristic.

The multiport behaviour ${\K}_{P'P"}$ of $\N_P$ on graph $\G_{SP},$ with an arbitrary collection
of vectors $\K_{S'S"}$ as its device characteristic, is given by
${\K}_{P'P"}=[(\V^v({\G_{SP}}))_{S'P'}\oplus (\V^i({\G_{SP}}))_{S"P"}]\lrar {\K}_{S'S"}.$
A natural question is
 `given ${\K}_{P'P"},$ what can we infer about
${\K}_{S'S"}?$'  \\ 
The answer to a more general form of this question is immediate from Theorem \ref{thm:IIT}. 
We state this as a corollary to that theorem.
(Note that the roles of sets $S,P$ in Theorem \ref{thm:IIT} 
and the corollary below, are interchanged.) 
\begin{corollary}
\label{cor:porttransformation}
1. Let $\Vsp$ be a vector space on $S\uplus P$ and $\K_S$ be an arbitrary 
collection of vectors on $S.$
Let $\Vsp\lrar \K_S= \K_P.$ Then $\Vsp\lrar \K_P= (\K_S+\Vsp \times S)\cap \Vsp\circ S=  (\K_S\cap \Vsp \circ S)+\Vsp\times S.$
\\
2. Let $\V_{S'P'S"P"}\equivd (\V^v({\G_{SP}}))_{S'P'}\oplus (\V^i({\G_{SP}}))_{S"P"} $ and let 
 ${\K}_{P'P"}=\V_{S'P'S"P"}\lrar {\K}_{S'S"}.$
Then\\ $\V_{S'P'S"P"}\lrar {\K}_{P'P"}=[{\K}_{S'S"}\cap ((\V^v({\G_{SP}}\circ S))_{S'}\oplus (\V^i({\G_{SP}}\times S))_{S"})]+[(\V^v({\G_{SP}}\times S))_{S'}\oplus (\V^i({\G_{SP}}\circ S))_{S"}].$
\end{corollary}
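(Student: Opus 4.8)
The plan is to treat Part 1 as an instance of the Implicit Inversion Theorem (Theorem \ref{thm:IIT}) with the roles of $S$ and $P$ interchanged, and then obtain Part 2 by straightforward substitution. First I would observe that the quantity we want to compute, $\Vsp\lrar\K_P$, is exactly the canonical solution that Theorem \ref{thm:IIT} attaches to the inverse equation $\Vsp\lrar\X_S=\K_P$ (unknown $\X_S$ on $S$, known $\K_P$ on $P$), which is the $S\leftrightarrow P$ transpose of the equation in that theorem with $Q=\emptyset$. The hypotheses of the (swapped) Theorem \ref{thm:IIT} hold for $\K_P$: since $\K_P=\Vsp\lrar\K_S$, every element of $\K_P$ is the $P$-part of a vector of $\Vsp$, so $\K_P\subseteq\Vsp\circ P$; and adding $(0_S,w_P)\in\Vsp$ to $(f_S,f_P)\in\Vsp$ with $f_S\in\K_S$ shows $\K_P+\Vsp\times P\subseteq\K_P$. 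Hence by parts 2 and 3 of Theorem \ref{thm:IIT}, $\hat{\X}_S\equivd\Vsp\lrar\K_P$ is the unique solution of $\Vsp\lrar\X_S=\K_P$ obeying the canonical conditions $\X_S\subseteq\Vsp\circ S$ and $\X_S+\Vsp\times S\subseteq\X_S$.

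Next I would exhibit the claimed set $\tilde{\K}_S\equivd(\K_S+\Vsp\times S)\cap\Vsp\circ S$ as a second solution satisfying the same canonical conditions, so that uniqueness forces $\Vsp\lrar\K_P=\tilde{\K}_S$. That $\tilde{\K}_S$ obeys the canonical conditions is immediate from $\Vsp\times S\subseteq\Vsp\circ S$. To see $\Vsp\lrar\tilde{\K}_S=\K_P$, I would use two reductions that leave a matched composition with $\Vsp$ unchanged: intersecting the left operand with $\Vsp\circ S$ (any $f_S$ contributing to the composition already lies in $\Vsp\circ S$), and adding $\Vsp\times S$ to it (if $f_S=a_S+w_S$ with $(w_S,0_P)\in\Vsp$ then $(a_S,f_P)\in\Vsp$). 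Together these give $\Vsp\lrar\tilde{\K}_S=\Vsp\lrar\K_S=\K_P$, establishing the first equality of Part 1. The second equality, $(\K_S+\Vsp\times S)\cap\Vsp\circ S=(\K_S\cap\Vsp\circ S)+\Vsp\times S$, is the modular law with $\Vsp\times S\subseteq\Vsp\circ S$; since these two are vector spaces, hence closed under subtraction, the usual two-line verification goes through even though $\K_S$ is only an arbitrary collection.

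For Part 2 I would simply instantiate Part 1 with $\Vsp=\V_{S'P'S"P"}=(\V^v(\Gsp))_{S'P'}\oplus(\V^i(\Gsp))_{S"P"}$ and $\K_S=\K_{S'S"}$, and then evaluate the two minors $\V_{S'P'S"P"}\circ S'S"$ and $\V_{S'P'S"P"}\times S'S"$. Because the space is a direct sum of a space on $S'P'$ and a space on $S"P"$, restriction and contraction to $S'S"$ distribute across the summands; combining this with Lemma \ref{lem:minorgraphvectorspace} (which converts $\circ/\times$ on $\V^v,\V^i$ into graph minors, noting $\V^i(\Gsp)\circ S=\V^i(\Gsp\times S)$ and $\V^i(\Gsp)\times S=\V^i(\Gsp\circ S)$) yields $\V_{S'P'S"P"}\circ S'S"=(\V^v(\Gsp\circ S))_{S'}\oplus(\V^i(\Gsp\times S))_{S"}$ and $\V_{S'P'S"P"}\times S'S"=(\V^v(\Gsp\times S))_{S'}\oplus(\V^i(\Gsp\circ S))_{S"}$. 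Substituting these into the second form of Part 1 gives exactly the stated expression.

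The main obstacle I anticipate is bookkeeping rather than depth: correctly transcribing Theorem \ref{thm:IIT} under the $S\leftrightarrow P$ interchange and checking that its hypotheses and its uniqueness clause really do apply to the specific collection $\K_P=\Vsp\lrar\K_S$, together with verifying that the candidate $\tilde{\K}_S$ both solves the inverse equation and meets the canonical conditions. One must stay alert to the fact that $\K_S$ is an arbitrary collection, so no closure property of $\K_S$ itself may be invoked; all the algebra must be carried by the vector spaces $\Vsp\circ S$ and $\Vsp\times S$.
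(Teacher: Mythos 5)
You prove the statement correctly, and with the same three ingredients the paper uses: the modular identity $(\K_S+\Vsp\times S)\cap\Vsp\circ S=(\K_S\cap\Vsp\circ S)+\Vsp\times S$, the invariance of matched composition with $\Vsp$ under adding $\Vsp\times S$ to, or intersecting $\Vsp\circ S$ with, the $S$-side operand, and Theorem \ref{thm:IIT}; your part 2 is identical to the paper's. The difference lies in how Theorem \ref{thm:IIT} is invoked. The paper uses only the existence clause (part 2), with $\hat{\K}_S\equivd(\K_S+\Vsp\times S)\cap\Vsp\circ S$ playing the role of the known right-hand side: having checked $\hat{\K}_S\subseteq\Vsp\circ S$ and $\hat{\K}_S+\Vsp\times S\subseteq\hat{\K}_S$, it concludes $\Vsp\lrar(\Vsp\lrar\hat{\K}_S)=\hat{\K}_S$, which together with $\Vsp\lrar\hat{\K}_S=\Vsp\lrar\K_S=\K_P$ gives the result in one stroke, with no appeal to uniqueness. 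You instead orient the equation the other way, $\Vsp\lrar\X_S=\K_P$ with $\K_P$ known, verify the hypotheses on $\K_P$ (a check the paper never needs), and combine the existence and uniqueness clauses to identify $\Vsp\lrar\K_P$ with your second canonical solution $\tilde{\K}_S$. This is somewhat more work --- note also that your uniqueness argument tacitly requires the fact that $\Vsp\lrar\K_P$ itself satisfies the canonical conditions ($\Vsp\lrar\K_P\subseteq\Vsp\circ S$ and closure under adding $\Vsp\times S$); this is immediate because $\Vsp$ is a vector space, but it is a separate one-line check not literally contained in parts 2 and 3 of the theorem --- yet it buys something the paper's argument leaves implicit: it exhibits $(\K_S+\Vsp\times S)\cap\Vsp\circ S$ as the \emph{unique} canonically closed collection on $S$ whose composition with $\Vsp$ equals $\K_P$, which is exactly the ``internal model'' interpretation the paper develops immediately afterwards (Definition \ref{def:internalmodel}).
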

\begin{proof}
1. Let $\hat{\K}_S\equivd (\K_S+\Vsp \times S)\cap \Vsp\circ S.$
Since $ \Vsp\circ S$ is a vector space and $\Vsp \times S\subseteq \Vsp\circ S,$ 
 a vector $f_S+g_S, g_S\in \Vsp \times S,$ belongs to $\Vsp\circ S$ iff 
$f_S\in \Vsp\circ S.$ Therefore, 
it is clear that \\$(\K_S+\Vsp \times S)\cap \Vsp\circ S=  (\K_S\cap \Vsp \circ S)+\Vsp\times S.$

We note that for any collection of vectors $\K'_S,$
we must have\\
 $\Vsp\lrar \K'_S= \Vsp\lrar (\K'_S+\Vsp \times S)=
\Vsp\lrar (\K'_S\cap \Vsp \circ S).$
Therefore, $\Vsp\lrar \K_S=\Vsp\lrar \hat{\K}_S= \K_P.$\\ 
Since $\hat{\K}_S\subseteq \Vsp\circ S$ and $\hat{\K}_S\supseteq \hat{\K}_S+\Vsp \times S,$ we know by Theorem \ref{thm:IIT}, that 
$\Vsp\lrar(\Vsp\lrar \hat{\K}_S)= \hat{\K}_S.$
Since $\Vsp\lrar \hat{\K}_S=\K_P,$ this proves the result.
\\
2. We have $\V_{S'P'S"P"}\circ S'S"= (\V^v({\G_{SP}})\circ S)_{S'}\oplus (\V^i({\G_{SP}})\circ S)_{S"}= (\V^v({\G_{SP}}\circ S))_{S'}\oplus (\V^i({\G_{SP}}\times S))_{S"}$
and 
$\V_{S'P'S"P"}\times S'S"= (\V^v({\G_{SP}})\times S)_{S'}\oplus (\V^i({\G_{SP}})\times S)_{S"}= (\V^v({\G_{SP}}\times S))_{S'}\oplus (\V^i({\G_{SP}}\circ S))_{S"},$
using Lemma \ref{lem:minorgraphvectorspace}.
The result follows using the previous part, taking $P\equivd P'\uplus P".
\ S\equivd S'\uplus S".$
\end{proof}
It is immediate from Corollary \ref{cor:porttransformation}, that 
 $\Vsp\lrar \K_{S}$ and $\Vsq\lrar \K_{S}$ provide us with the same 
information about $\K_S,$ if $\Vsp\times S= \Vsq\times S$
and $\Vsp\circ S= \Vsq\circ S.$
 This is so because,  if the latter conditions are satisfied, then we have $ \Vsp\lrar (\Vsp \lrar \K_S)=\Vsq\lrar (\Vsq \lrar \K_S)=(\K_S\cap \Vsp \circ S)+\Vsp\times S.$
\\
This fact is useful to note for a generalized multiport
and leads to the following definition.
\begin{definition}
\label{def:internalmodel}
Let $\N^g_P\equivd (\V_{S'P'S"P"},\K_{S'S"}),$  be a generalized multiport.
The collection of vectors\\
$(\K_{S'S"}\cap\V_{S'P'S"P"}\circ S'S")+ \V_{S'P'S"P"}\times S'S",$ is  the \nw{internal model} of $\N^g_P,$
available from  its ports.
\end{definition}
It is natural to regard two generalized multiports which have the same 
internal model available at their ports, as port transformations of each 
other. Using Corollary \ref{cor:porttransformation}, this  leads 
to the following definition.
\begin{definition}
\label{def:porttransformation}
We say $\Vsp,\Vsq$ are \nw{port transformations} of each other with respect 
to $S,$ \\iff 
$\Vsp\times S= \Vsq\times S$
and $\Vsp\circ S= \Vsq\circ S.$\\
We say $\Gsp,\Gsq$ are port transformations of each other with respect
to $S,$ \\iff
$\Gsp\times S= \Gsq\times S$
and $\Gsp\circ S= \Gsq\circ S.$\\
Let $\N^g_P\equivd (\V_{S'P'S"P"},\K_{S'S"}),$  be a generalized multiport.
A generalized multiport\\
$\N^g_Q\equivd (\V_{S'Q'S"Q"},\K_{S'S"}),$ is a port transformation  of $\N^g_P$ iff $\V_{S'Q'S"Q"}\times Q'Q"=\V_{S'P'S"P"}\times P'P"$ and $\V_{S'Q'S"Q"}\circ Q'Q"=\V_{S'P'S"P"}\circ P'P".$\\
A multiport $\N_Q\equivd (\Gsp, \K_{S'S"})$ is a port transformation 
of the multiport $\N_P\equivd (\Gsq, \K_{S'S"})$ \\ iff $\Gsp\times S= \Gsq\times S$
and $\Gsp\circ S= \Gsq\circ S.$
\end{definition}
The next lemma, which follows from Lemma \ref{lem:minorgraphvectorspace}, emphasizes that the Definition \ref{def:porttransformation}
 is consistently worded. 
\begin{lemma}
Let $\Gsp,\Gsq$ be graphs on $S\uplus P,$ $S\uplus Q,$
respectively.\\
Let $\V_{S'P'S"P"}\equivd (\V^v({\G_{SP}})_{S'P'}\oplus (\V^i({\G_{SP}})_{S"P"}$ and let 
$\V_{S'Q'S"Q"}\equivd (\V^v({\G_{SQ}})_{S'Q'}\oplus (\V^i({\G_{SQ}})_{S"Q"}.$ 
Then $\Gsq$ is a port transformation of $\Gsp$ with respect to $S,$ iff
$\V_{S'Q'S"Q"}$ is a port transformation of $\V_{S'P'S"P"}$ with respect to $S'S".$
\end{lemma}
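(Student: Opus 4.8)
The plan is to reduce both halves of the claimed equivalence to statements about the four graph minors $\Gsp\circ S,\Gsp\times S$ and $\Gsq\circ S,\Gsq\times S$, using the fact that the operations $\circ\, S'S"$ and $\times\, S'S"$ applied to the assembled space $\V_{S'P'S"P"}$ decompose summandwise over the $\V^v$ and $\V^i$ blocks. First I would record, exactly as in the proof of part 2 of Corollary \ref{cor:porttransformation}, that
\begin{align*}
\V_{S'P'S"P"}\circ S'S" &= (\V^v(\Gsp\circ S))_{S'}\oplus (\V^i(\Gsp\times S))_{S"},\\
\V_{S'P'S"P"}\times S'S" &= (\V^v(\Gsp\times S))_{S'}\oplus (\V^i(\Gsp\circ S))_{S"},
\end{align*}
together with the identical formulas for $\V_{S'Q'S"Q"}$ with $\Gsq$ in place of $\Gsp$. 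These hold because restriction and contraction of a direct sum over the disjoint sets $S'$ and $S"$ act on each summand separately, and because $(\V^v(\G))\circ S=\V^v(\G\circ S)$, $(\V^i(\G))\circ S=\V^i(\G\times S)$ and their duals all come from Lemma \ref{lem:minorgraphvectorspace}. This display is the computational heart of the argument and everything else is bookkeeping.

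For the forward direction, suppose $\Gsp$ and $\Gsq$ are port transformations with respect to $S$, i.e. $\Gsp\circ S=\Gsq\circ S$ and $\Gsp\times S=\Gsq\times S$. Then the corresponding voltage and current spaces coincide, and substituting into the two displayed formulas for $P$ and for $Q$ gives $\V_{S'Q'S"Q"}\circ S'S"=\V_{S'P'S"P"}\circ S'S"$ and $\V_{S'Q'S"Q"}\times S'S"=\V_{S'P'S"P"}\times S'S"$, which is precisely the assertion that $\V_{S'Q'S"Q"}$ is a port transformation of $\V_{S'P'S"P"}$ with respect to $S'S"$ (Definition \ref{def:porttransformation}).

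For the backward direction, suppose the two vector-space minors of $\V_{S'Q'S"Q"}$ and $\V_{S'P'S"P"}$ agree. Restricting each equality in turn to $S'$ and to $S"$ recovers the individual summands, since the complementary summand contains the zero vector; this yields $\V^v(\Gsp\circ S)=\V^v(\Gsq\circ S)$ and $\V^v(\Gsp\times S)=\V^v(\Gsq\times S)$, with the matching $\V^i$ equalities being automatic from these by Tellegen's Theorem (Theorem \ref{thm:tellegen}), since $\V^i=(\V^v)\uperp$. The remaining—and delicate—step is to pass from equality of voltage spaces back to equality of the graph minors themselves. The hard part will be exactly this identification: literal vertex-labelled graph equality is \emph{not} recoverable from the cycle space alone (Whitney $2$-isomorphism), so I would make explicit that in this network-theoretic setting a graph enters solely through the complementary orthogonal pair $(\V^v,\V^i)$, so that "$\Gsp\circ S=\Gsq\circ S$" in Definition \ref{def:porttransformation} is to be read at the level of these associated spaces. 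Read this way, the four extracted equalities are exactly graph equality of the two minors, the two directions close up, and the lemma confirms that Definition \ref{def:porttransformation} is consistently worded.
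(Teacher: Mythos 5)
Your proof is correct, and its computational core is exactly what the paper intends: the paper offers no argument for this lemma beyond the remark that it ``follows from Lemma \ref{lem:minorgraphvectorspace}'', and the two decompositions you display (which the paper itself writes out inside the proof of part 2 of Corollary \ref{cor:porttransformation}) are precisely what that remark points to. The forward direction by substitution, the backward extraction of summands by restricting each equality to $S'$ and to $S"$ (legitimate because the complementary summand contains the zero vector), and the use of Theorem \ref{thm:tellegen} to get the $\V^i$ equalities from the $\V^v$ ones are all sound.

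The subtlety you flag at the end is real, and it is a defect of the lemma as literally stated rather than of your proof. Concretely: let $S=\{e_1,e_2\}$, let $\G_{SP}$ be the path $v_1\,e_1\,v_2\,e_2\,v_3$ with one pendant port edge at $v_3$, and let $\G_{SQ}$ be two vertex-disjoint edges $e_1,e_2$ with one pendant port edge. All four graph minors $\G_{SP}\circ S,\ \G_{SP}\times S,\ \G_{SQ}\circ S,\ \G_{SQ}\times S$ are forests on $S$, so each has $\V^v=\F_S$ and $\V^i=\0_S$; consequently $\V_{S'P'S"P"}$ and $\V_{S'Q'S"Q"}$ have identical restrictions and contractions to $S'\uplus S",$ yet $\G_{SP}\circ S\ne \G_{SQ}\circ S$ as graphs. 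So with vertex-labelled graph equality the right-to-left implication fails, exactly as your Whitney remark predicts, and your decision to read the graph equalities in Definition \ref{def:porttransformation} as equalities of the associated KVL/KCL space pairs is not an optional gloss but the only reading under which the stated ``iff'' is true. That reading is evidently the intended one --- a graph enters the paper's multiport formalism only through $(\V^v,\V^i)$ --- but the paper asserts the lemma without confronting the point, so your writeup is strictly more careful than the original.
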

The next lemma speaks of a useful way of relating port transformations through
a suitable matched composition.
\begin{lemma}
\label{lem:porttransformation}
Let $S,P,Q,$ be pairwise disjoint.
The  spaces $\Vsp,\Vsq$ are port transformations of each other with respect 
to $S,$ iff
there exists $\Vpq$ such that $\Vsp\lrar \Vpq = \Vsq$ and 
such that \\$\Vsp\circ P= \Vpq\circ P, \Vsp\times P= \Vpq\times P.$
\end{lemma}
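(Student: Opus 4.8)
Let $S,P,Q$ be pairwise disjoint. The spaces $\Vsp,\Vsq$ are port transformations of each other with respect to $S$ iff there exists $\Vpq$ such that $\Vsp\lrar \Vpq=\Vsq$ and such that $\Vsp\circ P=\Vpq\circ P$, $\Vsp\times P=\Vpq\times P$.

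The plan is to prove the two directions separately, leaning heavily on the Implicit Inversion Theorem for vector spaces (Theorem~\ref{thm:IITlinear}), which governs exactly the equation $\Vsp\lrar\Vpq=\Vsq$ with $\Vpq$ unknown. The key conceptual point is that port transformation with respect to $S$ is, by Definition~\ref{def:porttransformation}, the statement $\Vsp\times S=\Vsq\times S$ and $\Vsp\circ S=\Vsq\circ S$; I expect these two minor-equalities to correspond precisely to the solvability conditions appearing in parts~1 and~2 of Theorem~\ref{thm:IITlinear}.

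For the ``if'' direction, suppose such a $\Vpq$ exists with $\Vsp\lrar\Vpq=\Vsq$. First I would show $\Vsp\circ S=\Vsq\circ S$. Since $\Vsq=\Vsp\lrar\Vpq=(\Vsp\cap\Vpq)\circ SQ$ (Theorem~\ref{thm:matchedprop}), restricting to $S$ gives $\Vsq\circ S=(\Vsp\cap\Vpq)\circ S$, and using $\Vsp\circ P=\Vpq\circ P$ together with the extended-intersection definition one checks that any $f_S\in\Vsp\circ S$ extends compatibly through $P$ into $\Vpq$, yielding $\Vsq\circ S=\Vsp\circ S$. Dually, for contraction I would use the identity $\Vsp\lrar\Vpq=(\Vsp+\V_{(-P)Q})\times SQ$ and the hypothesis $\Vsp\times P=\Vpq\times P$: a vector $(f_S,0_Q)\in\Vsq$ lifts to $(f_S,h_P,g_Q)$ with $(f_S,h_P)\in\Vsp$ and $(-h_P,g_Q)\in\Vpq$, and forcing the $Q$-part to vanish pins $h_P$ into $\Vsp\times P\cap(-\Vpq\times P)$; since these contractions agree, one extracts $(f_S,0_P)\in\Vsp$, giving $\Vsq\times S\subseteq\Vsp\times S$, and the reverse inclusion is symmetric. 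Thus $\Vsp,\Vsq$ are port transformations.

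For the ``only if'' direction, suppose $\Vsp\times S=\Vsq\times S$ and $\Vsp\circ S=\Vsq\circ S$. These give exactly $\Vsp\circ S\supseteq\Vsq\circ S$ and $\Vsp\times S\subseteq\Vsq\times S$, so by part~2 of Theorem~\ref{thm:IITlinear} the candidate $\hat\V_{PQ}\equivd\Vsp\lrar\Vsq$ satisfies $\Vsp\lrar\hat\V_{PQ}=\Vsq$. It remains to verify the interface conditions $\Vsp\circ P=\hat\V_{PQ}\circ P$ and $\Vsp\times P=\hat\V_{PQ}\times P$. For this I would compute $\hat\V_{PQ}\circ P=(\Vsp\lrar\Vsq)\circ P$ and $\hat\V_{PQ}\times P$ directly from the matched-composition formulas, using Theorem~\ref{thm:dotcrossidentity} (parts 2--4) and Theorem~\ref{thm:sumintersection} to relate minors of $\Vsp\lrar\Vsq$ over $P$ back to minors of $\Vsp$ over $P$; the equalities $\Vsp\circ S=\Vsq\circ S$, $\Vsp\times S=\Vsq\times S$ should make the $Q$-side contributions collapse so that only $\Vsp$'s own $P$-minors survive. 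I expect this last verification to be the main obstacle, since it requires carefully tracking how restriction and contraction to $P$ interact with the matched composition; everything else is a direct application of the inversion theorem and the algebraic identities already established in Theorems~\ref{thm:matchedprop}, \ref{thm:dotcrossidentity}, and~\ref{thm:sumintersection}.
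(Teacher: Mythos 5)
Your ``if'' direction is correct and complete: the element-wise verification that $\Vsq\circ S=\Vsp\circ S$ and $\Vsq\times S=\Vsp\times S$ goes through exactly as you describe. (The paper argues the same direction more algebraically, writing $\Vsq\times S=\Vsq\lrar \0_Q$ and $\Vsq\circ S=\Vsq\lrar\F_Q$, then using associativity of $\lrar$ together with the identities $\V_{AB}\lrar(\V_{AB}\times B)=\V_{AB}\times A$ and $\V_{AB}\lrar(\V_{AB}\circ B)=\V_{AB}\circ A$; your route is equally valid.) The first half of your ``only if'' direction, invoking part 2 of Theorem \ref{thm:IITlinear} to conclude that $\hat{\V}_{PQ}\equivd\Vsp\lrar\Vsq$ satisfies $\Vsp\lrar\hat{\V}_{PQ}=\Vsq$, is also exactly the paper's step.

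The gap is the second half of the ``only if'' direction: you never prove the interface conditions $\Vsp\circ P=\hat{\V}_{PQ}\circ P$ and $\Vsp\times P=\hat{\V}_{PQ}\times P$; you only announce a plan (``the $Q$-side contributions should collapse'') and yourself flag it as the main obstacle. As written this is not a proof, and the tools you name (parts 2--4 of Theorem \ref{thm:dotcrossidentity} and Theorem \ref{thm:sumintersection}) supply rank and orthogonality relations, not the set equalities needed here. What you missed is that no new machinery is required: the defining equation $\Vsp\lrar\Vsq=\hat{\V}_{PQ}$ is itself a matched composition of exactly the shape treated in the ``if'' direction, with the roles of $S$ and $P$ (and of $\Vsq$ and $\Vpq$) interchanged, and your hypotheses $\Vsp\circ S=\Vsq\circ S$, $\Vsp\times S=\Vsq\times S$ are precisely the interchanged interface conditions. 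Hence your own ``if''-direction argument, applied verbatim to this equation, delivers $\hat{\V}_{PQ}\circ P=\Vsp\circ P$ and $\hat{\V}_{PQ}\times P=\Vsp\times P$; this symmetry observation is exactly how the paper closes the proof. Concretely, $\hat{\V}_{PQ}\circ P=\{h_P:\exists f_S \mbox{ with } (f_S,h_P)\in\Vsp,\ f_S\in\Vsq\circ S\}$, and once $\Vsq\circ S=\Vsp\circ S$ the constraint $f_S\in\Vsq\circ S$ is automatic, so this set is $\Vsp\circ P$; similarly $\hat{\V}_{PQ}\times P=\{h_P:\exists f_S\in\Vsq\times S \mbox{ with } (f_S,h_P)\in\Vsp\}$, and once $\Vsq\times S=\Vsp\times S$ one subtracts $(f_S,0_P)\in\Vsp$ to land in $\Vsp\times P$, with the reverse inclusion obtained by taking $f_S=0_S$.
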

\begin{proof}
(if) It is easily seen
that $\V_{AB}\lrar (\V_{BC}\lrar \V_C)=(\V_{AB}\lrar \V_{BC})\lrar \V_C,$
when $A,B,C,$ are pairwise disjoint.
It is also easy to see that $\V_{AB}\lrar (\V_{AB}\times B)=\V_{AB}\times A$ 
and $\V_{AB}\lrar (\V_{AB}\circ B)=\V_{AB}\circ A.$ 
\\ Therefore,
  $\Vsq\times S\equivd \Vsq\lrar \0_Q=(\Vsp\lrar \Vpq)\lrar \0_Q = \Vsp\lrar (\Vpq\lrar \0_Q)=  \Vsp\lrar (\Vpq\times P)$\\$=\Vsp\lrar (\Vsp\times P)= \Vsp\times S.$
\\
By replacing $\0_Q$ in the above argument by $\F_Q,$ we see that 
$\Vsq\circ S=\Vsp\circ S.$
\\
(only if) We define $\Vpq\equivd \Vsp\lrar \Vsq.$
Since $\Vsp\lrar \Vsq=\Vpq$ and $\Vsp\times S= \Vsq\times S,
\ \Vsp\circ S= \Vsq\circ S,$
 by Theorem \ref{thm:IITlinear}, we have $\Vsp\lrar\Vpq=\Vsq.$
Next, starting from $\Vsp\lrar \Vsq=\Vpq,$
by using the argument of the previous part, but interchanging the roles 
of $S,P$ and $\Vpq,\Vsq,$ 
we get\\ $\Vsp\circ P= \Vpq\circ P, \Vsp\times P= \Vpq\times P.$ 
\end{proof}
\begin{remark}
We have taken $S,P,Q,$ to be pairwise disjoint in Lemma \ref{lem:porttransformation}. But one of the convenient ways of changing the set of ports 
is to define $\Vsq\equivd \Vsp\lrar(\0_{P_2}\oplus \F_{P_3}), \ 
 P\equivd P_1\uplus P_2\uplus P_3,$ 
taking care 
to keep $\Vsp\circ S= \Vpq\circ S, \Vsp\times S= \Vpq\times S.$
 In this case, to apply Lemma \ref{lem:porttransformation},
 we could define $\tilde{P}$ to be a disjoint copy of $P,$ disjoint also 
from $S,$ take 
$\V_{S\tilde{P}}\equiv (\Vsp)_{S\tilde{P}}$ and 
$\V_{SQ}\equivd \V_{S\tilde{P}}\lrar(\0_{\tilde{P}_2}\oplus \F_{\tilde{P}_3}), \ 
 \tilde{P}\equivd \tilde{P}_1\uplus \tilde{P}_2\uplus \tilde{P}_3,\ 
Q\equivd \tilde{P}_1.$

\end{remark}

A natural question next would be `for a given vector space $\Vsp,$ 
what is the port transformation $\Vsq$ with minimum size for the 
set of ports $Q$ and how to construct it efficiently?'

A lower bound is immediate.
\begin{lemma}
\label{lem:portlowerbound}
Let $\Vsp$ be a vector space. Then $|P|\geq r(\Vsp\circ S)-r(\Vsp\times S).$
\end{lemma}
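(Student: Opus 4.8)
The plan is to reduce the claimed inequality to the rank decomposition identity of Theorem \ref{thm:dotcrossidentity}, part 2, applied twice with the roles of $S$ and $P$ interchanged. Recall that part 2 of that theorem gives, for a vector space $\Vsp$ on $S\uplus P$ with $S\cap P=\emptyset$, the identity $r(\Vsp)=r(\Vsp\circ S)+r(\Vsp\times P)$. Since the hypothesis $S\cap P=\emptyset$ is symmetric in $S$ and $P$, the same theorem applied with the two sets swapped also yields $r(\Vsp)=r(\Vsp\circ P)+r(\Vsp\times S)$.

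First I would equate the two expressions for $r(\Vsp)$ obtained above, giving
\[
r(\Vsp\circ S)+r(\Vsp\times P)=r(\Vsp\circ P)+r(\Vsp\times S),
\]
and rearrange this to the ``balance'' identity
\[
r(\Vsp\circ S)-r(\Vsp\times S)=r(\Vsp\circ P)-r(\Vsp\times P).
\]
This converts a statement about the quantity $r(\Vsp\circ S)-r(\Vsp\times S)$ into an identical statement phrased entirely on the set $P$, which is exactly where the bound $|P|$ lives.

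Next I would bound the right-hand side. Since $\Vsp\circ P$ is a vector space on $P$ and $P$ is finite, any basis of $\Vsp\circ P$ has size at most $|P|$, so $r(\Vsp\circ P)\le |P|$; and since $r(\cdot)$ is nonnegative, $r(\Vsp\times P)\ge 0$. Combining these with the balance identity gives
\[
r(\Vsp\circ S)-r(\Vsp\times S)=r(\Vsp\circ P)-r(\Vsp\times P)\le r(\Vsp\circ P)\le |P|,
\]
which is the desired conclusion.

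There is essentially no hard step here: the entire content is the symmetric use of Theorem \ref{thm:dotcrossidentity}, part 2, followed by the trivial observations $r(\Vsp\circ P)\le |P|$ and $r(\Vsp\times P)\ge 0$. The only point requiring care is the recognition that part 2 may be applied with $S$ and $P$ interchanged, which is justified because the theorem's only structural assumption on the pair is disjointness. Thus I expect the proof to be short and the ``obstacle'' to amount only to citing the correct symmetric instance of the minor rank identity.
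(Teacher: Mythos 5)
Your proof is correct and is essentially identical to the paper's own argument: the paper likewise invokes Theorem \ref{thm:dotcrossidentity} to write $r(\Vsp)=r(\Vsp\circ S)+r(\Vsp\times P)=r(\Vsp\times S)+r(\Vsp\circ P)$, rearranges to the same balance identity, and concludes from $|P|\geq r(\Vsp\circ P)$ and $r(\Vsp\times P)\geq 0$. No gaps; nothing further is needed.
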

\begin{proof}
We have, by Theorem \ref{thm:dotcrossidentity}, 
$r(\Vsp)=r(\Vsp\circ S)+r(\Vsp\times P)= r(\Vsp\times S)+r(\Vsp\circ P).$
Therefore, $r(\Vsp\circ S)-r(\Vsp\times S)=r(\Vsp\circ P)-r(\Vsp\times P).$ But $|P|\geq r(\Vsp\circ P)$ and $r(\Vsp\times P)\geq 0.$
The result follows.
\end{proof}

We now present a simple algorithm for building the port transformation
with minimum port size, where we actually achieve the lower bound.

\begin{algorithm}
\label{alg:portreduction}
Input: $\V_{TR}$\\
Output: $\V_{T\tilde{R}},$
with $|\tilde{R}|= r(\V_{T\tilde{R}}\circ  T)-r(\V_{T\tilde{R}}\times  T),$
such that\\ $\V_{T\tilde{R}}\times T= \V_{TR}\times T, \V_{T\tilde{R}}\circ  T= \V_{TR}\circ  T.$

Step 1. We build a representative matrix $B$ of $\V_{TR},$ in which\\ $\V_{TR} \circ T,
\V_{TR} \times T,
\V_{TR} \circ R,
\V_{TR} \times R,$ are visible
as shown below.
\begin{align}
\label{eqn:minP}
B= \ppmatrix{B_{1T}&\0_{1R}\\
B_{2T}&B_{2R}\\
\0_{3T}&B_{3R}}.
\end{align}
Here $B_{1T}$ is a representative matrix of
$\V_{TR} \times T,$ $(B_{1T}^T|B_{2T}^T)^T$ is a representative matrix of\\
$\V_{TR} \circ T,$
 $B_{3R}$ is a representative matrix of
$\V_{TR} \times R,$ $(B_{2R}^T|B_{3R}^T)^T$ is a representative matrix of
$\V_{TR} \circ R.$
We note that the rows of $B_{2T},B_{2R}$ are linearly independent
and  their number\\  equals $r(\V_{T{R}}\circ  T)-r(\V_{T{R}}\times  T)=
 r(\V_{T{R}}\circ  R)-r(\V_{T{R}}\times  R).$
\\
Step 2. Let $R=\tilde{R}\uplus R_2,$ where $\tilde{R}$ is a maximal linearly independent
subset of columns of\\ $(B_{2R})$
and let $(B_{2R})= (B_{2\tilde{R}}|B_{2{R_2}}).$\\
Take $\V_{T\tilde{R}}$ to have the representative matrix $\tilde{B}$ shown below.
\begin{align}
\label{eqn:minR2}
\tilde{B}= \ppmatrix{B_{1T}&\0_{1\tilde{R}}\\
B_{2T}&B_{2\tilde{R}}}.
\end{align}
We have\\
$|\tilde{R}|= r(\V_{TR}\circ T)-r(\V_{TR}\times T),$
$\V_{T\tilde{R}}\times T= \V_{TR}\times T, \V_{T\tilde{R}}\circ  T= \V_{TR}\circ  T.$
\end{algorithm}
We give below a simple graph based algorithm for minimising 
the number of ports of a multiport. As can be seen, the algorithm
is linear time on the number of edges in the graph.
We use the term `tree of a graph' to mean a maximal loop free set 
in the graph, even if the graph is not connected.

\begin{algorithm}
\label{alg:graphportminimization}
Input: A graph $\G_{SP}$ on edge set $S\uplus P.$\\
Output: A graph $\G_{S\tilde{P}}$ on edge set $S\uplus \tilde{P},$
such that \\$\V^v(\G_{S\tilde{P}})\times S= \V^v(\G_{S{P}})\times S, \V^v(\G_{S\tilde{P}})\circ S= \V^v(\G_{S{P}})\circ S, $ $|\tilde{P}|= r(\G_{SP}\circ S)-r(\G_{SP}\times S).$\\
Step 1. Build a tree $t_1$ of $\G_{SP}\circ S,$ and grow this into a tree
$t_1\uplus t_2$ of $\G_{SP}.$\\
Step 2. Grow $t_2$ into a tree $\hat{t}_2$ of $\G_{SP}\circ P.$
\\
Step 3. Take $\tilde{P}\equivd \hat{t}_2-{t}_2.$ Build the graph $\G_{S\tilde{P}}\equivd \G_{SP}\circ (S\uplus \hat{t}_2) \times (S\uplus (\hat{t}_2-{t}_2)).$
\\
STOP
\end{algorithm}
{\bf Justification for Algorithm \ref{alg:graphportminimization}
}
\\
We use the following facts from graph theory.\\
Fact 0.  $\V^v(\G), \V^i(\G)$ are determined entirely by 
the set of circuits (cutsets)  and the 
relative orientation of the edges with respect to the  circuits (cutsets) of $\G.$\\
Fact 1. For any graph $\G_{SP},$ if a tree $t$ contains a tree
$t_1$ of $\G_{SP}\circ S,$ then $t-t_1$ is a tree of $\G_{SP}\times P.$\\
Fact 2. If $t_1$ is a tree of $\G_{SP}\circ S$ and $t_2,$ a tree of
$\G_{SP}\times P,$ then $t_1\uplus t_2$ is a tree of $\G_{SP}.$\\
If $t_2$ is contracted in $\G_{SP},$ loops of $\G_{SP}$ within $S,$ continue
to remain as loops, the relative orientation of edges with respect to that of
the loops remains unchanged, and further,  no new loops
are  created in $S.$
Equivalently, $[\V^v(\G_{SP}\times (S\uplus (P-t_2)))]\circ S=(\V^v(\G_{SP}))\circ S.$\\
Fact 3. If $\hat{t}_2$ is a tree of $\G_{SP}\circ P,$ then if  $P-\hat{t}_2$ is deleted
in $\G_{SP},$
cutsets of $\G_{SP}$ within $S,$ continue
to remain as cutsets, the relative orientation of edges with respect to that of  
the cutsets remains unchanged, and further, no new cutsets
are  created in $S.$
Equivalently, $[\V^v(\G_{SP}\circ (S\uplus \hat{t}_2))]\times S= (\V^v(\G_{SP}))\times S.$

We have $\G_{S\tilde{P}}\equivd \G_{SP}\circ (S\uplus \hat{t}_2) \times (S\uplus (\hat{t}_2-t_2))=
\G_{SP}\times(S\uplus (P-t_2))\circ (S\uplus (\hat{t}_2-t_2)).$\\
Therefore, $[\V^v(\G_{S\tilde{P}})]\circ S=[\V^v(\G_{SP}\times(S\uplus (P-t_2))\circ (S\uplus (\hat{t}_2-t_2)))]\circ S= [\V^v(\G_{SP}\times(S\uplus (P-t_2)))]\circ S= [\V^v(\G_{SP})]\circ S$\\
(by Fact 2. above).\\
Next, $[\V^v(\G_{S\tilde{P}})]\times S=[\V^v(\G_{SP}\circ (S\uplus \hat{t}_2) )\times (S\uplus (\hat{t}_2-t_2))]\times S=[\V^v(\G_{SP}\circ (S\uplus \hat{t}_2))]\times S= [\V^v(\G_{SP})]\times S$\\
(by Fact 3. above).
$\ \ \ \ \ \Box$
\\
\begin{remark}
We note that in Algorithm \ref{alg:graphportminimization}, if $\Gsp$ is connected, then $\G_{S\tilde{P}}\circ S$ would always be connected,
but $\G_{S{P}}\circ S$ may not be.
However $[\V^v(\G_{S\tilde{P}})]\circ S$ would always equal $[\V^v(\G_{S{P}})]\circ S.$ 
\end{remark}


%
%
\subsection{Computing port behaviour from the behaviour of port minimized version}
In Section \ref{subsec:dirac}, we reduced the problem of testing rigidity 
of a multiport with gyrators, resistors, controlled and independent sources, 
to that of testing independence of certain columns
of a vector space which is the multiport behaviour of another multiport
with a Dirac structure as its device characterisitc. 
In such cases, it would be convenient 
to reduce the problem to that of the independence of a smaller set of columns 
of the behaviour of a port minimized version.
We show how to do this in this section.

We use the notation of Algorithm \ref{alg:graphportminimization}.\\
Let $\V_{SP}\equivd \V^v(\G_{SP}),$ and let  $t_1, t_2, \hat{t}_2,$  be trees of $\G_{SP}\circ S, \G_{SP}\times P, \G_{SP}\circ P, $ respectively with $t_2\subseteq \hat{t}_2.$
Let the `minimized ports' $\hat{t}_2-t_2$ be denoted by  $\tilde{P}.$

Let $\V_{P'P"}\equivd [(\Vsp)_{S'P'}\oplus (\Vsp^{\perp})_{S"P"}]
\lrar \V_{S'S"}$ and let $\V_{\tilde{P}'\tilde{P}"}$ $\equivd [(\V_{S\tilde{P}})_{S'\tilde{P}'}\oplus (\V_{S\tilde{P}}^{\perp})_{S"\tilde{P}"}
]\lrar \V_{S'S"}.$

We have,
$\V_{S\tilde{P}}=\V^v(\G_{SP} \circ (S\cup \hat{t}_2)\times  (S\cup (\hat{t}_2-t_2)))=\Vsp \circ (S\cup \hat{t}_2)\times  (S\cup (\hat{t}_2-t_2)).$\\
By construction according to Algorithm \ref{alg:graphportminimization}, $\hat{t}_2, t_2$ are  trees of $\G_{SP}\circ P, \G_{SP}\times P,$ respectively. Therefore, by Lemma \ref{lem:minorgraphvectorspace}, $\hat{t}_2,t_2$ are column bases  of
$\V^v(\G_{SP}\circ P)= \Vsp \circ P$ and $\V^v(\G_{SP}\times P)= \Vsp \times P,$ respectively.\\
Again according to Algorithm \ref{alg:graphportminimization}, $\tilde{P}\equivd \hat{t}_2-t_2$ is a column base of $(\V^v(\G_{SP}\circ P))\circ \hat{t}_2\times \tilde{P} =\Vsp \circ \hat{t}_2\times \tilde{P}$\\$=\Vsp\times (P-{t}_2)\circ \tilde{P}$
and $\tilde{P}$ is also a column base of $(\Vsp\times (P-{t}_2)\circ \tilde{P})^{\perp}= \Vsp^{\perp}\circ (P-{t}_2)\times \tilde{P}= \Vsp^{\perp} \times \hat {t}_2\circ \tilde{P}= (\V^i(\G_{SP}\circ P))\times \hat{t}_2\circ \tilde{P}.$

Let $Q^1_{\hat{t}_2P}$ be a fundamental cutset matrix of $\G_{SP}\circ P$
 with respect to $\hat{t}_2.$
Since ${t}_2\subseteq \hat{t}_2,$ we can write
\begin{align}
\label{eqn:dotPcrossP1}
Q^1_{\hat{t}_2P}=\ppmatrix{I_{t_2}&0_{t_2(\hat{t}_2-t_2)}&K^1_{t_2(P-\hat{t}_2)}\\
0_{(\hat{t}_2-t_2)t_2}&I_{\hat{t}_2-t_2}&K_{(\hat{t}_2-t_2)(P-\hat{t}_2)}}= \ppmatrix{I_{t_2}&0_{t_2(\tilde{P})}&K^1_{t_2(P-\hat{t}_2)}\\
0_{\tilde{P}t_2}&I_{\tilde{P}}&K_{\tilde{P}(P-\hat{t}_2)}}.
\end{align}
Let $Q^2_{\hat{t}_2P}$ be a fundamental cutset matrix of $\G_{SP}\times  P$
 with respect to $\hat{t}_2.$
Since ${t}_2\subseteq \hat{t}_2,$ we can write
\begin{align}
\label{eqn:dotPcrossP21}
Q^2_{\hat{t}_2P}=\ppmatrix{I_{t_2}&K_{t_2(\hat{t}_2-t_2)}&K_{t_2(P-\hat{t}_2)}}
= \ppmatrix{I_{t_2}&K_{t_2(\tilde{P})}&K_{t_2(P-\hat{t}_2)}}
.
\end{align}

We note that $\V^v(\G_{SP}\circ P)= \V^v(\G_{SP})\circ P\supseteq 
\V^v(\G_{SP})\times P= \V^v(\G_{SP}\times P).$
This means that the rows of $Q^2_{\hat{t}_2P}$ belong to the space 
spanned by rows of $Q^1_{\hat{t}_2P}.$
It is also clear that the rows of $Q^2_{\hat{t}_2P}$ together 
with the second set of rows of $Q^1_{\hat{t}_2P}$ are linearly 
independent, and belong to the space
spanned by rows of $Q^1_{\hat{t}_2P}.$ 
Further these rows together are the same in number as the rows 
of $Q^1_{\hat{t}_2P}.$ We conclude that the matrix $Q_{\hat{t}_2P},$ 
shown below,  made up of these rows, is a representative matrix of 
$\Vsp \circ P=\V^v(\Gsp\circ P).$

\begin{align}
\label{eqn:dotPcrossP}
Q_{\hat{t}_2P}=\ppmatrix{I_{t_2}&K_{t_2(\hat{t}_2-t_2)}&K_{t_2(P-\hat{t}_2)}\\
0_{(\hat{t}_2-t_2)t_2}&I_{\hat{t}_2-t_2}&K_{(\hat{t}_2-t_2)(P-\hat{t}_2)}}= \ppmatrix{I_{t_2}&K_{t_2(\tilde{P})}&K_{t_2(P-\hat{t}_2)}\\
0_{\tilde{P}t_2}&I_{\tilde{P}}&K_{\tilde{P}(P-\hat{t}_2)}}.
\end{align}
Note that, in this representative matrix of $\Vsp \circ P=\V^v(\Gsp\circ P),$ 
the space 
$\Vsp \times P$ is visible (the first set of rows forms a 
representative matrix of $\Vsp \times P$). 
Also visible is the space $\Vsp\circ P\times (P-{t}_2)$ (the second set of rows restricted to columns $(P-{t}_2), $ forms a 
representative matrix of $\Vsp\circ P\times (P-{t}_2)$)
and the space $\Vsp\circ P\times (P-{t}_2)\circ \tilde{P}$ (the submatrix $I_{\tilde{P}}$ in the second set of rows forms a representative matrix of $\Vsp\circ P \times (P-{t}_2)\circ \tilde{P}$).

We can similarly construct a representative matrix  $B_{(P-{t}_2)P}$ of 
$\Vsp^{\perp} \circ P=\V^i(\Gsp\times P),$
 in which 
the space
$\Vsp^{\perp} \times P$ is visible.
In this matrix, shown below, 
the first set of rows forms a
representative matrix of $\Vsp^{\perp} \times P,$
the second set of rows, restricted to
columns $\hat{{t}}_2,$ forms a
representative matrix of\\ $\Vsp^{\perp}\circ P\times \hat{{t}}_2,$
the submatrix $I_{\tilde{P}}$ in the second set of rows forms a representative matrix of $\Vsp^{\perp}\circ P \times \hat{{t}}_2\circ \tilde{P}= \Vsp^{\perp}\circ (P-t_2)\times \tilde{P} .$
%
\begin{align}
\label{eqn:dotPcrossP22}
B_{(P-{t}_2)P}=\ppmatrix{
M_{(P-\hat{t}_2)t_2}&M_{(P-\hat{t}_2)(\hat{t}_2-t_2)}&I_{P-\hat{t}_2}\\
M_{(\hat{t}_2-t_2)t_2}&I_{\hat{t}_2-t_2}&0_{(\hat{t}_2-t_2)(P-\hat{t}_2)}
}= \ppmatrix{
M_{(P-\hat{t}_2)t_2}&M_{(P-\hat{t}_2)\tilde{P}}&I_{P-\hat{t}_2}\\
M_{\tilde{P}t_2}&I_{\tilde{P}}&0_{\tilde{P}(P-\hat{t}_2)}
}.
\end{align}

Let ${\A}_{\tilde{P}'\tilde{P}"}
\equivd \alpha_{\tilde{P}'\tilde{P}"}+{\V}_{\tilde{P}'\tilde{P}"}$
and let ${\A}_{{P}'{P}"}
\equivd \alpha_{{P}'{P}"}+{\V}_{{P}'{P}"}.$
\\
We will show that ${\A}_{{P}'{P}"}$ can be constructed in a simple 
way from ${\A}_{\tilde{P}'\tilde{P}"}.$
We do this by first relating a solution of $\N_P\equivd (\G_{SP},\A_{S'S"})$ to a solution of $\N_{\tilde{P}}\equivd (\G_{S\tilde{P}},\A_{S'S"}).$ 

As before, we denote $\V^v(\G_{SP}), \V^v(\G_{S\tilde{P}}),$ respectively by
$\Vsp, \V_{S\tilde{P}}.$
\\
Let $((f_{S'},\tilde{f}_{\tilde{P}'}),(g_{S"},\tilde{g}_{\tilde{P}"}))$ be a solution of $\N_{\tilde{P}},$ \\i.e., $((f_{S'},\tilde{f}_{\tilde{P}'}),({g}_{S"},\tilde{g}_{\tilde{P}"}))\in [(\V_{S\tilde{P}})_{S'\tilde{P}'}\oplus (\V_{S\tilde{P}}^{\perp})_{S"\tilde{P}"}]\cap {\A}_{S'S"}.$
Then the  corresponding solutions of $\N_P,$ i.e., corresponding vectors in
$[(\V_{SP})_{S'{P}'}\oplus (\Vsp^{\perp})_{S"{P}"}]\cap {\A}_{S'S"}$
have the form\\
$((f_{S'},f_{{P}'}),(g_{S"},g_{{P}"})),$
where $f_{{P}'}\in (\Vsp\circ P)_{P'}, g_{{P}"}\in (\Vsp^{\perp}\circ P)_{P"}.$
\\
Now the vector $f_{{P}'}\in (\Vsp\circ P)_{P'}$ must be a linear combination of the rows of the matrix $Q_{\hat{t}_2P}$ in Equation \ref{eqn:dotPcrossP}
and the vector $\tilde{f}_{\tilde{P}'}\in \Vsp\circ P \times (P-{t}_2)\circ \tilde{P}=\Vsp\circ\hat{t}_2\times \tilde{P}$
is a restriction of a linear combination of the second set of rows of 
$Q_{\hat{t}_2P}.$ Since this set of rows has an identity matrix corresponding to
columns $\tilde{P},$ it follows that 
 $(f_{S'},f_{{P}'})$ can be expanded as 
$(f_{S'},0_{{t'}_2},\tilde{f}_{\tilde{P}'}, f_{\tilde{P}'}^T K_{\tilde{P}(P-\hat{t}_2)})+(0_{S'},h_{{t'}_2},h_{\tilde{P}'},h_{(P-\hat{t}_2)'}),    $
where
 $(h_{{t'}_2},h_{\tilde{P}'},h_{(P-\hat{t}_2)'})\in (\Vsp\times P)_{P'}.$
(The first term in the sum corresponds to the second row of $Q_{\hat{t}_2P}$ 
and the second term corresponds to the first row of $Q_{\hat{t}_2P}$ 
which constitutes a representative matrix of $\Vsp\times P.$)
\\
Next the vector $g_{{P}"}\in (\Vsp^{\perp}\circ P)_{P'}$ must be a linear combination of the rows of the matrix $B_{(P-{t}_2)P}$ in Equation \ref{eqn:dotPcrossP22}
and the vector $\tilde{g}_{\tilde{P}"}\in (\Vsp^{\perp}\circ (P-{t}_2)\times \tilde{P})_{\tilde{P}"},$
is a restriction of a linear combination of the second set of rows of
$B_{(P-{t}_2)P}.$
We note that  this set of rows has an identity matrix corresponding to
columns $\tilde{P}.$ 
Arguing as above, $(g_{S"},\tilde{g}_{\tilde{P}"})$ can be expanded as 
$(g_{S"},g_{\tilde{P}"}^TM_{\tilde{P}t_2},g_{\tilde{P}"}, 0_{(P-\hat{t}_2)"}) +(0_{S"}, k_{{t"}_2}, k_{\tilde{P}"}, k_{(P-\hat{t}_2)"})
,$ where $( k_{{t"}_2},k_{\tilde{P}"},k_{(P-\hat{t}_2)"})\in (\Vsp^{\perp}\times P)_{P"}.$

Suppose ${\V}_{{P}'{P}"},{\V}_{\tilde{P}'\tilde{P}"}
$ are the vector space translates of  ${\A}_{{P}'{P}"},{\A}_{\tilde{P}'\tilde{P}"}
,$ respectively. The relationship between vectors of ${\V}_{{P}'{P}"},{\V}_{\tilde{P}'\tilde{P}"}
,$would be the same as the relationship between vectors in ${\A}_{{P}'{P}"}, {\A}_{\tilde{P}'\tilde{P}"}
,$ This is so because the same role is played by $\Vsp\circ P, \Vsp\times P,
\V_{S\tilde{P}}\circ\tilde{P}, \V_{S\tilde{P}}\times \tilde{P}$ and their orthogonal complements in both cases.

Let $({Q}_{\tilde{P}'}|{B}_{\tilde{P}"})$ be  a representative
matrix of ${\V}_{\tilde{P}'\tilde{P}"}.$
Then the matrix $({Q}_{{P}'}|{B}_{{P}"}),$ shown below, can be  a representative
matrix of ${\V}_{{P}'{P}"},$
taking the columns in the order $(t_2',\tilde{P}',(P-\hat{t}_2)',t"_2,\tilde{P}", (P-\hat{t}_2)"),$ taking $\tilde{P}\equivd \hat{t}_2-t_2, $
using Equation \ref{eqn:dotPcrossP}, Equation \ref{eqn:dotPcrossP22},
\begin{align}
\label{eqn:portminrelation1}
({Q}_{{P}'}|{B}_{{P}"})=
\ppmatrix{0_{t_2'} &\ \  {Q}_{\tilde{P}'}&\ \ {Q}_{\tilde{P}'}(K_{\tilde{P}(P-\hat{t}_2)})&\vdots\vdots   &\ \ {B}_{\tilde{P}"}(M_{\tilde{P}t_2})&\ \ {B}_{\tilde{P}"}&\ \  0_{1(P-\hat{t}_2)"}\\
I_{t_2'}&\ \    K_{t_2'(\tilde{P}')}&\ \   K_{t_2'(P-\hat{t}_2)'}&\vdots\vdots  &\ \    0_{t"_2}&\ \    0_{\tilde{P}"}&\ \   0_{(P-\hat{t}_2)"}
\\
0_{t_2'}&\ \  0_{(\tilde{P}')}&\ \ 0_{(P-\hat{t}_2)'}&\vdots\vdots   &M_{(P-\hat{t}_2)"t"_2}&\ \  M_{(P-\hat{t}_2)"\tilde{P}"}&\ \ I_{(P-\hat{t}_2)"}
},
\end{align}
where
$M_{\tilde{P}t_2}=-K^T_{t_2\tilde{P}}, 
$  $K_{\tilde{P}(P-\hat{t}_2)}= -M^T_{(P-\hat{t}_2)\tilde {P}}\ ,$
using the orthogonality of $\Vsp\times P, \Vsp^{\perp}\circ P$
and\\ $\Vsp\circ P, \Vsp^{\perp}\times P.$


\subsection{Testing independence of columns of a multiport behaviour using its port minimization}
\begin{problem}
\label{prob:testindependence}
Let $P_1'$ be a set of columns of a multiport behaviour ${\V}_{P'P"}.$
\begin{enumerate}
\item
Test whether $P_1'$ is an independent set of columns of ${\V}_{P'P"}\circ P'$ and of ${\V}_{P'P"}\times P'.$
\item Test whether $P_1"$ is an independent set of columns of ${\V}_{P'P"}\circ P"$ and of ${\V}_{P'P"}\times P".$
\end{enumerate}
\end{problem}
There are situations where a multiport behaviour ${\V}_{P'P"}
\equivd  [(\V^v(\G_{SP}))_{S'P'}\oplus (\V^i(\G_{SP})_{S"P"}]
\lrar \A_{S'S"}$ has size of $P$ much greater  than that of $S$
and we have to test independence as in Problem \ref{prob:testindependence}
.
We have seen that the minimized version $ \V_{\tilde{P}'\tilde{P}"}$
 of ${\V}_{P'P"}$ (using Algorithm \ref{alg:graphportminimization}) 
 has $|\tilde{P}|\leq |S|.$ 
It is therefore worthwhile inferring the independence of columns $P_1'\subseteq P'$ of ${\V}_{P'P"}\circ P', {\V}_{P'P"}\times P'$ 
by computing the independence of columns $(P_1\cap \tilde{P})'$
of $\V_{\tilde{P}'\tilde{P}"}\circ \tilde{P}', \V_{\tilde{P}'\tilde{P}"}\times \tilde{P}'.$
A similar exercise is worth doing with the $"$ entities. 

Let the columns $P_1'$
of $(\V_{SP})_{S'{P}'}$ be dependent. Then some nonzero vector $k_{P_1'}$ on $P_1'$ is orthogonal 
to the columns $P_1'$ of $(\V_{SP})_{S'{P}'}.$
Suppose $((f_{S'},f_{{P}_1'},f_{({P}-P_1)'}),(g_{S"},g_{{P}_1"},g_{({P}-P_1)"}))$  is a vector in\\ $[(\V_{SP})_{S'{P}'}\oplus (\Vsp^{\perp})_{S"{P}"}]\cap {\A}_{S'S"}.$ 
It is clear that $k_{P_1'}$ 
will also be 
orthogonal to the vector $f_{{P}_1'}.$
Since ${\V}_{P'P"}
\equivd  ([(\V_{SP})_{S'{P}'}\oplus (\Vsp^{\perp})_{S"\tilde{P}"}]\cap {\A}_{S'S"})\circ P'P",$ it follows that the columns $P_1'$ 
of ${\V}_{P'P"}$ must 
also be dependent. 
We conclude that if a set of columns $P_1'$
of ${\V}_{P'P"}$ is to be independent, it is necessary that these 
columns of $(\V_{SP})_{S'{P}'},$ also be independent. 
\\Let columns $P_1'$ of $(\V_{SP})_{S'{P}'},$ be independent.
By Lemma \ref{lem:minorgraphvectorspace}, $P_1$ is a subset of a tree of 
$\G_{SP}\circ P.$ Therefore, in Algorithm \ref{alg:graphportminimization},
we may take $P_1\subseteq \hat{t}_2.$ (We may additionally take $t_2$ to contain as many edges 
of $P_1$ as possible - this is not necessary, but will save effort later.) From Equation \ref{eqn:portminrelation1}, it is clear 
that a subset $P_1'$ of columns of $\tilde{P}'\cup t_2'=\hat{t}_2'$ is independent 
in the representative matrix $({Q}_{{P}'}|{B}_{{P}"})$ of 
$\V_{P'P"}$ iff $P_1'\cap \tilde{P}'$ is independent in the representative 
matrix $({Q}_{\tilde{P}'}|{B}_{\tilde{P}"})$ of $\V_{\tilde{P'}\tilde{P"}}.$
We note that $P_1'$ is an independent subset of columns of ${\V}_{P'P"}$ iff it is an independent 
subset of columns of ${\V}_{P'P"}\circ P'.$
\\
Next we can derive a representative matrix for ${\V}_{P'P"}\cap 0_{P"}$ 
from Equation \ref{eqn:portminrelation1}, as shown below.


\begin{align}
\label{eqn:portminrelationtimes}
(\hat{Q}_{{P}'}|0_{{P}"})=
\ppmatrix{0_{1t_2'} &\ \  \hat{Q}_{\tilde{P}'}&\ \ \hat{Q}_{\tilde{P}'}(K_{\tilde{P}'(P-\hat{t}_2)'})&\vdots\vdots   &\ \ 0_{2t"_2}&\ \ 0_{\tilde{P}"}&\ \  0_{1(P-\hat{t}_2)"}\\
I_{t_2'}&\ \    K_{t_2'(\tilde{P}')}&\ \   K_{t_2'(P-\hat{t}_2)'}&\vdots\vdots  &\ \    0_{2t"_2}&\ \    0_{\tilde{P}"}&\ \   0_{1(P-\hat{t}_2)"}
},
\end{align}
where the first set of rows is a suitable linear combination of the 
first set of rows of $({Q}_{{P}'}|{B}_{{P}"})$ in Equation \ref{eqn:portminrelation1}.
It is clear once again 
that a subset $P_1'$ of columns of $\tilde{P}'\cup t_2'=\hat{t}_2'$ is independent
in the representative matrix $(\hat{Q}_{{P}'}|0_{{P}"})$ of
${\V}_{\tilde{P'}\tilde{P"}}\cap 0_{P"}$ iff $P_1'\cap \tilde{P}'$ is independent in the representative
matrix $(\hat{Q}_{\tilde{P}'}|0_{\tilde{P}"})$ of ${\V}_{\tilde{P'}\tilde{P"}}
\cap 0_{P"}.$
It follows that a subset $P_1'$ of columns of $\tilde{P}'\cup t_2'=\hat{t}_2'$ is independent
in the representative matrix $(\hat{Q}_{{P}'})$ of
${\V}_{P'P"}\times {P'}$ iff $P_1'\cap \tilde{P}'$ is independent in the representative
matrix $(\hat{Q}_{\tilde{P}'})$ of ${\V}_{\tilde{P'}\tilde{P"}}\times \tilde{P}'
.$

The argument for the independence of columns $P_1"$ is similar.
If a set of columns $P_1"$
of ${\V}_{P'P"}$ is to be independent, it is necessary that these
columns of $(\V_{SP}^{\perp})_{S"\tilde{P}"},$ also be independent.
Let columns $P_1"$ of $(\V_{SP})^{\perp}_{S"\tilde{P}"},$ be independent.
By Lemma \ref{lem:minorgraphvectorspace}, $P_1$ is a subset of a cotree of
$\G_{SP}\times P,$ which may be taken to be $P-t_2.$ Now $P-\hat{t}_2\subseteq P-t_2.$ 
We may take $P-\hat{t}_2$  to contain as many  edges of $P_1$ as possible 
so that $(P-t_2)-(P-\hat{t}_2)=\hat{t}_2-t_2=\tilde{P}$ contains as few 
edges of $P_1$ as possible (this is not necessary, but will save effort later). 
From Equation \ref{eqn:portminrelation1}, it is clear
that a subset $P_1"$ of columns of $\tilde{P}"\cup (P-\hat{t}_2)"=(P-{t}_2)"$ is independent
in the representative matrix $({Q}_{{P}'}|{B}_{{P}"})$ of
$\V_{P'P"}$ iff $P_1"\cap \tilde{P}"$ is independent in the representative
matrix $({Q}_{\tilde{P}'}|{B}_{\tilde{P}"})$ of $\V_{\tilde{P'}\tilde{P"}}.$
The proof that a subset $P_1"$ of columns of ${\V}_{P'P"}\times P"$ 
is independent iff $P_1"\cap \tilde{P}"$ is an independent subset of columns 
of ${\V}_{P'P"}\times P"$ is similar to the $P_1'$ case for 
${\V}_{P'P"}\times P'.$
\begin{remark}
${\V}_{P'P"}\times P', {\V}_{P'P"}\times P"$ are respectively the open circuit and short circuit port behaviours of $\N_P.$ Even if $\N_P$ is rigid,
they cannot be computed directly using conventional circuit simulators, by open circuiting or short circuiting the ports and computing the solution of the resulting network. This is because, 
in general this network has non unique solution so cannot be handled 
by conventional circuit simulators.
\end{remark}

We summarize the above discussion in the form of an algorithm 
for testing independence of subsets of columns of 
 ${\V}_{P'P"}\circ P',{\V}_{P'P"}\times P',{\V}_{P'P"}\circ P",{\V}_{P'P"}\times P".$
\begin{algorithm}
\label{alg:testindependence}
Input: A linear multiport $\N_P\equivd (\G_{SP},\V_{S'S"}),$ and a subset $P_1\subseteq P.$\\
Output: A statement whether $P_1'$ is independent in ${\V}_{P'P"}\circ P',{\V}_{P'P"}\times P',$\\ and whether $P_1"$ is independent in ${\V}_{P'P"}\circ P",{\V}_{P'P"}\times P",$ where\\ ${\V}_{P'P"}\equivd \V^v(\G_{SP})_{S'P'}\oplus \V^i(\G_{SP})_{S"P"}\lrar \V_{S'S"}.$
\begin{itemize}
\item ($P_1'$) Step 1. Check if $P_1$ is loop free in $\G_{SP}.$ If not,
declare that $P_1'$ is \\not independent in ${\V}_{P'P"}\circ P',$
and not independent in ${\V}_{P'P"}\times P'.$
\\
If yes, construct minimal $\N_{\tilde{P}}$ as in Algorithm \ref{alg:graphportminimization}, taking $\hat{t}_2\supseteq P_1, \tilde{P}\equivd \hat{t}_2-{t}_2$ and taking $t_2\cap P_1$ as large as possible.\\
Step 2. Compute a representative matrix $\V_{\tilde{P}'\tilde{P}"} $ as in Algorithm \ref{alg:hybridgraphnew} and, using it,  \\a representative matrix for $\V_{\tilde{P}'\tilde{P}"} \cap \0_{P"}.$ 
\\
Step 3. Check if the subset of columns $(P_1\cap \tilde{P})'$ is independent in the above two \\representative matrices. If it is independent (dependent) declare 
$P_1'$ to be an independent (dependent) set of columns in the corresponding vector space,
but replacing $\V_{\tilde{P}'\tilde{P}"} \cap \0_{P"}$
by ${\V}_{P'P"}\times P'.$
\item ($P_1"$)
 Step 1. Check if $P_1$ is cutset free in $\G_{SP},$ i.e., whether $(S\cup P)-P_1$ contains a tree of $\G_{SP}.$ If not,
declare that $P_1"$ is \\not independent in ${\V}_{P'P"}\circ P",$
and not independent in ${\V}_{P'P"}\times P".$
\\
If yes, construct minimal $\N_{\tilde{P}}$ as in Algorithm \ref{alg:graphportminimization}, taking $(P-{t}_2)\supseteq P_1$ and taking\\ $(P-\hat{t}_2)\cap P_1$ as large as possible.\\
Step 2. Compute a representative matrix $\V_{\tilde{P}'\tilde{P}"} $ as in Algorithm \ref{alg:hybridgraphnew} and, using it,  \\a representative matrix for $\V_{\tilde{P}'\tilde{P}"} \cap \0_{P'}.$    
\\
Step 3. Check if the subset of columns $(P_1\cap \tilde{P})"$ is independent in the above two \\representative matrices. If it is independent (dependent) declare
$P_1"$ to be an independent (dependent)  set of columns in the corresponding vector space,
but replacing $\V_{\tilde{P}'\tilde{P}"} \cap \0_{P'}$
by ${\V}_{P'P"}\times P".$
\end{itemize}
STOP
\end{algorithm}
\begin{remark}
In Algorithm \ref{alg:testindependence}, while testing for independence 
of $P_1',$ we have taken $P_1\cap t_2$ as large as possible, so that 
$P_1\cap (\hat{t}_2-t_2)$ is as small as possible.
This can be done as follows.
Let $t_1$ be a tree of $\Gsp\circ S.$ Grow this into a tree $t$ of $\Gsp$
by using as many edges of $P_1$ as possible. Take $t_2\equivd t-t_1.$
Grow this into a tree $\hat{t}_2$ of $\Gsp\circ P,$
using edges of $P_1$ to begin with. Since $P_1$ is loop free in $\Gsp,$ 
we must have $P_1\subseteq \hat{t}_2.$
Further $|P_1\cap (\hat{t}_2-t_2)|$ is a minimum.

 While testing for independence
 of $P_1",$ we have taken $P_1\cap (P-\hat{t}_2)$ as large as possible, so that\\
 $P_1\cap ((P-t_2)-(P-\hat{t}_2))=P_1\cap (\hat{t}_2-t_2)$ is as small as possible.
This can be done as follows. 
Grow a tree $\hat{t}_2$ of $\Gsp\circ P$ that contains as few edges of $P_1$
as possible. Let $t_1$ be a tree of $\Gsp\circ S.$ Grow this into a tree $t$ of $\Gsp$ using edges of $\hat{t}_2-P_1$ first. 
Let $t_2\equivd t\cap \hat{t}_2.$
Since $P_1$ is cutset free in $\Gsp,$
we must have $P_1\subseteq P-{t}_2.$
Further $|P_1\cap (\hat{t}_2-t_2)|$ is a minimum.

\end{remark}
\bibliographystyle{elsarticle1-num}
 \bibliography{references}

\end{document}

\bibliographystyle{elsarticle1-num}
\bibliography{references}

\begin{thebibliography}{10}
\expandafter\ifx\csname url\endcsname\relax
  \def\url#1{\texttt{#1}}\fi
\expandafter\ifx\csname urlprefix\endcsname\relax\def\urlprefix{URL }\fi
\expandafter\ifx\csname href\endcsname\relax
  \def\href#1#2{#2} \def\path#1{#1}\fi

\bibitem{belevitch68} V. ~Belevitch: {\it Classical Network 
Theory}, Holden-Day, San Francisco, 1968. 


\bibitem{desoerkuh} C.~A. ~Desoer \& E.~S.~Kuh, {\it Basic circuit theory}, McGraw-Hill, New York, 1969.
\bibitem{edm65a} J. ~Edmonds: Minimum partition of a matroid
into independent subsets, { Journal of Research of the National Bureau
of Standards} {\bf 69B} (1965) 67-72.

\bibitem{forney2004}
G.~D. ~Forney \& M.~D. ~Trott, {The Dynamics of Group Codes : Dual Abelian Group
  Codes and Systems}, IEEE Transactions on Information Theory 50~(11) (2004)
  1--30.

\bibitem{iritomizawa} M. ~Iri \&  N. ~Tomizawa, A practical criterion for the existence of the unique solution
in a linear electrical network with mutual couplings, Trans. Inst. Electron. \& Commun.
Eng. Jpn., 57-A, 8, (1974) 35--41.

\bibitem{iritomizawa2} M. ~Iri \&  N. ~Tomizawa,
 A unifying approach to fundamental problems in network
theory by means of matroids, Trans. Inst. Electron. \& Commun. Eng. Jpn., 58-A, No. I, (1975)
33-40.

\bibitem{irisurvey} M. ~Iri, Survey of recent trends in applications of matroids,
Proc. IEEE ISCAS, Tokyo, (1979) 987.

\bibitem{irireview} M. ~Iri,
 A review of recent work in Japan on principal partitions of matroids and their
applications, Ann. New York Acad. Sci. 319 (1979) 306--319.

\bibitem{irifujishige} M. ~Iri \&  S. ~Fujishige,
 Use of matroid theory in operations research, circuits and
systems theory, Int. J. Syst. Sci., 12, (1981) 27-54.


\bibitem{iriapplications} M. ~Iri,
Applications of matroid theory, Mathematical Programming: The State 0f the Art,
Springer, Berlin, (1983) 158--201.


\bibitem{iriprogress}
M. ~Iri,
Structural Theory for the Combinatorial Systems Characterized by Submodular Functions,
Chapter in Progress in Combinatorial Optimization
(1984) 197--219, Academic Press.

%
%

\bibitem{lp86}
L. Lov{\'{a}}sz and M. Plummer:  {\it Matching Theory}
(Akad\'{e}mia Kiad\'{o}, Budapest;  North-Holland,
Amsterdam, 1986).

\bibitem{murotairi1}  K.~Murota \&  M. ~Iri,
Structural solvability of systems of equations —A mathematical formulation for distinguishing accurate and inaccurate numbers in structural analysis of systems,Japan Journal of Industrial and Applied Mathematics  2(1) (1985) 247--271.

\bibitem{murotabook0} K. ~Murota, {\it Systems Analysis by Graphs and Matroids:
Structural Solvability and Controllability}, Springer, Berlin, Heidelberg, New York, Tokyo, 1987.

\bibitem{murotabook} K. ~Murota, {\it Matrices and Matroids for Systems Analysis},
Springer, Berlin, Heidelberg, New York, Tokyo, 2000.



\bibitem{HNarayanan1986a}
H.~Narayanan, {On the decomposition of vector spaces}, Linear Algebra and its
  Applications 76 (1986) 61--98.

\bibitem{HNarayananadjoint}
H.~Narayanan, {A Unified Construction of Adjoint Systems and Networks}, Circuit
  Theory and Applications 14 (1986) 263--276.

\bibitem{narayanan1987topological}
H.~Narayanan, {Topological transformations of electrical networks},
  International Journal of Circuit Theory and Applications 15~(3) (1987)
  211--233.



\bibitem{HNarayanan1997}
H.~Narayanan, {\it Submodular Functions and Electrical Networks}, Annals of Discrete
  Mathematics, vol. 54, North Holland, Amsterdam, 1997.

\bibitem{HN2000}
H.~Narayanan, On the Duality between Controllability and Observability in Behavioural Systems Theory, Proc. International Conference on Communications Control and Signal Processing, CCSP 2000, Bangalore, July 25-July 28,2000, 183--186.

\bibitem{HN2000a}
H.~Narayanan, Matroids representable over Modules, Electrical Network Topology and Behavioural Systems Theory, Research report of the EE Dept., IIT Bombay, May 2000.

\bibitem{narayanan2002some}
H.~Narayanan, Some applications of an implicit duality theorem to connections
  of structures of special types including dirac and reciprocal structures,
  Systems \& Control Letters 45~(2) (2002) 87 -- 95.


\bibitem{HNarayanan2009}
H.~Narayanan, {\it Submodular Functions and Electrical Networks}, 
Open 2nd edition, 
 http://www.ee.iitb.ac.in/$\ \tilde{}$ hn/ , 2009. 

\bibitem{HNPS2013}
H.~Narayanan \&  H. ~Priyadarshan, 
{A subspace approach to linear dynamical systems},
Linear Algebra and its Applications 438 (2013) 3576--3599.

\bibitem{narayanan2016} H.~Narayanan, 
Implicit Linear Algebra and its Applications,
 arXiv:1609.07991v1 [math.GM] 
(2016).

\bibitem{narayanan2020} H.~Narayanan \& Hariharan Narayanan, Implicit Linear Algebra and Basic Circuit Theory, 
arXiv:2005.00838v1, [math.GM]  (2020).

\bibitem{penfield}
P. Penfield Jr., R. Spence \&  S. Duinker, {\it Tellegen's Theorem and Electrical Networks}, Cambridge, Mass. M.I.T.Press, 1970.

\bibitem{purslow} E.~J. Purslow, Solvability and analysis of linear active networks by use of 
 the state equations, IEEE Trans. Circuit Theory, CT-17 (1970) 469--475.

\bibitem{recski78} A. Recski,  Contributions to the n-port interconnection
problem by means of matroids, Coll. Math. Soc. Janos Bolyai 18,
{\it Combinatorics}, Keszthely, Hungary, North-Holland, Amsterdam, 877--892.

\bibitem{recski79} A. Recski, Terminal solvability and the n-port interconnection problem, Proceedings of the 1979 IEEE International Symposium on Circuits
and Systems, Tokyo, 988--991.

\bibitem{recski89} A. Recski, {\it Matroid Theory and its Applications 
in Electric Network Theory and in Statics}.
(Springer-Verlag,Berlin,Heidelberg,New York,London,Paris,Tokyo,1989).

\bibitem{recski19} A. Recski, Hybrid Description and the Spectrum of Linear Multiports,  IEEE Transactions on Circuits and Systems II, 68 (2019) 1502--1506.

\bibitem{STHN2014}
Siva Theja \& H.~Narayanan,
On the notion of generalized minor in topological network theory and matroids,
Linear Algebra and its Applications 458 (2014) 1--46.



\bibitem{tellegen}
B.~D.~H. Tellegen, A general network theorem, with applications, Philips Res. Rept., 7 
(1952) 259--269.


\bibitem{tutte} W.~T. Tutte, Lectures on matroids, { Journal of 
Research of the National Bureau of Standards}, { B69} (1965) 1--48. 

\bibitem{schaft1999} A.~J.~van~der~Schaft, “Interconnection and geometry”, in The Mathematics of
Systems and Control: from Intelligent Control to Behavioral Systems, Editors
J.W. Polderman, H.L. Trentelman, Groningen, pp. 203–218, 1999.


%



\bibitem{vanderscaftport} A.~J.~van~der~Schaft, ~D.~Jeltsema, Port-Hamiltonian Systems Theory: An Introductory
Overview, Foundations and Trends 
in Systems and Control, vol. 1, no. 2-3,
(2014) 173--378.





















\end{thebibliography}

\end{document}